\newcommand{\m}{\mathfrak{m}}
\newcommand{\ff}{\mathbb{F}}
\newcommand{\nn}{\mathbb{N}}
\newcommand{\bbn}{\mathbb{N}}
\newcommand{\pp}{\mathbb{P}}
\newcommand{\qq}{\mathbb{Q}}
\newcommand{\rr}{\mathbb{R}}
\newcommand{\zz}{\mathbb{Z}}
\newcommand{\bbz}{\mathbb{Z}}
\newcommand{\ch}{\text{char}}
\newcommand{\qf}{\text{qf}}
\newcommand{\gp}{\text{gp}}
\newcommand{\mcd}{\text{mcd}}
\newcommand{\ord}{\text{ord}}
\newcommand{\ii}{\mathcal{A}}
\newcommand{\uu}{\mathcal{U}}
\providecommand\ldb{\llbracket}
\providecommand\rdb{\rrbracket}
\newcommand{\onto}{\twoheadrightarrow}
\begin{document}
	\mainmatter
	
	\title{Atomicity in integral domains}
	\titlerunning{Atomicity in integral domains} 
	
	\author{Jim Coykendall \inst{1} \and Felix Gotti \inst{2}}
	\authorrunning{J. Coykendall and F. Gotti}
	\tocauthor{Jim Coykendall, Felix Gotti}
	
	\institute{School of Mathematical and Statistical Sciences, \\Clemson University, Clemson, SC 29634\\
		\email{jcoyken@clemson.edu},
		\and
		Department of Mathematics, MIT, Cambridge, MA 02139\\
		\email{fgotti@mit.edu}}

%
	

\maketitle

\begin{abstract}
	In algebra, atomicity is the study of divisibility by and factorizations into atoms (also called irreducibles). In one side of the spectrum of atomicity we find the antimatter algebraic structures, inside which there are no atoms and, therefore, divisibility by and factorizations into atoms are not possible. In the other (more interesting) side of the spectrum, we find the atomic algebraic structures, where essentially every element factors into atoms (the study of such objects is known as factorization theory). In this paper, we survey some of the most fundamental results on the atomicity of cancellative commutative monoids and integral domains, putting our emphasis on the latter. We mostly consider the realm of atomic domains. For integral domains, the distinction between being atomic and satisfying the ascending chain condition on principal ideals, or ACCP for short (which is a stronger and better-behaved algebraic condition) is subtle, so atomicity has been often studied in connection with the ACCP: we consider this connection at many parts of this survey. We discuss atomicity under various classical algebraic constructions, including localization, polynomial extensions, $D+M$ constructions, and monoid algebras. Integral domains having all their subrings atomic are also discussed. In the last section, we explore the middle ground of the spectrum of atomicity: some integral domains where some of but not all the elements factor into atoms, which are called quasi-atomic and almost atomic. We conclude providing techniques from homological algebra to measure how far quasi-atomic and almost atomic domains are from being atomic.
	\keywords{atomic domain, atomic monoid, atomicity, ACCP, hereditary atomicity, almost atomicity, quasi-atomicity, integral domain, polynomial ring, power series, monoid algebra, group cohomology}
\end{abstract}


\bigskip
\section{Introduction}
\label{sec:intro}
\smallskip

In commutative algebra, the study of factorization is central to the discipline. As any commutative ring is additively an abelian group, the behavior of its underlying multiplicative semigroup is the essence of what separates commutative algebra from abelian group theory. 
\smallskip

For an integral domain $R$ whose quotient field is $K$ and whose group of units is $R^\times$, the abelian group $G(R) := K^\times/R^\times$ is fundamentally associated with the semigroup that contains the essence of the multiplicative information encoded in $R$: the group $G(R)$ is called the \emph{group of divisibility} of $R$, and it is has the following natural partial ordering:
\[
	rR^\times \leq sR^\times \quad \text {if and only if} \quad \frac{s}{r} \in R.
\]
It is important to note that nonnegative elements under this partial ordering are precisely the cosets that are represented by elements of $R$. In the 1960s and 1970s, there was a flurry of interest in the group of divisibility (see, for example, \cite{Mott,mott1974convex}), and perhaps the most well-known result of this time was the Jaffard-Ohm-Kaplansky Theorem (sometimes referred to as the Krull-Kaplansky-Jaffard-Ohm Theorem). We record this celebrated theorem here, and the interested reader can find a short divisor-theoretical proof of the same in~\cite[Corollary~2]{GH96}. 

\begin{theorem}[Krull-Kaplansky-Jaffard-Ohm]
	If $(G, \leq)$ is a lattice-ordered abelian group, then there exists a B\'{e}zout domain with group of divisibility order-isomorphic to $(G, \leq )$.
\end{theorem}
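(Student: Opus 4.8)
The plan is to realize $(G,\leq)$ as the group of divisibility of a group-ring-type domain, following the classical Jaffard--Ohm construction. First I would fix a field $k$ and form the group ring $k[G]$, writing a typical nonzero element as $f = \sum_{g} a_g X^{g}$ with finite support $\mathrm{supp}(f) = \{g : a_g \neq 0\}$. Since every lattice-ordered abelian group is torsion-free, $k[G]$ is an integral domain; let $k(G)$ be its quotient field. I would then define $w \colon k[G]\setminus\{0\} \to G$ by $w(f) = \bigwedge \mathrm{supp}(f)$, the meet taken in the lattice $(G,\leq)$ (a finite, hence well-defined, meet), and extend it to $w \colon k(G)^{\times} \to G$ by $w(f/h) = w(f) - w(h)$.

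The heart of the argument is to show that $w$ is a semivaluation: $w(fh) = w(f) + w(h)$ and $w(f+h) \geq w(f) \wedge w(h)$. The inequality is immediate, since $\mathrm{supp}(f+h) \subseteq \mathrm{supp}(f) \cup \mathrm{supp}(h)$ and passing to a larger support set can only lower the meet. For the multiplicative identity, after translating by the monomials $X^{-w(f)}$ and $X^{-w(h)}$ I may assume $w(f) = w(h) = 0$; because $\mathrm{supp}(fh) \subseteq \mathrm{supp}(f)+\mathrm{supp}(h)$ and addition distributes over $\wedge$ in a lattice-ordered group, the inequality $w(fh) \geq 0$ comes for free, and the content is the reverse inequality $w(fh) \leq 0$. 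I expect this to be the main obstacle, precisely because the meet $w(f)$ need not be attained at any single exponent of $f$, so the naive ``lowest term times lowest term'' argument valid in the totally ordered case breaks down and genuine coefficient cancellation can occur.

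To overcome this I would invoke the structure theory of abelian lattice-ordered groups, namely that the order of $(G,\leq)$ is the intersection of its totally ordered homomorphic images: for any $c$ with $c \gneq 0$ there is a meet-preserving homomorphism $\phi \colon G \to T$ onto a totally ordered abelian group $T$ with $\phi(c) > 0$. Arguing by contradiction, suppose $w(fh) = c \gneq 0$ and fix such a $\phi$. Since $\phi$ preserves finite meets, the minimal $\phi$-value on each of $\mathrm{supp}(f)$ and $\mathrm{supp}(h)$ equals $\phi(0) = 0$, and the portion of $fh$ supported in $\phi$-value $0$ is exactly $f_0 h_0$, where $f_0, h_0$ are the corresponding leading parts lying in the group ring $k[\ker\phi]$. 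As $\ker\phi$ is again torsion-free, $k[\ker\phi]$ is a domain, so $f_0 h_0 \neq 0$; this produces a surviving exponent of $fh$ with $\phi$-value $0$, contradicting $\phi(c) > 0$. Hence $w$ is multiplicative.

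Finally I would set $R = \{x \in k(G) : x = 0 \text{ or } w(x) \geq 0\}$. The semivaluation inequalities make $R$ a subring of $k(G)$ with $\mathrm{Frac}(R) = k(G)$ (using the decomposition $g = g^{+} - g^{-}$ to clear denominators). A short computation gives $R^{\times} = \{x : w(x) = 0\}$ and, more usefully, that every nonzero $x \in R$ equals $u\,X^{w(x)}$ for a unit $u$, since $xX^{-w(x)}$ and its inverse both have $w$-value $0$. From this the induced map $\overline{w} \colon k(G)^{\times}/R^{\times} \to G$ is a group isomorphism satisfying $xR^{\times} \leq yR^{\times} \iff w(x) \leq w(y)$, which is the desired order-isomorphism $G(R) \cong (G,\leq)$. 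The same structural fact yields the Bézout property: for nonzero $x,y \in R$ one has $(x,y) = (X^{g_1}, X^{g_2})$ with $g_i \geq 0$, and writing $h = g_1 \wedge g_2$, $p = g_1 - h$, $q = g_2 - h$ (so $p \wedge q = 0$) reduces the claim to $(X^{p}, X^{q}) = R$; here $X^{p}+X^{q}$ has $w$-value $p \wedge q = 0$, hence is a unit, so $1 \in (X^{p},X^{q})$. Induction on the number of generators then shows every finitely generated ideal of $R$ is principal, completing the proof.
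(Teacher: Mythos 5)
Your construction is correct, but be aware that the survey itself does not prove this theorem: it only records the statement and points the reader to the short divisor-theoretical proof in \cite[Corollary~2]{GH96}, which runs through quasi divisor theories and systems of valuations. What you give instead is the classical Jaffard--Ohm group-algebra argument, and its one genuinely external ingredient --- that for each $c \gneq 0$ in an abelian lattice-ordered group $G$ there is an $\ell$-homomorphism $\phi$ of $G$ onto a totally ordered abelian group with $\phi(c) > 0$ (equivalently, Lorenzen's theorem that abelian $\ell$-groups are subdirect products of totally ordered groups; one takes the quotient by a value of $c$, which is a prime convex $\ell$-subgroup) --- is deployed exactly where it is needed: since $\phi$ preserves finite meets, $\phi(w(f)) = \min \phi(\mathrm{supp}\, f)$, and the $\phi$-degree-zero component of $fh$ is $f_0 h_0$, which is nonzero because $\ker \phi$ is torsion-free and $k[\ker\phi]$ is therefore a domain; this correctly rules out the coefficient cancellation that defeats the naive lowest-term argument. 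Two small points to tighten. First, prove multiplicativity of $w$ on $k[G]\setminus\{0\}$ \emph{before} extending to $k(G)^{\times}$, since the well-definedness of $w(f/h) = w(f) - w(h)$ is precisely equivalent to that multiplicativity. Second, in the B\'ezout step the identity $w(X^{p} + X^{q}) = p \wedge q$ requires $p \neq q$: if $p = q$, then $p \wedge q = 0$ forces $p = q = 0$ and $X^{p} + X^{q} = 2$, which vanishes in characteristic $2$; but in that degenerate case $g_1 = g_2$ and the ideal $(X^{g_1}, X^{g_2})$ is already principal, so the argument survives with a one-line case distinction. Compared with the cited divisor-theoretic route, yours is longer but self-contained modulo Lorenzen's theorem, and it buys explicit structural information that the abstract proof does not display: every nonzero element of $\mathrm{qf}(R)$ is an associate of a monomial $X^{w(x)}$, the unit group is exactly $\{x : w(x) = 0\}$, and greatest common divisors in $R$ realize the lattice meet in $G$, which makes the order-isomorphism $G(R) \cong (G, \leq)$ and the B\'ezout property transparent simultaneously.
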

The evolution of this result was spread out in several papers over the course of a few decades, and a more complete discussion of this can be found in \cite{brandal1976}.
\smallskip

Perhaps the second wave of in-depth study of the multiplicative structure of integral domains began in the early 1990s. Appearing in 1990, the paper \cite{AAZ90} by Anderson, Anderson, and Zafrullah was the landmark of this renaissance of the study of factorization in the setting of integral domains. Although we will only venture here into the arena of integral domains, it is worth noting that the study of factorization initiated in~\cite{AAZ90} was later generalized to the setting of commutative rings with identity in the presence of zero-divisors (see, for example, \cite{DDAZ}). In \cite{AAZ90}, various types of integral domains, defined by factorization properties, were explored. Some of the key objects of study are listed below.

\begin{enumerate}
	\item Atomic domains, which are integral domains with the property that every nonzero nonunit of~$R$ is a product of atoms.
	\smallskip
	
	\item ACCP domains, which are integral domains with the property that every ascending chain of principal ideals stabilizes.
	\smallskip
	
	\item Bounded factorization domains (BFDs), which are atomic domains such that every nonzero nonunit has a bound on the lengths of its irreducible factorizations.
	\smallskip
	
	\item Finite factorization domains (FFDs), which are atomic domains such that every nonzero nonunit has only finitely many irreducible factorizations (up to associates).
	\smallskip
	
	\item Half-factorial domains (HFDs), which are atomic domains such that any two irreducible factorizations of the same element have the same length.
	\smallskip
	
	\item Unique factorization domains (UFDs), which are integral domains such that every nonzero nonunit is a product of prime elements.
\end{enumerate}
The notions of bounded factorization and finite factorization domains were both introduced in~\cite{AAZ90}, they were investigated in the context of the following diagram:

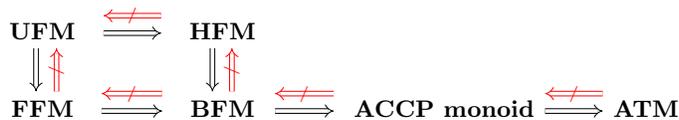
\begin{figure}[h]
	\begin{tikzcd}
		\textbf{ UFM } \ \arrow[r, Rightarrow] \arrow[red, r, Leftarrow, "/"{anchor=center,sloped}, shift left=1.7ex] \arrow[d, Rightarrow, shift right=1ex] \arrow[red, d, Leftarrow, "/"{anchor=center,sloped}, shift left=1ex]& \ \textbf{ HFM } \arrow[d, Rightarrow, shift right=0.6ex] \arrow[red, d, Leftarrow, "/"{anchor=center,sloped}, shift left=1.3ex] \\
		\textbf{ FFM } \ \arrow[r, Rightarrow]	\arrow[red, r, Leftarrow, "/"{anchor=center,sloped}, shift left=1.7ex] & \ \textbf{ BFM } \arrow[r, Rightarrow]  \arrow[red, r, Leftarrow, "/"{anchor=center,sloped}, shift left=1.7ex] & \textbf{ ACCP monoid}  \arrow[r, Rightarrow] \arrow[red, r, Leftarrow, "/"{anchor=center,sloped}, shift left=1.7ex]  & \textbf{ATM}
	\end{tikzcd}
	\caption{The implications in the diagram show the known inclusions among the subclasses of atomic monoids we have previously mentioned. The diagram also emphasizes (with red marked arrows) that none of the shown implications is reversible.}
	\label{fig:AAZ's atomic chain for monoids}
\end{figure}
Some of the implications denoted by arrows in the above diagram follow immediately from the definitions, but a couple of them are a bit more subtle. More remarkably, the authors show that, in general, none of the implications in the above diagram can be reversed: the counterexample for [BFD $\Leftarrow$ ACCP] given in~\cite{AAZ90} and the counterexample for [ACCP $\Leftarrow$ atomic] cited in~\cite{AAZ90} (and originally provided by Grams~\cite{aG74}) are decidedly delicate.
\smallskip

At the top of the food chain are UFDs, which encapsulate the notion of unique factorization into irreducibles (i.e., the statement of the Fundamental Theorem of Arithmetic). There are a number of properties that indicate the strength of this notion. Perhaps the most famous is the following theorem, generally credited to Kaplansky \cite[Theorem~5]{iK74}.
\begin{theorem}\label{kap}
	If $R$ is an integral domain, then $R$ is a UFD if and only if every nonzero prime ideal of $R$ contains a prime element.
\end{theorem}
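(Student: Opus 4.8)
The plan is to prove both implications, with the forward direction being routine and the converse carrying the real content. Throughout I will use the definition of UFD given as item~(6) above, namely that every nonzero nonunit of $R$ is a product of prime elements.

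For the forward direction, assume $R$ is a UFD and let $P$ be a nonzero prime ideal. I would pick any nonzero $x \in P$; since $P$ is proper, $x$ is a nonunit, so I may write $x = p_1 \cdots p_n$ as a product of prime elements. As $P$ is prime and $x \in P$, some factor $p_i$ lies in $P$, exhibiting a prime element of $R$ inside $P$. This part is immediate.

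For the converse, assume every nonzero prime ideal of $R$ contains a prime element, and let $S$ be the set of all elements of the form $u p_1 \cdots p_n$, where $u$ is a unit, each $p_i$ is a prime element, and $n \geq 0$ (so $S$ contains the units as empty products). I would first record that $S$ is a \emph{saturated} multiplicatively closed set: it is clearly closed under multiplication, and the fact that associates and factors of products of primes are again of this form gives saturation, i.e., $xy \in S$ forces $x \in S$ and $y \in S$. The goal is to show $S = R \setminus \{0\}$, for then every nonzero nonunit is an associate of a product of primes, hence itself a product of primes, which is exactly what is needed.

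To prove $S = R \setminus \{0\}$, I would argue by contradiction: suppose there is a nonzero element $a \notin S$. Saturation immediately yields $(a) \cap S = \emptyset$, since $ra \in S$ would force $a \in S$. Now I apply the standard Zorn's lemma argument to the nonempty poset of ideals containing $(a)$ and disjoint from $S$: a maximal such ideal $P$ exists, and the usual computation shows it is prime (if $xy \in P$ with $x, y \notin P$, then the strictly larger ideals $P + (x)$ and $P + (y)$ each meet $S$, and multiplying the two witnesses lands a product in $P \cap S$, a contradiction). Since $a \in P$ and $a \neq 0$, the prime $P$ is nonzero, so by hypothesis it contains a prime element $p$; but $p \in S$ by the very definition of $S$, contradicting $P \cap S = \emptyset$. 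Hence no such $a$ exists. The main obstacle is precisely this converse, and within it the crux is producing the nonzero prime disjoint from $S$: saturation is what makes $(a)$ avoid $S$, while the ``maximal ideal avoiding a multiplicative set is prime'' lemma promotes it to a prime ideal. Everything else is bookkeeping with the definition of UFD as products of primes.
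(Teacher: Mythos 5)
Your proof is correct. Note that the paper does not actually prove Theorem~\ref{kap}; it cites Kaplansky's \emph{Commutative Rings}, and your argument --- taking $S$ to be the saturated multiplicative set of units times products of primes, passing via Zorn's lemma to a prime ideal containing $(a)$ and disjoint from $S$, and deriving a contradiction from the hypothesis --- is precisely that classical proof. The one step you assert rather than verify is the saturation of $S$; it does hold, but it needs the routine induction that uses primality of the factors (the analogous claim for products of irreducibles is false), so a complete write-up should include that short argument.
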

\noindent It is interesting to note that this particular characterization of a UFD immediately shows that if $R$ is a PID, then $R$ is a UFD, and this allows us to conclude that Theorem~\ref{kap} depends on the Axiom of Choice. Indeed, it is shown in \cite[Corollary~10]{H1976} that in Zermelo–Fraenkel set theory (i.e., in the absence of the Axiom of Choice), it is impossible to show that every PID is a UFD (or even that every PID has a maximal ideal).
\smallskip

The ideal-theoretic characterization given by Theorem \ref{kap} is a source of the strength of the unique factorization property and almost universally gives a route to a quick(er) proof of some standard results. Most integral domains defined by factorization properties are described as certain specialized behavior of irreducible product representations. As a rule of thumb, integral domains with an ideal theoretic description tend to come out on top with regard to strength and sweeping nature of results (e.g., UFDs vs. HFDs and ACCP domains vs. atomic domains). This line of thought can be further illustrated by considering the class of HFDs in the setting of rings of algebraic integers. For rings of algebraic integers, Carlitz~\cite{carlitz} provided an ideal theoretic characterization of HFDs: that is, $R$ is an HFD if and only if $\vert\text{Cl}(R)\vert\leq2$. It is not a surprise that it is in this arena where some of the results concerning HFDs are strongest.
\smallskip

The present survey primarily focuses on atomicity in the setting of integral domains: we study the most general class of integral domains represented in the above diagram. We note that this is the basic assumption for all the integral domains defined by the factorization properties already mentioned. Of course, it makes sense to begin the study of factorization by considering the class of integral domains where every nonzero nonunit actually possesses a factorization into irreducibles. However, the class of atomic domains in some sense is strange: it is characterized by the property that all proper nonzero principal ideals can be expressed as a product of maximal principal ideals, and one could argue that this is problematic as maximal principal ideals need not be prime. Despite the subtle distinction, the class of ACCP domains is much more robust, and this will be clear at various points of this survey. This may explain why historically atomicity has been mostly studied in connection with the ACCP.
\smallskip

The contrast between being atomic and satisfying the ACCP can be thought of in terms of trying to factor a nonzero, nonunit element. If this element is in a domain that is ACCP, then any factorization choices made are guaranteed to eventually succeed because of the fact that the induced chain stabilizes. Indeed, suppose that one begins with a nonzero, nonunit $a\in R$ where $R$ is a domain that is ACCP. If $a$ is irreducible then we are done, but if not, factor it into two nonunits and inductively continue this process on the nonunit factors of $a$. The ACCP condition, coupled with the fact that every nonunit in an ACCP domain is divisible by an irreducible (\cite{AAZ90}) gives that this process terminates in an irreducible factorization of $a$, regardless of the intermediate choices made on factoring the divisors of $a$. On the other hand, there are potential hazards in atomic domains that do not have the ACCP property. In fact, the very existence of an ascending chain of principal ideals $(r) \subsetneq (r_1)\subsetneq (r_2) \subsetneq \cdots$ shows that if one begins to factor an element $r$ and carelessly selects $r_n$ as a factor at the $n^{\text{th}}$ stage of factorization, then although $r$ may be factored into a finite product of irreducibles in some way, this unfortunate choice of successively choosing $r_n$ will never terminate.
\smallskip

The construction of atomic domains that do not satisfy the ACCP is a challenging task and a problem that has been the subject of systematic attention in the literature since Grams~\cite{aG74} constructed the first of such integral domains back in 1974, disproving an assertion made by Cohn in~\cite{pC68} (the equivalence of atomicity and the ACCP inside the class of integral domains). Further constructions of atomic domains not satisfying the ACCP have been provided by Zaks~\cite{aZ82}, Roitman~\cite{mR93}, Boynton and the first author~\cite{BC19}, and Li and the second author~\cite{GL23,GL23a,GL22}. Unlike atomicity, the ACCP ascends to polynomial extensions over integral domains \cite[page~82]{GP74} (this is not the case in the presence of zero-divisors; see~\cite{HL94}). In addition, for each prescribed field $F$, a reduced, cancellative, and torsion-free monoid $M$ satisfies the ACCP if and only if its monoid algebra $F[M]$ satisfies the ACCP \cite[Theorem~13]{AJ15}. Thus, when the atomicity of any (atomic) non-ACCP monoid ascends to its monoid algebra $F[M]$ for some field~$F$, one automatically obtains an atomic monoid algebra that does not satisfy the ACCP. This is how atomic monoid algebras that do not satisfy the ACCP were constructed in~\cite{GL23,GL23a}.
\smallskip

Another striking example of the volatile nature of the class of atomic domains (in contrast to that of ACCP domains) concerns its stability in polynomial extensions. Indeed, of the six classes of integral domains studied in \cite{AAZ90} and listed above, only the class of HFDs and the class of atomic domains are unstable under taking polynomial extensions. For atomicity, this was proved by Roitman in~\cite{mR93}; however, the techniques and constructions in~\cite{mR93} are intricate, and so we do not discuss them here. It should also be noted that Roitman showed in~\cite{mR00} that the same construction provides a counterexample to the ascent of atomicity to power series extension. This is perhaps less surprising as stability of ``nice'' properties in passing to power series extensions is a rarer beast; indeed, Samuel showed in~\cite{S1961} that, in general, UFDs do not survive the passage to power series.
\smallskip

In this survey, we will assemble some central results known (and some unknown) concerning the class of atomic domains flavored with some minor new results/insights. We will highlight some mainstream results and explore some pathologies in the settings of monoids and integral domains, with our main focus on the later. It is our intent to illustrate many interesting behaviors in the core of our study with myriad examples.

\bigskip
\section{Background}
\label{sec:background}

In this section we introduce most of the relevant terminology and notation we shall be using throughout this survey. In addition, we revise the standard results in factorization theory, ideal theory, and commutative rings that we need as tools in latter sections.

\medskip
\subsection{General Notation and Terminology}

We let $\nn$ denote the set of positive integers, and we set $\nn_0 := \nn \cup \{0\}$. As it is customary, we let $\zz, \qq$, and $\rr$ denote the sets of integers, rational numbers, and real numbers, respectively. If $b,c \in \zz$ with $b \le c$, then we let $\ldb b,c \rdb$ denote the discrete interval from $b$ to $c$; that is,
\[
	\ldb b,c \rdb := \{k \in \zz : b \le k \le c\}.
\]
For a nonzero $q \in \qq$, we let $\mathsf{n}(q)$ and $\mathsf{d}(q)$ denote, respectively, the unique $n \in \zz$ and $d \in \nn$ such that $q = \frac{n}d$ and $\gcd(n,d) = 1$. For $S \subseteq \rr$ and $r \in \rr$, we set $S_{\ge r} := \{s \in S : s \ge r\}$ and, in a similar manner, we use the notation $S_{> r}$. For disjoint sets $S$ and~$T$, we often write $S \sqcup T$ instead of $S \cup T$ to emphasize that we are taking the union of disjoint sets. A sequence of sets $(S_n)_{n \ge 1}$ is called an \emph{ascending chain} if $S_n \subseteq S_{n+1}$ for every $n \in \nn$ and is said to \emph{stabilize} if there exists $n_0 \in \nn$ such that $S_n = S_{n_0}$ for every $n \ge n_0$.

\medskip
\subsection{Commutative Monoids}

Let $S$ be an (additive) commutative semigroup. The semigroup $S$ is called \emph{cancellative} if for any $b,c,d \in M$ the equality $b+d = c+d$ implies $b=c$. On the other hand, $S$ is called \emph{torsion-free} if for any $b,c \in S$ and $n \in \nn$, the equality $nb = nc$ implies that $b=c$. Throughout this paper, we reserve the term \emph{monoid} for a cancellative, commutative semigroup with identity and, unless otherwise is clear from the context, we will use additive notation for monoids. 
\smallskip

Let $M$ be a monoid. We set $M^\bullet = M \setminus \{0\}$ and, as for groups, we say that $M$ is \emph{trivial} if $M = \{0\}$. We let $\uu(M)$ denote the group of invertible elements of $M$. If $\uu(M)$ is trivial, then $M$ is called \emph{reduced}. The group of all formal differences of elements in $M$ is called the \emph{Grothendieck group} of $M$ and it is denoted by $\gp(M)$. Equivalently, $\gp(M)$ is the unique abelian group up to isomorphism satisfying the following condition: any abelian group containing an isomorphic image of $M$ also contains an isomorphic image of $\gp(M)$. One can readily check that a monoid is torsion-free if and only if its Grothendieck group is torsion-free. The \emph{rank} of $M$, denoted by $\text{rank} \, M$, is the rank of $\gp(M)$ as a $\zz$-module or, equivalently, the dimension of the $\qq$-vector space $\qq \otimes_\zz \gp(M)$.
\smallskip

For $b,c \in M$, we say that $c$ \emph{divides} $b$ \emph{in} $M$ and write $c \mid_M b$ if $b = c + d$ for some $d \in M$. A submonoid~$M'$ of $M$ is called a \emph{divisor-closed submonoid} of~$M$ if every element of~$M$ dividing an element of~$M'$ in~$M$ belongs to $M'$. The monoid $M$ is called a \emph{valuation monoid} if for all $b,c \in M$ either $b \mid_M c$ or $c \mid_M b$. A \emph{maximal common divisor} of a nonempty subset $S$ of $M$ is a common divisor $d \in M$ of~$S$ such that for any other common divisor $d' \in M$ of $S$ the divisibility relation $d \mid_M d'$ implies that $d' - d \in \uu(M)$. The set of all maximal common divisor of a nonempty subset $S$ of~$M$ is denoted by $\mcd_M(S)$.
\smallskip

We say that the monoid $M$ is a \emph{linearly ordered monoid} with respect to a given total order relation $\le$ on $M$ provided that $\le$ is compatible with the operation of $M$ in the following sense: for all $b,c,d \in M$, the inequality $b < c$ implies that $b+d < c+d$. We say that that the monoid $M$ is \emph{linearly orderable} if~$M$ is a linearly ordered monoid with respect to some total order. The linearly orderable monoids can be characterized as the commutative semigroups that are cancellative and torsion-free (see \cite[Section~3]{rG84}).

\medskip
\subsection{Atomicity, ACCP, and Factorizations}

For a subset $S$ of $M$, we let $\langle S \rangle$ denote the smallest submonoid of $M$ containing~$S$. If $M = \langle S \rangle$ for a finite set $S$, then $M$ is called \emph{finitely generated}. An element $a \in M \setminus \uu(M)$ is called an \emph{atom} if whenever $a = b + c$ for some $b,c \in M$ either $b \in \uu(M)$ or $c \in \uu(M)$. We let $\mathcal{A}(M)$ denote the set consisting of all the atoms of $M$. Note that if $M$ is reduced, then $\mathcal{A}(M)$ is contained in every generating set of~$M$. Following Cohn~\cite{pC68}, we proceed to introduce the most important notion in the scope of this survey: atomicity.

\begin{definition}
	Let $M$ be a monoid.
	\begin{itemize}
		\item $b \in M$ is an \emph{atomic} element if either $b$ is invertible or $b$ can be written as a sum of atoms.
		\smallskip
		
		\item $M$ is an \emph{atomic} monoid if every element of $M$ is atomic.
	\end{itemize}
\end{definition}

A subset $I$ of $M$ is called an \emph{ideal} of~$M$ if the set
\[
	I + M := \{b + c : b \in I \text{ and } c \in M\}
\]
is contained in~$I$ or, equivalently, if $I + M = I$. If~$I$ is an ideal of~$M$ such that
\[
	I = b + M := \{b + c : c \in M\}
\]
for some $b \in M$, then $I$ is called \emph{principal}. We say that an element $b_0 \in M$ satisfies the \emph{ascending chain condition on principal ideals} (ACCP) if every ascending chain $(b_n + M)_{n \ge 0}$ of principal ideals of~$M$ eventually stabilizes; that is, there is an $n_0 \in \nn$ such that $b_n + M = b_{n+1} + M$ for every $n \ge n_0$. Accordingly, the monoid $M$ is said to \emph{satisfy the ACCP} if every element of $M$ satisfies the ACCP. Every monoid that satisfies the ACCP must be atomic \cite[Proposition~1.1.4]{GH06}. The converse does not hold. The first study of the connection of atomicity and the ACCP was carried out in 1974 by Grams~\cite{aG74}, where she constructs the first atomic domain that does not satisfy the ACCP. For a generous insight of the connection of atomicity and the ACCP, see \cite{GL23a} and references therein.
\smallskip

Now assume that $M$ is atomic. The free (commutative) monoid on the set of atoms of $M/\uu(M)$ is denoted by $\mathsf{Z}(M)$. An element $z = a_1 + \cdots + a_\ell \in \mathsf{Z}(M)$, where $a_1, \dots, a_\ell \in \ii(M/\uu(M))$, is called a \emph{factorization} in $M$ of \emph{length} $|z| := \ell$. Because the monoid $\mathsf{Z}(M)$ is free, there is a unique monoid homomorphism $\pi \colon \mathsf{Z}(M) \to M/\uu(M)$ such that $\pi(a) = a$ for all $a \in \ii(M/\uu(M))$. For each $b \in M$, we set
\[
	\mathsf{Z}(b) := \pi^{-1}(b) \subseteq \mathsf{Z}(M) \quad \text{ and } \quad \mathsf{L}(b) := \{|z| : z \in \mathsf{Z}(b)\}.
\]
As $M$ is an atomic monoid, the sets $\mathsf{Z}(b)$ and $\mathsf{L}(b)$ are nonempty for every $b \in M$. Following~\cite{AAZ90} and~\cite{HK92}, we say that $M$ is a \emph{bounded factorization monoid} (BFM) if $|\mathsf{L}(b)| < \infty$ for every $b \in M$. It follows from \cite[Corollary~1.3.3]{GH06} that every BFM satisfies the ACCP. The bounded factorization property (along with the finite factorization property) was recently surveyed by Anderson and the second author in~\cite{AG22}. If $|\mathsf{Z}(b)| = 1$ (resp., $|\mathsf{L}(b)| = 1$) for every $b \in M$, then $M$ is called a \emph{unique factorization monoid} (UFM) (resp., a \emph{half-factorial monoid} (HFM)). It follows directly from the corresponding definitions that every UFM is an HFM and also that every HFM is a BFM.

\medskip
\subsection{Integral Domains}

Let $R$ be a commutative ring with identity. As it is customary, we let $R^\times$ denote the group of units of~$R$. The \emph{multiplicative monoid} of $R$, denoted by $R^*$, is the monoid consisting of all regular elements of $R$, meaning the elements that are not zero-divisors. Observe that $R^* = R \setminus \{0\}$ when $R$ is an integral domain, which is the case of interest in this survey. Suppose from now on that $R$ is an integral domain. In this case, we let $\text{qf}(R)$ denote the field of fractions of $R$. All the divisibility notation introduced for monoids can be naturally adapted to the setting of integral domains via the multiplicative monoid $R^*$. For instance, for a nonempty set $S$ consisting of nonzero elements of $R$, we say that a nonzero $d \in R$ is a \emph{maximal common divisor} of~$S$ in $R$ if $d$ is a maximal common divisor of~$S$ in $R^*$. For nonzero elements $r,s \in R$ with $r$ dividing $s$ in $R$ and a nonempty subset $S$ of $R^*$, we write $r \mid_R s$ and $\mcd_R(S)$ instead of $r \mid_{R^*} s$ and $\mcd_{R^*}(S)$, respectively. Let us provide terminology for the most relevant algebraic objects of this survey.

\begin{definition}
	Let $R$ be an integral domain.
	\begin{itemize}
		\item $r \in R^*$ is an \emph{atomic} element if either $r$ is a unit or $r$ factors into irreducibles.
		\smallskip
		
		\item $R$ is an \emph{atomic} integral domain or an \emph{atomic domain} if every nonzero element of $R$ is atomic.
	\end{itemize}
\end{definition}

As mentioned in the introduction, the abelian group $\qf(R)^\times/R^\times$ is denoted by $G(R)$ and called the \emph{group of divisibility} of $R$. Also, recall that $G(R)$ has the following natural order: for all nonzero $r,s \in \qf(R)$, the relation $rR^\times \leq sR^\times$ holds precisely when $\frac{s}{r} \in R$. Here is a characterization of an atomic domain in terms of the divisibility group, which we provide right away given its fundamental nature.

\begin{theorem} \label{atomicity:characterization}
	Let $R$ be an integral domain that is not a field, and let $G(R)$ be the group of divisibility of $R$. Then $R$ is atomic if and only if every positive element of $G(R)$ can be expressed as a product of minimal positive elements of $G(R)$.
\end{theorem}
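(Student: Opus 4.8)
The plan is to translate the ring-theoretic definition of atomicity into the order-theoretic language of $G(R)$ through the natural map $\phi \colon R^* \to G(R)$ sending $r \mapsto rR^\times$. First I would record that $\phi$ is a monoid homomorphism, since $\phi(rs) = (rs)R^\times = (rR^\times)(sR^\times)$, whose kernel is exactly $R^\times$; hence it descends to an injection of the reduced multiplicative monoid $R^*/R^\times$ onto the submonoid of nonnegative elements of $G(R)$. Moreover $\phi$ is order-preserving for the divisibility preorder on $R^*$, because $r \mid_R s$ is by definition equivalent to $s/r \in R$, i.e., to $rR^\times \le sR^\times$. As remarked in the introduction, the nonnegative elements of $G(R)$ are precisely the cosets represented by elements of $R$; hence the \emph{positive} elements of $G(R)$ are exactly the cosets $rR^\times$ with $r$ a nonzero nonunit of $R$, and two nonzero nonunits determine the same positive element if and only if they are associates.

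Next I would identify the minimal positive elements of $G(R)$ with the atoms of $R$. A positive element $rR^\times$ fails to be minimal precisely when there is another positive element $sR^\times$ lying strictly below it, i.e., with $sR^\times < rR^\times$ and $sR^\times$ not the identity; unwinding the order, this says that $s$ is a nonzero nonunit with $s \mid_R r$ and $r \nmid_R s$, that is, $s$ is a proper nonunit divisor of $r$. Thus $rR^\times$ is a minimal positive element if and only if $r$ has no proper nonunit divisor, which is exactly the statement that $r$ is irreducible. Combining this with the previous paragraph, $\phi$ restricts to a bijection between the associate classes of atoms of $R$ and the minimal positive elements of $G(R)$.

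Finally I would assemble these correspondences. Since the positive cone is closed under the group operation (the product of two nonzero nonunits of a domain is again a nonzero nonunit), any product $(a_1R^\times)\cdots(a_nR^\times)$ of minimal positive elements equals $(a_1\cdots a_n)R^\times$, where each $a_i$ may be chosen to be an atom; conversely, any factorization $r = u\,a_1\cdots a_n$ of a nonzero nonunit into atoms, with $u \in R^\times$, maps under $\phi$ to such a product. Hence the positive element $rR^\times$ is a product of minimal positive elements if and only if $r$ is a product of atoms. Quantifying over all positive elements of $G(R)$---equivalently, over all associate classes of nonzero nonunits of $R$, which are nonempty because $R$ is not a field---this is precisely the statement that $R$ is atomic. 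The argument is essentially a dictionary, so I do not anticipate a genuine obstacle; the one point requiring care is the minimality computation, where I must verify that the order $rR^\times \le sR^\times \iff s/r \in R$ runs in the expected direction (smaller cosets divide larger ones), so that minimal positive elements correspond to irreducibles and not to some dual notion.
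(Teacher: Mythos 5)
Your proposal is correct and follows essentially the same route as the paper's proof: both establish the dictionary under which minimal positive elements of $G(R)$ correspond exactly to (associate classes of) irreducibles of $R$, and then translate factorizations into atoms into products of minimal positive elements via the quotient map $r \mapsto rR^\times$. Your treatment is somewhat more systematic in setting up the order-preserving homomorphism and verifying the minimality computation via proper nonunit divisors, but the underlying argument is the same.
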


\begin{proof}
	Let $K$ be the quotient field of $R$. Observe that, for each $a \in R^*$, the coset $a R^\times \in G(R)$ is minimal positive if and only if $a$ is irreducible in~$R$. Indeed, if $a = rs$, then $a R^\times = rR^\times sR^\times$ and, as $a R^\times$ is minimal positive, it must be the case that either $rR^\times$ or $sR^\times$ is not positive; that is, either $r$ or $s$ belongs to $R^\times$. The converse is almost identical.
	
	Assume first that $R$ is atomic. Let $dR^\times$ be a positive element of $G(R)$, that is, $d \in R$ is a nonzero nonunit. Since $R$ is atomic, we can write $d = a_1 \cdots a_n$ for some irreducibles $a_1, \dots, a_n$ of $R$. Since $a_1 R^\times, \dots, a_n R^\times$ are all positive, we obtain our desired expression: $d R^\times = a_1 R^\times \cdots a_n R^\times$.
	
	On the other hand, assume that every positive element of $G(R)$ can be expressed as a product of minimal positive elements of $G(R)$. Let $d$ be a nonzero nonunit of $R$. By assumption, $d R^\times = a_1 R^\times \cdots a_n R^\times$, where $a_1 R^\times, \dots, a_n R^\times$ are minimal positive elements of $G(R)$, and so $a_1, \dots, a_n$ are irreducibles in $R$. Hence we can take $u \in R^\times$ such that $d = u a_1 \cdots a_n$. Thus, $R$ is atomic.
\end{proof}

Since the irreducibles of~$R$ are precisely the atoms of $R^*$, it follows immediately that $R$ is an atomic domain (resp., a UFD) if and only if the monoid~$R^*$ is an atomic monoid (resp., a UFM). Similarly, one can see that $R$ satisfies the ACCP (as defined traditionally for rings) if and only if the monoid $R^*$ satisfies the ACCP. Furthermore, we say that $R$ is a \emph{bounded factorization domain} (resp., \emph{half-factorial domain}) provided that $M$ is a BFM (resp., an HFM); as it is customary in the literature, we let BFD (resp., HFD) stands for bounded factorization domain (resp., half-factorial domain). 
\smallskip

Let $R$ be a commutative ring with identity, and let $M$ be an (additive) commutative semigroup with identity. The commutative ring consisting of all polynomial expressions in an indeterminate $x$ with exponents in $M$ and coefficients in~$R$ is called the \emph{semigroup algebra} of~$M$ over $R$ (or \emph{monoid algebra} when~$M$ is a monoid). Following Gilmer~\cite{rG84}, we will denote the semigroup algebra of~$M$ over $R$ by $R[x;M]$, or simply $R[M]$ if we see no risk of ambiguity. We are mostly interested here in monoid algebras $R[M]$ that are integral domains and a necessary condition for this is that $M$ is cancellative, so we assume for the rest of this section that $M$ is a monoid. It follows from~\cite[Proposition 8.3]{GP74} that if $R[M]$ is an integral domain, then $\dim R[M] \ge 1 + \dim R$. Moreover, when $R$ is a Noetherian domain and $M$ is a torsion-free monoid, it follows from \cite[Corollary~2]{jO88} that $\dim R[M] = \dim R + \text{rank} \, M$.
\smallskip

Assume now that $M$ is a linearly ordered monoid with respect to the total order relation $\le$, and let $R$ be a commutative ring with identity. Then we can write any nonzero element $f \in R[M]$ as $f := c_1 x^{m_1} + \dots + c_k x^{m_k}$ for some nonzero coefficients $c_1, \dots, c_k \in R$ and exponents $m_1, \dots, m_k \in M$ with $m_1 > \dots > m_k$. In this case, we call the set of exponents
\[
	\text{supp} \, f := \{m_1, \dots, m_k\}
\]
the \emph{support} of $f$, and we call
\[
	\deg f := m_1 \quad \text{ and } \quad \text{ord} \, f := m_k
\]
the \emph{degree} and \emph{order} of $f$, respectively. In addition, if $R$ is an integral domain, then one can readily see that the following equalities hold in the monoid algebra $R[M]$:
\begin{equation} \label{eq:degree and order}
	\deg \, f_1 f_2 = \deg \, f_1 + \deg \, f_2 \quad \ \text{ and } \ \quad \text{ord} \, f_1 f_2 = \text{ord} \, f + \text{ord} \, f_2
\end{equation}
for all nonzero $f_1, f_2 \in R[M]$. Observe that when the monoid $M$ is $\nn_0$ under the usual order, we recover the standard notions of support, degree, and order of a polynomial in $R[x]$.

\bigskip
\section{Atomic Monoids}
\label{sec:atomic monoids}

In this section, we discuss some fundamental properties about atomicity, mostly in connection to the ACCP, that hold in the general context of (cancellative and commutative) monoids. We also provide some examples of atomic monoids that will be useful in the next sections of this survey, which are dedicated to atomicity in the special setting of integral domains.

\medskip
\subsection{Atomicity and the ACCP}

We begin by showing that the condition of satisfying the ACCP is stronger than that of being atomic, which is a well-known fact that was first observed back in the sixties by Cohn~\cite{pC68} in the context of integral domains.

\begin{proposition}\footnote{A more general characterization of atomicity via principal ideals has been recently given in~\cite{sT23}.} \label{prop:ACCP monoids are atomic}
	Every monoid satisfying the ACCP is atomic.
\end{proposition}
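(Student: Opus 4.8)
The plan is to prove the contrapositive in spirit: assume $M$ satisfies the ACCP and show every element is atomic. Since units are atomic by definition, it suffices to show that every nonunit of $M$ can be written as a sum of atoms. I would argue by contradiction, supposing that the set $B$ of nonunits of $M$ that are \emph{not} atomic is nonempty, and then extract from $B$ an ascending chain of principal ideals that fails to stabilize, contradicting the ACCP.

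The key construction is as follows. Pick any $b_0 \in B$. Since $b_0$ is not atomic, in particular $b_0$ is not itself an atom, so we can write $b_0 = c + d$ with both $c, d \in M \setminus \uu(M)$. I claim that at least one of $c, d$ must again lie in $B$: for if both $c$ and $d$ were atomic, then writing each as a sum of atoms (a nonunit cannot be invertible, so each is genuinely a sum of atoms) would exhibit $b_0 = c + d$ as a sum of atoms, making $b_0$ atomic and contradicting $b_0 \in B$. So relabel one such non-atomic summand as $b_1 \in B$. Because the discarded summand is a nonunit, we have $b_0 \in b_1 + M$ with $b_0 - b_1 \notin \uu(M)$, which gives a \emph{strict} inclusion of principal ideals $b_0 + M \subsetneq b_1 + M$. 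Iterating this selection produces a sequence $(b_n)_{n \ge 0}$ in $B$ with
\[
	b_0 + M \subsetneq b_1 + M \subsetneq b_2 + M \subsetneq \cdots,
\]
a strictly ascending chain of principal ideals that never stabilizes, contradicting the hypothesis that $M$ satisfies the ACCP. Hence $B = \varnothing$ and $M$ is atomic.

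The step I expect to require the most care is verifying that each inclusion $b_n + M \subsetneq b_{n+1} + M$ is genuinely strict, i.e.\ that the principal ideals are distinct rather than merely nested. The inclusion itself is automatic from $b_n = b_{n+1} + (\text{summand})$, so the real content is ruling out equality. Equality $b_n + M = b_{n+1} + M$ would force $b_n$ and $b_{n+1}$ to be associates, meaning the discarded summand is a unit; but we chose the decomposition $b_n = b_{n+1} + (\text{summand})$ precisely so that \emph{both} summands are nonunits, so the discarded one is not invertible and strictness follows. This is where cancellativity of $M$ is implicitly doing the work that makes ``associate'' behave as expected. Everything else—that a non-atomic nonunit admits a decomposition into two nonunits, and that one summand stays non-atomic—is a direct unwinding of the definitions of atom and atomic element, so I would state those quickly rather than belabor them.
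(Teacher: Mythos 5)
Your proof is correct and follows essentially the same route as the paper's: assume a non-atomic element exists, split it into two nonunits of which one must again be non-atomic, iterate to build a strictly ascending chain of principal ideals, and contradict the ACCP. The extra details you supply (why one summand stays non-atomic, and the cancellativity argument showing each inclusion is strict) are exactly the steps the paper leaves implicit, so there is no substantive difference.
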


\begin{proof}
	Let $M$ be a monoid that satisfies the ACCP. Suppose, by way of contradiction, that there exists an element $b_0 \in M$ that is not atomic. Thus, we can take non-invertible elements $b_1, c_1 \in M$ such that $b_1$ is not atomic and $b_0 = b_1 + c_1$. As $b_1$ is not atomic, we can take non-invertible elements $b_2,c_2 \in M$ such that $b_2$ is not atomic and $b_1 = b_2 + c_2$. Proceeding similarly, we end up producing two sequences $(b_n)_{n \ge 1}$ and $(c_n)_{n \ge 1}$ whose terms are non-invertible elements of~$M$ such that $b_n = b_{n+1} + c_{n+1}$ for every $n \in \nn$. Now the fact that
	\[
		b_n + M = b_{n+1} + c_{n+1} + M \subsetneq b_{n+1} + M
	\]
	for every $n \in \nn$ implies that $(b_n + M)_{n \ge 1}$ is an ascending chain of principal ideals of $M$ that does not stabilize, which contradicts that $M$ satisfies the ACCP.
\end{proof}

The converse of the statement of Proposition~\ref{prop:ACCP monoids are atomic} does not hold in general. The most elementary counterexamples witnessing this observation can be found in the class consisting of additive submonoids of $\qq_{\ge 0}$, known as \emph{Puiseux monoids}. The atomicity of Puiseux monoids has been actively studied during the past few years (see the survey~\cite{CGG21} and references therein). The first atomic (Puiseux) monoid not satisfying the ACCP appears as the main ingredient in Grams' construction of the first atomic domain not satisfying the ACCP~\cite[Section~1]{aG74} (we will discuss Grams' construction in the next section).

\begin{example} \label{ex:Grams monoid}
	Let $(p_n)_{n \ge 1}$ be the strictly increasing sequence whose terms are the odd primes, and consider the following Puiseux monoid, often referred to as \emph{Grams' monoid}:
	\begin{equation}
		M := \Big\langle \frac{1}{2^{n-1}p_n} : n \in \nn \Big\rangle.
	\end{equation}
	One can readily verify that $M$ is atomic with $\mathcal{A}(M) = \big\{ \frac{1}{2^{n-1}p_n} : n \in \nn \big\}$ (it also follows from the more general result \cite[Proposition~3.1]{GL23}). On the other hand, observe that $\frac1{2^n} \in M$ for every $n \in \nn_0$, and so $N := \big\langle \frac1{2^n} : n \in \nn_0 \big\rangle$ is a submonoid of $M$. This in turn implies that $\big( \frac1{2^n} + M \big)_{n \ge 1}$ is an ascending chain of principal ideals of $M$ that does not stabilize. Hence $M$ is an atomic Puiseux monoid that does not satisfy the ACCP. Moreover, it is useful to know that each element $b \in M$ can be uniquely written in the following canonical form:
	\begin{equation} \label{eq:Grams monoid canonical decomposition}
		b = \nu_N(b) + \sum_{n \in \nn} c_n \frac{1}{2^{n-1}p_n},
	\end{equation}
	 where $\nu_N(b) \in N$, while $c_n \in \ldb 0, p_n - 1 \rdb$ for all $n \in \nn$ and $c_n = 0$ for almost all $n \in \nn$. To argue the existence of such a canonical sum decomposition, write $b = \sum_{n \in \nn} c_n \frac{1}{2^{n-1}p_n}$ for some sequence $(c_n)_{n \ge 1}$ of nonnegative integer coefficients with only finitely many nonzero terms, which is possible because $M$ is atomic with set of atoms $\big\{ \frac{1}{2^{n-1}p_n} : n \in \nn \big\}$. We can actually rewrite the same sum as in~\eqref{eq:Grams monoid canonical decomposition}, where $\nu_N(b) \in N$, and each nonnegative integer coefficients $c_n$ now satisfying that $0 \le c_n < p_n$ for every $n \in \nn$ (still only finitely many terms of $(c_n)_{n \ge 1}$ are nonzero). Observe that any two sum decompositions of $b$ as in~\eqref{eq:Grams monoid canonical decomposition} satisfying the imposed restrictions must be equal (to check this, just apply for each $n \in \nn$ the $p_n$-adic valuation map to the equality of two such potential sum decompositions).
\end{example}

If a given monoid satisfies the ACCP, then every submonoid preserving invertible elements also satisfies the ACCP. This is a well-known result, and we include it here as it will be helpful at several occasions later.

\begin{proposition} \label{prop:submonoids satisfying ACCP}
	Let $M$ be a monoid satisfying the ACCP, and let $S$ be a submonoid of $M$.  If $\uu(M) \cap S = \uu(S)$, then $S$ also satisfies the ACCP.
\end{proposition}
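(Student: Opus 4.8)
The plan is to take an arbitrary ascending chain of principal ideals in $S$, transport it to an ascending chain of principal ideals in $M$, invoke the ACCP of $M$ to make the latter stabilize, and then descend the stabilization back to $S$ using the hypothesis $\uu(M) \cap S = \uu(S)$.

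First I would fix an ascending chain of principal ideals $(s_n + S)_{n \ge 0}$ in $S$. For each $n$, the inclusion $s_n + S \subseteq s_{n+1} + S$ means $s_{n+1} \mid_S s_n$, and since $S$ is a submonoid of $M$ this also yields $s_{n+1} \mid_M s_n$, hence $s_n + M \subseteq s_{n+1} + M$. Thus $(s_n + M)_{n \ge 0}$ is an ascending chain of principal ideals of $M$. Because $M$ satisfies the ACCP, there is an index $n_0$ with $s_n + M = s_{n_0} + M$ for all $n \ge n_0$.

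Next I would fix $n \ge n_0$ and aim to prove $s_n + S = s_{n_0} + S$; as the inclusion $s_{n_0} + S \subseteq s_n + S$ already holds from the chain in $S$, only the reverse inclusion requires work. From that same chain I can write $s_{n_0} = s_n + t_n$ with $t_n \in S$, and from the equality $s_n + M = s_{n_0} + M$ I can write $s_n = s_{n_0} + m_n$ with $m_n \in M$. Substituting the second expression into the first and cancelling $s_{n_0}$ gives $m_n + t_n = 0$, so $t_n$ is invertible in $M$ with inverse $m_n$. This is the step where the hypothesis becomes essential: since $t_n \in S$ and $t_n \in \uu(M)$, the assumption $\uu(M) \cap S = \uu(S)$ forces $t_n \in \uu(S)$, so that $-t_n \in S$. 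Then $s_n = s_{n_0} + (-t_n)$ exhibits $s_n \in s_{n_0} + S$, giving the missing inclusion and hence $s_n + S = s_{n_0} + S$, as desired.

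The only genuinely delicate point, and the reason the hypothesis cannot be dropped, is the conversion of the $M$-unit $t_n$ into an $S$-unit. Nothing prevents a chain in $S$ from failing to stabilize even when its image in $M$ does, precisely because an element of $S$ that becomes invertible in the larger monoid $M$ need not be invertible in $S$; the condition $\uu(M) \cap S = \uu(S)$ is exactly what rules this out. Everything else is a routine transport of divisibility relations between $S$ and $M$ together with cancellation.
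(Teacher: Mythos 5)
Your proof is correct and follows essentially the same route as the paper's: transport the chain into $M$, stabilize there by the ACCP, observe that the "quotient" element lies in $\uu(M) \cap S = \uu(S)$, and conclude stabilization in $S$. Your write-up merely makes explicit the cancellation step that the paper compresses into the assertion $b_n - b_{n+1} \in \uu(M) \cap S$.
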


\begin{proof}
	Assume that $\uu(M) \cap S = \uu(S)$. Let $(b_n + S)_{n \ge 1}$ be an ascending chain of principal ideals in $S$. Then $(b_n + M)_{n \ge 1}$ is an ascending chain of principal ideals in $M$, and so it must stabilize. This means that there exists $N \in \nn$ such that $b_n - b_{n+1} \in \uu(M) \cap S = \uu(S)$ for every $n \ge N$, whence $(b_n + S)_{n \ge 1}$ must also stabilize in $S$. Thus, $S$ satisfies the ACCP.
\end{proof}

Since divisor-closed submonoids preserve invertible elements, we obtain the following corollary.

\begin{corollary} \label{cor:divisor-closed submonoids of an ACCP monoid}
	If a monoid $M$ satisfies the ACCP, then every divisor-closed submonoid of $M$ satisfies the ACCP.
\end{corollary}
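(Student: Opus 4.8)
The plan is to obtain this as an immediate consequence of Proposition~\ref{prop:submonoids satisfying ACCP}. That proposition upgrades the ACCP from $M$ to a submonoid $S$ under the single hypothesis $\uu(M) \cap S = \uu(S)$, so the entire task reduces to verifying that this unit-preservation condition is automatic once $S$ is divisor-closed in $M$. Thus I would not re-run the ascending-chain argument at all; I would simply check the hypothesis and then cite the proposition.

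To verify $\uu(M) \cap S = \uu(S)$, I would treat the two inclusions separately. The inclusion $\uu(S) \subseteq \uu(M) \cap S$ holds for \emph{every} submonoid $S$: if $u \in \uu(S)$ has inverse $v \in S$, then the relation $u + v = 0$ already exhibits $u$ as invertible in $M$, so $u \in \uu(M)$, while $u \in S$ by assumption. For the reverse inclusion $\uu(M) \cap S \subseteq \uu(S)$, which is the only place the divisor-closed hypothesis enters, I would take $u \in \uu(M) \cap S$ and let $v \in M$ be its inverse in $M$, so that $u + v = 0$. The key observation is that $v \mid_M 0$, witnessed by $0 = v + u$; since the identity $0$ lies in $S$ and $S$ is divisor-closed, every divisor in $M$ of the element $0 \in S$ must belong to $S$, whence $v \in S$. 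Then $u + v = 0$ with $v \in S$ shows $u \in \uu(S)$, as desired.

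With both inclusions in hand, $\uu(M) \cap S = \uu(S)$, and Proposition~\ref{prop:submonoids satisfying ACCP} then applies verbatim to conclude that $S$ satisfies the ACCP. I do not expect any genuine obstacle here: the corollary is purely a matter of unwinding the definition of a divisor-closed submonoid. The only point requiring a moment's care is choosing the right divisibility relation to trigger the divisor-closedness; invoking $v \mid_M 0$ is the cleanest route, although one could equally use $v \mid_M u$ (via $u = v + 2u$), since $u$ itself already lies in $S$.
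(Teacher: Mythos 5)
Your proposal is correct and is exactly the paper's route: the paper derives the corollary from Proposition~\ref{prop:submonoids satisfying ACCP} with the one-line remark that divisor-closed submonoids preserve invertible elements, which is precisely the condition $\uu(M) \cap S = \uu(S)$ you verify. Your verification (via $v \mid_M 0$, or equivalently $v \mid_M u$) correctly fills in the detail the paper leaves implicit.
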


The statements in Proposition~\ref{prop:submonoids satisfying ACCP} does not hold if we replace satisfying the ACCP by being atomic. For instance, Grams' monoid $M$ is atomic but contains the non-atomic submonoid $S := \big\langle \frac1{2^n} : n \in \nn_0 \big\rangle$ (clearly, both $\uu(M) \cap S$ and $\uu(S)$ are copies of the trivial monoid).

\medskip
\subsection{Atomicity in Rank-$1$ Monoids from Examples}

In this subsection, we discuss some examples that will be helpful later. We begin by another example of an atomic Puiseux monoid not satisfying the ACCP, which is less natural than the Grams' monoid but will be useful later to show that atomicity does not ascend to monoid algebras over fields.

\begin{example} \label{ex:ad-hoc atomic PM without the ACCP}
	Let $(\ell_n)_{n \ge 1}$ be a strictly increasing sequence of positive integers such that the following inequality $3^{\ell_n - \ell_{n-1}} > 2^{n+1}$ holds for every $n \in \nn$. Now set $A = \{a_n, b_n : n \in \nn\}$, where
	\[	
		a_n := \frac{2^n3^{\ell_n} - 1}{2^{2n} 3^{\ell_n}} \quad \text{and} \quad b_n := \frac{2^n3^{\ell_n} + 1}{2^{2n} 3^{\ell_n}}.
	\]
	The sequence $b_1, a_1, b_2, a_2, \dots$ is strictly decreasing and bounded from above by~$1$: indeed, for every $n \in \nn$, it is clear that $1 > b_n > a_n$ and also that
	\[
		a_n = \frac{1}{2^n} - \frac{1}{2^{2n}3^{\ell_n}} = \frac{1}{2^{n+1}} + \bigg( \frac{1}{2^{n+1}} - \frac{1}{2^{2n}3^{\ell_n}} \bigg) > \frac{1}{2^{n+1}} + \frac{1}{2^{2n+2}3^{\ell_{n+1}}} = b_{n+1}.
	\]
	Consider the Puiseux monoid $M$ generated by $A$. We will prove that, as Grams' monoid, $M$ is an atomic monoid that does not satisfy the ACCP. 
	
	Proving that $M$ is atomic amounts to showing that $\mathcal{A}(M) = A$. Assume, towards a contradiction, that this is not the case. Then we can take $n \in \nn$ such that $M = \langle A \setminus \{a_n\} \rangle$ or $M = \langle A \setminus \{b_n\} \rangle$. We split the rest of the argument into the following two cases.
	\smallskip
	
	\noindent \textsc{Case 1:} $M = \langle A \setminus \{a_n\} \rangle$. In this case,
	\begin{equation} \label{eq:testing atoms I}
		a_n = \sum_{i=1}^N \alpha_i a_i + \sum_{i=1}^N \beta_i b_i
	\end{equation}
	for some $N \in \nn_{\ge n}$ and nonnegative integer coefficients $\alpha_i$'s and $\beta_i$'s (for every $i \in \ldb 1,N \rdb$) such that $\alpha_n = 0$ and either $\alpha_N > 0$ or $\beta_N > 0$. Since the sequence $b_1, a_1, b_2, a_2, \dots$ is strictly decreasing, $\alpha_i = \beta_i = 0$ for $i \in \ldb 1, n \rdb$. We observe that the equality $\alpha_i = \beta_i$ does not hold for every $i \in \ldb n+1, N \rdb$ as, otherwise,
	\[
		a_n = \sum_{i=n+1}^N \alpha_i a_i + \sum_{i=n+1}^N \alpha_i b_i= \sum_{i = n+1}^N \alpha_i \frac{1}{2^{i-1}},
	\]
	which is not possible given that $3 \mid \mathsf{d}(a_n)$. Now set
	\[
		m = \max \big\{ i \in \ldb n+1, N \rdb : \alpha_i \neq \beta_i \big\}.
	\]
	First, we assume that $\alpha_m > \beta_m$. This allows us to rewrite~(\ref{eq:testing atoms I}) as
	\begin{equation} \label{eq:testing atoms III}
		a_n = (\alpha_m - \beta_m) \frac{2^m3^{\ell_m} - 1}{2^{2m} 3^{\ell_m}} + \sum_{i=m}^N \beta_i \frac{1}{2^{i-1}} + \! \! \sum_{i=n+1}^{m-1} \frac{\alpha_i(2^i3^{\ell_i} - 1) + \beta_i(2^i3^{\ell_i} + 1)}{2^{2i} 3^{\ell_i}}.
	\end{equation}
	After multiplying~(\ref{eq:testing atoms III}) by $2^{2N}3^{\ell_m}$, we see that each summand involved in such an equality, except perhaps $2^{2N - 2m}(\alpha_m - \beta_m)(2^m 3^{\ell_m} - 1)$, is divisible by $3^{\ell_m -\ell_{m-1}}$. As a result, $3^{\ell_m - \ell_{m-1}}$ also divides $\alpha_m - \beta_m$. Now because $a_m > b_{m+1} > \frac{1}{2^{m+1}}$,
	\begin{equation*}
		a_n \ge \alpha_m a_m \ge (\alpha_m - \beta_m) b_{m+1} \ge 3^{\ell_m - \ell_{m-1}} b_{m+1} > \frac{3^{\ell_m - \ell_{m-1}}}{2^{m+1}} > 1,
	\end{equation*}
	which contradicts that $1$ is an upper bound for the sequence $(a_n)_{n \ge 1}$. Similarly, we can produce a contradiction after assuming that $\beta_m > \alpha_m$.
	\smallskip
	
	\noindent \textsc{Case 2:} $M = \langle A \setminus \{b_n\} \rangle$. In this case, one can write
	\begin{equation} \label{eq:testing atoms II}
		b_n - \alpha_n a_n = \sum_{i=n+1}^N \alpha_i a_i + \sum_{i=n+1}^N \beta_i b_i
	\end{equation}
	for some nonnegative coefficients $\alpha_i$'s (for every $i \in \ldb n, N \rdb$) and $\beta_j$'s (for every $j \in \ldb n+1,N \rdb$) such that at least one of the inequalities $\alpha_N > 0$ and $\beta_N > 0$ holds. From $2 a_n > b_n$, we obtain that $\alpha_n \in \{0,1\}$. As before, we can take $m \in \ldb n+1, N \rdb$ with $\alpha_m \neq \beta_m$ and further assume that $m$ has been taken as large as it could possibly be. If $\alpha_m > \beta_m$, then
	\begin{equation}  \label{eq:testing atoms IIII}
		b_n - \alpha_n a_n = (\alpha_m - \beta_m) \frac{2^m3^{\ell_m} - 1}{2^{2m} 3^{\ell_m}} + \sum_{i=m}^N \beta_i \frac{1}{2^{i-1}} + \! \! \sum_{i=n+1}^{m-1} \frac{\alpha_i(2^i3^{\ell_i} - 1) + \beta_i(2^i3^{\ell_i} + 1)}{2^{2i} 3^{\ell_i}}.
	\end{equation}
	Because $\mathsf{d}(b_n - \alpha_n a_n) \in \{2^{n-1} 3^{\ell_n}, 2^{2n} 3^{\ell_n}\}$, we can multiply~\eqref{eq:testing atoms IIII} by $2^{2N}3^{\ell_m}$ to deduce that $3^{\ell_m - \ell_{m-1}}$ must divide $\alpha_m - \beta_m$. At this point, we can proceed as we did in Case~1 to show that $b_n > 1$, the desired contradiction. Similarly, we can produce a contradiction after assuming that $\alpha_m < \beta_m$.
	\smallskip
	
	Hence $M$ is an atomic monoid. On the other hand, one can easily verify that~$M$ does not satisfy the ACCP: indeed, as $\frac{1}{2^n} = a_{n+1} + b_{n+1} \in M$ for every $n \in \nn_0$, it follows that $\big( \frac1{2^n} + M \big)_{n \ge 1}$ is an ascending chain of principal ideals of~$M$ that does not stabilize.
\end{example}

We have mentioned in the introduction that every BFM satisfies the ACCP. The converse does not hold in general. The following example, which we will revisit in next sections, illustrates this observation.

\begin{example} \label{ex:prime reciprocal PM}
	Consider the Puiseux monoid generated by the reciprocal of all prime numbers; that is,
	\[
		M := \Big\langle \frac1p : p \in \pp \Big\rangle.
	\]
	One can readily show that $M$ is atomic with $\mathcal{A}(M) = \big\{ \frac1{p} : p \in \pp \big\}$. In addition, it is not hard to argue that for each $q \in M$ there is a unique $N(q) \in \nn_0$ and a unique sequence of nonnegative integers $(c_p(q))_{p \in \pp}$ such that
	\[
		q = N(q) + \sum_{p \in \pp} c_p(q) \frac{1}{p},
	\]
	where $c_p(q) \in \ldb 0, p - 1 \rdb$ for all $p \in \pp$ and $c_p(q) = 0$ for all but finitely many $p \in \pp$. For each $q \in M$, set $s(q) := \sum_{p \in \pp} c_p(q)$. Observe that if $q' \mid_M q$ for some $q' \in M$, then $N(q') \le N(q)$. Also, it follows that if $q'$ is a proper divisor of $q$ in $M$, then $N(q') = N(q)$ implies that $s(q') < s(q)$. As a result, each sequence $(q_n)_{n \ge 1}$ in $M$ such that $q_{n+1} \mid_M q_n$ for every $n \in \nn$ must stabilize. Hence $M$ satisfies the ACCP.
	
	To argue that $M$ is not a BFM, it suffices to notice that $\pp \subseteq \mathsf{L}(1)$ as, for each $p \in \pp$, the element~$1$ can be written as the sum of $p$ copies of the atom $\frac1p$ (indeed, $\mathsf{L}(1) = \pp$).
\end{example}

We conclude with a very simple example of a Puiseux monoid that is a BFM but is not an HFM and contains elements with infinitely many factorizations.\footnote{An atomic monoid whose elements have only finitely many factorizations is called a \emph{finite factorization monoid}.}

\begin{example} \label{ex:PM that is BF but not FF}
	Let $M$ denote the Puiseux monoid $\{0\} \cup \qq_{\ge 1}$. Observe that $M$ is atomic with set of atoms $\qq \cap [1,2)$. In addition, as $0$ is not a limit point of $M^\bullet$, it follows from \cite[Proposition~4.5]{fG19} that~$M$ is a BFM. Since $3 = 1 + 1 + 1$ and $3 = \frac32 + \frac32$, we see that $M$ is not an HFM. Finally, observe that $3$ has infinitely many factorizations: indeed, $\big( \frac32 - \frac1n\big) + \big( \frac32 + \frac1n \big)$ determines a length-$2$ factorization of $3$ in $M$ for every $n \in \nn$ with $n \ge 3$.
\end{example}

\bigskip
\section{Classes of Atomic Domains}
\label{sec:atomic domains}

In this section, we identify some classes of atomic domains, putting some emphasis on the nested classes of integral domains indicated in the following diagram of implications:

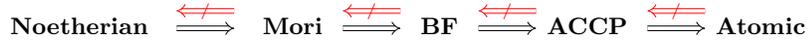
\begin{figure}[h]
	\begin{tikzcd}
		\textbf{ Noetherian } \ \arrow[r, Rightarrow]	\arrow[red, r, Leftarrow, "/"{anchor=center,sloped}, shift left=1.7ex] & \ \textbf{ Mori } \arrow[r, Rightarrow]  \arrow[red, r, Leftarrow, "/"{anchor=center,sloped}, shift left=1.7ex] & \textbf{ BF }  \arrow[r, Rightarrow] \arrow[red, r, Leftarrow, "/"{anchor=center,sloped}, shift left=1.7ex]  & \textbf{ACCP } \arrow[r, Rightarrow] \arrow[red, r, Leftarrow, "/"{anchor=center,sloped}, shift left=1.7ex]  & \textbf{Atomic}
	\end{tikzcd}
	\caption{The implications in the diagram show the known inclusions among the indicated classes of atomic domains. 
		The diagram also emphasizes (with red marked arrows) that none of the shown implications is reversible.}
	\label{fig:Noetherian to ATM}
\end{figure}

\medskip
\subsection{Atomic Integral Domains and the ACCP} 

As an immediate consequence of Proposition~\ref{prop:ACCP monoids are atomic} we obtain that every integral domain that satisfies the ACCP is atomic. We record this as our next proposition to be referenced later. However, the main purpose of this section is to provide examples illustrating that the converse does not hold in general.

\begin{proposition} \label{prop:ACCP domain are atomic}
	If an integral domain satisfies the ACCP, then it is atomic.
\end{proposition}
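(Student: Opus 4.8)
The plan is to reduce the statement about integral domains directly to the corresponding result about monoids, namely Proposition~\ref{prop:ACCP monoids are atomic}, rather than to reprove the chain-of-factorizations argument from scratch. The key observation, already recorded in the excerpt, is that for an integral domain $R$ the multiplicative monoid $R^*$ captures exactly the divisibility and factorization data of $R$: the irreducibles of $R$ are precisely the atoms of $R^*$, and the principal ideals of $R^*$ correspond bijectively (and order-compatibly) to the nonzero principal ideals of $R$. Consequently $R$ is atomic if and only if $R^*$ is an atomic monoid, and $R$ satisfies the ACCP if and only if $R^*$ satisfies the ACCP.

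First I would recall these two translations explicitly. The correspondence between nonzero principal ideals $rR$ of $R$ and principal ideals $r + R^*$ of the (additively written) monoid $R^*$ is inclusion-preserving, so an ascending chain of nonzero principal ideals in $R$ stabilizes if and only if the corresponding ascending chain of principal ideals in $R^*$ stabilizes. Since every ascending chain of principal ideals of $R$ that is not eventually the unit ideal consists of nonzero ideals, the ACCP for $R$ is equivalent to the ACCP for the monoid $R^*$. Similarly, an element $r \in R^*$ factors into irreducibles in $R$ exactly when it is a sum of atoms in $R^*$, so $R$ is atomic precisely when $R^*$ is atomic.

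With these equivalences in hand, the proof is immediate: assuming $R$ satisfies the ACCP, the monoid $R^*$ satisfies the ACCP, so by Proposition~\ref{prop:ACCP monoids are atomic} the monoid $R^*$ is atomic, and therefore $R$ is atomic. There is essentially no obstacle here, since the real content lives in the monoid statement already established; the only thing requiring care is making the two dictionary equivalences precise enough that one may invoke Proposition~\ref{prop:ACCP monoids are atomic} without re-running its argument. Given that the excerpt has already asserted both equivalences in the paragraph following Theorem~\ref{atomicity:characterization}, I would expect the author's proof to be a one-line appeal to Proposition~\ref{prop:ACCP monoids are atomic} applied to $R^*$, and my plan matches that: the ``hard part,'' such as it is, is simply citing the monoid-level result correctly rather than any genuinely new argument.
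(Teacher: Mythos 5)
Your proposal is correct and matches the paper's approach exactly: the paper records this proposition as an immediate consequence of Proposition~\ref{prop:ACCP monoids are atomic}, relying on the dictionary (stated after Theorem~\ref{atomicity:characterization}) that $R$ is atomic, respectively satisfies the ACCP, if and only if the multiplicative monoid $R^*$ does. No separate argument is given in the paper, so your one-line appeal to the monoid result is precisely the intended proof.
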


The construction of the first atomic domain that does not satisfy the ACCP was carried out by Grams~\cite{aG74} back in 1974. We proceed to discuss Gram's construction. Let
\[
	M = \Big\langle \frac{1}{2^{n-1}p_n} : n \in \nn \Big\rangle
\]
be Grams' monoid, which was introduced in Example~\ref{ex:Grams monoid} (here $(p_n)_{n \ge 1}$ is the strictly increasing sequence whose underlying set is $\pp \setminus \{2\}$). We have already verified in the same example that $M$ is atomic but does not satisfy the ACCP. Let $F$ be a field, and consider the following multiplicative subset of the monoid algebra $F[M]$:
\[
	S := \{f \in F[M] : \text{ord} \, f = 0\}.
\]
The localization $F[M]_S$ of the monoid algebra $F[M]$ at $S$ is often referred to as the \emph{Grams' domain} over the field~$F$. The following theorem is \cite[Theorem~1.3]{aG74}.

\begin{theorem} \label{thm:Grams domain}
	For any field $F$, the Grams' domain over $F$ is atomic but does not satisfy the ACCP.
\end{theorem}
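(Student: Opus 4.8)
The plan is to study $R := F[M]_S$ through the order function, which descends from $F[M]$ to $R$ and simultaneously pins down the units and a large supply of atoms. Since $\text{ord}(f_1 f_2) = \text{ord}\, f_1 + \text{ord}\, f_2$ for nonzero $f_1, f_2 \in F[M]$ and $R$ is a domain, a fraction $f/s$ (with $f \in F[M]$ and $s \in S$) is a unit of $R$ precisely when $fg = st$ for some $g \in F[M]$ and $t \in S$; comparing orders, all of which lie in $M \subseteq \qq_{\ge 0}$, forces $\text{ord}\, f = 0$. Thus the units of $R$ are exactly the elements of order $0$, and $\text{ord}$ extends to a monoid homomorphism $R^* \to M$ with kernel $R^\times$ and image $M$ (as $\text{ord}\, x^q = q$). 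Every $s \in S$ is therefore a unit, so each nonzero nonunit of $R$ is associate to some $f \in F[M]$ with $\text{ord}\, f > 0$, and it suffices to factor such $f$ into atoms. Failure of the ACCP is then immediate: $\frac{1}{2^n} \in M$ yields $x^{1/2^{n}} \in R$ with $x^{1/2^{n+1}} \mid_R x^{1/2^n}$, while $x^{-1/2^{n+1}} \notin R$ (its order is negative, hence outside $M$), so $(x^{1/2^n} R)_{n \ge 1}$ is a strictly ascending chain of principal ideals that never stabilizes.

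For atomicity I would first locate a large family of atoms. If $\text{ord}\, f = a \in \mathcal{A}(M)$, then in any factorization $f = r_1 r_2$ the orders $\text{ord}\, r_1, \text{ord}\, r_2 \in M$ sum to the atom $a$; since $M$ is reduced, one of them is $0$ and the corresponding factor is a unit, so $f$ is an atom of $R$. In particular each $x^a$ with $a \in \mathcal{A}(M)$ is an atom, whence every monomial $x^d$ with $d \in M^\bullet$ is a finite product of atoms, because atomicity of $M$ (Example~\ref{ex:Grams monoid}) writes $d$ as a finite sum of atoms of $M$; consequently every $f$ that is associate to a monomial is atomic. For a general $f$ with $\text{ord}\, f > 0$ I would run the natural descent: if $f$ is not already an atom, write $f = gh$ with $g,h$ nonunits, so $\text{ord}\, g$ and $\text{ord}\, h$ are positive elements of $M$ summing to $\text{ord}\, f$, and recurse on each factor.

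The main obstacle is proving that this descent can be arranged to terminate, equivalently that the set of nonunits admitting no finite factorization into atoms is empty. Here the failure of the ACCP is precisely what blocks the routine argument: the divisibility order on $M$ is not well founded, so one cannot merely assert termination. If some nonunit were non-atomic, iterating the splitting would produce an infinite chain $f = g_1 g_2 \cdots g_n f_n$ with all $g_i, f_n$ nonunits; then $\sum_{i \le n} \text{ord}\, g_i \le \text{ord}\, f$ for all $n$ forces $\text{ord}\, g_i \to 0$, while each partial product $g_1 \cdots g_n$ divides the fixed element $f$, and $\deg g_i \le \deg f$ confines each $\mathrm{supp}\, g_i$ to $[0, \deg f]$. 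Thus an infinite descent would require the fixed $f$ to possess infinitely many pairwise non-associate divisors of arbitrarily small order.

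The delicate, construction-specific heart is to rule this out using the canonical form of Example~\ref{ex:Grams monoid}. The idea is that the only mechanism producing arbitrarily small divisors in $M$ is the dyadic part $\nu_N$: an $f$ all of whose exponents differ from $\text{ord}\, f$ by elements of $N$ is already associate to the monomial $x^{\text{ord}\, f}$ (hence atomic), whereas for a genuine non-monomial nonunit the $a_n$-coordinates of the relevant exponents are bounded by $p_n - 1$, so that $f$ has only finitely many small-order divisors and the infinite family needed for an infinite descent cannot exist. Making this dichotomy precise — showing that a non-monomial nonunit supports no infinite factorization — is exactly where the distinct odd-prime denominators $2^{n-1}p_n$ do the real work. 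I expect this termination estimate, rather than the routine unit and ACCP computations, to be the hard part, consistent with the ``delicate'' nature attributed to Grams' construction.
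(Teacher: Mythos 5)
Your preliminary steps are correct and coincide with the paper's: the units of $R = F[M]_S$ are exactly the elements of order zero, any element of $R$ whose order is an atom of $M$ is irreducible (so every monomial $x^m$ with $m \in M$ is atomic), and the chain $\big(x^{1/2^n}R\big)_{n \ge 1}$ witnesses the failure of the ACCP. But the actual content of the theorem---that every nonunit of $R$ that is \emph{not} associate to a monomial factors into irreducibles---is precisely the part you do not prove. You correctly reduce it to bounding the number of nonunit factors in any factorization of a fixed $f$, then propose a dichotomy (``exponents differing from $\text{ord}\, f$ by dyadics give a monomial associate; otherwise $f$ has only finitely many non-associate small-order divisors''), and explicitly defer making the second half precise, calling it ``the hard part.'' That deferral is a genuine gap: nothing in the sketch supplies the bound, and the finiteness claim is essentially as hard as the theorem itself. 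Note also that your reduction does not put you in as good a position as you suggest: even after factoring out the maximal common dyadic monomial, $\text{ord}\, f$ can still have positive dyadic part (e.g.\ $f = x^{1/4} + x^{1/3}$, where $\text{ord}\, f = \tfrac14$), and then the set of elements of $M$ dividing $\text{ord}\, f$ is infinite, so no finiteness can be read off from the order of $f$ alone.

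For comparison, the paper closes this gap by a different mechanism than divisor-counting. It first proves that $\nu_N(b)$ is the \emph{maximum} dyadic divisor of $b$ in $M$ (uniqueness of the canonical form prevents carries from cancelling dyadic parts); this justifies factoring out $x^q$ with $q = \min_j \nu_N(m_j)$ over the support $m_1 > \cdots > m_n$ of $f$. After this reduction one picks the smallest exponent $m_k$ with $\nu_N(m_k) = 0$. Since every smaller exponent has positive dyadic part and hence does not divide $m_k$ in $M$, the coefficient of $x^{m_k}$ in $f s_1 \cdots s_\ell$ equals $r_k s_1(0) \cdots s_\ell(0) \neq 0$ for any $s_1, \dots, s_\ell \in S$. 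Consequently, any factorization $f = (f_1/s_1) \cdots (f_\ell/s_\ell)$ into nonunits of $R$ forces $m_k = q_1 + \cdots + q_\ell$ with each $q_i \in \text{supp}\, f_i$ nonzero, and since $\mathsf{Z}_M(m_k)$ is a singleton of finite length, this bounds $\ell$; taking $\ell$ maximal makes every factor irreducible. In short, the bound comes from tracking one distinguished coefficient through an arbitrary factorization and transferring the problem to the uniquely factorable element $m_k$ of $M$---this is the ``termination estimate'' your proposal identifies but leaves unproved.
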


\begin{proof}
	Fix a field $F$, let $M$ denote Grams' monoid, and let $S$ be the multiplicative set in the construction of the Grams' domain over~$F$. As seen in Example~\ref{ex:Grams monoid}, the monoid $M$ is atomic with set of atoms $\mathcal{A}(M) = \big\{ \frac{1}{2^{n-1}p_n} : n \in \nn \big\}$. Let us prove that the Grams' domain $R := F[M]_S$ is atomic.
	\smallskip
	
	First, let us show that $x^m$ is an atomic element of $F[M]_S$ for every $m \in M$. To do so, fix $a \in \mathcal{A}(M)$, and let us verify that the element $x^a$ is irreducible in $F[M]_S$. Write $x^a = \frac{f_1}{s_1} \cdot \frac{f_2}{s_2}$ for some $f_1, f_2 \in F[M]$ and $s_1, s_2 \in S$. From the equality $x^a s_1 s_2 = f_1 f_2$, we infer that $\text{ord} \, f_1 + \text{ord} \, f_2 = \text{ord} \, f_1 f_2 = a$. As $\text{ord} \, f_1$ and $\text{ord} \, f_2$ are contained in $M$ and $a$ is an atom of $M$, either $\text{ord} \, f_1 = 0$ or $\text{ord} \, f_2 = 0$, which means that either~$f_1$ or $f_2$ belongs to $S$. Therefore $x^a$ is irreducible in $F[M]_S$. Now the fact that $M$ is atomic immediately implies that for each $m \in M$ the element $x^m$ is atomic in $F[M]_S$.
	\smallskip
	
	Proving that $F[M]_S$ is atomic amounts to showing that every nonzero nonunit in $F[M]$ factors into irreducibles in $F[M]_S$. We first set $N := \big\langle \frac1{2^n} : n \in \nn \big\rangle$ and prove the following.
	\smallskip
	
	\noindent \textsc{Claim.} For each nonzero $b \in M$, the set $\{d \in N : d \mid_M b \}$ has a maximum element.
	\smallskip
	
	\noindent \textsc{Proof of Claim.} Fix a nonzero $b \in M$. As we have seen in Example~\ref{ex:Grams monoid}, we can uniquely write
	\begin{equation} \label{eq:Grams monoid canonical decomposition again}
		b = \nu_N(b) + \sum_{n \in \nn} c_n \frac{1}{2^{n-1}p_n},
	\end{equation}
	where $\nu_N(b) \in N$, while $c_n \in \ldb 0, p_n - 1 \rdb$ for all $n \in \nn$ and $c_n = 0$ for almost all $n \in \nn$. We proceed to show that $\nu_N(b)$ is the maximum element of the set
	\[
		D_b := \{d \in N : d \mid_M b \}.
	\]
	It is clear that $\nu_N(b) \in D_b$. Now for any $d \in D_b$, we can write $b = d + b'$ for some $b' \in M$ and after decomposing $b'$ as in~\eqref{eq:Grams monoid canonical decomposition again}, we also obtain a decomposition for $b$, and so the uniqueness of such a decomposition of $b$ will ensure that $d \mid_N \nu_N(b)$. Hence $\nu_N(b) = \max D_b$, and the claim follows.
	\smallskip
	
	We are in a position to argue that any nonunit $f := \sum_{j=1}^n r_j x^{m_j}$ in $F[M]^*$ (with $m_1 > \dots > m_n$ and $r_1 \cdots r_n \neq 0$) factors into irreducibles in $F[M]_S$. To do this, set
	\[
		q = \min \{\nu_N(m_j) : j \in \ldb 1,n \rdb\} \quad \text{ and } \quad k = \max \{ j \in \ldb 1,n \rdb : \nu_N(m_j) = q \},
	\]
	where $\nu_N(m_j)$ is as in the sum decomposition~\eqref{eq:Grams monoid canonical decomposition}. The minimality of $q$ and the fact that $q \mid_M d$ for every $d \in N_{\ge q}$ ensures that $x^q$ divides $x^{m_j}$ in $F[M]$ for every $j \in \ldb 1,n \rdb$. Since we already know that $x^q$ is an atomic element of $F[M]_S$, after replacing $f$ by $\frac{f}{x^q}$, we can assume that $q=0$, which means that~$0$ is the only common divisor in $M$ of $\text{supp} \, f$ that belongs to $N$. Now write
	\[
		f = \frac{f_1}{s_1} \cdots \frac{f_\ell}{s_\ell}
	\]
	for some nonunits $f_1, \dots, f_\ell \in F[M]$ and $s_1, \dots, s_\ell \in S$. Then the equality $f s_1 \cdots s_\ell = f_1 \cdots f_\ell$ holds in $F[M]$. Since $\nu_N(m_k) = 0$, it follows from the uniqueness of~\eqref{eq:Grams monoid canonical decomposition} that $\mathsf{Z}_M(m_k)$ is a singleton. Also, as $\nu_N(m_k) = 0$, for each $i \in \ldb k+1, n \rdb$ the inequality $\nu_N(m_i) > 0$ implies that $m_i \nmid_M m_k$. As a result, the coefficient of $x^{m_k}$ in the polynomial expression $f s_1 \cdots s_\ell$ is $r_k s_1(0) \cdots s_\ell(0)$, which is different from $0$ because $s_1, \dots, s_\ell \in S$. Hence $m_k = q_1 + \dots + q_\ell$ for some $q_1, \dots, q_\ell \in M$ with $q_i \in \text{supp} \, f_i$ for every $i \in \ldb 1, \ell \rdb$. As $q_i \in \text{supp} \, f_i$ and $f_i \notin S$ for each $i \in \ldb 1, \ell \rdb$, it follows that the elements $q_1, \dots, q_\ell$ are nonzero. This, along with the fact that $M$ is atomic, ensures that $\ell$ is at most the length of the only factorization in $\mathsf{Z}_M(m_k)$. Then, after assuming that~$\ell$ was taken as large as it could possible be, one obtains that $\frac{f_1}{s_1}, \dots, \frac{f_\ell}{s_\ell} \in \mathcal{A}(F[M]_S)$, and so $f$ is atomic in $F[M]_S$. Hence we conclude that $F[M]_S$ is an atomic domain.
	\smallskip
	
	Finally, the fact that $R$ does not satisfy the ACCP follows immediately once we observe that $\big(x^{1/2^n} R \big)_{n \ge 1}$ is an ascending chain of principal ideals of $F[M]_S$ that does not stabilize.
\end{proof}

We can use the fact that the Grams' domains are atomic domains that do not satisfy the ACCP to show that there are polynomial rings that are atomic but do not satisfy the ACCP. This result was first proved by Li and the second author \cite[Proposition~3.6]{GL22}.

\begin{proposition} \label{prop:atomic polynomial rings without the ACCP}
	Let $F$ be a field, and let $R$ be the Grams' domain over $F$. Then $R[x]$ is an atomic domain that does not satisfy the ACCP.
\end{proposition}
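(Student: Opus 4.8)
The plan is to treat the two assertions separately: the failure of the ACCP will be inherited directly from $R$, while atomicity will be obtained by adapting the coefficient-tracking argument used in the proof of Theorem~\ref{thm:Grams domain}. To keep the notation unambiguous I write the monoid algebra underlying $R$ as $F[t;M]$, so that $t$ is its variable and $R = F[t;M]_S$, reserving $x$ for the polynomial indeterminate of $R[x]$; since $R$ is a domain, $R[x]^\times = R^\times$. For the ACCP, recall from the proof of Theorem~\ref{thm:Grams domain} that $\big(t^{1/2^n}R\big)_{n\ge 1}$ is a strictly ascending chain of principal ideals of $R$. As $t^{1/2^{n+1}} \mid_R t^{1/2^n}$, the chain $\big(t^{1/2^n}R[x]\big)_{n\ge 1}$ is ascending in $R[x]$, and it is strict: an equality $t^{1/2^n}R[x] = t^{1/2^{n+1}}R[x]$ would force $t^{1/2^n}$ and $t^{1/2^{n+1}}$ to differ by a unit of $R[x]=R^\times$, hence to be associates already in $R$, against the strictness of the original chain. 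Thus $R[x]$ does not satisfy the ACCP.

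For atomicity, I would first observe that every monomial $t^m x^i$ (with $m\in M$ and $i\in\nn_0$) is atomic in $R[x]$: the element $t^m$ is atomic in $R$—as in Theorem~\ref{thm:Grams domain}, each $t^a$ with $a\in\mathcal{A}(M)$ is irreducible and $m$ factors in the atomic monoid $M$—hence atomic in $R[x]$, while $x$ is irreducible in $R[x]$ by the standard degree argument. Given a nonzero nonunit $h\in R[x]$, clearing denominators by one element of $S$ (a unit of $R[x]$) lets me assume $h\in F[t;M][x]$; removing the largest power $x^e$ dividing $h$ together with the monomial content $t^q$, where $q:=\min\{\nu_N(m):t^m x^i\in\text{supp}\,h\}\in N$ (both atomic factors), I may further assume that some $M$-exponent occurring in $h$ has $\nu_N$-value $0$. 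Fix such an exponent $m^*$; it occurs in some coefficient $g_{j^*}$ of $h=\sum_i g_i(t)x^i$.

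The crux is to bound the number of nonunit factors in an arbitrary factorization $h=\prod_{i=1}^\ell(H_i/s_i)$ with $H_i\in F[t;M][x]$, $s_i\in S$, and each $H_i\notin S$. The factors of positive $x$-degree number at most $\deg_x h$, since $\deg_x$ is additive on the domain $F[t;M][x]$ and each $s_i$ has $x$-degree $0$. Each remaining factor is a degree-$0$ element, that is, a nonunit $c_i$ of $R$; their product $c=\prod c_i$ divides $h$ in $R[x]$, hence divides every coefficient of $h$ in $R$, in particular $c\mid_R g_{j^*}$. Because $g_{j^*}$ contains the exponent $m^*$ of $\nu_N$-value $0$, it is a normalized element of $F[t;M]$, and the bound inside the proof of Theorem~\ref{thm:Grams domain} shows that any factorization of $g_{j^*}$ into nonunits of $R$ has length at most the (finite) length of the unique $M$-factorization of its least zero-$\nu_N$ exponent. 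Writing $g_{j^*}=c\cdot w$ exhibits $g_{j^*}$ as a product of at least $|\{i:\deg_x H_i=0\}|$ nonunits of $R$, so the number of constant factors, and therefore $\ell$, is bounded. A factorization of $h$ of maximal length then consists of atoms; reinstating $x^e$ and $t^q$ shows the original element is atomic, whence $R[x]$ is atomic.

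The main obstacle is exactly this bound on the constant factors: $R$ itself fails the ACCP and is not a bounded factorization domain, so $R$-factorizations cannot be bounded in general, and the positive-degree factors (caught by $\deg_x$) are not the difficulty. The mechanism that rescues the argument is that the normalization forces the relevant coefficient $g_{j^*}$ to be a normalized element of $F[t;M]$—precisely the class of elements shown to have bounded $R$-factorizations within the proof of Theorem~\ref{thm:Grams domain}—and one must take care to pass to $g_{j^*}$ rather than to the constant term $g_0$, since the zero-$\nu_N$ exponent need not lie in $g_0$.
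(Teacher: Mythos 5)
Your proposal is correct and takes essentially the same approach as the paper's proof: extract the monomial content $t^q$ so that a distinguished coefficient becomes a normalized element of $F[t;M]$, bound the number of positive-degree factors by $\deg_x$ and the number of constant factors by the bounded-length property of normalized elements established inside the proof of Theorem~\ref{thm:Grams domain}, and then take a factorization of maximal length. The ACCP claim is also handled identically in substance; your direct verification that the chain $\big(t^{1/2^n}R[x]\big)_{n\ge 1}$ does not stabilize is just an unwinding of the paper's remark that $R^*$ is a divisor-closed submonoid of $R[x]^*$.
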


\begin{proof}
	Let $M$ and $S$ be the Puiseux monoid and the multiplicative set in the construction of the Grams' domain over $F$, respectively. Observe that the submonoid $N := \langle \frac1{2^n} : n \in \nn \rangle$ of~$M$ is a valuation monoid and, therefore, for any $q_1, q_2 \in N$ the conditions $q_1 \le q_2$ and $q_1 \mid_N q_2$ are equivalent. As we have shown in Example~\ref{ex:Grams monoid}, each $b \in M$ can be uniquely written as follows:
	\[
		b = \nu(b) + \sum_{n \in \nn} c_n \frac{1}{2^{n-1}p_n},
	\]
	where $\nu(b) \in N$, while $c_n \in \ldb 0, p_n - 1 \rdb$ for all $n \in \nn$ and $c_n = 0$ for almost all $n \in \nn$. Now we define the map $\bar{\nu} \colon F[t;M]^* \to N$ by
	\[
		\bar{\nu} \colon \sum_{i=1}^k c_i t^{b_i} \mapsto \min \{\nu(b_i) : i \in \ldb 1,k \rdb\}
	\]
	for any canonically-written nonzero polynomial expression $ \sum_{i=1}^k c_i t^{b_i}$.
	
	To argue that $R[x]$ is an atomic domain, fix a nonzero nonunit polynomial $p(x) := \sum_{i=0}^n f_i(t) x^i \in R[x]$. After replacing $p(x)$ by one of its associates, we can assume that $f_i(t) \in F[t;M]$ for every $i \in \ldb 0,n \rdb$. For each $i \in \ldb 0,n \rdb$, the fact that~$N$ is a valuation monoid ensures that $\frac{f_i(t)}{t^{\bar{\nu}(f_i)}} \in R$, and so $\mathsf{L}_R\big(\frac{f_i(t)}{t^{\bar{\nu}(f_i)}}\big)$ is bounded be virtue of \cite[Theorem~1.3]{aG74}. Now set
	\[
		q := \min \{\bar{\nu}(f_i) : i \in \ldb 0, n \rdb\} \in N,
	\]
	and then take $s \in \ldb 0,n \rdb$ such that $\bar{\nu}(f_s) = q$. Once again the fact that $N$ is a valuation monoid allows us to write $p(x) = t^q p'(x)$ for some $p'(x) \in R[x]$. Since the monomials in $F[t;M]$ that are irreducibles remain irreducibles in $R$, the fact that $M$ is atomic ensures that  $t^q$ factors into irreducibles in $R$, and so in $R[x]$. To argue that the polynomial $p'(x)$ also factors into irreducibles in $R[x]$, write $p'(x) = a_1 \cdots a_k b_1(x) \cdots b_\ell(x)$ for some nonunits $a_1, \dots, a_k \in R$ and some polynomials $b_1(x), \dots, b_\ell(x) \in R[x]$ with $\deg b_i(x) \ge 1$ for every $i \in \ldb 1, \ell \rdb$. Because the coefficient $\frac{f_s(t)}{{t^q}}$ of $x^s$ has a bounded set of lengths in $R$, and the inequality
	\[
		k + \ell \le \max \mathsf{L}_R\Big(\frac{f_s(t)}{{t^q}}\Big) + \deg p'(x)
	\]
	holds, we can assume that $k+\ell$ was taken as large as it could possibly be. This guarantees that $a_1 \cdots a_k b_1(x) \cdots b_\ell(x)$ is a factorization of $p'(x)$ in $R[x]$. Hence $R[x]$ is atomic.
	\smallskip
	
	Finally, the fact that $R[x]$ does not satisfy the ACCP follows immediately from the fact that $R$ does not satisfy the ACCP as $R^*$ is a divisor-closed submonoid of $R[x]^*$.
\end{proof}

Constructing atomic domains that do not satisfy the ACCP is always nontrivial: the constructions shown in Theorem~\ref{thm:Grams domain} (Grams' construction) and Proposition~\ref{prop:atomic polynomial rings without the ACCP} are, among those in the literature, one of the less technical constructions. A generalization of Grams' construction has been recently provided in \cite[Theorem~3.3]{GL23}, yielding a larger class of atomic domains not satisfying the ACCP. The search for atomic domains not satisfying the ACCP has attracted the attention of several authors. Back in the eighties, Zaks~\cite{aG74} constructed the first atomic monoid algebra not satisfying the ACCP. In the nineties, Roitman~\cite{mR93} constructed an atomic domain whose polynomial extension was not atomic, yielding as a result another example of an atomic domain not satisfying the ACCP. Further constructions have been crafted recently: in \cite{BC19}, Boynton and the first author constructed an atomic pullback ring that does not satisfy the ACCP, while a more thorough investigation of the class of atomic domains not satisfying the ACCP was carried out by Li and the first author in~\cite{GL23a}, where further constructions of atomic domains not satisfying the ACCP were provided. Finally, an example of a non-commutative atomic ring not satisfying the ACCP was recently constructed by Bell et al. in~\cite{BBNS23}.
\smallskip

We conclude this subsection with an example of an integral domain that satisfies the ACCP but is not a BFD.

\begin{example}
	We saw in Example~\ref{ex:prime reciprocal PM} that the Puiseux monoid $M := \big\langle \frac1p : p \in \pp \big\rangle$ satisfies the ACCP but is not a BFM. Let $F$ be a field. From the fact that $M$ satisfies the ACCP, it is not hard to infer that the monoid algebra $F[M]$ also satisfies the ACCP. To argue that $F[M]$ is not a BFM, first observe that the set $S := \{cx^q : c \in F^\times \text{ and } q \in M\}$ is a divisor-closed submonoid of $F[M]^*$ whose reduced monoid, $S/\uu(S)$, is isomorphic to $M$. Thus, from the fact that $M$ is not a BFM, we now obtain that $F[M]$ is not a BFD.
\end{example}

\medskip
\subsection{Mori and Noetherian Domains}

Noetherian domains, which are arguably the most important class of rings in commutative algebra, are atomic as a consequence of Proposition~\ref{prop:ACCP domain are atomic}. Indeed, Noetherian domains are BFDs, and this will follow as a direct consequence of the next proposition on Mori domains. Recall that a Mori domain is an integral domain where every ascending chain of divisorial ideals stabilizes, and so every Noetherian domain is a Mori domain. The term `Mori domain' was coined by Querr\'e~\cite{jQ} honoring the work of Mori (see~\cite{vB00} for a survey on Mori domains). Before proving that every Mori domain is a BFD, we highlight the same result also holds in the more general setting of monoids \cite[Theorem~2.2.9]{GH06} and, as a consequence, each Krull monoid (resp., domain) is a BFM (resp., BFD).\footnote{It is actually true that every Krull monoid is a finite factorization monoid, which is even stronger than the fact that Krull monoids are BFDs}.

\begin{proposition} \label{prop:Mori domains are BFD}
	Every Mori domain is a BFD and, therefore, an atomic domain.
\end{proposition}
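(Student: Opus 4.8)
The plan is to separate the statement into its genuinely substantive part (a length bound) and its formal part (atomicity), and to reduce the former to the ascending chain condition on divisorial ideals so that the Mori hypothesis applies directly. First I would recall that $R$ is a BFD exactly when its multiplicative monoid $R^*$ is a BFM, and that the ``therefore'' clause is free: every BFM satisfies the ACCP and every monoid satisfying the ACCP is atomic by Proposition~\ref{prop:ACCP monoids are atomic}. Hence the whole content is to show that for each nonzero nonunit $a \in R$ the quantity $\lambda(a) := \sup\{k : a = c_1 \cdots c_k \text{ with each } c_i \text{ a nonunit}\}$ is finite; this suffices because a decomposition into nonunits of maximal length is automatically a factorization into irreducibles, so finiteness of $\lambda$ yields both atomicity and the bounded-factorization property. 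I would record that $\lambda$ is superadditive, $\lambda(bc) \ge \lambda(b) + \lambda(c)$.

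Next I would dispose of the interaction with the ACCP. Since principal ideals are divisorial, the Mori hypothesis forces the ACCP, so $R$ is already atomic. Suppose some nonzero nonunit had $\lambda = \infty$. If we could always peel off an irreducible factor $u$ with $\lambda(a/u) = \infty$, the ideals $(a) \subsetneq (a/u_1) \subsetneq (a/u_1 u_2) \subsetneq \cdots$ would form a non-stabilizing ascending chain, contradicting the ACCP; so the peeling must terminate at a \emph{$\lambda$-critical} element $a$, one with $\lambda(a) = \infty$ but $\lambda(a/u) < \infty$ for every irreducible $u \mid_R a$. A short counting argument then shows such an $a$ is divisible by infinitely many pairwise non-associate irreducibles: if only $q_1, \dots, q_r$ occurred up to associates, then any factorization $a = p_1 \cdots p_k$ would satisfy $k-1 \le \lambda(a/p_1) \le \max_i \lambda(a/q_i) < \infty$, bounding $\lambda(a)$.

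The engine is then the full Mori hypothesis. The cleanest route is to transfer the problem to $R^*$: under the standard correspondence the divisorial ideals of $R$ match the $v$-ideals of $R^*$, so ``$R$ is Mori'' says precisely that $R^*$ is a $v$-noetherian monoid, and \cite[Theorem~2.2.9]{GH06} then gives that $R^*$ is a BFM, i.e.\ that $R$ is a BFD. Alternatively, one argues directly that the configuration produced above---infinitely many non-associate irreducible divisors of a $\lambda$-critical $a$, with cofactors of unbounded length---forces an infinite strictly ascending chain of divisorial ideals, contradicting the ACC.

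The hard part will be exactly this last step, and it is where one must use strictly more than the ACCP. A single long factorization produces only a \emph{finite} chain of principal ideals, and the obvious non-principal candidates---the divisorial closures of the ideals generated by the first $n$ cofactors, or by the first $n$ irreducible divisors---tend to stabilize for soft reasons and so contradict nothing on their own. Indeed the prime-reciprocal monoid of Example~\ref{ex:prime reciprocal PM} satisfies the ACCP (and is even Archimedean) while failing to be a BFM, so any correct argument must genuinely invoke the ascending chain condition on \emph{non-principal} divisorial ideals and produce a chain whose strict ascent can be verified. That delicate construction is precisely the content I would ultimately outsource to \cite[Theorem~2.2.9]{GH06}, applied to the monoid $R^*$.
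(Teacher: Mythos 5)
Your proposal is formally valid, but only because its decisive step is a citation: granting the standard fact that $R$ is Mori if and only if $R^*$ is $v$-noetherian, the theorem you invoke, \cite[Theorem~2.2.9]{GH06}, \emph{is} the proposition under proof, stated for monoids. The paper points to this very theorem in the sentence immediately preceding the proposition and then deliberately proves the domain case from scratch, so the real comparison is between a proof by reference and a self-contained argument. The paper's route is as follows. Since principal ideals are divisorial, the Mori condition gives the ACCP and hence atomicity (this matches your reduction). Then, for a fixed nonzero nonunit $x$, the ACC on divisorial ideals shows there are only finitely many prime divisorial ideals $P_1, \dots, P_n$ minimal over $xR$. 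The key claim is that $\bigcap_{k \in \nn} P_j^k = \{0\}$ for each $j$; it is proved by localizing at $P_j$ (the localization is still Mori), writing $x R_{P_j}$ as the $v$-closure of a finite set so as to trap a power of the maximal ideal inside $xR_{P_j}$, and using that in a Mori domain any nonempty family of principal ideals with nonzero intersection has a minimal element. Picking $m_j$ with $x \notin P_j^{m_j}$ and setting $m := \max_j m_j$, a pigeonhole count then shows no factorization of $x$ into irreducibles has length exceeding $mn$: every irreducible factor of $x$ lies in some $P_j$, so a factorization of length $\ell > mn$ would force $x \in P_j^{k_j} \subseteq P_j^{m_j}$ with $k_j > m_j$ for some $j$, a contradiction. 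This yields an explicit length bound, requires no transfer to monoids, and is exactly the ``delicate construction'' you deferred.

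Concerning your own attempt at a direct argument: the peeling to a $\lambda$-critical element and the observation that such an element must admit infinitely many pairwise non-associate irreducible divisors are both correct, but they go unused (your finished proof is the citation), and your diagnosis of why the direct route stalls is accurate — chains manufactured from a single long factorization, or from $v$-closures of sets of cofactors, need not ascend strictly. The idea you are missing is precisely the paper's: do not build chains out of factorizations at all; instead measure all factorizations of $x$ simultaneously against powers of the finitely many minimal prime divisorial ideals over $xR$, where the Mori hypothesis enters twice — once for the finiteness of the $P_j$ and once for the triviality of $\bigcap_k P_j^k$.
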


\begin{proof}
	Let $R$ be a Mori domain, and let us prove that $R$ is a BFD. Since every principal ideal is divisorial,~$R$ must satisfy the ACCP and, therefore, it must be atomic. As a result, we are done once we show that the set of lengths of every element of $R$ is bounded.
	
	To do so, fix a nonzero nonunit $x \in R$. From the fact that every ascending chain of divisorial ideals of $R$ stabilizes, one obtains that the set of divisorial prime ideals containing $x$ is nonempty and finite. Thus, the set of prime divisorial ideals minimal over $xR$ in $R$ is nonempty and has finitely many ideals: let $P_1, \dots, P_n$ denote such ideals. In addition, one can verify that $\bigcap_{j=1}^n P_j$ is the radical of $xR$. In order to complete our proof, we need the following claim.
	\smallskip
	
	\noindent \textsc{Claim.} If $P$ is a prime divisorial ideal minimal over $xR$, then $\bigcap_{n \in \nn} P^n$ is zero.
	\smallskip
	
	\noindent \textsc{Proof of Claim.} Since $R$ is Mori, the localization ring $R_P$ is also Mori. Since $P$ is minimal over $xR$, it follows that $P_P$ is the only prime ideal of $R_P$ containing~$x$. Then the radical ideal of $xR_P$ is $P_P$. In addition, every divisorial ideal of $R_P$ is the $v$-ideal of a finitely generated ideal, and so we can pick a finite subset $S$ of $R_P$ such that $xR_P = S_v$. Because $S$ is finite and $P_P$ is the radical of $S_v$, there exists $k \in \nn$ such that $s^k \in P_P$ for all $s \in S$. This implies that $S^{k |S|} \subseteq x R_P$, which implies that $P_P^{k |S|} \subseteq (S^{k |S|})_v \subseteq x R_P$. This, in turn, implies that
	\begin{equation} \label{eq:intersection inclusion Mori}
		\bigcap_{n \in \nn} P^n \subseteq \bigcap_{n \in \nn} P_P^n \subseteq \bigcap_{n \in \nn} P_P^{nk|S|} \subseteq \bigcap_{n \in \nn} x^n R_P.
	\end{equation}
	One can easily check that in a Mori domain, every nonempty set of principal ideals with nonzero intersection has a minimal element. Therefore if $\bigcap_{n \in \nn} P^n$ were nonzero, then by~\eqref{eq:intersection inclusion Mori} the set of principal ideals $\big\{ x^n R_P : n \in \nn \big\}$ would contain a minimal element, and so $x$ would be a unit of $R_P$, which is not true. Hence $\bigcap_{n \in \nn} P^n$ is zero, and the claim is proved.
	\smallskip
	
	By virtue of the established claim, for every $j \in \ldb 1,n \rdb$, we can pick $m_j \in \nn$ such that $x \notin P_j^{m_j}$. Now set $m := \max \{m_1, \dots, m_n\}$, and let us argue that each factorization of $x$ has length at most $mn$. 
	
	Suppose, by way of contradiction, that there exist $\ell \in \nn$ with $\ell > mn$ and $a_1, \dots, a_\ell \in \mathcal{A}(R)$ such that $x = a_1 \cdots a_\ell$. For each $j \in \ldb 1, \ell \rdb$, the inclusion $xR \subseteq a_j R$ implies that any prime divisorial ideal minimal over $a_jR$ is minimal over $xR$, and so $a_j \in P$ for some  $P \in \{P_1, \dots, P_n\}$. As a consequence, we see that $x = a_1 \cdots a_\ell \in P_1^{k_1} \cdots P_n^{k_n}$ for some $k_1, \dots, k_n \in \nn_0$ such that $k_1 + \dots + k_n = \ell$. Now the fact that $\ell > mn$ ensures that $k_j > m_j$ for some $j \in \ldb 1,n \rdb$. This in turn implies that $x \in P_j^{k_j} \subseteq P_j^{m_j}$, which contradicts the choice of $m_j$. Hence we conclude that $R$ is a BFD.
\end{proof}

As every Noetherian domain is a Mori domain, we obtain the following corollary.

\begin{corollary} \label{cor:Noetherian implies BF}
	Every Noetherian domain is a BFD and, therefore, an atomic domain.
\end{corollary}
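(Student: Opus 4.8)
The plan is to reduce the corollary to Proposition~\ref{prop:Mori domains are BFD} by verifying the single containment that every Noetherian domain is a Mori domain. Once that containment is in hand the conclusion is immediate, since Proposition~\ref{prop:Mori domains are BFD} already establishes that every Mori domain is a BFD and hence an atomic domain.

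To see that every Noetherian domain $R$ is Mori, I would simply unwind the two definitions. By definition, $R$ is a Mori domain precisely when every ascending chain of divisorial ideals of $R$ stabilizes, whereas $R$ is Noetherian precisely when every ascending chain of ideals of $R$ stabilizes. Since a divisorial ideal is in particular an ideal, any ascending chain of divisorial ideals is an ascending chain of ideals, and the Noetherian hypothesis forces it to stabilize. Thus the ascending chain condition on divisorial ideals holds automatically, and $R$ is a Mori domain.

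With the containment of the Noetherian domains in the class of Mori domains established, I would invoke Proposition~\ref{prop:Mori domains are BFD}: since $R$ is Mori, it is a BFD, and therefore an atomic domain. There is essentially no obstacle here, as the corollary is a direct specialization of the preceding proposition; the only content is the elementary observation that the defining chain condition for Mori domains ranges over a subclass of the ideals appearing in the defining chain condition for Noetherian domains, and so the former is implied by the latter.
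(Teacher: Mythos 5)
Your proposal is correct and is exactly the paper's argument: the paper derives this corollary from Proposition~\ref{prop:Mori domains are BFD} via the same observation that a Noetherian domain is Mori, since any ascending chain of divisorial ideals is in particular an ascending chain of ideals and hence stabilizes. Nothing is missing.
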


The converse of Proposition~\ref{prop:Mori domains are BFD} does not hold in general. The following example sheds some light upon this observation.

\begin{example}
	Consider the rank-one additive monoid $M := \{0\} \cup \qq_{\ge 1}$. We have already seen in Example~\ref{ex:PM that is BF but not FF} that $M$ is a BFM with $\mathcal{A}(M) = \qq \cap [1,2)$. Hence it follows from \cite[Theorem~13]{AJ15} that the monoid algebra $F[M]$ is a BFD. We proceed to argue that $F[M]$ is not a Mori domain. Fix $r \in \qq$ and consider the following set:
	\[
		I_r := \{ f \in F[M] : \text{ord} \, f \ge r\}.
	\]
	As $\text{ord}(f+g) \ge \min \{ \text{ord} \, f, \text{ord} \, g \}$ and $\text{ord}(fg) = \text{ord} \, f + \text{ord} \, g$ for all $f,g \in F[M]$, it follows that~$I_r$ is a fractional ideal of $F[M]$. Now observe that, because the Grothendieck group of $M$ is $\qq$, each nonzero element in the quotient field $F(M)$ of $F[M]$ can be uniquely written as $c x^q \frac fg$ for some $c \in F^\times$, $q \in \qq$, and $f,g \in F[\qq]$ such that $\text{ord} \, f = \text{ord} \, g = 1$; indeed, as the group algebra $F[\qq]$ is a GCD-domain by \cite[Theorem~5.2]{GP74}, we can further assume that $f$ and $g$ are relatively primes in $F[\qq]$. Now fix $c x^q \frac fg$ as before, and observe that $cx^q \frac fg \in I_r^{-1}$ if and only if $c x^q f I_r \subseteq g F[M]$, which implies that $cx^{q+r}f = gh$ for some $h \in F[M]$. Now the fact that $f$ and $g$ are relatively primes in $F[\qq]$ ensures that $g$ belongs to $F[\qq]^\times$; that is, $g$ is a monomial in $F[\qq]$. Thus,
	\[
		I_r^{-1} = \big\{ f \in F[\qq] : f I_r \subseteq F[M] \big\} = \big\{ f \in F[\qq] : \text{ord} \, f \ge 1-r \big\}.
	\]
	Therefore, for each $r \in \qq_{> 1}$, it follows that $(I_r^{-1})^{-1} = I_r$, and so $I_r$ is a $v$-ideal of $F[M]$. As a result, $\big( I_{1 + 1/n} \big)_{n \in \nn}$ is an ascending chain of $v$-ideals in $F[M]$ that does not stabilize. Hence $F[M]$ is not a Mori domain.
\end{example}

\bigskip
\section{The $D+M$ Construction}

In this section we consider the classical $D+M$ construction. The $D+M$ construction allows us to obtain a vast repository of interesting counterexamples in both commutative algebra and factorization theory. Let $T$ be an integral domain, and let $K$ and~$M$ be a subfield of $T$ and a nonzero maximal ideal of $T$, respectively, such that $T = K + M$. For a subdomain $D$ of $K$, set $R = D + M$. Observe that the sum in $D+M$ is a direct sum of abelian groups, and also that $M$ is also a maximal ideal of $R$. This construction was introduced and first studied by Gilmer \cite[Appendix II]{rG68} in the special setting of valuation domains, and then it was investigated simultaneously by Brewer and Rutter~\cite{BR76} and by Costa, Mott, and Zafrullah~\cite{CMZ78} for arbitrary integral domains.

Before discussing how atomicity behaves under the $D+M$ construction, we need to collect some information about units and irreducibles in the $D+M$ construction. When we work with the $D+M$ construction, we will often denote an element of $T$ by $\alpha + m$, tacitly assuming that $\alpha \in K$ and $m \in M$.

\begin{lemma} \label{lem:units of the D+M construction}
	Let $T$ be an integral domain, and let $K$ and $M$ be a subfield of $T$ and a nonzero maximal ideal of $T$, respectively, such that $T = K + M$. For a subdomain $D$ of $K$, set $R = D + M$. Then the following statements hold.
	\begin{enumerate}
		\item $R^\times \cap (1+M) = T^\times \cap (1+M)$.
		\smallskip
				
		\item $R^\times = T^\times \cap R$ if and only if $D$ is a field.
	\end{enumerate}
\end{lemma}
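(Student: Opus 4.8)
The plan is to prove the two statements by unwinding the $D+M$ construction, keeping in mind that every element of $R$ has the form $d+m$ with $d \in D$ and $m \in M$, and that the sum $T = K+M$ is direct (so the $K$-component of an element of $T$ is well-defined). The overarching observation I would exploit is that $M$ is a common ideal of both $R$ and $T$, and that reduction modulo $M$ identifies $T/M$ with $K$ and $R/M$ with $D$. A unit of $T$ lying in $1+M$ reduces to $1$ modulo $M$, which is why part (1) should come out cleanly, while part (2) hinges on understanding when the $K$-component $d$ of a unit can be inverted \emph{inside} $R$.

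For part (1), the inclusion $R^\times \cap (1+M) \subseteq T^\times \cap (1+M)$ is immediate since $R \subseteq T$ forces $R^\times \subseteq T^\times \cap R$. For the reverse inclusion, I would take $1+m \in T^\times \cap (1+M)$ and let $u \in T$ be its inverse. Writing $u = \alpha + m'$ with $\alpha \in K$ and $m' \in M$, the equation $(1+m)(\alpha+m') = 1$ reduces modulo $M$ to $\alpha = 1$ (using that $M$ is an ideal of $T$ and $T/M \cong K$). Hence $u = 1 + m' \in 1+M \subseteq D+M = R$, so $1+m$ is already a unit of $R$ lying in $1+M$. The key step here is the reduction-mod-$M$ argument forcing the $K$-component of the inverse to be $1$.

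For part (2), suppose first that $D$ is a field. I would show $R^\times = T^\times \cap R$; the inclusion $\subseteq$ is clear, so I take $d+m \in T^\times \cap R$ (with $d \in D$, $m \in M$) and must produce an inverse in $R$. If $d = 0$ then $d+m = m \in M$, which cannot be a unit of $T$ (as $M$ is a proper ideal), so $d \neq 0$; since $D$ is a field, $d^{-1} \in D \subseteq R$, and then $d^{-1}(d+m) = 1 + d^{-1}m \in 1+M$ is a unit of $T$ lying in $R$, hence by part (1) a unit of $R$. Therefore $d+m$ is a unit of $R$. Conversely, I would prove the contrapositive: if $D$ is not a field, I exhibit an element of $T^\times \cap R$ that is not in $R^\times$. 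Picking a nonzero nonunit $d \in D$, the element $d$ itself is a unit of $T$ (since $d \in K^\times$ and $K$ is a field, so $d^{-1} \in K \subseteq T$) lying in $R$, but its inverse $d^{-1} \in K$ reduces modulo $M$ to $d^{-1} \notin D$, so $d^{-1} \notin R$ and $d \notin R^\times$.

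The main obstacle, and the point requiring the most care, is the converse direction of part (2): one must verify that the only candidate inverse of $d$ inside $T$ is $d^{-1} \in K$, and that this genuinely fails to lie in $R$ precisely because $d$ is a nonunit of $D$. This amounts to observing that $d^{-1} \in R = D+M$ would force, via the directness of the sum $K+M$, that $d^{-1} \in D$, contradicting the choice of $d$ as a nonunit of the domain $D$. Everything else reduces to the mod-$M$ bookkeeping already established in part (1), which I would invoke directly rather than repeat.
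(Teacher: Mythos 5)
Your proof is correct and follows essentially the same route as the paper's: both parts rest on the directness of the sum $K+M$ (i.e., reduction modulo $M$), the fact that $T^\times \cap M$ is empty, and the identity $K \cap R = D$. The only cosmetic differences are that you prove the forward implication of (2) in contrapositive form and obtain the reverse implication by factoring $d+m = d(1+d^{-1}m)$ and invoking part (1), whereas the paper writes the inverse as $\beta + m_2$ and checks directly that $\beta = \alpha^{-1} \in D$.
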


\begin{proof}
	(1) Clearly, $R^\times \cap (1+M) \subseteq T^\times \cap (1+M)$. For the reverse inclusion, it suffices to observe that if $1+m$ is a unit of $T$ for some $m \in M$, then the inverse of $1+m$ in $T$ has the form $1+m'$ for some $m' \in M$ (this is because $T^\times \cap M$ is empty), and so the fact that $1+m' \in R$ implies that $1+m$ is also a unit of $R$.  
	\smallskip
	
	(2) For the direct implication, take a nonzero $\alpha \in D$. As $\alpha \in K^\times \subseteq T^\times$ and $D \subseteq R$, it follows that $\alpha \in T^\times \cap R = R^\times$, and so $\alpha^{-1} \in K \cap R = D$. Hence $D$ is a field. For the reverse implication, suppose that $D$ is a field. Clearly, $R^\times \subseteq T^\times \cap R$. Towards the reverse inclusion, take $\alpha + m_1 \in T^\times \cap R$, and then let $\beta + m_2$ be the inverse of $\alpha + m_1$ in $T$. As $\alpha$ is contained in the field $D$, it follows from the equality $(\alpha + m_1)(\beta + m_2) = 1$ that $\beta = \alpha^{-1} \in D$. Hence $\beta + m_2 \in R$, which implies that $\alpha + m_1 \in R^\times$. Thus, the inclusion $T^\times \cap R \subseteq R^\times$ also holds.
\end{proof}

Let us turn our attention to irreducibility.

\begin{lemma} \label{lem:irreducibles of the D+M construction}
	Let $T$ be an integral domain, and let $K$ and $M$ be a subfield of $T$ and a nonzero maximal ideal of $T$, respectively, such that $T = K + M$. For a subdomain $D$ of $K$, set $R = D + M$. Then $\ii(R) \subseteq T^\times \cup \ii(T)$. Moreover, the following statements hold.
	\begin{enumerate}
		\item If $D$ is a field, then $\ii(R) \subseteq \ii(T)$. 
		\smallskip
		
		\item For each $m \in M$, the element $m$ (resp., $1+m$) is irreducible in $R$ if and only if $m$ (resp., $1+m$) is irreducible in $T$.
	\end{enumerate}
\end{lemma}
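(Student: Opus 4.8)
The plan is to prove the two-part statement about irreducibles in the $D+M$ construction. Recall $R = D+M$ sits inside $T = K+M$, where $K$ is a subfield, $D$ a subdomain of $K$, and $M$ a nonzero maximal ideal of $T$; a crucial structural fact I will use repeatedly is that $M$ contains no units of $T$ (since $M$ is a proper ideal), so $T^\times \cap M = \emptyset$, and thus any unit of $T$ has nonzero $K$-component.

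First I would establish the containment $\ii(R) \subseteq T^\times \cup \ii(T)$. Take an irreducible $r \in R$ and suppose $r \notin T^\times$; I must show $r \in \ii(T)$. Since $r$ is a nonzero nonunit of $R$, it is a nonzero nonunit of $T$ as well (it cannot be a unit of $T$ by assumption, and it is nonzero). Write $r = st$ with $s,t \in T$; the goal is to force one factor into $T^\times$. The key observation is that $r \in R$ means the $K$-component of $r$ actually lies in $D$, and the product of the $K$-components of $s$ and $t$ equals the $K$-component of $r \in D$. The natural move is to rescale: replacing $s$ by $\lambda s$ and $t$ by $\lambda^{-1} t$ for a suitable $\lambda \in K^\times$, I can arrange that both $s$ and $t$ land in $R$ (adjusting so their $K$-components lie in $D$), at which point irreducibility of $r$ in $R$ gives $s \in R^\times \subseteq T^\times$ or $t \in R^\times \subseteq T^\times$. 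I expect the rescaling bookkeeping — checking that a single $\lambda$ simultaneously pushes both factors into $R$ and that this does not secretly change which factor is the unit — to be the main obstacle, and I would handle it by splitting on whether the $K$-component of $r$ is zero (i.e. $r \in M$) or nonzero.

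For part (1), assume $D$ is a field. By Lemma~\ref{lem:units of the D+M construction}(2), $D$ being a field gives $R^\times = T^\times \cap R$. Now take $r \in \ii(R)$; by the containment just proved, either $r \in \ii(T)$ (and we are done) or $r \in T^\times$. But if $r \in T^\times$ then $r \in T^\times \cap R = R^\times$, contradicting that $r$ is irreducible in $R$. Hence $\ii(R) \subseteq \ii(T)$, as claimed.

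For part (2), I would prove the two equivalences (for $m$ and for $1+m$) together, exploiting the containment and the unit lemma. The forward direction in each case follows from $\ii(R) \subseteq T^\times \cup \ii(T)$: an element of $M$ or of $1+M$ cannot be a unit of $T$ (for $M$, because $T^\times \cap M = \emptyset$; for $1+m$, if it were a nonunit-producing factorization issue, one still argues it lands in $\ii(T)$ since it is not forced into $T^\times$), so irreducibility in $R$ forces irreducibility in $T$. For the reverse direction, suppose $m$ (or $1+m$) is irreducible in $T$; I must show it stays irreducible in $R$. Given a factorization in $R$, view it as a factorization in $T$, where irreducibility yields one factor in $T^\times$; I then use Lemma~\ref{lem:units of the D+M construction}(1), namely $R^\times \cap (1+M) = T^\times \cap (1+M)$, to promote the relevant $T$-unit factor to an $R$-unit. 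The delicate point is tracking the $K$-components through the factorization so that the unit factor actually lies in $1+M$ (or is a $D$-multiple thereof) and hence falls under the scope of Lemma~\ref{lem:units of the D+M construction}(1); I would treat the $m \in M$ case and the $1+m$ case by normalizing the factorization so that one factor is congruent to $1$ modulo $M$.
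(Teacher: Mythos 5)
Your proof of the containment $\ii(R) \subseteq T^\times \cup \ii(T)$ and of part (1) is essentially the paper's own argument: the paper likewise splits on whether the $K$-component of the given irreducible vanishes, rescales the two $T$-factors by a single element of $K^\times$ so that both land in $R$, and deduces part (1) from Lemma~\ref{lem:units of the D+M construction}(2) exactly as you do. Your rescaling bookkeeping does go through: writing $s = \sigma + m_1$ and $t = \tau + m_2$ with $\sigma, \tau \in K$, take $\lambda = \sigma^{-1}$ when $\sigma \neq 0$, and take $\lambda = \tau$ when $r \in M$ and (say) $\sigma = 0$; if $\sigma = \tau = 0$ then $s,t \in M \subseteq R$ are both nonunits of $R$, which irreducibility of $r$ in $R$ rules out. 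Your normalization also handles the $1+m$ case of part (2) correctly, for the same reason as in the paper: if $1+m = r_1 r_2$ in $R$, the $K$-components of $r_1$ and $r_2$ multiply to $1$, hence lie in $D^\times \subseteq R^\times$, so each $r_i$ is an $R$-associate of an element of $1+M$ and Lemma~\ref{lem:units of the D+M construction}(1) transfers unit status back to the $r_i$. (One small repair: in the forward direction for $1+m$, the way to exclude $1+m \in T^\times$ is again Lemma~\ref{lem:units of the D+M construction}(1); ``it is not forced into $T^\times$'' is not an argument.)

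The genuine gap is the reverse implication of part (2) for $m \in M$, and it cannot be repaired as stated. Suppose $m \in \ii(T)$ and $m = r_1 r_2$ in $R$ with $r_2 \in M$; then $r_1 \in T^\times$ and $r_1 = \alpha(1+m_1)$ for some nonzero $\alpha \in D$ and $m_1 \in M$. Your normalization passes to $m = (1+m_1)(\alpha r_2)$, and Lemma~\ref{lem:units of the D+M construction}(1) does make $1+m_1$ a unit of $R$; but this says nothing about the \emph{original} factorization, since $r_1 = \alpha(1+m_1)$ is a unit of $R$ only if $\alpha \in D^\times$, which can fail when $D$ is not a field. In fact this half of the statement is false without assuming $D$ is a field: take $T = \qq[x] = \qq + x\qq[x]$, $M = x\qq[x]$, $D = \zz$, so $R = \zz + x\qq[x]$. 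Then $x$ is irreducible in $T$, yet $x = 2 \cdot (x/2)$ with $2$ and $x/2$ nonunits of $R$, so $x \notin \ii(R)$ (indeed $\ii(R) \cap M$ is empty here). You should know that the paper's own proof of this direction has the same defect: it applies Lemma~\ref{lem:units of the D+M construction}(1) to $r_1 = \alpha(1+m_1)$, which lies in $1+M$ only when $\alpha = 1$. The claim becomes true (and easy) once $D$ is a field: then Lemma~\ref{lem:units of the D+M construction}(2) gives $r_1 \in T^\times \cap R = R^\times$ directly, and that is the only setting in which this part of the lemma is invoked later, namely in the proof of Theorem~\ref{thm:D+M construction}.
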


\begin{proof}
	To prove the inclusion $\ii(R) \subseteq T^\times \cup \ii(T)$, take $a = \alpha + m$ in $\ii(R)$, where $\alpha \in D$ and $m \in M$. Observe that $a \in D^* \subseteq T^\times$ when $m = 0$ and show that $a \in \mathcal{A}(T)$ otherwise. Assume that $m \neq 0$. Take $s,t \in T$ such that $a = st$, and consider the following two cases.
	\smallskip
	
	\noindent \textsc{Case 1:} $\alpha = 0$. In this case, $st = m \in M$, and so either $s \in M$ or $t \in M$. Assume first that $s \in M$. Now write $a = (\beta^{-1}s)(\beta t)$ for some $\beta \in K^\times$ such that $\beta t \in R$. Because $a \in \mathcal{A}(R)$, either $\beta^{-1}s \in R^\times$ or $\beta t \in R^\times$. Since $\beta^{-1} s \in M$, it follows that $\beta t \in R^\times \subseteq T^\times$, which implies that $t \in T^\times$. We can similarly obtain that $s \in T^\times$ if we assume that $t \in M$. Thus, either $s$ or $t$ belongs to $T^\times$.
	\smallskip
	
	\noindent \textsc{Case 2:} $\alpha \neq 0$. In this case, neither $s$ nor $t$ belong to $M$. Take $\alpha_1, \alpha_2 \in K^\times$ with $\alpha_1 \alpha_2 = \alpha$ and $m_1, m_2 \in M$ such that $s = \alpha_1(1 + m_1)$ and $t = \alpha_2(1 + m_2)$. Because $a \in \mathcal{A}(R)$, the equality $a = \alpha(1+m_1)(1+m_2)$ guarantees that either $\alpha(1+m_1) \in R^\times \subseteq T^\times$ or $1+m_2 \in R^\times \subseteq T^\times$. Hence either $s$ or $t$ (or both) belongs to $T^\times$.
	\smallskip
	
	In both of the cases, we have obtained that either $s$ or $t$ belongs to $T^\times$, which means that $a \in \ii(T)$. As a consequence, we can conclude that the inclusion $\ii(R) \subseteq T^\times \cup \ii(T)$ holds.
	\smallskip
	
	(1) Observe that because $D$ is a field, then part~(2) of Lemma~\ref{lem:units of the D+M construction} guarantees that $T^\times \cap \ii(R) \subseteq T^\times \cap R = R^\times$, and so $T^\times \cap \ii(R)$ is empty. This, in tandem with the inclusion $\ii(R) \subseteq T^\times \cup \ii(T)$ already proved, implies the inclusion $\mathcal{A}(R) \subseteq \mathcal{A}(T)$.
	\smallskip
	
	(2) Let us argue first that $\mathcal{A}(R) \cap M = \mathcal{A}(T) \cap M$. Fix $m \in M$. If $m \in \mathcal{A}(R)$, then the inclusion $\ii(R) \subseteq T^\times \cup \ii(T)$ and the fact that $M \cap T^\times$ is empty imply that $m \in \mathcal{A}(T)$. Conversely, if $m \in \mathcal{A}(T)$, and $m = r_1 r_2$ for some $r_1, r_2 \in R$, then the fact that $m \in \mathcal{A}(T)$ implies that exactly one of $r_1$ and $r_2$ belongs to~$M$: say that $r_2 \in M$. Then we can write $r_1 = \alpha(1+m_1)$ for some $\alpha \in D^*$ and $m_1 \in M$, and the fact that $r_2 \in T^\times$ implies that $r_1$ belongs to $T^\times$. Thus, it follows from part~(1) of Lemma~\ref{lem:units of the D+M construction} that $r_1 \in R^\times$, which implies that $m \in \mathcal{A}(R)$.  As a result, $\mathcal{A}(R) \cap M = \mathcal{A}(T) \cap M$.
	
	We will argue now that $\mathcal{A}(R) \cap (1+M) = \mathcal{A}(T) \cap (1+M)$. This follows directly from part~(1) of Lemma~\ref{lem:units of the D+M construction} after observing that for each $m \in M$, each divisor of $1+m$ in $R$ (resp., in $T$) is associate in~$R$ (resp., in $T$) to an element of $1+M$.
\end{proof}

Now we consider atomicity and the ACCP under the $D+M$ construction. The following result was first proved by Anderson, Anderson, and Zafrullah \cite[Proposition~1.2]{AAZ90}.

\begin{theorem}  \label{thm:D+M construction}
	Let $T$ be an integral domain, and let $K$ and $M$ be a subfield of $T$ and a nonzero maximal ideal of $T$, respectively, such that $T = K + M$. For a subdomain $D$ of~$K$, set $R = D + M$. Then the following statements hold.
	\begin{enumerate}
		\item $R$ is atomic if and only if $T$ is atomic and $D$ is a field.
		\smallskip
		
		\item $R$ satisfies the ACCP if and only if $T$ satisfies the ACCP and $D$ is a field.
	\end{enumerate}
\end{theorem}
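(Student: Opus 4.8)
The plan is to establish each of the two biconditionals by arguing its two implications separately, using throughout the residue isomorphisms $T/M \cong K$ and $R/M \cong D$ together with Lemmas~\ref{lem:units of the D+M construction} and~\ref{lem:irreducibles of the D+M construction}. The single most useful reduction is that every nonzero $t \in T$ is $T$-associate to an element of $R$: if $t = \alpha + m$ with $\alpha \neq 0$, then $t = \alpha(1 + \alpha^{-1}m)$ is $T$-associate to $1 + \alpha^{-1}m \in 1+M \subseteq R$, whereas if $\alpha = 0$ then $t = m \in M \subseteq R$ already. The complementary observation for passing from $T$ back to $R$ is part~(2) of Lemma~\ref{lem:irreducibles of the D+M construction}, which says that elements of the special shapes $m$ and $1+m$ are irreducible in $R$ precisely when they are irreducible in $T$.

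For part~(1), I would first treat the forward implication by separating the two conclusions. To see that $D$ must be a field, I argue contrapositively: if $d \in D$ is a nonzero nonunit and $m \in M$ is nonzero, then $m = d\,(d^{-1}m)$ with $d^{-1}m \in M$ is a nontrivial factorization, so \emph{no} nonzero element of $M$ is irreducible in $R$; on the other hand, reducing any $R$-factorization of a nonzero $m \in M$ modulo $M$ and using that $R/M \cong D$ is a domain forces at least one factor to lie in $M$, and such a factor cannot be irreducible. Hence $m$ is not atomic and $R$ is not atomic. Once $D$ is known to be a field, Lemmas~\ref{lem:units of the D+M construction}(2) and~\ref{lem:irreducibles of the D+M construction}(1) give $R^\times = T^\times \cap R$ and $\mathcal{A}(R) \subseteq \mathcal{A}(T)$; then any nonzero nonunit $t \in T$, replaced by its $R$-representative in $1+M$ or in $M$, is a nonzero nonunit of $R$, so it factors into elements of $\mathcal{A}(R) \subseteq \mathcal{A}(T)$, proving $T$ atomic. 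For the converse, assuming $T$ atomic and $D$ a field, a nonzero nonunit $r \in R$ is $R$-associate to an element of $1+M$ (when its $D$-component is nonzero, via $D^\times \subseteq R^\times$) or else lies in $M$; I factor that element in $T$ and reduce modulo $M$. In the $1+M$ case the residues of the $T$-irreducible factors multiply to $1$, so the $K^\times$-coefficients can be absorbed, leaving factors in $1+M$; in the $M$ case the residues multiply to $0$, so some factor lies in $M$, into which the remaining nonzero residue is absorbed. By Lemma~\ref{lem:irreducibles of the D+M construction}(2) each resulting factor is irreducible in $R$, so $r$ is atomic in $R$.

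For the ACCP statements in part~(2), the implication $(\Leftarrow)$ is short: given $T$ satisfying the ACCP and $D$ a field, an ascending chain $(r_n R)$ lifts to an ascending chain $(r_n T)$, which stabilizes; comparing a $T$-associativity $r_n = u\,r_{n+1}$ with an $R$-divisibility $r_n = v\,r_{n+1}$ and cancelling the nonzero $r_{n+1}$ yields $u = v \in T^\times \cap R = R^\times$ by Lemma~\ref{lem:units of the D+M construction}(2), so the chain stabilizes in $R$. For $(\Rightarrow)$, since $R$ satisfying the ACCP is atomic by Proposition~\ref{prop:ACCP domain are atomic}, part~(1) already forces $D$ to be a field, and it remains to deduce that $T$ satisfies the ACCP.

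I expect this last step to be the main obstacle, because $R$-divisibility is genuinely finer than $T$-divisibility. Arguing contrapositively, from a strictly ascending chain $(t_n T)$ in $T$ I would build a strictly ascending chain in $R$ by choosing representatives $r_n \in R$ of the ideals $t_n T$ for which $r_{n+1} \mid_R r_n$, not merely $r_{n+1} \mid_T r_n$. Writing $r_n = r_{n+1}c$ with $c \in T$, the cofactor $c$ lies in $R$ exactly when its residue in $K$ lies in $D$: when $r_{n+1} \notin M$ this is automatic, since the residues of $r_n$ and $r_{n+1}$ lie in the field $D$ and force the residue of $c$ into $D$; but when $r_{n+1} \in M$ the residue of $c$ is unconstrained in $K$. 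The delicate case is therefore a chain lying inside $M$, and the fix is to twist $r_{n+1}$ by a suitable $\alpha \in K^\times$, which keeps it in $M \subseteq R$ and in the same $T$-ideal while normalizing the new cofactor's residue to $1 \in D$. The chain $(r_n R)$ so obtained is ascending and strict, because $r_n R = r_{n+1} R$ would give equal $T$-ideals, contradicting that $R$ satisfies the ACCP; hence $T$ satisfies the ACCP.
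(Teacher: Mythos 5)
Your proof is correct, and it splits naturally into a part that mirrors the paper and a part that does not. Part~(1) is essentially the paper's own argument: the same observation that a nonzero nonunit $d \in D$ makes every nonzero element of $M$ reducible, combined with primeness of $M$ (your phrasing via $R/M \cong D$ being a domain) to block atomicity of $R$; the same passage from a nonzero nonunit of $T$ to an $R$-representative in $M \cup (1+M)$, using Lemma~\ref{lem:irreducibles of the D+M construction}, to get atomicity of $T$; and the same absorption of $K^\times$-coefficients in a $T$-factorization for the converse. Part~(2) is where you genuinely diverge. The paper normalizes all generators into $M \cup (1+M)$ and runs both implications through three prepared statements comparing principal ideals of $R$ and of $T$ of each shape. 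Your $(\Leftarrow)$ avoids that case analysis entirely: lift the chain to $T$, stabilize there, cancel $r_{n+1}$ to compare the $T$-unit with the $R$-cofactor, and conclude from $T^\times \cap R = R^\times$ (part~(2) of Lemma~\ref{lem:units of the D+M construction}); this is cleaner, and in fact tighter than the paper's corresponding step, whose statement~(3) only yields containments up to a $K^\times$-twist and is rather terse about closing that gap. Your $(\Rightarrow)$ is the contrapositive of the paper's argument and rests on the same key maneuver---twisting a generator lying in $M$ by an element of $K^\times$ so that consecutive divisibilities hold in $R$ and not merely in $T$, which is exactly the content of the paper's statement~(3)---but you do the bookkeeping through residues modulo $M$ rather than through the paper's numbered statements; both come to the same thing. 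The only blemish is your closing sentence, which compresses two distinct steps: strictness of $(r_n R)$ holds because $r_n R = r_{n+1} R$ would force $t_n T = t_{n+1} T$, contradicting strictness of the chain in $T$; it is then the resulting non-stabilizing chain in $R$ that contradicts the ACCP of $R$. The logic is sound; only the phrasing conflates the two contradictions.
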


\begin{proof}
	(1) For the direct implication, suppose that $R$ is atomic.
	
	We first argue that $D$ must be a field. Suppose, by way of contradiction, that this is not the case. Lake $d$ be a nonzero nonunit of $D$. Then $d$ cannot be a unit of $R$, and so we can factor each $m \in M$ as $m = d(d^{-1}m)$. Therefore $M$ is disjoint from $\ii(R)$. This implies that no element of $M \setminus \{0\}$ factors into irreducible in $R$ because otherwise the fact that $M$ is a prime ideal would guarantee that one of the atoms in the factorization belongs to $M$. Thus, the elements of the nonempty set $M \setminus \{0\}$ are not atomic in $R$, contradicting that $R$ is atomic. Hence $D$ is a field.
	
	Now we prove that $T$ is also atomic. To do so, fix a nonzero nonunit $x \in T$, and take $k \in K^\times$ such that $xk^{-1} \in R$ (we can take $k$ as $1$ if $x \in M$ and as the projection of $x$ on $K$ otherwise). Since $R$ is atomic, $xk^{-1}$ factors into irreducibles in $R$, and so part~(1) of Lemma~\ref{lem:irreducibles of the D+M construction} ensures that $x$ factors into irreducibles in $T$. Hence $T$ is also atomic. 
	
	For the reverse implication, suppose that $T$ is atomic and $D$ is a field. Fix nonzero nonunit $x \in R$, and let us show that $x$ factors into irreducibles in $R$. Since $T$ is atomic, we can write
	\[
		x = \prod_{i=1}^r m_i \prod_{j=1}^s (\alpha_j + m'_j)
	\]
	for irreducibles $m_1, \dots, m_r \in M$ and $\alpha_1 + m'_1, \dots, \alpha_s + m'_s \in K^\times + M$ of $T$. Now set $\alpha := \prod_{j=1}^s \alpha_j$. Notice that if $r=0$, then $\alpha \in D^* \subseteq R^\times$, and in light of part~(2) of Lemma~\ref{lem:irreducibles of the D+M construction} the element $x$ factors into irreducibles in~$R$ as $x = \alpha \prod_{j=1}^s (1 + \alpha_j^{-1}m'_j)$. If $r > 0$, then $x$ still factors into irreducibles in $R$ as $x = (\alpha m_1) \prod_{i=2}^r m_i \prod_{j=1}^s (1 + \alpha_j^{-1}m'_j)$. Hence we conclude that $R$ is atomic.
	\smallskip
	
	(2) Since every integral domain satisfying the ACCP is atomic, in light of part~(1) we can assume that $D$ is a field to prove both implications. The following two observations will be crucial to argue both implications. Since $D$ is a field, every principal ideal of~$R$ (resp., of $T$) has one of the forms $R m$ or $R (1+m)$ (resp., $T m$ or $T (1+m)$) for some $m \in M$. Furthermore, for all $m_1, m_2 \in M$, one can readily verify the following statements:
	\begin{enumerate}
		\item $R (1+m_1) \subseteq R(1 + m_2)$ if and only if $T (1+m_1) \subseteq T (1 + m_2)$,
		\smallskip
		
		\item $R m_1 \subseteq R m_2$ implies that $T m_1 \subseteq T m_2$, and 
		\smallskip
		
		\item $T m_1 \subseteq T m_2$ implies that $R m_1 \subseteq R k m_2$ for some $k \in K^\times$.
	\end{enumerate}
	
	For the direct implication, suppose that $R$ satisfies the ACCP. Therefore $R$ is atomic, and so it follows from part~(1) that $T$ is atomic and $D$ is a field. With this in mind, let $(T r_n)_{n \ge 1}$ be an ascending chain of principal ideals of $T$. In light of our first observation, we can assume that $r_n \in M \cup (1+M)$ for every $n \in \nn$. In fact, given that no element of~$M$ can divide an element of $1+M$ in $T$, after dropping finitely many terms from the sequence $(T r_n)_{n \ge 1}$ we can further assume that either $r_n \in M$ for every $n \in \nn$ or that $r_n \in 1+M$ for every $n \in \nn$. Suppose first that $r_n \in M$ for every $n \in \nn$. In this case, by virtue of statement~(3) and the inclusion $K^\times \subseteq T^\times$, we can replace the generators of the principal ideals in the sequence $(T r_n)_{n \ge 1}$ by suitable associates in~$T$ in such a way that $(R r_n)_{n \ge 1}$ is an ascending chain of principal ideals in $R$. As $R$ satisfies the ACCP, $(R r_n)_{n \ge 1}$ must stabilize, and so it follows from statement~(2) that $(T r_n)_{n \ge 1}$ also stabilizes. Now suppose that $r_n \in 1+M$ for every $n \in \nn$. Then it follows from statement~(1) that $(R r_n)_{n \ge 1}$ is an ascending chain of principal ideals of $R$, and so it must stabilize because $R$ satisfies the ACCP. Thus, it follows from statement~(1) that $(T r_n)_{n \ge 1}$ also stabilizes. Hence $T$ also satisfies the ACCP.
	
	Conversely, suppose that $T$ satisfies the ACCP and~$D$ is a field. Let $(R r_n)_{n \ge 1}$ be an ascending chain of principal ideals of $R$. The observations we made earlier allow us to assume that either $r_n \in M$ for every $n \in \nn$ or that $r_n \in 1+M$ for every $n \in \nn$. Assume first that $r_n \in M$ for every $n \in \nn$. In this case, statement~(2) ensures that $(T r_n)_{n \ge 1}$ is also an ascending chain of principal ideals of $T$, and so it stabilizes because $T$ satisfies the ACCP. Now it follows from statement~(3) that $(R r_n)_{n \ge 1}$ also stabilizes. Now assume that $r_n \in 1+M$ for every $n \in \nn$. By statement~(1), the sequence of principal ideals $(T r_n)_{n \ge 1}$ is an ascending chain, which must stabilize as $T$ satisfies the ACCP. Now statement~(1) ensures that $(R r_n)_{n \ge 1}$ also stabilizes. Hence we conclude that $R$ also satisfies the ACCP.
\end{proof}

We can use the $D+M$ construction to give an example of an atomic domain (indeed, an integral domain satisfying the ACCP) whose integral closure is not atomic.

\begin{example}
	 Let $F$ be a field, and consider the monoid algebra $R := F[\qq_{\ge 0}]$ of the Puiseux monoid $\qq_{\ge 0}$ over the field $F$. If we consider the maximal ideal
	 \[
	 	\mathfrak{m} := \{f \in R : f(0)=0 \}
	 \] 
	 of $R$, then the localization $V := R_\m$ of $R$ at $\m$ is a $1$-dimensional valuation domain with value group~$\mathbb{Q}$. We now consider the subring formed by the $D+M$ construction $V_1 := F + xV$. Note that $V_1$ is a quasi-local subring of~$V$ and also that the integral closure of $V_1$ is precisely $V$. Because of the fact that every nonunit in $V_1$ has value at least $1$, any ascending chain of principal ideals in $V_1$ must stabilize, and so $V_1$ satisfies the ACCP. However, $V$ is not even atomic as it is not a Noetherian valuation domain. Hence $V_1$ is an atomic (ACCP) domain having a non-atomic integral closure.
\end{example}

\begin{remark}
	In \cite{Co4} an HFD, $R$, is constructed such that the integral closure of~$R$ is not even atomic. The example in this paper is a rather complicated multi-stage construction, but does illustrate that even a domain that is atomic and as strong as the HFD class is still not immune from losing atomicity in the integral closure.
\end{remark}

\bigskip
\section{Localization}

In this section, we discuss how atomicity behaves under localization. Let $R$ be an integral domain, and let $S$ be a multiplicative set of $R$ (i.e., a submonoid of $R^*$). Now let $R_S$ denote the localization of~$R$ at $S$. The notion of localization is one of the central tools in commutative algebra, and much of its utility lies in its good behavior. For instance, the correspondence theorem gives a natural one-to-one correspondence between the prime ideals of $R$ missing $S$ and the prime ideals of $R_S$. Many well-studied properties in commutative ring theory, including being Noetherian and being a UFD, behave well under localization. Unfortunately, the property of atomicity and that of satisfying the ACCP do not interact well with localization in general. We proceed to illustrate this with some examples.

\medskip
\subsection{Some Motivating Counterexamples} 

Let us take a look at two examples of integral domains that satisfy the ACCP but have a localization that is not even atomic.

\begin{example} \label{ex:ACCP monoid algebra with a non-atomic localization}
	Take $p \in \pp$, and let $\ff_p[M]$ be the monoid algebra of the Puiseux monoid $M := \{0\} \cup \qq_{\ge 1}$ over $\ff_p$. Let~$S$ be the multiplicative set $\{x^q : q \in M\}$. From the fact that $0$ is not a limit point of $M^\bullet$, we can readily deduce that~$M$ satisfies the ACCP (indeed, that $M$ is a BFM). This, along with the fact that $\ff_p$ is a field, ensures that $\ff_p[M]$ satisfies the ACCP. On the other hand, it follows from \cite[Proposition~3.1]{GGT21} that $\gp(M) = \qq$, and so $R_S$ is the group algebra $\ff_p[\qq]$, which is not even atomic: actually, $\ff_p[\qq]$ is antimatter because every nonzero element is a $p$-power.
\end{example}

\begin{example}
	Consider the integral domain $R:=\mathbb{Z}+x\overline{\mathbb{Z}}[x]$ where $\overline{\mathbb{Z}}$ is the ring of all algebraic integers. We first show that $R$ satisfies the ACCP. To do so, let $(f_nR)_{n \ge 1}$ be an ascending chain of principal ideals of $R$. This chain gives rise to the decreasing sequence of degrees $(\deg f_n)_{n \ge 1}$, which must stabilize. Thus, after dropping some of the first terms from the sequence  $(f_nR)_{n \ge 1}$, we can assume that all the polynomials in the sequence $(\deg f_n)_{n \ge 1}$ have the same degree. For each $n \in \nn$, let $L_n$ denote the leading terms of $f_n$. For each $n \in \nn$, as $f_n/f_{n+1} \in R$, the fact that $\deg \, f_n = \deg \, f_{n+1}$ implies that $L_n/L_{n+1} \in \zz$. In particular, $(L_n D)_{n \ge 1}$ is an ascending chain of principal ideals in the ring of algebraic integers $D$ obtained by taking the integral closure of $\mathbb{Z}$ in $\mathbb{Q}(L_1)$. Since $D$ is a Dedekind domain, $(L_n D)_{n \ge 1}$ must stabilize. whence $(f_nR)_{n \ge 1}$ also stabilizes. Thus, $R$ satisfies the ACCP.
	
	We proceed to argue that the localization $T:=R[\frac{1}{x}]$ of $R$ at the multiplicative set $\{x^n : n \in \nn_0\}$ is not atomic. Consider a rational prime $p \in T$, and suppose that we can factor $p$ as $p = a_1 \cdots a_n$ for some irreducibles $a_1, \dots, a_n$ of $T$. For each $i \in \ldb 1,n \rdb$, write $a_i := \frac{p_i(x)}{x^{b_i}}$ for some $b_i \in \zz$ and $p_i(x) \in R$ such that $p_i(x)$ is irreducible in~$R$. Clearing denominators in $p = a_1 \cdots a_n$, we obtain the factorization $x^b p = p_1(x) \cdots p_n(x)$ for some $b \in \zz$. Considering this as a factorization in $\overline{\mathbb{Q}}[x]$, we see that $p_j(x)$ must be a monomial of the form $\alpha_jx^{s_j}$ for every $j \in \ldb 1,n \rdb$, and also that we can take $i \in \ldb 1,n \rdb$ such that $\alpha_i$ a nonunit in $\overline{\mathbb{Z}}$. Thus, $\alpha_i$ is an irreducible in $T$. However, it follows from the inclusion $\overline{\mathbb{Z}} \subseteq T$ that $\sqrt{\alpha_i} \in T$, which contradicts the irreducibility of~$\alpha_i$. Hence we conclude that~$R$ satisfies the ACCP, it has a localization that is not even atomic.
\end{example}

There are also integral domains that are not atomic even though their localizations at any prime ideal are PIDs and, therefore, satisfy the ACCP. The following example illustrates this observation.

\begin{example} \label{ex:non-atomic domain with all localizations at prime ideals PIDs}
	Fix $p_0 \in \pp$, and set $R_0 := \mathbb{Z}_{(p)}$. We construct a quadratic extension $K_1$ of $\mathbb{Q}$ such that in $R_1$, the integral closure of $R_0$ in $K_1$, the prime $p$ splits into two principal primes as $p_0 = p_1 q_1$. We then construct $K_2$, a quadratic extension of $K_1$ such that in $R_2$, the integral closure of $R_1$ in $K_2$, the prime~$q_1$ is inert and $p_1$ splits into two principal primes as $p_1 = p_2 q_2$. Inductively, we construct for each $n \in \nn$ a semilocal PID $R_n$ in which $q_1, q_2,\ldots, q_n$ are prime and $p_{n-1} = p_n q_n$. Each step is made possible by \cite[Theorem 42.5]{rG68}. We note that $D :=\bigcup_{n=1}^\infty R_n$, the union of this chain of integral domains, is an almost Dedekind domain in which the maximal ideals are $(q_n)$ for each $n \in \mathbb{N}$ and $(p_1, p_2, p_3, \ldots)$. Observe that $D$ is not atomic because $p_0$ cannot be factored into irreducibles. Note, however, that $D_P$ is PID, and so atomic, for every prime ideal $P$ of~$D$.
\end{example}

Although atomicity and the ACCP do not behave well under localization in general, they do in special localization extensions. We will briefly revise such extensions.

\medskip
\subsection{Inert Extensions} 

Following Cohn~\cite{pC68}, we say that an extension $A \subseteq B$ of commutative rings is \emph{inert} provided that $rs \in A$ for $r,s \in B^\ast$ implies that $ur, u^{-1}s \in A$ for some element $u \in B^\times$. Extensions that are inert are more likely to allow the transfer of algebraic an arithmetic properties; for instance, see the recent paper~\cite{BR22} for connection between inert extensions and transfer Krull homomorphisms. For any integral domain $R$ it is clear that the polynomial extension $R \subseteq R[x]$ is inert. Also, with notation as in the $D+M$ construction, the extensions $D \subseteq R = D + M$ and $R \subseteq T = K + M$ are both inert. As the following two examples illustrate, there are extensions of integral domains are not inert.

\begin{example}
	Let $R$ be an integral domain, and consider the following extension: $R[x^2] \subseteq R[x]$. Observe that even though $x^2 \in R[x^2]$, the fact that $R[x^2]^\times = R^\times$ ensures that $ux \notin R[x^2]$ for any $u \in R^\times$. Thus, $R[x^2] \subseteq R[x]$ is not inert extension. Similarly, we can argue that for any proper additive submonoid $M$ of $\nn_0$ (this monoids are often called numerical monoids) the extension $R[M] \subseteq R[x]$ is not inert.
\end{example}

The following lemma, which will be needed later, is \cite[Lemma~1.1]{AAZ92}. We have decided to omit the proofs of all the lemmas in this subsection not only because they are based on routine, but also because they were explicitly discussed in the recent survey about the bounded and the finite factorization properties~\cite{AG22} by Anderson and the second author.

\begin{lemma} \label{lem:irreducibles in inert extensions}
	If $A \subseteq B$ is an inert extension of integral domains, then $\ii(A) \subseteq B^\times \cup \ii(B)$.
\end{lemma}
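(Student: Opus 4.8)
The plan is to show directly that every atom of $A$ that fails to be a unit of $B$ must be irreducible in $B$. So I would fix $a \in \ii(A)$ and immediately dispose of the easy case: if $a \in B^\times$, then $a$ lands in the first set of the union and there is nothing to prove. Assume therefore that $a \notin B^\times$. Since $a$ is an atom of $A$, it is in particular a nonzero nonunit of $A$; being nonzero it is a nonzero element of $B$, and by the standing assumption it is a nonunit of $B$. Thus $a$ is a nonzero nonunit of $B$, and it only remains to check that $a$ admits no nontrivial factorization in $B$.

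To that end I would take an arbitrary factorization $a = rs$ with $r,s \in B$. Because $B$ is a domain and $a \ne 0$, both factors are nonzero, so $r,s \in B^\ast$. Now the whole point is that the product $rs = a$ lies in $A$, so the inert hypothesis applies: there exists $u \in B^\times$ with $ur \in A$ and $u^{-1}s \in A$. The key move is then to rewrite the factorization as
\[
	a = rs = (ur)(u^{-1}s),
\]
which exhibits $a$ as a product of \emph{two elements of $A$}. Since $a$ is irreducible in $A$, one of these two factors must be a unit of $A$, hence a unit of $B$ via the inclusion $A^\times \subseteq B^\times$.

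I would finish by translating this back to $r$ and $s$ using that $u \in B^\times$: if $ur \in A^\times \subseteq B^\times$, then $r = u^{-1}(ur) \in B^\times$, and symmetrically if $u^{-1}s \in A^\times \subseteq B^\times$, then $s = u(u^{-1}s) \in B^\times$. Either way one of $r,s$ is a unit of $B$, so the factorization $a = rs$ is trivial in $B$, and $a \in \ii(B)$. This completes the inclusion $\ii(A) \subseteq B^\times \cup \ii(B)$.

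There is no serious obstacle here; the argument is a clean one-line application of inertness once the factorization in $B$ is pulled back into $A$ by the unit $u$. The only point that requires a moment of care is the bookkeeping with $u$ and $u^{-1}$ together with the inclusion $A^\times \subseteq B^\times$, which is exactly what lets a unit of $A$ among $ur, u^{-1}s$ force a unit of $B$ among $r, s$. If a reader wanted the statement recorded for the survey without proof (as the excerpt indicates for the inert-extension lemmas), this short derivation is precisely the routine computation being suppressed.
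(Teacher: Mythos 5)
Your proof is correct and is precisely the routine argument the paper suppresses (the paper omits proofs of the lemmas in this subsection, deferring to \cite{AAZ92} and \cite{AG22}): pull the factorization $a = rs$ in $B$ back into $A$ via the unit $u$ furnished by inertness, apply irreducibility of $a$ in $A$, and use $A^\times \subseteq B^\times$ together with $u \in B^\times$ to force $r$ or $s$ into $B^\times$. The bookkeeping with $u$, $u^{-1}$ and the case split on whether $a \in B^\times$ are exactly as they should be.
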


In light of Lemma~\ref{lem:irreducibles in inert extensions}, one can see that if $A \subseteq B$ is an inert extension of integral domains with $A^\times = B^\times \cap A$, then $\ii(A) = \ii(B) \cap A$.
\smallskip

A multiplicative subset $S$ of $R$ is called \emph{saturated} provided that $S$ is a divisor-closed submonoid of $R^*$. Observe that if a multiplicative subset of $R$ is generated by primes, then it is saturated. Following~\cite{AAZ92}, we say that a saturated multiplicative subset $S$ of an integral domain $R$ is \emph{splitting} if every $t \in R$ can be factored as $t = rs$ for some $r \in R$ and $s \in S$ with $rR \cap s'R = rs' R$ for every $s' \in S$. The following lemma, which is \cite[Proposition~1.5]{AAZ92}, asserts that extensions by localization are inert when the multiplicative set is splitting.

\begin{lemma} \label{lem:localizations at splitting MS are inert}
	Let $R$ be an integral domain, and let $S$ be a splitting multiplicative set of $R$. Then $R \subseteq R_S$ is an inert extension.
\end{lemma}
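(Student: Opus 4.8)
The plan is to push everything down to numerators and denominators in $R$ and then exploit the splitting factorization to absorb all the ambiguity coming from $S$ into a single unit of $R_S$. Recall that any fraction $\frac{\sigma}{c}$ with $\sigma, c \in S$ is automatically a unit of $R_S$, with inverse $\frac{c}{\sigma}$; so the whole difficulty is to locate the \emph{correct} such unit.

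First I would isolate the one elementary divisibility fact that makes the splitting hypothesis usable. Call an element $\pi \in R$ \emph{$S$-complementary} if $\pi R \cap s'R = \pi s'R$ for every $s' \in S$. The claim is: if $\pi$ is $S$-complementary and $t \in S$ divides $\pi d$ in $R$ for some $d \in R$, then $t \mid_R d$. Indeed, $\pi d$ then lies in $tR \cap \pi R = \pi t R$, so $\pi d = \pi t y$ for some $y \in R$, and cancelling $\pi$ (legitimate since $R$ is a domain and $\pi \neq 0$) gives $d = ty$. This is the only place the defining property of a splitting set enters.

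Now take nonzero $r, s \in R_S$ with $rs \in R$, and write $r = \frac{a}{c}$ and $s = \frac{b}{e}$ with nonzero $a, b \in R$ and $c, e \in S$. Since $S$ is splitting, factor the numerators as $a = a_0\sigma$ and $b = b_0\tau$, where $\sigma, \tau \in S$ and $a_0, b_0 \in R$ are $S$-complementary; as $a, b \neq 0$, all four factors are nonzero. The hypothesis $rs = \frac{a_0 b_0 \sigma\tau}{ce} \in R$ says exactly that $ce \mid_R a_0 b_0 \sigma\tau$. Applying the divisibility fact twice---first to $a_0$ (with $t = ce \in S$ and $d = b_0\sigma\tau$) and then to $b_0$---peels off both complementary factors and leaves $ce \mid_R \sigma\tau$. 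Hence $g := \frac{\sigma\tau}{ce}$ actually lies in $R$.

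Finally I would exhibit the required unit. Set $u := \frac{c}{\sigma} \in R_S^\times$. Then $ur = \frac{c}{\sigma}\cdot\frac{a_0\sigma}{c} = a_0 \in R$ and $u^{-1}s = \frac{\sigma}{c}\cdot\frac{b_0\tau}{e} = b_0 \cdot \frac{\sigma\tau}{ce} = b_0 g \in R$, which is precisely the assertion that $R \subseteq R_S$ is inert. The main obstacle is the third paragraph: without the splitting property there is no control over how the denominator $ce$ interacts with the numerator $ab$, and the purpose of $S$-complementarity is exactly to force $ce$ to divide the ``$S$-part'' $\sigma\tau$ of the product, so that the unit correcting $r$ back into $R$ simultaneously carries $s$ into $R$.
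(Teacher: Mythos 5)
Your proof is correct. The paper itself gives no proof of this lemma—it cites \cite[Proposition~1.5]{AAZ92} and explicitly omits the proofs of all lemmas in that subsection—so there is nothing internal to compare against; your argument (splitting the numerators as $a = a_0\sigma$, $b = b_0\tau$, using the defining property $a_0R \cap s'R = a_0 s'R$ twice to cancel the denominator $ce$ down to a divisor of $\sigma\tau$, and then correcting by the unit $u = c/\sigma \in R_S^\times$) is exactly the standard one and cleanly fills the gap the paper leaves to the literature.
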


The following lemma, first established in  \cite[Proposition~1.6]{AAZ92}, yields a characterization of the multiplicative sets generated by primes that are splitting multiplicative sets.

\begin{lemma} \label{lem:when multiplicative sets generated by primes are SMS}
	Let $R$ be an integral domain, and let $S$ be a multiplicative set of~$R$ generated by primes. Then the following statements are equivalent.
	\begin{enumerate}
		\item[(a)] $S$ is a splitting multiplicative set.
		\smallskip
		
		\item[(b)] $\bigcap_{n \in \nn} p^n R = \bigcap_{n \in \nn} p_n R = \{0\}$ for every prime $p \in S$ and every sequence $(p_n)_{n \ge 1}$ of non-associate primes in $S$.
		\smallskip
		
		\item[(c)] For every nonunit $x \in R^\ast$, there is an $n_x \in \nn$ such that $x \in p_1 \cdots p_n R$ for $p_1, \dots, p_n \in S$ implies that $n \le n_x$.
	\end{enumerate}
\end{lemma}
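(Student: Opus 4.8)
The plan is to prove the cycle of implications $(a) \Rightarrow (b) \Rightarrow (c) \Rightarrow (a)$. Throughout I would lean on two elementary facts valid in any integral domain, each provable by a short induction on exponents using cancellativity: first, if $q$ is a prime with $q \nmid_R r$ and $q^a \mid_R rz$, then $q^a \mid_R z$; and second, if $q_1, \dots, q_m$ are pairwise non-associate primes with $q_i^{a_i} \mid_R z$ for all $i$, then $\prod_{i=1}^m q_i^{a_i} \mid_R z$. I would also record the standing observation that, since $S$ is generated by primes and hence saturated, every $s \in S$ equals a unit times a finite product of the generating primes; in particular each $s \in S$ has only finitely many prime factors counted with multiplicity.

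For $(a) \Rightarrow (b)$, suppose $S$ is splitting and fix either a prime $p \in S$ or a sequence $(p_n)_{n \ge 1}$ of non-associate primes in $S$. Given a nonzero $x$ in the relevant intersection, I would apply the splitting property to write $x = rs$ with $s \in S$ and $rR \cap s'R = rs'R$ for every $s' \in S$. Taking $s' = p^n$ (resp. $s' = p_1 \cdots p_n$, which divides $x$ by the second fact above) gives $x \in rR \cap s'R = rs'R$; cancelling the nonzero $r$ from $x = rs$ then forces $s' \mid_R s$ for every $n$. Since $s$ has only finitely many prime factors, this is impossible for large $n$, so the intersection is $\{0\}$.

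For $(b) \Rightarrow (c)$, I would argue by contradiction: if some nonzero nonunit $x$ admits, for arbitrarily large $n$, a divisor that is a product of $n$ primes from $S$, I would split into two cases. If only finitely many non-associate primes $q_1, \dots, q_k$ of $S$ divide $x$, then the $n$ prime factors all lie among the $q_j$ up to associates, and by the two divisibility facts $q_1^{e_1} \cdots q_k^{e_k} \mid_R x$ with $\sum_{j} e_j = n$; a pigeonhole argument then yields a single $q_{j_0}$ with $q_{j_0}^m \mid_R x$ for arbitrarily large $m$, contradicting $\bigcap_{m} q_{j_0}^m R = \{0\}$. If instead infinitely many non-associate primes of $S$ divide $x$, enumerating them as $(p_n)_{n \ge 1}$ gives $x \in \bigcap_{n} p_n R \neq \{0\}$, again contradicting (b).

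For $(c) \Rightarrow (a)$, given $t \in R$ I would let $s$ be a product of primes from $S$ of maximal total multiplicity dividing $t$; condition (c) is exactly what guarantees this maximum is finite, so $s$ exists, and I write $t = rs$. Maximality forces $r$ to have no prime factor from $S$. It then remains to verify $rR \cap s'R = rs'R$ for every $s' \in S$: the inclusion $\supseteq$ is automatic, and for $\subseteq$ I would take $y = rz \in s'R$, write $s' = u q_1^{a_1} \cdots q_m^{a_m}$ with the $q_i$ non-associate primes of $S$, and use $q_i \nmid_R r$ together with the two divisibility facts to conclude $q_i^{a_i} \mid_R z$ and hence $s' \mid_R z$, giving $y \in rs'R$. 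I expect this last step, constructing the $S$-part and verifying the divisorial identity, to be the main obstacle, since it is where the bound from (c) and the arithmetic of non-associate primes must be combined carefully; the pigeonhole extraction of a single unbounded prime in $(b) \Rightarrow (c)$ is the other place demanding care.
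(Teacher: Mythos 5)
There is no proof in the paper to compare yours against: the lemma is stated with a citation to \cite[Proposition~1.6]{AAZ92}, and the authors explicitly announce that they omit the proofs of all lemmas in this subsection, deferring them to the survey \cite{AG22}. Judged on its own merits, your proposal is correct, and it follows what is essentially the classical Anderson--Anderson--Zafrullah argument. The two divisibility facts you isolate are exactly the right tools; in (a) $\Rightarrow$ (b) the cancellation of $r$ is legitimate since $x = rs \neq 0$ forces $r \neq 0$, and $s' \mid_R s$ for all $n$ does contradict the finiteness of the prime factorization of $s \in S$; in (b) $\Rightarrow$ (c) the dichotomy between finitely and infinitely many non-associate prime divisors from $S$ is exhaustive, and the pigeonhole extraction of a single $q_{j_0}$ with unbounded powers dividing $x$ works because the exponent vectors have bounded length $k$ but unbounded sum; in (c) $\Rightarrow$ (a) the bound $n_t$ is precisely what makes the maximal $S$-divisor exist, maximality gives that no prime of $S$ divides $r$, and your verification of $rR \cap s'R = rs'R$ goes through. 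The only loose end is trivial: your construction of the maximal $S$-part presupposes $t \neq 0$ (every product of primes divides $0$), so the splitting decomposition of $t = 0$ must be dispatched separately, say by $0 = 0 \cdot 1$ with $0R \cap s'R = \{0\} = (0 \cdot s')R$; similarly, for a unit $t$ the maximum is $0$ without invoking (c). With that one-line remark added, your argument is a complete, self-contained proof of the lemma.
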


It was proved in \cite[Proposition~1.9]{AAZ92} that, as it is the case of localizations at splitting multiplicative sets, the extensions by localization at multiplicative sets generated by primes are inert extensions.

\begin{lemma} \label{lem:localization at prime-generated MS are inert}
	Let $R$ be an integral domain, and let $S$ be a multiplicative set of~$R$ generated by primes. Then $R \subseteq R_S$ is an inert extension.
\end{lemma}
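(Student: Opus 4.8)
The plan is to verify the defining property of an inert extension directly for $A=R$ and $B=R_S$, rather than routing through Lemma~\ref{lem:localizations at splitting MS are inert}; that detour is unavailable in general, since a multiplicative set generated by primes need not be splitting (by Lemma~\ref{lem:when multiplicative sets generated by primes are SMS} it is splitting only under the extra hypotheses (b)/(c)). So fix nonzero $r,s\in R_S$ with $rs\in R$, and aim to produce a unit $u\in R_S^\times$ with $ur\in R$ and $u^{-1}s\in R$. First I would clear denominators: write $r=\frac{a}{t}$ and $s=\frac{b}{t'}$ with $a,b\in R^*$ and $t,t'\in S$. Since every element of $S$ becomes a unit in $R_S$, the hypothesis $rs\in R$ translates into the divisibility $tt'\mid_R ab$ in $R$.

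The crux of the argument is to rebalance the primes of $tt'$ between the numerators $a$ and $b$. Write $tt'=q_1\cdots q_m$ as a finite product of primes drawn from the generating set of $S$ (absorbing any unit factor). The key step is the divisor-distribution property of prime elements: since $q_1\cdots q_m\mid_R ab$ and each $q_i$ is prime, one can peel the $q_i$ off one at a time---at each stage $q_i$ divides the current value of $a$ or of $b$, and we remove it there---to obtain a partition $tt'=d_1 d_2$ (up to a unit of $R$) together with factorizations $a=d_1 a'$ and $b=d_2 b'$ with $a',b'\in R$. I expect this to be the only genuinely delicate point, but also to be painless here: because $tt'$ is a single finite product, the peeling terminates after exactly $m$ steps with no need for any chain condition. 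This is precisely why the statement holds for an arbitrary prime-generated $S$, whereas a $p$-adic-valuation argument would require the vanishing of $\bigcap_{n} p^n R$ that is supplied only by the splitting hypothesis.

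Finally I would assemble the unit. Both $d_1$ (a product of generating primes) and $t$ (an element of $S$) are units in $R_S$, so $u:=\frac{t}{d_1}\in R_S^\times$. Then $ur=\frac{t}{d_1}\cdot\frac{d_1 a'}{t}=a'\in R$, while $u^{-1}s=\frac{d_1}{t}\cdot\frac{d_2 b'}{t'}=\frac{d_1 d_2}{tt'}\,b'$, and since $d_1 d_2=tt'$ up to a unit of $R$, the coefficient $\frac{d_1 d_2}{tt'}$ lies in $R^\times$, giving $u^{-1}s\in R$. Hence $u$ witnesses inertness for the pair $r,s$, and as these were arbitrary, $R\subseteq R_S$ is inert. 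As a sanity check on the bookkeeping I would confirm that $d_1,d_2$ are genuine products of generating primes (hence units of $R_S$) and that the unit absorbed when writing $tt'=q_1\cdots q_m$ is tracked so that $\frac{d_1 d_2}{tt'}\in R^\times$ exactly.
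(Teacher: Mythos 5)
Your proof is correct. Note that the paper itself gives no argument for this lemma: it explicitly omits the proofs of all lemmas in that subsection, deferring to \cite[Proposition~1.9]{AAZ92} and the survey~\cite{AG22}, so there is no internal proof to diverge from. Your argument is the standard (and essentially the referenced) one, and it is a complete, self-contained verification: writing $r=\frac{a}{t}$, $s=\frac{b}{t'}$, the hypothesis $rs\in R$ does reduce (since $R$ is a domain and $0\notin S$) to $tt'\mid_R ab$; the prime-peeling step is sound because after each cancellation the remaining primes still divide the current product, and cancellation is legitimate in a domain; and the final bookkeeping with $u=\frac{t}{d_1}$ works since $d_1\in S$ is a unit of $R_S$ and $tt'$ and $d_1d_2$ agree up to a unit of $R$. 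Your preliminary observation is also a genuine point in favor of the direct route: a multiplicative set generated by primes need not be splitting (that requires the extra intersection conditions of Lemma~\ref{lem:when multiplicative sets generated by primes are SMS}), so one cannot simply invoke Lemma~\ref{lem:localizations at splitting MS are inert}; this is precisely why the paper records Lemma~\ref{lem:localization at prime-generated MS are inert} as a separate statement.
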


\medskip
\subsection{Ascent of Atomicity and ACCP  Under Localization}

Extensions by localization may not be inert. The following example sheds some light upon this observation.

\begin{example}
	Fix a field $F$ and consider the monoid algebra $R := F[M]$, where~$M$ is the Puiseux monoid $\nn_0 \setminus \{1\}$. Observe that the localization of $R$ at the multiplicative subset $S := \{x^m : m \in M\}$ is the group algebra $F[\zz]$, that is, the ring of Laurent polynomials over $F$. Therefore
	\[
		R_S^\times = \{c x^n : c \in F^\times \text{ and } n \in \zz\}.
	\]
	Now consider the localization extension $R \subseteq R_S$, and note that $1 \pm x \in R_S$ and $(1-x)(1+x) = 1 - x^2 \in R$. Thus, the extension $R \subseteq R_S$ is not inert because there is no way to take $c \in F^\times$ and $n \in \zz$ such that $cx^n(1-x)$ and $c^{-1}x^{-n}(1+x)$ belong to $R$ simultaneously.
\end{example}

However, as the following theorem indicates, if an extension $R \subseteq R_S$ of integral domains by localization is inert, then the property of being atomic and that of satisfying the ACCP both ascend from $R$ to $R_S$. This result is part of \cite[Theorem~2.1]{AAZ92}.

\begin{theorem} \label{thm:localization ascent of atomicity/ACCP} 
	Let $R$ be an integral domain, and let $S$ be a multiplicative set of $R$ such that $R \subseteq R_S$ is an inert extension. Then the following statements hold.
	\begin{enumerate}
		\item  If $R$ is atomic, then $R_S$ is atomic.
		\smallskip
		
		\item If $R$ satisfies the ACCP, then $R_S$ satisfies the ACCP.
	\end{enumerate}
\end{theorem}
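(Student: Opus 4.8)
The plan is to handle the two parts separately, in each case pushing a statement about $R_S$ down to the corresponding statement about $R$ by exploiting two facts. First, every nonzero element of $R_S$ is associate in $R_S$ to an element of $R$: writing such an element as $r/s$ with $r \in R^*$ and $s \in S$, the denominator $s$ lies in $R_S^\times$, so $r/s$ and $r$ generate the same principal ideal of $R_S$. Second, by Lemma~\ref{lem:irreducibles in inert extensions} applied to the inert extension $R \subseteq R_S$, every irreducible of $R$ is either a unit of $R_S$ or an irreducible of $R_S$.

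For part~(1), I would take a nonzero nonunit $x \in R_S$ and replace it by an $R_S$-associate $r \in R$. Since $x$ is a nonunit, so is $r$, whence $r \in R^*$ is a nonunit of $R_S$. Using that $R$ is atomic, I factor $r = a_1 \cdots a_n$ into irreducibles of $R$. By Lemma~\ref{lem:irreducibles in inert extensions} each $a_i$ is either a unit of $R_S$ or an irreducible of $R_S$, and not all of them can be units (otherwise $r$, and hence $x$, would be a unit of $R_S$). Absorbing the unit factors into a single unit then exhibits $r$ as a unit times a nonempty product of irreducibles of $R_S$, so $x$ is atomic in $R_S$; thus $R_S$ is atomic.

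For part~(2), I would start with an ascending chain $(x_n R_S)_{n \ge 1}$ of principal ideals of $R_S$, and, after discarding any initial zero ideals and replacing each generator by an $R_S$-associate, assume $x_n \in R^*$ for all $n$. The goal is to manufacture an ascending chain of principal ideals in $R$, so that the ACCP in $R$ forces stabilization and this transfers back. To this end I would build inductively elements $\tilde x_n \in R^*$ such that $\tilde x_n$ is an $R_S$-associate of $x_n$ and $\tilde x_{n+1} \mid_R \tilde x_n$. Setting $\tilde x_1 := x_1$ and assuming $\tilde x_n$ constructed, the relation $x_{n+1} \mid_{R_S} x_n$ coming from the chain (together with $x_n R_S = \tilde x_n R_S$) gives $x_{n+1} \mid_{R_S} \tilde x_n$, say $\tilde x_n = x_{n+1}\, u_n$ with $u_n \in R_S^*$.

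Here is where inertness does the essential work, and where I expect the main obstacle to lie: the equality $\tilde x_n = x_{n+1}\, u_n$ holds in $R_S$, but $u_n$ need not lie in $R$, so a priori $x_{n+1}$ need not divide $\tilde x_n$ in $R$. However, $\tilde x_n = x_{n+1}\, u_n \in R$ with $x_{n+1}, u_n \in R_S^*$, so the inert hypothesis (with $A = R$ and $B = R_S$) supplies a unit $v_n \in R_S^\times$ with $v_n x_{n+1} \in R$ and $v_n^{-1} u_n \in R$. I then set $\tilde x_{n+1} := v_n x_{n+1}$, an $R_S$-associate of $x_{n+1}$ lying in $R$, and the factorization $\tilde x_{n+1}\,(v_n^{-1} u_n) = \tilde x_n$ with $v_n^{-1} u_n \in R$ shows $\tilde x_{n+1} \mid_R \tilde x_n$, completing the induction. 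The resulting chain $(\tilde x_n R)_{n \ge 1}$ stabilizes by the ACCP in $R$; once $\tilde x_n R = \tilde x_{n+1} R$, the elements $\tilde x_n, \tilde x_{n+1}$ are associates in $R$, hence in $R_S$, and since each $\tilde x_n$ is an $R_S$-associate of $x_n$, the original chain $(x_n R_S)_{n \ge 1}$ stabilizes as well. The crux is thus exactly the use of inertness to convert the $R_S$-divisibility $x_{n+1} \mid_{R_S} \tilde x_n$ into a genuine $R$-divisibility after adjusting $x_{n+1}$ by a unit of $R_S$.
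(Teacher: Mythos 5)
Your proposal is correct and follows essentially the same route as the paper: part (1) is identical (factor an $R$-associate of the element in $R$ and use Lemma~\ref{lem:irreducibles in inert extensions} to sort the factors into units and irreducibles of $R_S$), and part (2) uses the same key mechanism of applying inertness to $\tilde x_n = x_{n+1}u_n \in R$ to adjust $x_{n+1}$ by a unit of $R_S$ so that $R_S$-divisibility becomes $R$-divisibility. The only difference is cosmetic: you run the induction directly and transfer stabilization back, whereas the paper phrases the same lifting contrapositively (a non-stabilizing chain in $R_S$ would yield a non-stabilizing chain in $R$).
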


\begin{proof}
	(1) Suppose first that $R$ is atomic. Take a nonzero nonunit $x$ in $R_S$ and write it as $x = \frac{r}s$, where $r \in R$ and $s \in S$. Since $R$ is atomic, there are $a_1, \dots, a_n \in \ii(R)$ such that $r = a_1 \cdots a_n$. As the extension $R \subseteq R_S$ is inert, in light of Lemma~\ref{lem:irreducibles in inert extensions} we can assume that $a_1, \dots, a_j \in \ii(R_S)$ and $a_{j+1}, \dots, a_n \in R_S^\times$ for some $j \in \ldb 0, n \rdb$, and so $a_1 \cdots a_j \in \mathsf{Z}_{R_S}(x)$. Thus, $R_S$ is an atomic domain.
	\smallskip
	
	(2) Now suppose that $R$ satisfies the ACCP. Take $x,y \in R_S$, and assume that $xR_S \subsetneq yR_S$ after replacing both $x$ and $y$ by suitable associates, we can further assume that $x,y \in R$. Now write  $x = y\big(\frac{r}s \big)$ for some $r \in R$ and $s \in S$. Because $R \subseteq R_S$ is inert extension, one can take $u \in R_S^\times$ such that both $uy$ and $u^{-1}\big(\frac{r}s \big)$ belong to $R$. After setting $y' := uy$,  we see that $xR = (uy)\big(u^{-1} \big(\frac{r}s \big) \big)R \subsetneq uyR$ (here the inclusion is strict because $\frac{r}s \notin R_S^\times$. As a consequence, the principal ideal $xR$ is properly contained in $uyR$, and $uyR_S = yR_S$. Thus, if $(x_n R_S)_{n \ge 1}$ were an ascending chain of principal ideals of $R_S$ that does not stabilize, then after taking, for each $n \in \nn$, an associate $x'_n \in R$ of $x_n$ in $R_S$ such that $x'_n R \subsetneq x'_{n+1} R$ when $x_n R_S \subsetneq x_{n+1} R_S$, we would obtain an ascending chain of principal ideals $(x'_n R)_{n \ge 1}$ that does not stabilize.
\end{proof}

Theorem~\ref{thm:localization ascent of atomicity/ACCP}, in tandem with Lemmas~\ref{lem:localizations at splitting MS are inert} and~\ref{lem:localization at prime-generated MS are inert}, yields the following corollary, which is \cite[Corollary~2.2]{AAZ92}.

\begin{corollary} \label{cor:localization ascent of atomicity/ACCP}
	Let $R$ be an integral domain, and let $S$ be a multiplicative set of~$R$ such that~$S$ is either generated by primes or a splitting multiplicative set. If~$R$ is atomic (resp., satisfies the ACCP), then $R_S$ is atomic (resp., satisfies the ACCP).
\end{corollary}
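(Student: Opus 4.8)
The plan is to recognize that this corollary is an immediate consequence of combining the preceding theorem with the two inertness lemmas, so the only real content is to verify that each hypothesis on $S$ forces the localization extension $R \subseteq R_S$ to be inert. Once inertness is in hand, the simultaneous transfer of both atomicity and the ACCP is delivered wholesale by Theorem~\ref{thm:localization ascent of atomicity/ACCP}, with no further work required.

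Concretely, I would split the argument according to which of the two hypotheses on $S$ holds. If $S$ is a splitting multiplicative set, then Lemma~\ref{lem:localizations at splitting MS are inert} tells us directly that $R \subseteq R_S$ is inert. If instead $S$ is generated by primes, then Lemma~\ref{lem:localization at prime-generated MS are inert} supplies exactly the same conclusion. In either case we have reduced to the situation of an inert localization extension, so the two cases merge.

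With inertness established, I would simply invoke Theorem~\ref{thm:localization ascent of atomicity/ACCP}: part~(1) of that theorem gives that $R$ atomic implies $R_S$ atomic, and part~(2) gives that $R$ satisfying the ACCP implies $R_S$ satisfying the ACCP. These are precisely the two assertions to be proved, so the argument closes.

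Since all the substantive work has already been carried out in the earlier results, there is no genuine obstacle here; the corollary amounts to routine bookkeeping. The only point deserving a sentence of care is to confirm that a multiplicative set generated by primes is automatically saturated, i.e.\ a divisor-closed submonoid of $R^*$, so that it indeed falls within the scope of Lemma~\ref{lem:localization at prime-generated MS are inert}. This observation was already recorded in the text immediately preceding that lemma, so it can simply be cited rather than re-proved.
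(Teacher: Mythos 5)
Your proposal is correct and matches the paper's own proof exactly: the corollary is obtained by combining Theorem~\ref{thm:localization ascent of atomicity/ACCP} with Lemma~\ref{lem:localizations at splitting MS are inert} (splitting case) and Lemma~\ref{lem:localization at prime-generated MS are inert} (prime-generated case). Your closing remark about saturation is harmless but not needed, since Lemma~\ref{lem:localization at prime-generated MS are inert} already applies directly to any multiplicative set generated by primes.
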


We conclude this subsection with the following remark.

\begin{remark}
	Theorem~\ref{thm:localization ascent of atomicity/ACCP} holds if we replace being atomic (or satisfying the ACCP) by being a BFD, an FFD, or a UFD (see~\cite[Theorem~2.1]{AAZ92}).
\end{remark}

\medskip
\subsection{A Nagata-Type Theorem}

The next ``Nagata-type" theorem provides a scenario where atomicity and the ACCP are inherited from certain special localizations. Part~(1) of the following theorem is a fragment of \cite[Theorems~3.1]{AAZ92}, while part~(2) of the same theorem is a fragment of \cite[Theorem~3.2]{GP74}.

\begin{theorem} \label{thm:localization Nagata-type for atomicity/ACCP}
	Let $R$ be an integral domain, and let $S$ be a splitting multiplicative set of $R$ generated by primes. Then the following statements hold.
	\begin{enumerate}
		\item  If $R_S$ is atomic, then $R$ is atomic.
		\smallskip
		
		\item  If $R_S$ satisfies the ACCP, then $R$ satisfies the ACCP.
	\end{enumerate}
\end{theorem}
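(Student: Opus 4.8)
The plan is to exploit the \emph{splitting} structure of $S$ by decomposing every nonzero element of $R$ into an ``$S$-part'' and an ``$S$-coprime part,'' treating the two pieces separately and transferring information between $R$ and $R_S$ only through the coprime part. First I would fix notation: since $S$ is splitting, every $t \in R^*$ factors, uniquely up to associates, as $t = s\,t'$ with $s \in S$ and $t'$ \emph{$S$-coprime}, meaning $t'R \cap s'R = t's'R$ for all $s' \in S$, and the $S$-coprime elements form a saturated submonoid of $R^*$ that is closed under multiplication. Because $S$ is generated by primes, $S$ is (modulo units) free on the associate classes of those primes, so there is a well-defined length function $\ell \colon S \to \nn_0$ counting prime factors, which will control the $S$-part. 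The technical linchpin I would isolate first is the unit-detection fact: \emph{if $r$ is $S$-coprime and a unit in $R_S$, then $r$ is a unit in $R$}. Indeed, $r \in R_S^\times$ forces $ra = s$ in $R$ for some $a \in R$ and $s \in S$, so $s \in rR \cap sR = rsR$, whence $s = rsw$ and $r$ is a unit after cancelling $s$. In particular, for $S$-coprime elements, being a nonunit is the same condition in $R$ and in $R_S$.

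For part~(1) I would take a nonzero nonunit $x \in R$ and write $x = s\,x'$. The $S$-part $s$ is a product of primes of $R$, hence of atoms, so it suffices to factor the $S$-coprime part $x'$ when it is a nonunit. By the unit-detection fact $x'$ is then a nonunit of $R_S$, and since $R_S$ is atomic it factors there into irreducibles. The key lifting step is that such a factorization descends: using the uniqueness of the $S$-coprime decomposition to absorb all $S$-denominators, an irreducible factorization of an $S$-coprime element in $R_S$ can be rewritten, one factor at a time, as a factorization in $R$ whose factors are $S$-coprime and irreducible in $R_S$; and an $S$-coprime element irreducible in $R_S$ is irreducible in $R$ (again via the decomposition and the unit-detection fact). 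This produces an irreducible factorization of $x'$ in $R$, so $R$ is atomic.

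For part~(2) I would take an ascending chain $(x_nR)_{n\ge1}$ of principal ideals of $R$ and decompose $x_n = s_n x_n'$. Uniqueness of the decomposition turns each relation $x_{n+1}\mid_R x_n$ into the pair $s_{n+1}\mid_R s_n$ and $x_{n+1}'\mid_R x_n'$, splitting the chain into an $S$-part chain and an $S$-coprime-part chain. The $S$-part chain stabilizes because $(\ell(s_n))_{n \ge 1}$ is a non-increasing sequence of nonnegative integers and equal consecutive lengths force associates. For the coprime part, $(x_n'R_S)_{n\ge1}$ is an ascending chain in $R_S$, which stabilizes by hypothesis; once consecutive coprime parts are associate in $R_S$, the unit-detection fact applied to the ($S$-coprime) cofactor $x_n'/x_{n+1}'$ makes them associate already in $R$. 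Hence for large $n$ both $s_n$ and $s_{n+1}$ and $x_n'$ and $x_{n+1}'$ are associate in $R$, so $x_nR = x_{n+1}R$ and the original chain stabilizes.

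The main obstacle I anticipate is not any single deep step but rather pinning down the $S$-coprime decomposition rigorously: its existence is immediate from the definition of a splitting set, but the \emph{uniqueness up to associates} and the multiplicative closure of the $S$-coprime elements are exactly what make the lifting step in~(1) and the clean splitting of the chain in~(2) go through. These two structural facts, together with the elementary unit-detection fact, form the technical backbone, after which each part reduces to routine bookkeeping.
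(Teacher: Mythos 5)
Your proof is correct, and its skeleton coincides with the paper's: both decompose every nonzero element as (element of $S$) $\times$ ($S$-coprime part), handle the $S$-part via unique factorization into primes, and transfer the coprime part through the localization $R \subseteq R_S$. There are two points of divergence worth noting. First, you isolate the unit-detection fact ($S$-coprime and invertible in $R_S$ implies invertible in $R$) as an explicit lemma and use it both to descend irreducibility from $R_S$ to $R$ in part~(1) and to convert associateness in $R_S$ into associateness in $R$ in part~(2); the paper uses this only implicitly, asserting without elaboration that the irreducibles $a_1, \dots, a_n$ of $R_S$ ``can be assumed'' to be irreducibles of $R$ and that stabilization of $(t_n R_S)_{n \ge 1}$ forces stabilization of $(t_n R)_{n \ge 1}$, so your write-up actually supplies details the paper glosses over. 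Second, in part~(2) the paper stabilizes the $S$-part chain $(s_n R)_{n \ge 1}$ by localizing at the complementary multiplicative set $T$ of $S$-coprime elements and observing that $R_T$ is a UFD (hence satisfies the ACCP), whereas you stabilize it directly with the length function $\ell$ counting prime factors, using that equal consecutive lengths force the quotients to be units. Your route is more elementary and self-contained, avoiding the auxiliary localization $R_T$ and the need to justify that it is a UFD; the paper's route is shorter on the page because it outsources the work to the standard implication UFD $\Rightarrow$ ACCP. Both arguments are sound, and both ultimately rest on the same two structural facts you highlight: uniqueness (up to associates) of the splitting decomposition and multiplicative closure plus saturation of the $S$-coprime elements.
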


\begin{proof}
	(1) Assume first that $R_S$ is atomic. To show that $R$, take a nonzero nonunit $x \in R$. Since $S$ is a splitting multiplicative set that is generated by primes, we can factor $x$ as $x = rs$ for some $r \in R$ and $s \in S$ in such a way that~$s$ is a product of primes and also that no prime in $S$ divides~$r$ in~$R$. Since $R_S$ is atomic, we can factor $r$ as $r = a_1 \cdots a_n$ for some irreducibles $a_1, \dots, a_n$ of $R_S$. Because no prime in $S$ divides~$r$, we can assume that $a_1, \dots, a_n$ are irreducibles of $R$. Thus, $x$ factors in $R$ as $x = a_1 \cdots a_n s$, and so the fact that $s$ is a product of primes in $R$ ensures that $x$ is atomic. Hence we conclude that $R$ is atomic.
	\smallskip
	
	(2) Assume that $R_S$ satisfies the ACCP, and consider the following set:
	\[
		T := \{r \in R^* : rR \cap sR = rsR \text{ for every } s \in S\};
	\]
	that is, $T$ is the set consisting of all the elements in $R^*$ that are not divisible in $R$ by any of the primes in~$S$. Clearly, $T$ is a multiplicative subset of $R$. Let $(r_n R)_{n \ge 1}$ be an ascending chain of principal ideals of~$R$ none of its terms is the zero ideal. For each $n \in \nn$, we can write $r_n = s_n t_n$ for some $s_n \in S$ and $t_n \in T$. Observe now that  $(s_n R)_{n \ge 1}$ and $(t_n R)_{n \ge 1}$ are also ascending chains of principal ideals of~$R$. Furthermore, note that $(r_n R)_{n \ge 1}$ stabilizes if and only if both $(s_n R)_{n \ge 1}$ and $(t_n R)_{n \ge 1}$ stabilize. Since $(t_n R_S)_{n \ge 1}$ is an ascending chain of principal ideals of $R_S$, and $R_S$ satisfies the ACCP, $(t_n R_S)_{n \ge 1}$ must stabilize. Therefore $(t_n R)_{n \ge 1}$ also stabilizes. Now observe that every principal ideal of $R_T$ is generated by an element of $S$ and, as a result, $R_T$ must be a UFD. Thus, $R_T$ satisfies the ACCP, and so the ascending chain of principal ideals $(s_n R_T)_{n \ge 1}$ must stabilize. This implies that $(s_nR)_{n \ge 1}$ stabilizes. As a consequence, one obtains that $(r_n R)_{n \ge 1}$ must stabilize. Hence we conclude that $R$ satisfies the ACCP.
\end{proof}

\begin{remark}
	Theorem~\ref{thm:localization Nagata-type for atomicity/ACCP} holds if one replaces being atomic (or satisfying the ACCP) by being a BFD, an FFD, or a UFD (see~\cite[Theorem~3.1]{AAZ92}).
\end{remark}

\bigskip
\section{Polynomial and Power Series Extensions}

In this section, we take a look at the ascent of atomicity and the ACCP to polynomial and power series extensions. In addition, we study both atomicity and the ACCP in subrings of rings of polynomials as well as subrings of rings of power series, giving special attention to the subrings of the form $S + xR[x]$ and $S + xR\ldb x \rdb$, where $S \subseteq R$ is an extension of integral domains. We also consider the generalized case obtained by replacing the single extension $R \subseteq S$ by the possibly-infinite tower of integral domains $R_1 \subseteq R_2 \subseteq \cdots$.

\medskip
\subsection{Subrings of $R[x]$ of the Form $S + xR[x]$}

Let $R$ be an integral domain. In this subsection, we consider atomicity and the ACCP property of subrings of $R[x]$ of the form $S + xR[x]$, where $S$ is a subring of $R$. Subring of the form $S + xR[x]$, where $S$ is a subring of $R$, are often a good source of counterexamples.

The property of being atomic does not ascend, in general, from $R$ to the polynomial ring $R[x]$. This was first proved by Roitman~\cite{mR93} answering \cite[Question 1]{AAZ90}. In the same paper, Roitman provided a sufficient condition for atomicity to ascend from $R$ to $R[x]$. A nonzero polynomial $f(x) \in R[x]$ is called \emph{decomposable} provided that whenever $f(x) = g(x)h(x)$ for some $g(x), h(x) \in R[x]$, either $g(x)$ or $h(x)$ is a constant polynomial. The following proposition is~\cite[Proposition~1.1]{mR93}.

\begin{proposition} \label{prop:A+xB[x] and atomicity}
	For an integral domain $R$, the following condition are equivalent.
	\begin{enumerate}
		\item[(a)] $R$ is atomic, and the set of coefficients of every indecomposable polynomial in $R[x]$ has a maximal common divisor in~$R$.
		\smallskip
		
		\item[(b)] $R[x]$ is atomic.
	\end{enumerate}
\end{proposition}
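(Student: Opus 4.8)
The plan is to prove the two implications separately, using throughout the fact, recorded just before the proposition, that the extension $R \subseteq R[x]$ is inert; since $R$ is a domain we also have $R^\times = R[x]^\times \cap R$, so the remark following Lemma~\ref{lem:irreducibles in inert extensions} gives $\mathcal{A}(R) = \mathcal{A}(R[x]) \cap R$. In words: a constant polynomial is irreducible in $R[x]$ exactly when it is irreducible in $R$. I will also use repeatedly that, because $R$ is a domain, the units of $R[x]$ are precisely the units of $R$, so every non-constant polynomial is automatically a non-unit and degrees add across products.

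For $(b) \Rightarrow (a)$, first I would show $R$ is atomic: given a nonzero nonunit $r \in R$, factor it into irreducibles of $R[x]$ and note that, since the degrees sum to $0$, every factor is constant, hence an irreducible of $R$ by the identity above. For the maximal-common-divisor condition, let $f$ be a non-constant polynomial no factorization of which splits it into two non-constant factors (the constant case is trivial, as the coefficient set is then a singleton). Factoring $f = a_1 \cdots a_n$ into irreducibles of $R[x]$, indecomposability forces exactly one factor, say $a_n$, to be non-constant, so $f = d\,a_n$ with $d := a_1 \cdots a_{n-1} \in R$. I would then verify that $d \in \mcd_R(\{\text{coefficients of } f\})$: clearly $d$ divides every coefficient of $f = d\,a_n$; and if $d'$ is any common divisor of those coefficients with $d \mid_R d'$, writing $d' = d e$ and cancelling $d$ shows that $e$ divides every coefficient of $a_n$, so $a_n = e g$ for some $g \in R[x]$, whence irreducibility of $a_n$ together with $\deg a_n \ge 1$ forces $e \in R^\times$, i.e.\ $d'$ is an associate of $d$.

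For $(a) \Rightarrow (b)$, I would induct on $\deg f$ for a nonzero nonunit $f \in R[x]$. When $\deg f = 0$, atomicity of $R$ (plus $\mathcal{A}(R) = \mathcal{A}(R[x]) \cap R$) finishes the job. When $\deg f \ge 1$ and $f = gh$ with $g,h$ both non-constant, each has strictly smaller degree and the inductive hypothesis applies. Otherwise $f$ is indecomposable, and hypothesis (a) supplies an $d \in \mcd_R$ of its coefficients; writing $f = d f_1$, I would first check that every common divisor of the coefficients of $f_1$ is a unit (if $e$ divides them all, then $d e$ is a common divisor of the coefficients of $f$ with $d \mid_R d e$, so maximality of $d$ forces $e \in R^\times$), then that $f_1$ is again indecomposable, and finally that $f_1$ is irreducible: any factorization $f_1 = g h$ has a constant factor, that constant divides all coefficients of $f_1$, and hence is a unit. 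Since $d \in R$ is either a unit or a product of irreducibles of $R$ by atomicity, each of which remains in $\mathcal{A}(R[x])$ by inertness, $f = d f_1$ factors into irreducibles of $R[x]$.

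The delicate point in both directions is handling the notion of \emph{maximal} common divisor correctly: $\mcd$ is maximality in the divisibility preorder, not a greatest element, so I cannot argue via a gcd. The two places where this bites are the maximality check in $(b)\Rightarrow(a)$ — where the crucial move is cancelling $d$ in the domain $R$ to convert ``$e$ divides all coefficients of $a_n$'' into a genuine factorization $a_n = e g$, so that irreducibility of $a_n$ can be invoked — and the primitivity transfer in $(a)\Rightarrow(b)$, where one must deduce from $d \in \mcd_R$ that the deflated polynomial $f_1$ has only unit common divisors among its coefficients. Both reduce to the same cancellation-plus-maximality manipulation, so once that step is isolated the remainder is routine bookkeeping on degrees.
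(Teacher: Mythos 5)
Your proposal is correct and follows essentially the same route as the paper's proof: both directions hinge on writing an indecomposable polynomial as (a maximal common divisor of its coefficients) times an irreducible polynomial, and the cancellation-plus-maximality manipulation you isolate is exactly the step the paper uses (and partly glosses) to conclude that $a_1 \cdots a_{\ell-1}$ lies in $\mcd_R$ of the coefficients. The only cosmetic differences are that you run (a)$\Rightarrow$(b) as an induction on degree where the paper takes a minimal-degree counterexample, and you construct the maximal common divisor directly in (b)$\Rightarrow$(a) where the paper derives a contradiction from assuming the $\mcd$ set is empty.
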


\begin{proof}
	(a) $\Rightarrow$ (b): Assume, towards a contradiction, that $R[x]$ is not atomic. Let~$f$ be a minimum-degree nonunit polynomial in $R[x]^*$ that does not factor into irreducibles. Then $f$ must be indecomposable. In addition, the fact that $R$ is atomic ensures that $\deg f \ge 1$. By assumption, we can write $f = c g$, where $c$ is a maximal common divisor of the set of coefficients of~$f$. As any common divisor of the set of coefficients of $g$ belongs to $R^\times$, the fact that $g$ is indecomposable implies that $g$ is irreducible in~$R[x]$. This, along with the fact that $f$ does not factor into irreducibles in $R[x]$, guarantees that $c$ is a nonzero nonunit of $R$ that does not factor into irreducibles, which contradicts that $R$ is atomic.
	\smallskip
	
	(b) $\Rightarrow$ (a): Since the multiplicative monoid $R^*$ is a divisor-closed submonoid of the atomic multiplicative monoid $R[x]^*$, we conclude that $R$ is atomic.
	
	Now suppose, for the sake of a contradiction, that there exists an indecomposable nonzero polynomial $f = \sum_{i=0}^n c_i x^i \in R[x]$ such that $\mcd_R(c_0, \ldots, c_n)$ is an empty set. The fact that $\mcd_R(c_0, \dots, c_n)$ is empty guarantees that $\deg f \ge 1$. Thus,~$f$ is a nonzero nonunit in $R[x]$, and as $R[x]$ is atomic we can write $f = a_1 \cdots a_\ell$ for some irreducibles $a_1, \dots, a_\ell \in R[x]$. Because $f$ is indecomposable, we can further assume that $\deg a_i = 0$ for every $i \in \ldb 1,\ell-1 \rdb$ and $\deg a_\ell = \deg f$. Now since $a_\ell$ is an irreducible in $R[x]$ and $\deg a_\ell \ge 1$, every common divisor of the set of coefficients of $a_\ell$ belongs to $R^\times$. Hence $a_1 \cdots a_{\ell-1}$ must belong to $\mcd_R(c_0, \dots, c_n)$, which contradicts the fact that $\mcd_R(c_0, \dots, c_n)$ is an empty set.
\end{proof}

Even under the condition on indecomposable polynomials given in part~(a) of Proposition~\ref{prop:A+xB[x] and atomicity}, the fact that $R$ is atomic does not imply that $S + xR[x]$ is atomic when $S$ is a subring of $R$. For instance, the subring $\zz + x\qq[x]$ is not atomic even though the set of coefficients of every indecomposable polynomial in $\qq[x]$ has a maximal common divisor in~$\qq$. Under certain conditions, however, we can guarantee that the subring $S + xR[x]$ of $R[x]$ is atomic, one of them being when~$S$ is a field (this is an immediate consequence of Corollary~\ref{cor:if S is a field, S+xR[x] is ACCP}). Instead of looking directly at the atomicity of the subrings $S + xR[x]$, it is more convenient to study whether the same subrings satisfy the ACCP and obtaining some information on their atomicity as a byproduct. We start by the following proposition (a fragment of \cite[Proposition~1.1]{AeA99}), which has various interesting consequences.

\begin{proposition} \label{prop:A+xB[x] and ACCP}
	Let $R$ be an integral domain, and let $S$ be a subring of $R$. Then the following conditions are equivalent.
	\begin{enumerate}
		\item[(a)] $S + xR[x]$ satisfies the ACCP.
		\smallskip
		
		\item[(b)] Every ascending chain $(r_n S)_{n \ge 1}$, where $r_n \in R$ for every $n \in \nn$, stabilizes.
		\smallskip
		
		\item[(c)] $R^\times \cap S = S^\times$, and every ascending chain $(r_nR)_{n \ge 1}$, where $r_n \in R$ and $r_n/r_{n+1} \in S$ for every $n \in \nn$, stabilizes. 
	\end{enumerate}
\end{proposition}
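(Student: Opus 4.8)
The plan is to prove the chain of equivalences by establishing (a) $\Leftrightarrow$ (b) and (b) $\Leftrightarrow$ (c) separately, the second being an elementary manipulation of ascending chains inside $R$ and the first requiring a translation between principal ideals of $T := S + xR[x]$ and principal $S$-submodules of $R$. Before starting, I would record two structural facts about $T$: first, the constant polynomials lying in $T$ are exactly the elements of $S$; second, $T^\times = S^\times$, since any unit of $T \subseteq R[x]$ must have degree $0$ together with its inverse, hence both it and its inverse are constants in $S$. Throughout I will use that $\deg$ and $\ord$ are additive on $R[x]$ because $R$ is a domain.

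For the implication (a) $\Rightarrow$ (b): given an ascending chain $(r_nS)_{n\ge 1}$ with $r_n \in R$ (so that $r_n/r_{n+1} \in S$ for all $n$), I set $f_n := r_n x \in T$. The key observation is that for these elements divisibility in $T$ is governed entirely by $S$: one checks that $f_n T \subseteq f_{n+1}T$ holds iff $r_n/r_{n+1} \in S$, and $f_nT = f_{n+1}T$ holds iff $r_n/r_{n+1} \in S^\times$, i.e. iff $r_nS = r_{n+1}S$. Thus $(f_nT)_{n\ge 1}$ is an ascending chain of principal ideals of $T$ that stabilizes exactly when $(r_nS)_{n\ge 1}$ does, and invoking the ACCP hypothesis (a) finishes this direction.

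The converse (b) $\Rightarrow$ (a) is the main obstacle, since an arbitrary ascending chain $(f_nT)$ of principal ideals of $T$ is not a priori of the special ``scalar'' shape used above. I would tame it by degree control: writing $f_n = f_{n+1}g_n$ with $g_n \in T$, additivity of degree forces $(\deg f_n)$ to be nonincreasing, hence eventually constant, so after discarding finitely many terms every $g_n$ has degree $0$ and is therefore a nonzero element of $S$. Letting $a_n \in R$ denote the leading coefficient of $f_n$, additivity gives $a_n = a_{n+1}g_n$, so $(a_nS)$ is an ascending chain of principal $S$-submodules of $R$. Hypothesis (b) makes it stabilize, so eventually $g_n = a_n/a_{n+1} \in S^\times = T^\times$, whence $f_nT = f_{n+1}T$ and the original chain stabilizes.

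Finally (b) $\Leftrightarrow$ (c) stays inside $R$. For (b) $\Rightarrow$ (c): applying (b) to the chain $(u^{-n}S)$ for any $u \in R^\times \cap S$ yields $u \in S^\times$, giving $R^\times \cap S = S^\times$; and a chain $(r_nR)$ with $r_n/r_{n+1} \in S$ induces the $S$-chain $(r_nS)$, whose stabilization forces the ratios into $S^\times \subseteq R^\times$ and hence stabilizes $(r_nR)$. For (c) $\Rightarrow$ (b): an ascending $(r_nS)$ has all ratios in $S \subseteq R$, so $(r_nR)$ is an ascending $R$-chain of the type covered by (c); once it stabilizes, each eventual ratio lies in $R^\times \cap S = S^\times$, so $(r_nS)$ stabilizes as well. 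The only subtle points are the bookkeeping of discarding finitely many initial (possibly zero) terms of each chain and the repeated identification of ``ratio in $S^\times$'' with ``module/ideal equality,'' both of which are routine.
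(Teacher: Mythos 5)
Your proof is correct, and while your treatment of the equivalence (a) $\Leftrightarrow$ (b) coincides with the paper's (multiplying by $x$ to produce the chain $(r_nxT)_{n \ge 1}$ in one direction; the degree-stabilization and leading-coefficient argument in the other), your handling of condition (c) takes a genuinely different route. The paper proves (a) $\Rightarrow$ (c) and (c) $\Rightarrow$ (a) directly, passing through the ring $T = S + xR[x]$ both times: for the unit condition $R^\times \cap S = S^\times$ it invokes the implication ACCP $\Rightarrow$ atomic and argues that if some $s \in R^\times \cap S$ had $s^{-1} \notin S$, then no linear monomial $rx$ would be irreducible in $T$, so $x$ could not factor into irreducibles; and its proof of (c) $\Rightarrow$ (a) is a second leading-coefficient argument on polynomial chains. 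You instead prove (b) $\Leftrightarrow$ (c) entirely inside $R$, and your argument for the unit condition --- applying (b) to the chain $(u^{-n}S)_{n \ge 1}$ for $u \in R^\times \cap S$, whose stabilization forces $u^{-1} \in S$ --- is a clean chain-theoretic trick that avoids both atomicity considerations and polynomials altogether. What each approach buys: the paper's direct (c) $\Rightarrow$ (a) avoids composing implications, and its irreducibility argument illuminates why the unit condition is the right obstruction (it is exactly what prevents linear monomials from being irreducible); your version is more elementary and self-contained, needing no appeal to the fact that ACCP domains are atomic, and it isolates all the content of condition (c) as a statement about chains in $R$ alone. The minor bookkeeping issues you flag (zero terms in chains, cancellation to pass from ideal equality to unit ratios) are indeed routine and handled the same way in the paper.
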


\begin{proof}
	Set $T := S + xR[x]$.
	\smallskip
	
	(a) $\Rightarrow$ (b): Assume that $T$ satisfies the ACCP. Now let $(r_n)_{n \ge 1}$ be a sequence whose terms belong to~$R$ such that $(r_nS)_{n \ge 1}$ is an ascending chain. Observe that, for each $n \in \nn$, the fact that $r_n \in r_{n+1} S \subseteq r_{n+1} T$ implies that $r_nx \in r_{n+1} x T$. This, along with the fact that $Rx \subseteq T$, ensures that $(r_n x T)_{n \ge 1}$ is an ascending chain of principal ideals in $T$. Because $T$ satisfies the ACCP, $(r_n x T)_{n \ge 1}$ must stabilize. As a result, there exists $N \in \nn$ such that for each $n \ge N$ we can take $u_n \in T^\times = S^\times$ such that the equality $r_N = u_n r_n$ holds, which implies that $r_N S = r_n S$. Hence the chain $(r_n S)_{n \ge 1}$ stabilizes. 
	\smallskip
	
	(b) $\Rightarrow$ (a): Now assume that every ascending chain $(r_n S)_{n \ge 1}$, where $r_n \in R$ for every $n \in \nn$, stabilizes. Let $(f_n T)_{n \ge 1}$ be an ascending chain of principal ideals in $T$. If every term of $(f_n)_{n \ge 1}$ is the zero polynomial, there is nothing to do. Thus, suppose that this is not the case, and further assume that none of the terms of $(f_n)_{n \ge 1}$ is the zero polynomial, which can be done after dropping finitely many terms from the same sequence. For each $n \in \nn$, let $r_n$ be the leading coefficient of $f_n$. From one point on all terms of the sequence $(f_n)_{n \ge 1}$ have the same degree, so after dropping finitely many terms from the same sequence, we can assume that all the polynomials in $(f_n)_{n \ge 1}$ have the same degree. For each $n \in \nn$, now the inclusion $f_n T \subseteq f_{n+1} T$ guarantees the existence of $s_{n+1} \in S$ such that $f_n = s_{n+1} f_{n+1}$ and so $r_n = s_{n+1} r_{n+1}$, which implies that $r_n S \subseteq r_{n+1} S$. Therefore $(r_n S)_{n \ge 1}$ is an ascending chain, and so it must stabilize. Thus, there exists $N \in \nn$ such that $s_n \in S^\times$ for every $n \in N$, which implies that $(f_n T)_{n \ge 1}$ stabilizes at its $N$-th terms. Hence $T$ satisfies the ACCP.
	\smallskip
	
	(a) $\Rightarrow$ (c): Assume that $T$ satisfies the ACCP. In order to establish the equality $R^\times \cap S = S^\times$, it suffices to check that $R^\times \cap S \subseteq S^\times$. Take $s \in R^\times \cap S$ and suppose, by way of contradiction, that $s^{-1}$, which belongs to $R$ does not belong to $S$. Then for any $r \in R$, the linear monomial $rx$ factors in $T$ as $rx = s(rs^{-1}x)$, and so no linear monomial in $T$ is irreducible. This clearly implies that $x$ does not factor in $T$, which implies that $T$ is not atomic. However, this contradicts that $T$ satisfies the ACCP.
	
	For the second assertion, suppose that $(r_nR)_{n \ge 1}$ is an ascending chain, where $r_n \in R$ and $r_n/r_{n+1} \in S$ for every $n \in \nn$. As $r_n/r_{n+1} \in S$ for every $n \in \nn$, the sequence $(r_n S)_{n \ge 1}$ is ascending, and so it must stabilize in light of condition~(b), which holds as we already proved that~(a) implies~(b). Thus, we can pick $N \in \nn$ so that, for every $n \ge N$ there exists $s_n \in S^\times \subseteq R^\times$ such that the equality $r_N = s_n r_n$ holds, which implies that $r_N R = r_n R$. Hence the ascending chain $(r_nR)_{n \ge 1}$ stabilizes.
	\smallskip
	
	(c) $\Rightarrow$ (a): Finally, assume that $R^\times \cap S = S^\times$ and every ascending chain $(r_nR)_{n \ge 1}$, where $r_n \in R$ and $r_n/r_{n+1} \in S$ for every $n \in \nn$, stabilizes. Let $(f_n T)_{n \ge 1}$ be an ascending chain of principal ideals in~$T$ As in our proof that (b) implies (a), we can assume that all the polynomials in the sequence $(f_n)_{n \ge 1}$ are nonzero and have the same degree. Then if for each $n \in \nn$, we let $r_n$ denote the leading coefficient of $f_n$, we see that $r_n/r_{n+1} \in S$. Thus, the sequence $(r_n R)_{n \ge 1}$ is an ascending chain of principal ideals in $R$ such that $r_n/r_{n+1} \in S$ for every $n \in \nn$, and so it must stabilize by assumption. This implies the existence of $N \in \nn$ that $r_n/r_{n+1} \in R^\times \cap S \subseteq S^\times$ for every $n \ge N$. Now the fact that all the polynomials in $(f_n)_{n \ge 1}$ have the same degree guarantees that $f_n/f_{n+1} = r_n/r_{n+1} \in S^\times \subseteq T^\times$ for every $n \ge N$, and so $(f_n T)_{n \ge 1}$ stabilizes. Hence $T$ satisfies the ACCP.
\end{proof}

As an immediate consequence of Proposition~\ref{prop:A+xB[x] and ACCP} we obtain that ACCP ascends from $R$ to $R[x]$.

\begin{corollary} \label{cor:ACCP ascends to Polynomial Rings}
	Let $R$ be an integral domain. Then $R$ satisfies the ACCP if and only if $R[x]$ also satisfies the ACCP.
\end{corollary}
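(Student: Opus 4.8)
The plan is to derive this corollary by specializing Proposition~\ref{prop:A+xB[x] and ACCP} to the case in which the subring $S$ equals $R$ itself. Since $R$ is trivially a subring of $R$, the proposition applies with this choice, and the key preliminary observation is that the subring $S + xR[x]$ collapses to the full polynomial ring: every polynomial $a_0 + a_1 x + \dots + a_n x^n \in R[x]$ has constant term $a_0 \in S = R$ and remaining terms in $xR[x]$, so $R + xR[x] = R[x]$.

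With $S = R$ fixed, I would then read off what the equivalent conditions of the proposition become. Condition~(a) reads ``$R + xR[x]$ satisfies the ACCP'', which by the identification above is exactly ``$R[x]$ satisfies the ACCP''. Condition~(b) reads ``every ascending chain $(r_n R)_{n \ge 1}$ with $r_n \in R$ stabilizes'', which is precisely the definition of $R$ satisfying the ACCP. Hence the equivalence (a) $\Leftrightarrow$ (b) supplied by the proposition is, verbatim, the assertion that $R[x]$ satisfies the ACCP if and only if $R$ does.

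Because the corollary is an immediate instance of an already-established result, there is no genuine obstacle here; the only point requiring care is the bookkeeping that $S + xR[x] = R[x]$ when $S = R$ and that condition~(b) unwinds to the ACCP for $R$. One could alternatively invoke the equivalence (a) $\Leftrightarrow$ (c), observing that with $S = R$ the hypothesis $R^\times \cap S = S^\times$ holds automatically and condition~(c) again reduces to the ACCP for $R$; either route yields the same conclusion.
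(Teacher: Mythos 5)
Your proposal is correct and is exactly the paper's route: the paper derives this corollary as an immediate consequence of Proposition~\ref{prop:A+xB[x] and ACCP}, which is precisely your specialization $S = R$ (where $S + xR[x] = R[x]$ and condition~(b) unwinds to the ACCP for $R$). Your write-up just makes explicit the bookkeeping the paper leaves to the reader.
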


\begin{remark}
	In the presence of nonzero zero-divisors, the ACCP does not ascend to polynomial extensions. A commutative ring $R$ satisfying the ACCP such that $R[x]$ does not satisfy the ACCP was constructed by Heinzer and Lantz in~\cite{HL94}.
\end{remark}

We have seen that rings of the form $S + xR[x]$ may not satisfy the ACCP (indeed, $\zz + x\qq[x]$ is not even atomic). The following corollary of Proposition~\ref{prop:A+xB[x] and ACCP}, which was first pointed out in \cite[Proposition~1.1]{BIK97}, gives a sufficient condition for the subrings $S + xR[x]$ to satisfy the ACCP.

\begin{corollary} \label{cor:if S is a field, S+xR[x] is ACCP}
	Let $R$ be an integral domain, and let $S$ be a subring of $R$. If $S$ is a field, then $S + xR[x]$ satisfies the ACCP.
\end{corollary}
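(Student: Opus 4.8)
The plan is to deduce this corollary directly from Proposition~\ref{prop:A+xB[x] and ACCP}, which characterizes when $S + xR[x]$ satisfies the ACCP. Since $S$ is assumed to be a field, I would verify condition~(b) of that proposition: every ascending chain $(r_n S)_{n \ge 1}$ with each $r_n \in R$ stabilizes. The key observation is that because $S$ is a field, the principal $S$-module (more precisely, the set $r_n S = \{r_n s : s \in S\}$) collapses dramatically: for any nonzero $r \in R$, the coset $rS$ is just the one-dimensional $S$-subspace of $R$ spanned by $r$, and for $r = 0$ we get $rS = \{0\}$.

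The heart of the argument is therefore a short linear-algebra observation. First I would dispose of the degenerate case where some $r_n = 0$: if $r_N = 0$ for some $N$, then since the chain is ascending and $0 \in r_m S$ forces $r_m S = \{0\}$ only when $r_m = 0$, I would argue the chain is eventually either constantly $\{0\}$ or jumps out of $\{0\}$ at most once, so stabilization is immediate. In the main case where all $r_n$ are nonzero, I would show that $r_n S \subseteq r_{n+1} S$ implies $r_n = r_{n+1} s$ for some $s \in S$, and crucially that $s \ne 0$ (as $r_n \ne 0$), hence $s \in S^\times$ because $S$ is a field. This immediately gives $r_n S = r_{n+1} S$, so the chain is in fact constant once the zero terms are handled. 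Thus every such ascending chain stabilizes trivially, establishing condition~(b).

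Having verified~(b), Proposition~\ref{prop:A+xB[x] and ACCP} yields condition~(a), namely that $S + xR[x]$ satisfies the ACCP, completing the proof. The main subtlety — and the only place requiring care — is the bookkeeping around zero terms in the chain $(r_n S)_{n \ge 1}$: since Proposition~\ref{prop:A+xB[x] and ACCP}(b) ranges over all $r_n \in R$ (including $r_n = 0$), I must confirm that the presence of a zero term does not produce a non-stabilizing chain. This is where one checks that $r_n S = \{0\}$ precisely when $r_n = 0$, so that an ascending chain can pass from the zero subspace to a nonzero one-dimensional subspace at most once, after which the field property forces constancy. No deeper obstacle arises, as everything reduces to the fact that a field has no nontrivial ascending chains of cyclic submodules in a torsion-free setting.
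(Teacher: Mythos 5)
Your proof is correct and follows exactly the route the paper intends: the corollary is stated there as an immediate consequence of Proposition~\ref{prop:A+xB[x] and ACCP}, with the implicit verification being precisely your observation that when $S$ is a field, any inclusion $r_nS \subseteq r_{n+1}S$ with $r_n \neq 0$ forces $r_n/r_{n+1} \in S^\times$, so every chain $(r_nS)_{n \ge 1}$ stabilizes. Your careful handling of the zero terms is a sound (if routine) addition that the paper leaves unstated.
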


The converse of Corollary~\ref{cor:if S is a field, S+xR[x] is ACCP} does not hold in general as, for instance, $\zz[x]$ satisfies the ACCP even though $\zz$ is not a field. As another consequence of Proposition~\ref{prop:A+xB[x] and ACCP}, we can add that $S + xR[x]$ satisfies the ACCP to the equivalent conditions of Proposition~\ref{prop:A+xB[x] and atomicity}.

\begin{corollary} \label{cor:A+xB[x] under qf(S) subset of R}
	Let $R$ be an integral domain, and let $S$ be a subring of $R$ such that $\emph{qf}(S) \subseteq R$. Then the following conditions are equivalent.
	\begin{enumerate}
		\item[(a)] $S + xR[x]$ satisfies ACCP.
		\smallskip
		
		\item[(b)] $S + xR[x]$ is atomic.
		\smallskip
		
		\item[(c)] $S$ is a field.
	\end{enumerate}
\end{corollary}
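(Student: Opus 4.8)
The plan is to close the cycle of implications $(c) \Rightarrow (a) \Rightarrow (b) \Rightarrow (c)$, where only the last arrow requires genuine work and the first two are immediate from results already in hand. For $(c) \Rightarrow (a)$ I would invoke Corollary~\ref{cor:if S is a field, S+xR[x] is ACCP} directly: when $S$ is a field, $S + xR[x]$ satisfies the ACCP. For $(a) \Rightarrow (b)$ I would appeal to Proposition~\ref{prop:ACCP domain are atomic}, which guarantees that any integral domain satisfying the ACCP is atomic. Thus the whole content of the corollary is the implication $(b) \Rightarrow (c)$, which I will establish in contrapositive form: if $S$ is not a field, then $T := S + xR[x]$ is not atomic.

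So assume $S$ is not a field and fix a nonzero nonunit $s_0 \in S$. The hypothesis $\qf(S) \subseteq R$ is precisely what drives the argument: it yields $s_0^{-1} \in \qf(S) \subseteq R$, so that $s_0^{-1} v x \in xR[x] \subseteq T$ for every $v \in R$. I would first record that $T^\times = S^\times$; indeed, since $T \subseteq R[x]$ is a domain, a factorization $fg = 1$ in $T$ forces $\deg f = \deg g = 0$, and the degree-zero elements of $T$ are exactly the elements of $S$, whence $f \in S^\times$. Now I claim that the element $x$, which is a nonzero nonunit of $T$, fails to be atomic.

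The crux is a short factorization analysis. Suppose for contradiction that $x = a_1 \cdots a_n$ with each $a_i$ an irreducible of $T$. Because the degrees are nonnegative and sum to $\deg x = 1$, exactly one factor, say $a_n$, has degree $1$, while $a_1, \dots, a_{n-1}$ are nonzero elements of $S$. Writing $a_n = u + vx$ with $u \in S$ and $v \in R^*$, the constant term of the product yields $(a_1 \cdots a_{n-1})\, u = 0$, and since $T$ is a domain this forces $u = 0$; hence $a_n = vx$. But then
\[
	vx = s_0 \cdot (s_0^{-1} v x),
\]
where $s_0$ is a nonunit of $T$ (as $T^\times = S^\times$ and $s_0 \notin S^\times$), while $s_0^{-1} v x \in xR[x] \subseteq T$ is nonconstant and hence also a nonunit of $T$. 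This exhibits $a_n = vx$ as a product of two nonunits, contradicting its irreducibility. Therefore $x$ admits no factorization into irreducibles, so $T$ is not atomic, which completes the contrapositive and hence the cycle.

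The main obstacle here is conceptual rather than computational: one must isolate the single point where the hypothesis $\qf(S) \subseteq R$ is used, namely in keeping $s_0^{-1} v x$ inside $T$. This is also exactly what separates the present corollary from Proposition~\ref{prop:A+xB[x] and atomicity}, where atomicity of $R[x]$ need not force $S$ to be a field; the extra hypothesis collapses the situation by rendering every linear monomial $vx$ with $v \in R^*$ reducible as soon as $S$ possesses a nonzero nonunit.
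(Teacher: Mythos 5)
Your proposal is correct and follows essentially the same route as the paper: both reduce everything to the implication (b) $\Rightarrow$ (c) and establish it by observing that a nonzero nonunit $s_0 \in S$ has $s_0^{-1} \in \qf(S) \subseteq R$, so every linear monomial $vx$ factors as $s_0(s_0^{-1}vx)$ into two nonunits, whence $x$ cannot factor into irreducibles. The only difference is cosmetic: you phrase it as a contrapositive and spell out the degree/constant-term argument that the paper leaves implicit in the phrase ``no linear monomial in $T$ is irreducible, which implies that $x$ does not factor into irreducibles.''
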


\begin{proof}
	(a) $\Rightarrow$ (b): This is clear.
	\smallskip
	
	(b) $\Rightarrow$ (c): Assume that $T := S + x R[x]$ is atomic. Suppose, by way of contradiction, that $S$ is not a field. Let $s$ be a nonzero nonunit in $R$. Since $\text{qf}(S) \subseteq R$, we see that $s^{-1} \notin R \setminus S$. Then for any $r \in R$, the linear monomial $rx$ factors in $T$ as $rx = s(rs^{-1}x)$, and so no linear monomial in $T$ is irreducible, which implies that $x$ does not factor into irreducibles in $T$, contradicting that $T$ is atomic.
	\smallskip
	
	(c) $\Rightarrow$ (a): This follows from Corollary~\ref{cor:if S is a field, S+xR[x] is ACCP}.
\end{proof}

With the notation as in Corollary~\ref{cor:A+xB[x] under qf(S) subset of R}, we have already seen that, in general, condition~(a) (and so condition~(b)) does not imply condition~(c). It was proved in~\cite[Proposition~3.6]{GL22} that if $F$ is an infinite field and $R$ is the Grams' domain over $F$, then $R[x]$ is an atomic domain that does not satisfy the ACCP. Therefore the conditions (a) and (b) in Corollary~\ref{cor:A+xB[x] under qf(S) subset of R} are not equivalent in general.
\smallskip

If $R = \text{qf}(S)$, then it turns out that $S + xR[x]$ satisfies the ACCP if and only if $S$ satisfies the ACCP. This is a special case of the following result, which is \cite[Theorem~7.5]{AAZ91}.

\begin{proposition} \label{prop:ACCP of rings between R[X] and K[X]}
	Let $S$ be an integral domain with quotient field $K$, and let $T$ be an integral domain such that $S[x] \subseteq T \subseteq S + xK[x]$. In addition, assume that for every $n \in \nn_0$, there is an $s_n \in S^*$ such that $s_n f \in S[x]$ for every $f \in T$ with $\deg f \le n$. Then $T$ satisfies the ACCP if and only if $S$ satisfies the ACCP.
\end{proposition}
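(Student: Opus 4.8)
The plan is to isolate one structural fact that does all the heavy lifting, namely that $S$ and $T$ share the same units, $T^\times = S^\times$, and then treat the two implications separately: the implication from $T$ to $S$ will follow almost formally from this unit identification, whereas the implication from $S$ to $T$ is where the denominator-bounding hypothesis is genuinely needed. To establish $T^\times = S^\times$, I would argue that any unit $u$ of $T$ is a unit of the ambient domain $K[x]$ (since $T \subseteq K[x]$), hence a nonzero constant $u \in K^\times$; as $u$ is constant and lies in $T \subseteq S + xK[x]$, its (only) coefficient lies in $S$, so $u \in S$, and applying the same reasoning to $u^{-1} \in T$ gives $u^{-1} \in S$, whence $u \in S^\times$. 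The reverse inclusion $S^\times \subseteq T^\times$ is immediate from $S \subseteq T$.

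For the implication $T$ satisfies the ACCP $\Rightarrow$ $S$ satisfies the ACCP, I would start from an ascending chain $(a_n S)_{n \ge 1}$ of principal ideals of $S$. Each inclusion $a_n S \subseteq a_{n+1} S$ is witnessed by some $c_{n+1} \in S \subseteq T$ with $a_n = c_{n+1} a_{n+1}$, so the same relations show that $(a_n T)_{n \ge 1}$ is an ascending chain of principal ideals of $T$. By hypothesis this chain stabilizes, so for large $n$ the elements $a_n$ and $a_{n+1}$ are associates in $T$; the associating unit then lies in $T^\times = S^\times$, which forces $a_n S = a_{n+1} S$, so the original chain in $S$ stabilizes. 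No denominator hypothesis is used here.

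For the converse, assume $S$ satisfies the ACCP and let $(f_n T)_{n \ge 1}$ be an ascending chain of principal ideals of $T$. After discarding an initial segment I may assume every $f_n$ is nonzero, and writing $f_n = g_{n+1} f_{n+1}$ with $g_{n+1} \in T$ and comparing degrees inside $K[x]$ shows that $(\deg f_n)_{n \ge 1}$ is non-increasing, hence eventually constant; discarding another initial segment, I may assume all $f_n$ have a common degree $d$. Then each $g_{n+1}$ is a nonzero constant of $T$, so $g_{n+1} \in S^*$ by the inclusion $T \subseteq S + xK[x]$. Letting $r_n \in K^\times$ denote the leading coefficient of $f_n$, the relation $f_n = g_{n+1} f_{n+1}$ gives $r_n = g_{n+1} r_{n+1}$.

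The crux is to transfer this chain of leading coefficients, which a priori lives in $K$ rather than in $S$, into a genuine ascending chain of principal ideals of $S$, and this is exactly the role of the denominator-bounding hypothesis. Applying it at the fixed degree $d$ produces a single $s_d \in S^*$ with $s_d f \in S[x]$ for every $f \in T$ of degree at most $d$; in particular $t_n := s_d r_n \in S^*$ for all $n$. Multiplying $r_n = g_{n+1} r_{n+1}$ by $s_d$ yields $t_n = g_{n+1} t_{n+1}$ with $g_{n+1} \in S$, so $(t_n S)_{n \ge 1}$ is an ascending chain of principal ideals of $S$, which stabilizes since $S$ satisfies the ACCP. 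This forces $g_{n+1} \in S^\times = T^\times$ for all large $n$, hence $f_n T = f_{n+1} T$ eventually, and $T$ satisfies the ACCP. I expect this last transfer to be the main obstacle and the only point where the extra hypothesis is essential: without a uniform common denominator $s_d$ valid across the whole (possibly infinite) family of degree-$d$ generators, the coefficients $r_n$ admit no comparison inside $S$, and examples such as $\zz + x\qq[x]$ show that the ACCP can genuinely fail for $T$ even when $S$ has it.
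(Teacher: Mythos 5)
Your proof is correct, and while its skeleton matches the paper's (same reduction to a fixed-degree chain, same use of the common denominator $s_d$), the decisive step in the harder direction is genuinely different. For the implication that $T$ satisfying the ACCP forces $S$ to satisfy it, the paper notes that every constant of $T$ lies in $S$, hence $S^*$ is a divisor-closed submonoid of $T^*$, and cites Corollary~\ref{cor:divisor-closed submonoids of an ACCP monoid}; your identification $T^\times = S^\times$ together with lifting chains from $S$ to $T$ is the same idea, in the form of Proposition~\ref{prop:submonoids satisfying ACCP}. For the converse, after reducing to generators of common degree $d$ and multiplying by $s_d$, the paper views $(s_d f_n S[x])_{n\ge 1}$ as an ascending chain of principal ideals of the polynomial ring $S[x]$ and invokes Corollary~\ref{cor:ACCP ascends to Polynomial Rings}, i.e.\ the ascent of the ACCP from $S$ to $S[x]$ (which in turn rests on Proposition~\ref{prop:A+xB[x] and ACCP}); you instead extract leading coefficients and run the chain $(s_d r_n S)_{n\ge 1}$ inside $S$ itself, using only the ACCP of $S$ and the fact that the quotients $g_{n+1}$ are constants lying in $S$. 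Your route is therefore more self-contained: it bypasses the ascent of the ACCP to polynomial extensions entirely, in effect inlining the leading-coefficient argument by which the paper proves that ascent in the first place. The paper's route is shorter in context, since the corollary is already available, but logically heavier. Both arguments are sound.
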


\begin{proof}
	Since $T \subseteq S + xK[x]$, every constant polynomial in $T$ belong to $S$. Therefore it follows that $S^*$ is a divisor-closed submonoid of $T^*$, and so Corollary~\ref{cor:divisor-closed submonoids of an ACCP monoid} guarantees that $S$ satisfies the ACCP provided that $T$ satisfies the ACCP.
	\smallskip
	
	For the reverse implication, assume that $S$ satisfies the ACCP. Now let $(f_n T)_{n \ge 1}$ be an ascending chain of principal ideals of $T$ and assume, without loss of generality, that not every term in this chain is the zero ideal. After dropping finitely many terms from $(f_n T)_{n \ge 1}$, we can assume that all the polynomials in the sequence $(f_n)_{n \ge 1}$ have the same degree, namely, $d$. By hypothesis, we can take a nonzero $s_d \in S$ such that $s_d f_n \in S[x]$ for every $n \in \nn$. Now the fact that $(f_n/f_{n+1}) \in T \cap K \subseteq S$ for every $n \in \nn$ guarantees that $(s_d f_n S[x])_{n \ge 1}$ is an ascending chain of principal ideals in $S[x]$. As $S[x]$ satisfies the ACCP by Corollary~\ref{cor:ACCP ascends to Polynomial Rings}, there exists $N \in \nn$ such that $f_N/f_n \in S[x]^\times \subseteq T^\times$ for every $n \ge N$. Hence the chain $(f_n T)_{n \ge 1}$ stabilizes in $T$, and so we conclude that $T$ satisfies the ACCP.
\end{proof}

\medskip
\subsection{Rings of Integer-Valued Polynomials}

We proceed to discuss both atomicity and the ACCP in rings of integer-valued polynomials. Let $R$ be an integral domain with quotient field $K$, and let $S$ be a subset of $R$. The \emph{ring of integer-valued polynomials} $\text{Int}(S,R)$ of~$R$ on~$S$ is the subring of $K[x]$ consisting of all polynomials mapping the elements of $S$ into $R$:
\[
	\text{Int}(S,R) := \{f \in K[x] : f(S) \subseteq R\}.
\]
It is customary to write $\text{Int}(R,R)$ simply as $\text{Int}(R)$ and call it the \emph{ring of integer-valued polynomials of} $R$. A survey on rings of integer-valued polynomials has been given by P. J. Cahen and J. L. Chabert in \cite{CC16}, and the reader can find considerably more information about $\text{Int}(S,R)$ in the book \cite{CC97}, written by the same authors. Finally, the arithmetic and atomicity of rings of integer-valued polynomials have been recently studied by several authors, including Fadinger, Frisch, Li, Nakato, Rissner, Windisch, and the second author: see~\cite{FW24}, which is one of the most recent studies, as well as the references therein.

Observe that $R[x] \subseteq \text{Int}(R) \subseteq \text{Int}(S,R)$. In particular, $\text{Int}(S,R)$ is an intermediate ring of the extension $R[x] \subseteq K[x]$. The inclusion $R[x] \subseteq \text{Int}(R)$ is often strict: for instance,
\[
	\binom{x}{n} := \frac{x(x-1) \cdots (x-n+1)}{n!} \in \text{Int}(\zz) \setminus \zz[x]
\]
for every $n \in \nn$ with $n \ge 2$. However, as the following proposition indicates, there are conditions on $R$ which guarantee that the equality $\text{Int}(R) = R[x]$ holds. We record them here for future reference.

\begin{proposition} \label{prop:int(R)=R[x] when R contains an infinite field}
	Let $R$ be an integral domain. If at least one of the following conditions holds, then $\emph{Int}(R) = R[x]$.
	\begin{itemize}
		\item \cite[Corollary~2]{CC71}  $R$ contains an infinite field.
		\smallskip
		
		\item \cite[page 10]{CC97} There exists a family $\Omega$ consisting of prime ideals of $R$ such that $R = \bigcap_{\mathfrak{p} \in \Omega} R_\mathfrak{p}$ and $R_\mathfrak{p}$ is infinite for each $\mathfrak{p} \in \Omega$.
	\end{itemize}
\end{proposition}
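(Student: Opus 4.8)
The plan is to prove both implications through a single interpolation principle and to view the two bullet conditions as two different ways of supplying the hypotheses that principle needs. Since every polynomial with coefficients in $R$ maps $R$ into $R$, the inclusion $R[x] \subseteq \text{Int}(R)$ is automatic, so in each case the content is the reverse inclusion $\text{Int}(R) \subseteq R[x]$. The common engine is the following Vandermonde observation: if $f = \sum_{i=0}^n c_i x^i \in K[x]$ and $a_0, \dots, a_n$ are elements of $R$, then Cramer's rule gives $(\det V)\, c_i = \sum_{j=0}^n \big(\operatorname{adj} V\big)_{ij}\, f(a_j)$ for every $i$, where $V = \big(a_j^{\,i}\big)_{0 \le i,j \le n}$ is the Vandermonde matrix of $a_0, \dots, a_n$. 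The entries of $\operatorname{adj} V$ are polynomial expressions in $a_0, \dots, a_n$ with integer coefficients, hence lie in $R$, and $\det V = \pm \prod_{0 \le j < k \le n}(a_k - a_j)$. Thus, if $f \in \text{Int}(R)$ so that each $f(a_j) \in R$, then $(\det V)\, c_i \in R$ for all $i$, and the entire problem collapses to arranging that $\det V$ be a unit in the relevant ring, which then forces each $c_i$ into that ring.

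For the first bullet, let $F \subseteq R$ be an infinite field and fix $f \in \text{Int}(R)$ of degree $n$. First I would choose $n+1$ distinct elements $a_0, \dots, a_n \in F$, which is possible precisely because $F$ is infinite. Then each difference $a_k - a_j$ is a nonzero element of the field $F$, so $\det V \in F \setminus \{0\} = F^\times \subseteq R^\times$. Inverting $\det V$ in the displayed identity yields $c_i \in R$ for every $i$, that is, $f \in R[x]$, as desired. No localization is needed: the infinite field supplies the evaluation points directly and makes $\det V$ a global unit.

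For the second bullet I would first pass from the global statement to a local one. From $R = \bigcap_{\mathfrak{p} \in \Omega} R_\mathfrak{p}$ it follows that $R[x] = \bigcap_{\mathfrak{p} \in \Omega} R_\mathfrak{p}[x]$, since a polynomial lies in $R[x]$ exactly when each of its coefficients lies in every $R_\mathfrak{p}$. Hence it suffices to show, for a fixed $\mathfrak{p} \in \Omega$ and a fixed $f \in \text{Int}(R)$ of degree $n$, that $f \in R_\mathfrak{p}[x]$. The point that makes this tractable is that now I only need $\det V$ to be a unit of $R_\mathfrak{p}$, i.e.\ to avoid $\mathfrak{p}$, which is a far weaker demand than being a unit of $R$; this lets me keep the \emph{global} evaluation points $a_j \in R$, for which $f(a_j) \in R \subseteq R_\mathfrak{p}$ comes for free. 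Concretely, I would select $a_0, \dots, a_n \in R$ that are pairwise incongruent modulo $\mathfrak{p}$; then each $a_k - a_j \notin \mathfrak{p}$, so $\det V \notin \mathfrak{p}$ because $\mathfrak{p}$ is prime, whence $\det V \in R \setminus \mathfrak{p} \subseteq R_\mathfrak{p}^\times$. As before this forces $c_i \in R_\mathfrak{p}$ for every $i$, so $f \in R_\mathfrak{p}[x]$, and intersecting over $\Omega$ gives $f \in R[x]$.

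The step I expect to be the main obstacle is exactly the one where the infinitude hypothesis is used in the second bullet: producing $n+1$ elements of $R$ that are pairwise incongruent modulo $\mathfrak{p}$. This requires the residue field at $\mathfrak{p}$ to be infinite, and since $\deg f$ is unbounded over $\text{Int}(R)$ one genuinely needs infinitely many residues modulo $\mathfrak{p}$ rather than merely finitely many; this is the precise sense in which the infinitude assumption on the localizations enters the argument. Everything else reduces to the routine linear algebra of the Vandermonde/adjugate identity recorded in the first paragraph, so the only real work is verifying the availability of sufficiently many incongruent evaluation points.
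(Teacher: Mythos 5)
The paper records this proposition \emph{without proof}, citing \cite[Corollary~2]{CC71} and \cite[page 10]{CC97}, so your argument can only be measured against the standard cited one. Your Vandermonde/adjugate engine and your treatment of the first bullet are correct and are essentially that standard argument: choosing $n+1$ distinct points $a_0, \dots, a_n$ in an infinite subfield $F \subseteq R$ makes $\det V = \pm \prod_{j<k}(a_k - a_j)$ a product of elements of $F^\times \subseteq R^\times$, hence a unit of $R$, and the adjugate identity then forces every coefficient of $f$ into $R$.

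The genuine gap is in the second bullet, at precisely the step you single out. Producing $n+1$ elements of $R$ that are pairwise incongruent modulo $\mathfrak{p}$ requires the residue ring $R/\mathfrak{p}$ (equivalently, the residue field of $R_\mathfrak{p}$) to be infinite, and you then declare that this is ``the precise sense in which the infinitude assumption on the localizations enters.'' That identification is false: infinitude of the ring $R_\mathfrak{p}$ does not imply infinitude of its residue field --- $\zz_{(p)}$ is infinite while its residue field is $\ff_p$. So your argument does not prove the bullet as literally stated; indeed no argument can, because as literally stated it is false: take $R = \zz$ and $\Omega$ the set of nonzero primes, so that $\zz = \bigcap_{p \in \pp} \zz_{(p)}$ with every $\zz_{(p)}$ infinite, yet $\binom{x}{2} = \frac{x(x-1)}{2}$ lies in $\text{Int}(\zz) \setminus \zz[x]$, as the paper itself observes. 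The hypothesis actually used in \cite[page 10]{CC97} is that each localization has \emph{infinite residue field}; the phrasing ``$R_\mathfrak{p}$ is infinite'' in the survey is an inaccuracy. Under that corrected hypothesis your proof closes up and is exactly the standard one: points pairwise incongruent modulo $\mathfrak{p}$ give $\det V \in R \setminus \mathfrak{p} \subseteq R_\mathfrak{p}^\times$, hence $\text{Int}(R) \subseteq R_\mathfrak{p}[x]$ for each $\mathfrak{p} \in \Omega$, and intersecting over $\Omega$ (using $\bigcap_{\mathfrak{p} \in \Omega} R_\mathfrak{p}[x] = R[x]$) finishes. In short, you have the right proof of the theorem the references actually assert, but equating the printed hypothesis with the one your argument needs is a real logical error: you should have flagged them as different, since one is strictly weaker than the other.
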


Observe that $\text{Int}(\{0\},\zz) = \zz + x\qq[x]$, which is not even atomic. We proceed to present a necessary condition for the atomicity of $\text{Int}(S,R)$ that generalizes this observation (it was first proved in \cite[Proposition~1.1]{ACCS95}).

\begin{proposition} \label{prop:sufficient condition for atomicity}
	Let $R$ be an integral domain that is not a field, and let $S$ be a nonempty subset of~$R$. If $\emph{Int}(S,R)$ is atomic, then $|S| = \infty$.
\end{proposition}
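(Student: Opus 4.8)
The plan is to prove the contrapositive: assuming $S = \{s_1, \dots, s_m\}$ is finite (so its elements are distinct), I will produce a nonzero nonunit of $\text{Int}(S,R)$ admitting no factorization into irreducibles. Throughout set $K := \qf(R)$ and $g := \prod_{i=1}^m (x - s_i) \in R[x]$. The starting observation is that any $K[x]$-multiple of $g$ vanishes on all of $S$ and hence lies in $\text{Int}(S,R)$; in particular $g/c^k \in \text{Int}(S,R)$ for every nonzero $c \in R$ and every $k \in \nn$. My candidate non-atomic elements are exactly these $g/c^k$, and the crux will be a clever choice of $c$ and $k$.

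The key structural input concerns the leading coefficients of the atoms that could occur in a factorization of $g/c^k$. Since $\text{Int}(S,R) \subseteq K[x]$ and $K[x]$ is a UFD, every divisor of $g/c^k$ is, up to a scalar in $K^\times$, a subproduct of the $x - s_i$; thus each positive-degree atom $a$ dividing $g/c^k$ has the shape $a = \gamma \prod_{j \in T}(x - s_j)$ with $\gamma \in K^\times$ and $T \subseteq \ldb 1,m \rdb$. Here $T$ must be \emph{proper}: if $T = \ldb 1,m \rdb$ then $a = \gamma g$ vanishes on $S$, so $a = c\,(a/c)$ with $a/c \in \text{Int}(S,R)$ a nonunit, contradicting irreducibility. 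Choosing any $\ell \notin T$, membership $a \in \text{Int}(S,R)$ forces $\gamma \prod_{j \in T}(s_\ell - s_j) \in R$; and since the fixed element $w := \prod_{i \neq j}(s_i - s_j) \in R$ (nonzero, as the $s_i$ are distinct) is divisible in $R$ by the subproduct $\prod_{j \in T}(s_\ell - s_j)$, I conclude $\gamma w \in R$. So the leading coefficient of every positive-degree atom dividing $g/c^k$ becomes integral after multiplication by $w$.

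Next I run the leading-coefficient computation. Suppose $g/c^k = a_1 \cdots a_N$ is a factorization into atoms. At most $m$ of the $a_i$ have positive degree; call them $a_1,\dots,a_p$ (with leading coefficients $\gamma_1,\dots,\gamma_p$ and $p \le m$), and let $e \in R$ be the product of the constant atoms. As $g$ is monic of degree $m$, comparing leading coefficients gives $\prod_{i=1}^p \gamma_i = 1/(c^k e)$. Multiplying by $w^p$ and using $\gamma_i w \in R$ yields $w^p/(c^k e) \in R$, whence $c^k \mid_R w^p$, and multiplying through by $w^{m-p}$ upgrades this to $c^k \mid_R w^m$. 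Therefore atomicity of $g/c^k$ forces the divisibility $c^k \mid_R w^m$.

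It remains to choose $c$ and $k$ with $c^k \nmid_R w^m$, which makes $g/c^k$ non-atomic and finishes the argument. The main obstacle is that $R$ need not be Noetherian, so I cannot invoke the Krull intersection theorem to get $\bigcap_k c^k R = \{0\}$: a careless $c$ could divide $w^m$ to all orders. I sidestep this with a case split on $w$. If $w \in R^\times$, then $w^m$ is a unit, and any nonzero nonunit $c$ (which exists because $R$ is not a field) satisfies $c \nmid_R w^m$; take $k = 1$. If $w$ is a nonunit, take $c := w$ and $k := m+1$: were $w^{m+1} \mid_R w^m$, cancelling $w^m$ in the domain $R$ would force $w \in R^\times$, a contradiction, so $c^k = w^{m+1} \nmid_R w^m$. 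In either case $g/c^k$ is a nonzero nonunit of $\text{Int}(S,R)$ with no irreducible factorization, so $\text{Int}(S,R)$ is not atomic. I expect the delicate points to be ruling out the full-root-set atom and the leading-coefficient bookkeeping (the steps $\gamma_i w \in R$ and $c^k \mid_R w^p \Rightarrow c^k \mid_R w^m$); the final choice of $(c,k)$ is the clean trick that keeps the proof free of any finiteness hypothesis on $R$.
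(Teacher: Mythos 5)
Your proof is correct. It rests on the same foundation as the paper's argument---every divisor of a rational multiple of $\prod_{i=1}^m(x-s_i)$ inside $\text{Int}(S,R) \subseteq K[x]$ is, up to a scalar in $K^\times$, a subproduct of the linear factors, and evaluating an atom at a point of $S$ outside its root set forces an integrality constraint---but the route to the contradiction is genuinely different. The paper fixes one nonzero nonunit $d$, builds a single witness $f$ whose denominator is $d$ times a universal product $s$ of \emph{all} the evaluation constants $m_T(s_j)$ over all proper subsets $T$, shows $f$ is not irreducible, uses atomicity to split $f = gh$ with both factors nonconstant, and then an evaluation identity at two points yields $d(r_3r_4)=1$, i.e.\ $d \in R^\times$, a contradiction. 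You instead prove a quantitative criterion valid for \emph{every} putative atomic factorization: since each positive-degree atom has leading coefficient $\gamma$ with $\gamma w \in R$, where $w = \prod_{i\neq j}(s_i-s_j)$, the leading-coefficient product shows that atomicity of $g/c^k$ forces $c^k \mid_R w^m$; you then choose $(c,k)$ (any nonzero nonunit with $k=1$ if $w \in R^\times$, else $c=w$ and $k=m+1$) to violate that divisibility. What the paper's route buys is the absence of any case analysis: the huge denominator $s$ absorbs all evaluation corrections at once, and the contradiction lands on an arbitrary nonunit $d$. What your route buys is a cleaner structural statement (the bound $c^k \mid_R w^m$), an argument that handles factorizations of arbitrary length directly rather than reducing to a two-factor splitting, and a much smaller denominator (a power of the discriminant-like element $w$, or a single nonunit) in the witness element; the case split on whether $w$ is a unit is the modest price, and it also transparently covers the edge case $|S|=1$, where $w$ is the empty product.
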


\begin{proof}
	Assume that $\text{Int}(S,R)$ is atomic. Now suppose, towards a contradiction, that $S$ is a finite set of size $n$ and write $S = \{s_1,\dots, s_n\}$. Since $R$ is not a field, we can fix a nonzero nonunit $d \in R$. Now for any nonempty proper subset $T$ of $\ldb 1,n \rdb$, set
	\[
		m_T := \prod_{t \in \ldb 1,n \rdb \setminus T} \! (x-s_t) \quad \text{ and } \quad f := \frac{\prod_{i=1}^n (x-s_i)}{d \prod_{T \subsetneq \ldb 1,n \rdb} \prod_{j \notin \ldb 1,n \rdb \setminus T} m_T(s_j)}.
	\]
	As $f(s_i) = 0$ for every $i \in \ldb 1,n \rdb$, it follows that $f \in \text{Int}(S,R)$ (indeed, the same argument shows that any polynomial in $\text{qf}(R)^*f$ belongs to $\text{Int}(S,R)$). In addition, the fact that $\deg f = n \ge 1$ ensures that $f$ is a nonunit element in $\text{Int}(S,R)^*$. Let us argue that $f$ cannot factor into irreducibles in $\text{Int}(S,R)$, which will give the desired contradiction. First, observe that for every nonzero $r \in R$, the fact that both $d$ and $f/rd$ are nonzero nonunits of $\text{Int}(S,R)$ implies that $f/r$ is not irreducible in $\text{Int}(S,R)$. This guarantees that $f$ is not irreducible in $\text{Int}(S,R)$ and, as $\text{Int}(S,R)$ is atomic, we can write $f = g h$ for some nonconstant polynomials $g,h \in \text{Int}(S,R)$. Because $K[x]$ is a UFD, there are nonempty disjoint subsets $U$ and $V$ of $\ldb 1,n \rdb$ with $U \sqcup V = \ldb 1,k \rdb$ and elements $r_1, r_2, d_1, d_2 \in R^*$ such that $g = \frac{r_1}{d_1} m_U$ and $h = \frac{r_2}{d_2} m_V$. From the definition of $f$ and the equality $f = gh$, we infer that $d_1 d_2 = ds r_1 r_2$, where 
	\[
		s := \prod_{T \subsetneq \ldb 1,n \rdb} \prod_{j \notin \ldb 1,n \rdb \setminus T} m_T(s_j).
	\]
	Now fix indices $j \in \ldb 1,n \rdb \setminus U$ and $k \in \ldb 1,n \rdb \setminus V$. As $\frac{r_1}{d_1}m_U(s_j) = g(s_j) \in R$ and $ \frac{r_2}{d_2}m_V(s_k) = h(s_k) \in R$, we see that $m_U(s_j) m_V(s_k)r_1 r_2 = d_1 d_2 r_3$ for some $r_3 \in R$. On the other hand, observe that $m_U(s_j) m_V(s_k) \mid_R s$, and take $r_4 \in R$ such that $s = m_U(s_j) m_V(s_k) r_4$. After substituting the last two equalities in $d_1 d_2 = ds r_1 r_2$, one obtains that
	\[
		d_1 d_2 = d sr_1 r_2 = d m_U(s_j) m_V(s_k) r_1 r_2 r_4 = d d_1 d_2 r_3 r_4,
	\]
	and so $d (r_3 r_4) = 1$, which implies that $d \in R^\times$. However, this contradicts the $d$ was initially taken to be a nonunit of $R$. Hence $|S| = \infty$, as desired.
\end{proof}

As a consequence of Proposition~\ref{prop:sufficient condition for atomicity} when $|S| < \infty$, we obtain that $\text{Int}(S,R)$ is atomic if and only if it satisfies the ACCP, which happens precisely when $R$ is a field. 

\begin{corollary} \label{cor:factorization properties of Int(R,S) when S is finite}
	Let $R$ be an integral domain, and let $S$ be a finite subset of $R$. Then the following conditions are equivalent.
	\begin{enumerate}
		\item[(a)] $\emph{Int}(S,R)$ satisfies the ACCP.
		\smallskip
		
		\item[(b)] $\emph{Int}(S,R)$ is atomic.
		\smallskip
		
		\item[(c)] $R$ is a field.
	\end{enumerate}
\end{corollary}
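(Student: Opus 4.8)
The plan is to establish the cyclic chain of implications (a) $\Rightarrow$ (b) $\Rightarrow$ (c) $\Rightarrow$ (a), leaning almost entirely on results already in hand; this corollary is really just a repackaging of Proposition~\ref{prop:sufficient condition for atomicity} together with the elementary fact that the ACCP implies atomicity. Throughout I assume that $S$ is nonempty, since this is the hypothesis under which Proposition~\ref{prop:sufficient condition for atomicity} is available (and, as noted below, it is genuinely needed).

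First, (a) $\Rightarrow$ (b) is immediate: applying Proposition~\ref{prop:ACCP domain are atomic} to the integral domain $\text{Int}(S,R)$ shows that satisfying the ACCP forces it to be atomic. For (b) $\Rightarrow$ (c) I argue by contraposition using Proposition~\ref{prop:sufficient condition for atomicity}. Suppose $\text{Int}(S,R)$ is atomic but, toward a contradiction, that $R$ is not a field. Since $S$ is a nonempty subset of $R$ and $R$ is not a field, Proposition~\ref{prop:sufficient condition for atomicity} yields $|S| = \infty$, contradicting the standing hypothesis that $S$ is finite. Hence $R$ must be a field.

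Finally, for (c) $\Rightarrow$ (a), if $R$ is a field $K$, then every polynomial in $K[x]$ already maps $S \subseteq K$ into $K$, so that $\text{Int}(S,R) = K[x]$. Since $K[x]$ is a PID, it satisfies the ACCP, which establishes (a) and closes the cycle.

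The only point requiring care — rather than a genuine obstacle — is the role of the nonemptiness of $S$. Proposition~\ref{prop:sufficient condition for atomicity} is stated only for nonempty $S$, and the degenerate case $S = \emptyset$ (where the defining condition $f(S) \subseteq R$ is vacuous, so $\text{Int}(\emptyset, R) = K[x]$ is always a UFD regardless of whether $R$ is a field) must be excluded; otherwise (b) would not force (c). With that understood, the proof is a direct assembly of the two cited results and the elementary identification $\text{Int}(S,K) = K[x]$.
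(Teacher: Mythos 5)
Your proof is correct and takes essentially the same route as the paper's: (a) $\Rightarrow$ (b) from the general fact that ACCP domains are atomic (Proposition~\ref{prop:ACCP domain are atomic}), (b) $\Rightarrow$ (c) by applying Proposition~\ref{prop:sufficient condition for atomicity} to contradict the finiteness of $S$, and (c) $\Rightarrow$ (a) via the identification $\text{Int}(S,K) = K[x]$, a PID. Your added remark that $S$ must be nonempty — since $\text{Int}(\emptyset,R) = \text{qf}(R)[x]$ is atomic and satisfies the ACCP regardless of whether $R$ is a field — is a legitimate point of care that the paper leaves implicit.
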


\begin{proof}
	(a) $\Rightarrow$ (b): This is clear.
	\smallskip
	
	(b) $\Rightarrow$ (c): This follows immediately from Proposition~\ref{prop:sufficient condition for atomicity}.
	\smallskip
	
	(c) $\Rightarrow$ (a): It is clear that if $R$ is a field, then $\text{Int}(S,R) = R[x]$ and so it satisfies the ACCP.
\end{proof}

Corollary~\ref{cor:factorization properties of Int(R,S) when S is finite} yields a characterization of when a ring of integer-valued polynomial $\text{Int}(S,R)$ is atomic (or satisfies the ACCP) provided that $|S|$ is finite. As the following proposition indicates, rings of integer-valued polynomials satisfying the ACCP can also be characterized when $|S|$ is not finite (see \cite[Corollary~7.6]{AAZ91}, \cite[Theorem~1.3]{CC95}, and  \cite[Theorem~1.2]{ACCS95}).

\begin{proposition} \label{prop:ACCP characterization for IVPs}
	Let $R$ be an integral domain, and let $S$ be an infinite subset of $R$. Then $\emph{Int}(S,R)$ satisfies the ACCP if and only if $R$ satisfies the ACCP.
\end{proposition}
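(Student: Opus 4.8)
The plan is to prove the two implications separately, with the reverse implication being routine and the forward implication carrying all of the content. Throughout, write $K$ for the quotient field of $R$, so that $\text{Int}(S,R) \subseteq K[x]$ and, since $K[x]^\times = K^\times$, one checks easily that $\text{Int}(S,R)^\times = R^\times$.

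For the direction ``$\text{Int}(S,R)$ satisfies the ACCP $\Rightarrow$ $R$ satisfies the ACCP,'' I would argue exactly as in the proof of Proposition~\ref{prop:ACCP of rings between R[X] and K[X]}. Since $S \neq \emptyset$, the constant polynomials of $\text{Int}(S,R)$ are precisely the elements of $R$, and a degree argument in $K[x]$ shows that any divisor in $\text{Int}(S,R)$ of a nonzero constant is again a constant, hence lies in $R$. Thus $R^*$ is a divisor-closed submonoid of $\text{Int}(S,R)^*$, and Corollary~\ref{cor:divisor-closed submonoids of an ACCP monoid} immediately yields that $R$ satisfies the ACCP.

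For the converse, assume $R$ satisfies the ACCP and let $(f_n \, \text{Int}(S,R))_{n \ge 1}$ be an ascending chain of nonzero principal ideals. Writing $f_n = g_{n+1} f_{n+1}$ with $g_{n+1} \in \text{Int}(S,R)$ and reading degrees in $K[x]$, the sequence $(\deg f_n)_{n \ge 1}$ is non-increasing, so after discarding finitely many terms I may assume that every $f_n$ has the same degree $d$; then each $g_{n+1}$ is a constant, and therefore $c_{n+1} := g_{n+1} \in K \cap \text{Int}(S,R) = R$, with $c_{n+1} \neq 0$. As $\text{Int}(S,R)^\times = R^\times$, the chain stabilizes precisely when $c_{n+1} \in R^\times$ for all large $n$, so it remains to force this.

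The key step, and the only place the infinitude of $S$ enters, is to replace the leading coefficients (which live in $K$, not $R$, and so give only a chain of fractional ideals that need not behave well) by evaluations at points of $S$. For any $s \in S$ the relation $f_n = c_{n+1} f_{n+1}$ gives $f_n(s) = c_{n+1} f_{n+1}(s)$ in $R$; since $c_{n+1} \neq 0$ and $R$ is a domain, $f_n(s) = 0$ if and only if $f_{n+1}(s) = 0$, so the vanishing status at a fixed $s$ is constant along the chain. Because $S$ is infinite I may choose $d+1$ distinct points of $S$, and since $f_1$ has degree $d$ it is nonzero at some such point $s_0$; by the previous remark $f_n(s_0) \neq 0$ for every $n$. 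Then $(f_n(s_0) R)_{n \ge 1}$ is an ascending chain of nonzero principal ideals of $R$, which stabilizes by hypothesis; at stabilization $f_n(s_0)$ and $f_{n+1}(s_0)$ are associates, and cancelling in $f_n(s_0) = c_{n+1} f_{n+1}(s_0)$ forces $c_{n+1} \in R^\times$. Hence the original chain stabilizes, and $\text{Int}(S,R)$ satisfies the ACCP. The main obstacle to anticipate is exactly this transfer: the naive leading-coefficient argument fails over $K$, and one must exploit both that evaluations land in $R$ and that infinitely many evaluation points are available in order to locate a single point avoiding the roots of all the $f_n$ simultaneously.
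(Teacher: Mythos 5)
Your proof is correct and takes essentially the same approach as the paper: the forward direction via the observation that $R^*$ is a divisor-closed submonoid of $\text{Int}(S,R)^*$ together with Corollary~\ref{cor:divisor-closed submonoids of an ACCP monoid}, and the reverse direction via degree stabilization, the constant quotients $f_n/f_{n+1}$ lying in $R$, and evaluation at a point of $S$ avoiding the roots of $f_1$ (which is where $|S| = \infty$ is used). The only cosmetic difference is that the paper phrases the reverse implication contrapositively (a non-stabilizing chain in $\text{Int}(S,R)$ yields a non-stabilizing chain in $R$), whereas you argue it directly.
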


\begin{proof}
	Let $K$ be the field of fractions of $R$, and set $T := \text{Int}(S,R)$. 
	\smallskip
	
	The direct implication follows immediately from Corollary~\ref{cor:divisor-closed submonoids of an ACCP monoid} because $R^*$ is a divisor-closed submonoid of $T^*$. 
	\smallskip

	For the reverse implication, suppose that $\text{Int}(S, R)$ does not satisfy the ACCP. Then take an ascending chain of principal ideals $(f_n T)_{n \ge 1}$ of $T$ that does not stabilize. We can further assume that $f_n T$ is strictly contained in $f_{n+1} T$ for every $n \in \nn$. As $\deg f_n \ge \deg f_{n+1}$ for every $n \in \nn$, the fact that $\nn_0$ is well ordered ensures that the sequence $(\deg f_n)_{n \ge 1}$ eventually becomes constant. Hence, after dropping finitely many terms from the sequence $(f_n T)_{n \ge 1}$, we can assume that $\deg f_n = \deg f_1$ for every $n \in \nn$. Thus, for each $n \in \nn$, we can set $r_n := f_n/f_{n+1}$ and we see that $r_n \in R$ but $r_n \notin R^\times$ because $f_n T \subsetneq f_{n+1} T$. Now the fact that $|S| = \infty$ allows us to take $s \in S$ such that $f_1(s) \neq 0$, which implies that $f_n(s) \neq 0$ for every $n \in \nn$. As a consequence, $(f_n(s))_{n \ge 1}$ is an ascending chain of principal ideals of $R$ that does not stabilize, which implies that $R$ does not satisfy the ACCP.
\end{proof}

\begin{corollary}
	Let $R$ be an integral domain. Then $\emph{Int}(R)$ satisfies the ACCP if and only if $R$ satisfies the ACCP.
\end{corollary}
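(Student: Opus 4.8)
The plan is to realize $\text{Int}(R)$ as the special instance $\text{Int}(R,R)$ of the ring of integer-valued polynomials on the subset $S := R$, and then apply Proposition~\ref{prop:ACCP characterization for IVPs}. Since that proposition requires the subset $S$ to be infinite, the only genuine issue is to dispose of the case in which $R$ is finite; I would therefore split the argument according to the cardinality of $R$.

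If $R$ is infinite, then $S := R$ is an infinite subset of $R$, and Proposition~\ref{prop:ACCP characterization for IVPs} applies directly with this choice of $S$, yielding that $\text{Int}(R,R) = \text{Int}(R)$ satisfies the ACCP if and only if $R$ does. No further work is needed in this case.

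If $R$ is finite, then $R$ is a finite integral domain and hence a field: for each nonzero $a \in R$, multiplication by $a$ is an injective self-map of the finite set $R$ and therefore surjective, so $a$ is a unit. Consequently $R$ satisfies the ACCP trivially, as it contains no nonzero nonunits. Moreover, because $R$ is a field its quotient field $K$ equals $R$, so $K[x] = R[x]$ and the defining condition $f(R) \subseteq R$ holds automatically for every $f \in K[x]$; thus $\text{Int}(R) = R[x]$. Since $R$ is a field, $R[x]$ satisfies the ACCP by Corollary~\ref{cor:ACCP ascends to Polynomial Rings}, and so $\text{Int}(R)$ satisfies the ACCP as well. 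In this case both sides of the claimed equivalence hold, and the corollary follows.

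I expect no serious obstacle here: the statement is an immediate corollary once one observes that $\text{Int}(R) = \text{Int}(R,R)$, and the only point demanding care is that Proposition~\ref{prop:ACCP characterization for IVPs} excludes finite $S$, a gap closed by the elementary fact that a finite integral domain is a field.
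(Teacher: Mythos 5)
Your proposal is correct and follows exactly the route the paper intends: the corollary is stated as an immediate consequence of Proposition~\ref{prop:ACCP characterization for IVPs} applied with $S := R$. Your additional handling of the finite case (a finite integral domain is a field, whence $\text{Int}(R) = R[x]$ satisfies the ACCP by Corollary~\ref{cor:ACCP ascends to Polynomial Rings}) is a careful closing of a gap the paper leaves implicit, not a different argument.
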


As the following example illustrates, the statement in Proposition~\ref{prop:ACCP characterization for IVPs} does not hold if we replace the ACCP by atomicity.

\begin{example} \label{ex:the RIVP of an atomic domain may not be atomic}
	It follows from \cite[Example~5.1]{mR93} that every field can be embedded into an atomic domain $R$ satisfying that $R[x]$ is not atomic. Thus, let $R$ be an atomic domain containing an infinite field such that $R[x]$ is not atomic. Since $R$ contains an infinite field, it follows from Proposition~\ref{prop:int(R)=R[x] when R contains an infinite field} that $\text{Int}(R) = R[x]$, and so $\text{Int}(R)$ is not atomic even though $R$ is atomic.
\end{example}

\medskip
\subsection{Rings of Power Series}

In this last subsection we consider rings of power series. The property of being atomic does not transfer between an integral domain $R$ to its ring of power series $R\ldb x \rdb$ in any direction: counterexamples illustrating this observation were constructed by Roitman in~\cite{mR00}. We have decided not to include such constructions here because they are rather technical. Finally, it is worth emphasizing that there have been significant recent progress on factorization/Krull properties in rings of power series (for instance, see~\cite{AJ22} and \cite{PO23}).

Let $S \subseteq R$ be an extension of integral domains. We proceed to consider atomicity and the ACCP on subrings of the form $S + x R\ldb x \rdb$. We start by proving that $S + x R\ldb x \rdb$ satisfies the ACCP if and only if $S + x R[x]$ satisfies the ACCP (this result is part of \cite[Corollary~1.4]{AeA99}). This will enable us to use most of the results we have already established for $S + x R[x]$ in terms of the ACCP condition to draw similar conclusions for $S + x R\ldb x \rdb$.

\begin{proposition} \label{prop:S+xR[[x]] and the ACCP}
	Let $R$ be an integral domain, and let $S$ be a subring of $R$. Then $S + xR \ldb x \rdb$ satisfies the ACCP if and only if $S + x R[x]$ satisfies the ACCP.
\end{proposition}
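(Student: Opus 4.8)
The plan is to reduce both statements to a single intrinsic condition on the extension $S \subseteq R$. Recall that Proposition~\ref{prop:A+xB[x] and ACCP} shows that $S + xR[x]$ satisfies the ACCP if and only if every ascending chain $(r_n S)_{n \ge 1}$ with $r_n \in R$ stabilizes (condition~(b) there). Since this condition makes no reference to polynomials or power series, it suffices to prove the analogous equivalence for $T := S + xR\ldb x \rdb$: namely that $T$ satisfies the ACCP if and only if every ascending chain $(r_n S)_{n \ge 1}$ with $r_n \in R$ stabilizes. Chaining the two equivalences then yields the proposition.

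Before carrying out the two directions, I would record two facts about $T$. First, using that $\ord$ is additive on products (as $R$ is a domain), for $f \in T$ one has $f \in T^\times$ if and only if $f(0) \in S^\times$: if $f(0) = s_0 \in S^\times$ then $f = s_0\bigl(1 + x s_0^{-1} h\bigr)$ for some $h \in R\ldb x \rdb$, and the factor $1 + x(\cdots)$ is invertible in $R\ldb x \rdb$ with inverse again in $1 + xR\ldb x \rdb \subseteq T$; conversely $f(0)g(0) = 1$ with $f(0),g(0) \in S$ forces $f(0) \in S^\times$. Second, for $f,g \in T$ the coefficient of $x^{\ord f + \ord g}$ in $fg$ is the product of the bottom (order) coefficients of $f$ and $g$.

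For the direction ``condition~(b) $\Rightarrow$ $T$ has the ACCP'', I would start with an ascending chain $(f_n T)_{n \ge 1}$ and, discarding an initial segment, assume every $f_n$ is nonzero. From $f_n \in f_{n+1}T$ the orders satisfy $\ord f_n \ge \ord f_{n+1}$, so being nonincreasing nonnegative integers they stabilize; after dropping finitely many terms I may assume $\ord f_n = d$ for all $n$. Writing $c_n \in R$ for the coefficient of $x^d$ in $f_n$ and $f_n = f_{n+1} g_{n+1}$ with $g_{n+1} \in T$, the order bookkeeping gives $\ord g_{n+1} = 0$ and $c_n = c_{n+1} g_{n+1}(0)$ with $g_{n+1}(0) \in S$; hence $(c_n S)_{n \ge 1}$ is an ascending chain of the type in condition~(b), and therefore stabilizes. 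Once $c_N S = c_n S$, cancellation in the domain forces $g_{n+1}(0) \in S^\times$, so $g_{n+1} \in T^\times$ and the chain $(f_n T)$ stabilizes.

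For the converse, given an ascending chain $(r_n S)_{n \ge 1}$ with $r_n \in R$, I would push it into $T$ via the monomials $r_n x$: from $r_n \in r_{n+1}S$ we get $r_n x \in (r_{n+1}x)T$, so $(r_n x T)_{n \ge 1}$ is an ascending chain in $T$, which stabilizes by hypothesis; comparing the coefficients of $x$ in $r_N x = (r_n x)u$ with $u \in T^\times$ (so $u(0) \in S^\times$) yields $r_N S = r_n S$. The main obstacle, and the only real novelty over the polynomial case treated in Proposition~\ref{prop:A+xB[x] and ACCP}, is that the natural descending invariant for power series is the order rather than the degree; one must check that it is still well ordered along a divisibility chain and correctly identify the bottom coefficient and the unit group $T^\times$, after which the argument runs parallel to the polynomial one.
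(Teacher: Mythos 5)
Your proposal is correct and takes essentially the same route as the paper: both arguments reduce the statement to condition~(b) of Proposition~\ref{prop:A+xB[x] and ACCP} and then prove that $T = S + xR\ldb x \rdb$ satisfies the ACCP if and only if every ascending chain $(r_n S)_{n \ge 1}$ with $r_n \in R$ stabilizes, using the order and bottom coefficient of power series in one direction and the monomial chain $(r_n x T)_{n \ge 1}$ in the other. If anything, you are slightly more explicit than the paper, which states only the implication $f \in T^\times \Rightarrow f(0) \in S^\times$ while tacitly using the converse (that a bottom coefficient in $S^\times$ forces a unit of $T$, the point where power series genuinely differ from polynomials) in its second direction.
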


\begin{proof}
	Let $T := S + x R\ldb x \rdb$, and observe that if $f \in T^\times$, then $f(0) \in S^\times$. By virtue of Proposition~\ref{prop:A+xB[x] and ACCP} it suffices to show that $S + xR\ldb x \rdb$ satisfies the ACCP if and only if every ascending chain $(r_n S)_{n \ge 1}$, where $r_n \in R$ for every $n \in \nn$, stabilizes.
	\smallskip
	
	For the direct implication, suppose that $T$ satisfies the ACCP, and let $(r_n)_{n \ge 1}$ be a sequence with terms in $R$ such that $(r_n S)_{n \ge 1}$ is an ascending chain. After assuming that not all terms of $(r_n)_{n \ge 1}$ are zero and dropping finitely many of them, we can further assume that $r_n \neq 0$ for any $n \in \nn$. Observe that $(r_n x T)_{n \ge 1}$ is an ascending chain of principal ideals of $T$ because $r_n/r_{n+1} \in S \subseteq T$. As $T$ satisfies the ACCP, the chain $(r_nxT)_{n \ge 1}$ stabilizes, and so we can take $N \in \nn$ such that for each $n \ge N$ the equality $r_N x = r_n x f_n$ holds for some $f_n \in T^\times$. Now, for each $n \ge N$, the equality $r_N = r_n f_n$ implies that $r_N S = r_n S$ because $f_n(0) \in S^\times$. Hence the chain $(r_n S)_{n \ge 1}$ stabilizes.
	\smallskip
	
	Conversely, suppose that every ascending chain $(r_n S)_{n \ge 1}$, where $r_n \in R$ for every $n \in \nn$, stabilizes. Let $(f_nT)_{n \ge 1}$ be an ascending chain of principal ideals of $T$ and assume, without loss of generality, that $f_n \neq 0$ for any $n \in \nn$. Observe that $(\text{ord} \, f_n)_{n \ge 1}$ must become stationary, and so after dropping finitely many terms from $(f_nT)_{n \ge 1}$, we can assume the existence of $m \in \nn$ that $\text{ord} \, f_n = m$ for every $n \in \nn$. For each $n \in \nn$, let $r_n \in R$ be the nonzero coefficient of $x^m$ in the power series $f_n$. Since all the terms of $(f_n)_{n \ge 1}$ have the same order, $r_n/r_{n+1} \in S^*$ for every $n \in \nn$, which implies that $(r_nS)_{n \ge 1}$ is a chain. By assumption, this chain must stabilize, and so for all $n \in \nn$ large enough, $r_n/r_{n+1} \in S^\times$, which implies $f_n T = f_{n+1}T$. Hence $(f_nT)_{n \ge 1}$ stabilizes, and we can conclude that $T$ satisfies the ACCP.
\end{proof}

After putting together Corollary~\ref{cor:if S is a field, S+xR[x] is ACCP} and Proposition~\ref{prop:S+xR[[x]] and the ACCP}, we immediately obtain the following corollary.

\begin{corollary} \label{cor:if S is a field, S+xR[[x]] is ACCP}
	Let $R$ be an integral domain, and let $S$ be a subring of $R$. If $S$ is a field, then $S + xR\ldb x \rdb$ satisfies the ACCP.
\end{corollary}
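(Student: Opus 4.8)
The plan is to obtain this statement as an immediate corollary of the two results that directly precede it, so that no new argument is needed. The hypothesis to exploit is that $S$ is a field, and the conclusion concerns the ACCP in the power-series subring $S + xR\ldb x \rdb$. First I would invoke Corollary~\ref{cor:if S is a field, S+xR[x] is ACCP}, whose hypotheses coincide exactly with those here: since $S$ is a field, the \emph{polynomial} subring $S + xR[x]$ satisfies the ACCP. This is the step carrying the genuine content, but it has already been established (ultimately resting on the equivalence in Proposition~\ref{prop:A+xB[x] and ACCP}).

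Next I would apply Proposition~\ref{prop:S+xR[[x]] and the ACCP}, which asserts that $S + xR\ldb x \rdb$ satisfies the ACCP if and only if $S + xR[x]$ does. Feeding the conclusion of the previous step into this equivalence yields at once that $S + xR\ldb x \rdb$ satisfies the ACCP. Thus the entire proof is the composition of these two citations, and I would present it in a single sentence.

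There is essentially no obstacle, since the two ingredients were proved independently: the polynomial case via the characterization in Proposition~\ref{prop:A+xB[x] and ACCP}, and the polynomial-to-power-series transfer via an order-and-leading-coefficient stabilization argument. The only point deserving a moment's care is to confirm that the hypotheses align precisely---namely that ``$S$ is a subring of $R$ that happens to be a field'' is exactly what both cited results demand---which it is.
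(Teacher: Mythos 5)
Your proposal is correct and matches the paper exactly: the paper states that the corollary follows immediately "after putting together Corollary~\ref{cor:if S is a field, S+xR[x] is ACCP} and Proposition~\ref{prop:S+xR[[x]] and the ACCP}," which is precisely your two-citation composition. No further argument is needed, and your check that the hypotheses align is the right (and only) point of care.
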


As the following corollary indicates, the statement we obtain from that of Corollary~\ref{cor:A+xB[x] under qf(S) subset of R} after replacing rings of the form $S + R[x]$ by rings of the form $S + xR\ldb x \rdb$ still holds.

\begin{corollary} \label{cor:A+xB[[x]] under qf(S) subset of R}
	Let $R$ be an integral domain, and let $S$ be a subring of $R$ such that $\emph{qf}(S) \subseteq R$. Then the following conditions are equivalent.
	\begin{enumerate}
		\item[(a)] $S + xR\ldb x \rdb$ satisfies ACCP.
		\smallskip
		
		\item[(b)] $S + xR\ldb x \rdb$ is atomic.
		\smallskip
		
		\item[(c)] $S$ is a field.
	\end{enumerate}
	In particular, $S + x R \ldb x \rdb$ is atomic if and only if $S + x R[x]$ is atomic.
\end{corollary}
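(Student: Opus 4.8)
The plan is to establish the cycle of implications (a) $\Rightarrow$ (b) $\Rightarrow$ (c) $\Rightarrow$ (a), with essentially all of the work concentrated in (b) $\Rightarrow$ (c); the other two implications are nearly immediate from results already in hand. Write $T := S + xR\ldb x \rdb$ throughout. For (a) $\Rightarrow$ (b) I would simply invoke Proposition~\ref{prop:ACCP domain are atomic}, which states that every integral domain satisfying the ACCP is atomic. For (c) $\Rightarrow$ (a) I would cite Corollary~\ref{cor:if S is a field, S+xR[[x]] is ACCP} directly, as it asserts that $S + xR\ldb x \rdb$ satisfies the ACCP whenever $S$ is a field. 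Note that the hypothesis $\qf(S) \subseteq R$ is not needed for these two implications.

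The substantive step is (b) $\Rightarrow$ (c), which I would prove by contraposition: assuming $S$ is \emph{not} a field, I will show that $x$ fails to factor into irreducibles in $T$, so $T$ is not atomic. Fix a nonzero nonunit $s \in S$; since $\qf(S) \subseteq R$, the inverse $s^{-1}$ lies in $R$. The crucial observation is that \emph{no element of $T$ of positive order is irreducible in $T$}. Indeed, if $a \in T$ satisfies $\ord a \ge 1$, then $a \in xR\ldb x \rdb$, and because $s^{-1} \in R$ we have $s^{-1}a \in xR\ldb x \rdb \subseteq T$; thus $a = s \cdot (s^{-1}a)$ exhibits $a$ as a product of two nonunits of $T$ (the factor $s$ is a nonunit since $s \notin S^\times$, and $s^{-1}a$ is a nonunit since its positive order forces its constant term $0 \notin S^\times$). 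This is the power-series analogue of the fact, used in the proof of Corollary~\ref{cor:A+xB[x] under qf(S) subset of R}, that no linear monomial is irreducible.

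With this observation in place the contradiction is quick. If $x = a_1 \cdots a_\ell$ were a factorization into irreducibles of $T$, then comparing orders (which is additive on products since $R\ldb x \rdb$ is a domain) forces exactly one factor, say $a_1$, to have order $1$ while the remaining factors have order $0$. But $a_1$ then has positive order, so by the observation above it is reducible, contradicting its irreducibility. Hence $x$ is not atomic, so $T$ is not atomic, completing (b) $\Rightarrow$ (c).

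Finally, for the ``in particular'' clause I would combine the equivalence just proved with its polynomial counterpart: the chain (a)--(c) shows that $S + xR\ldb x \rdb$ is atomic if and only if $S$ is a field, while Corollary~\ref{cor:A+xB[x] under qf(S) subset of R} shows that $S + xR[x]$ is atomic if and only if $S$ is a field, and equating the two conditions yields the asserted equivalence. I expect the only genuine obstacle to be the main observation of the second paragraph: in contrast to the polynomial setting, an order-$1$ factor of $x$ need not be a monomial, so one cannot argue by directly matching degrees of monomials. The peeling identity $a = s \cdot (s^{-1}a)$, which uses only $\ord a \ge 1$ and $s^{-1} \in R$, is precisely what sidesteps this difficulty and makes the argument uniform across all factors of positive order.
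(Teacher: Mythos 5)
Your proof is correct and takes essentially the same route as the paper: the paper handles (a) $\Rightarrow$ (b) and (c) $\Rightarrow$ (a) exactly as you do, and disposes of (b) $\Rightarrow$ (c) by declaring it \emph{mutatis mutandis} from the polynomial case (Corollary~\ref{cor:A+xB[x] under qf(S) subset of R}), with the ``in particular'' clause obtained by the same combination of the two equivalences. Your second paragraph is precisely the correct spelling-out of that adaptation; in particular, replacing the polynomial argument's ``no linear monomial is irreducible'' with ``no positive-order element is irreducible'' via the peeling identity $a = s(s^{-1}a)$ is exactly the adjustment the power series setting requires, since an order-one factor of $x$ need not be a monomial there.
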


\begin{proof}
	(a) $\Rightarrow$ (b): This is clear.
	\smallskip
	
	(b) $\Rightarrow$ (c): This follows \emph{mutatis mutandis} as the given proof of the corresponding statement of Corollary~\ref{cor:A+xB[x] under qf(S) subset of R}.
	\smallskip
	
	(c) $\Rightarrow$ (a): This follows from Corollary~\ref{cor:if S is a field, S+xR[[x]] is ACCP}.
	\smallskip
	
	The last statement of the corollary follows immediately from the equivalence already established and the equivalences of Corollary~\ref{cor:A+xB[x] under qf(S) subset of R}.
\end{proof}

\bigskip
\section{Monoid Algebras}

In this section, we are interested in understanding atomicity as well as the behavior of ascending chains of principal ideals in the class of monoid algebras. Recent progress on the arithmetic of (weakly) Krull monoid algebras can be found in~\cite{FW22,FW22a}.

As we have seen in Grams' construction, previously discussed in Section~\ref{sec:atomic domains}, it is convenient that the monoid of exponents of a given monoid algebra has an order compatible with its operation as it allows us to talk about degree, order, and support, which after all are quite valuable tools in the classical study of polynomial rings. Also, we are interested in monoid/semigroup algebras that are integral domains, and they can be characterized as follows.

\begin{proposition} \label{prop:MA that are integral domains}
	Let $S$ be a commutative semigroup with an identity, and let $R$ be a commutative ring with identity. The semigroup algebra $R[S]$ is an integral domain if and only if $R$ is an integral domain and $S$ is cancellative and torsion-free.
\end{proposition}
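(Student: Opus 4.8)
The plan is to prove the two implications separately, disposing of the forward direction by exhibiting zero-divisors and treating the (more substantial) reverse direction via a leading-term argument made available by a compatible total order on $S$.

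For the forward direction, assume $R[S]$ is an integral domain. First, the assignment $r \mapsto r x^0$, where $0$ denotes the identity of $S$, embeds $R$ as a subring of $R[S]$, so $R$ is an integral domain. To see that $S$ is cancellative, suppose $b+d=c+d$ in $S$ with $b \ne c$; then $x^d(x^b - x^c) = x^{b+d} - x^{c+d} = 0$, and since both $x^d$ and $x^b - x^c$ are nonzero (the latter because $b \ne c$ gives two distinct monomials with nonzero coefficients $1$ and $-1$), this contradicts that $R[S]$ has no zero-divisors.

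Next I would show that $S$ is torsion-free, arguing the contrapositive and using the cancellativity already in hand. Suppose $na = nb$ for some $a, b \in S$ with $a \ne b$, and choose $n$ minimal (so $n \ge 2$). The key algebraic identity is the factorization
\[
	x^{na} - x^{nb} = (x^a - x^b)\sum_{i=0}^{n-1} x^{ia + (n-1-i)b},
\]
whose left-hand side is $0$. Since $x^a - x^b \ne 0$, it suffices to prove that the cofactor is nonzero; this reduces to showing that the exponents $e_i := ia + (n-1-i)b$ are pairwise distinct, so that the cofactor is a sum of distinct monomials each carrying coefficient $1 \ne 0$. I expect this distinctness to be the main obstacle: it can fail for a non-minimal $n$, and in positive characteristic the repeated monomials could conceivably cancel, so minimality is essential. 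I would resolve it as follows: if $e_i = e_j$ with $j < i$, then cancelling the common summand $ja + (n-1-i)b$ from both sides yields $(i-j)a = (i-j)b$ with $0 < i-j < n$, contradicting the minimality of $n$. This produces a zero-divisor of $R[S]$ and completes the forward direction.

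For the reverse direction, assume $R$ is an integral domain and $S$ is cancellative and torsion-free. Here I would invoke the characterization recorded in the Background section: a commutative semigroup is linearly orderable precisely when it is cancellative and torsion-free. Thus $S$ admits a total order $\le$ compatible with its operation. Given nonzero $f,g \in R[S]$, let $s^* = \deg f$ and $t^* = \deg g$ be the largest exponents occurring in their supports, with (nonzero) leading coefficients $c^*$ and $d^*$. Using the strict monotonicity of the order together with cancellativity, any pair of support exponents $s_i \le s^*$ of $f$ and $t_j \le t^*$ of $g$ satisfies $s_i + t_j \le s^* + t^*$, with equality forcing $s_i = s^*$ and $t_j = t^*$. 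Hence the monomial $x^{s^*+t^*}$ occurs in $fg$ with coefficient exactly $c^* d^*$, which is nonzero since $R$ is a domain. Therefore $fg \ne 0$, and $R[S]$ is an integral domain.
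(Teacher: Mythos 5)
Your proposal is correct and follows essentially the same route as the paper's proof: the same telescoping factorization of $x^{na}-x^{nb}$ with $n$ chosen minimal (using cancellativity to see the cofactor's exponents are distinct, hence the cofactor is nonzero), and the same reverse-direction argument via a compatible total order on $S$ and the leading coefficient of $fg$. The only difference is expository: you spell out the pairwise-distinctness of the exponents and the leading-term computation in slightly more detail than the paper does.
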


\begin{proof}
	For the direct implication, assume that $R[S]$ is an integral domain. Then~$R$ is also an integral domain because it is a subring of $R[S]$. To check that the semigroup $S$ is cancellative, take $b,c,d \in S$ such that $b+d = c+d$. Since $x^d$ is different from $0$ and $x^d(x^b - x^c) = x^{b+d} - x^{c+d} = 0$, the fact that $R[S]$ is an integral domain guarantees that $x^b - x^c = 0$, and so $b=c$. To argue that the monoid $S$ is torsion-free, take $b,c \in S$ and $n \in \nn$ such that $nb = nc$. Assume that we have taken $n$ to be the minimum of the set $\{m \in \nn : mb = mc \}$. Observe that
	\[
		\big(x^b - x^c \big) \sum_{i=0}^{n-1} x^{((n-1) - i)b + ic} = x^{nb} - x^{nc} = 0.
	\]
	As $S$ is cancellative, it follows from the minimality of $n$ that no two of the exponents in the polynomial expression $f := \sum_{i=0}^{n-1} x^{((n-1) - i)b + ic} $ are equal. Therefore $f$ must be different from $0$, and so the fact that $R[S]$ is an integral domain guarantees that $x^b - x^c = 0$, which means that $b=c$. Thus, $S$ is torsion-free.
	\smallskip
	
	Conversely, assume that $S$ is cancellative and torsion-free and $R$ is an integral domain. As $S$ is cancellative and torsion-free, we can assume that $S$ is a totally ordered monoid and so that $\deg \, f$ and $\text{ord} \, f$ are well defined for each $f \in R[S]$. Now observe that if $f$ and $g$ are two nonzero elements of $R[S]$, then $\deg \, f + \deg \, g$ belongs to the support of $fg$ and, therefore, $fg$ cannot be zero.
\end{proof}

Since monoids in this survey are assumed to be cancellative, we can rephrase Proposition~\ref{prop:MA that are integral domains} as follows.

\begin{corollary} \label{cor:MA that are integral domains}
	Let $M$ be a monoid, and let $R$ be a commutative ring with identity. Then the monoid algebra $R[M]$ is an integral domain if and only if $R$ is an integral domain and $M$ is torsion-free.
\end{corollary}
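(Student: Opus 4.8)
The plan is to derive this directly from Proposition~\ref{prop:MA that are integral domains}, since the corollary is simply the specialization of that proposition to the case where the semigroup in question is already a monoid. First I would recall the standing convention established in the Background section: throughout this survey the term \emph{monoid} is reserved for a cancellative, commutative semigroup with identity. Thus any monoid $M$ automatically satisfies the cancellativity hypothesis appearing in Proposition~\ref{prop:MA that are integral domains}, and the task reduces to observing that, under this convention, the two-part condition ``$S$ is cancellative and torsion-free'' collapses to the single condition ``$M$ is torsion-free.''

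Concretely, I would apply Proposition~\ref{prop:MA that are integral domains} with $S := M$. That proposition asserts that $R[M]$ is an integral domain if and only if $R$ is an integral domain and $M$ is both cancellative and torsion-free. Because $M$ is a monoid, it is cancellative by definition, so the cancellativity clause is vacuously satisfied and may be dropped from the statement. This yields exactly the claimed equivalence: $R[M]$ is an integral domain if and only if $R$ is an integral domain and $M$ is torsion-free.

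There is essentially no genuine obstacle here, as all the substantive work---establishing both directions of the equivalence for arbitrary commutative semigroups with identity---was already carried out in the proof of Proposition~\ref{prop:MA that are integral domains}. The only point requiring care is purely terminological: one must invoke the convention that ``monoid'' includes cancellativity, which is precisely what lets the general semigroup-level result specialize cleanly. Accordingly, the proof I would write is a single sentence noting that the corollary follows from Proposition~\ref{prop:MA that are integral domains} upon restricting to monoids, for which cancellativity is part of the definition.
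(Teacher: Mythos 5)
Your proposal is correct and matches the paper exactly: the paper presents this corollary as an immediate rephrasing of Proposition~\ref{prop:MA that are integral domains}, noting that monoids in the survey are by convention cancellative, so the cancellativity clause drops out. Nothing further is required.
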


In light of Corollary~\ref{cor:MA that are integral domains}, we tacitly assume that every monoid we mention or deal with throughout this section is torsion-free. 

 Our next goal is to obtain necessary conditions on $M$ and $R$ for $R[M]$ to be atomic or to satisfy the ACCP. First, we need to gather information about the units and irreducible monomials of $R[M]$.

\begin{lemma} \label{lem:units and irreducibles in MAs}
	For an integral domain $R$ and a torsion-free monoid $M$, the following statements hold.
	\begin{enumerate}
		
		\item $\{rx^b : r \in R^* \text{ and } b \in M\}$ is a divisor-closed submonoid of $R[M]^*$.
		\smallskip
		
		\item $R[M]^\times = \{r x^u : r \in R^\times \text{ and } u \in \uu(M) \}$.
		\smallskip
		
		\item $r \in \mathcal{A}(R[M])$ if and only if $r \in \mathcal{A}(R)$.
		\smallskip
		
		\item $x^b \in \mathcal{A}(R[M])$ if and only if $b \in \mathcal{A}(M)$.
	\end{enumerate}
\end{lemma}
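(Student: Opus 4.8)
The plan is to exploit the linear orderability of $M$. Since $M$ is torsion-free and cancellative, it admits a total order compatible with its operation, so the degree and order functions are well defined on $R[M]^*$ and satisfy the additivity relations in~\eqref{eq:degree and order}. The single observation that drives everything is that, for a nonzero $f \in R[M]$, one has $\deg f \ge \text{ord} \, f$, with equality precisely when $\text{supp} \, f$ is a singleton, i.e. when $f$ is a nonzero monomial. Parts~(1) and~(2) carry the real content; parts~(3) and~(4) then drop out by comparing supports.

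For part~(1), write $S := \{rx^b : r \in R^*, b \in M\}$. That $S$ is a submonoid is immediate: $rx^b \cdot r'x^{b'} = rr'x^{b+b'}$ is again in $S$ because $R$ is a domain, and $1 = 1 \cdot x^0 \in S$. For divisor-closedness, suppose $gh = rx^b \in S$ with $g,h \in R[M]^*$. Applying~\eqref{eq:degree and order} gives $\deg g + \deg h = b = \text{ord} \, g + \text{ord} \, h$, so $(\deg g - \text{ord} \, g) + (\deg h - \text{ord} \, h) = 0$; since each summand is nonnegative, both vanish, forcing $g$ (and likewise $h$) to be a monomial by the observation above. Hence $g \in S$, which is exactly divisor-closedness. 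I expect this to be the main obstacle, as it is the one place where the order structure is genuinely used.

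Part~(2) is then a short corollary. The inclusion $\supseteq$ is witnessed by the explicit inverse $r^{-1}x^{-u}$. For $\subseteq$, any $f \in R[M]^\times$ divides $1 \in S$, so part~(1) gives $f = rx^b \in S$; writing its inverse as $r'x^{b'}$ and comparing supports in $rr'x^{b+b'} = 1 \cdot x^0$ yields $b + b' = 0$ and $rr' = 1$, whence $b \in \uu(M)$ and $r \in R^\times$.

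Finally, for parts~(3) and~(4) I would argue both directions uniformly from the two facts just proved. Non-unit-ness is read straight off part~(2): a constant $r$ is a unit in $R[M]$ iff $r \in R^\times$, and $x^b$ is a unit iff $b \in \uu(M)$. For the nontrivial implications, any factorization of $r$ (resp. $x^b$) inside $R[M]$ lands in $S$ by part~(1); comparing supports then collapses it to a factorization $r = st$ in $R$ (resp. $b = c+d$ in $M$), up to the units identified in part~(2), so an atom in one structure is precisely an atom in the other. The converse directions are even easier, since a factorization in $R$ (resp. in $M$) is already a factorization in $R[M]$.
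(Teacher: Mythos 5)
Your proposal is correct and follows essentially the same route as the paper's proof: exploit a compatible total order on $M$, use the additivity of $\deg$ and $\mathrm{ord}$ to show every divisor of a nonzero monomial is a monomial (part~(1)), deduce the description of units (part~(2)), and then reduce factorizations of constants and monomials to factorizations in $R$ and $M$ by comparing supports (parts~(3) and~(4)). The only cosmetic difference is that you phrase the key step of part~(1) via the differences $\deg g - \mathrm{ord}\, g$ (which live in $\gp(M)$), whereas the paper argues directly with the equalities $\deg f = \mathrm{ord}\, f$; both are valid.
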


\begin{proof}
	As $M$ is torsion-free, we can assume that $M$ is a totally ordered monoid and so that $\deg \, f$ and $\text{ord} \, f$ are well defined for each nonzero $f \in R[M]$.
	\smallskip

	(1) Fix some nonzero $f,g \in R[M]$ such that $f g$ is a monomial in $R[M]$. Since~$R$ is an integral domain, the equality $\deg \, fg = \text{ord} \, fg$, along with the equalities in~\eqref{eq:degree and order}, ensures that $\deg \, f = \text{ord} \, f$ and $\deg \, g = \text{ord} \, g$, and so that both $f$ and~$g$ are monomials in $R[M]$. Thus, we conclude that every divisor of a nonzero monomial in $R[M]$ must be a nonzero monomial.
	\smallskip
	
	(2) It is clear that for any $r \in R^\times$ and $u \in \uu(M)$, the element $rx^u$ is a unit of $R[M]$. Conversely, suppose that $f$ is a unit of $R[M]$, and take $g \in R[M]$ such that $fg = 1$. It follows from part~(1) that $f = r x^b$ and $g = s x^c$ for some $r,s \in R$ and $b,c \in M$, and so the equalities $rs=1$ and $b+c = 0$ imply that $r \in R^\times$ and $b \in \uu(M)$.
	\smallskip
	
	(3) Fix a nonzero $r \in R$. By part~(2), $r$ is a unit in $R$ if and only if $r$ is a unit in $R[M]$, so we can assume that $r \notin R^\times$. Suppose first that $r$ is irreducible in $R[M]$ and  write $r = st$ for some $s,t \in R$. As $r$ is irreducible in $R[M]$ either $s$ or~$t$ is a unit in $R[M]$, and so it follows from part~(2) that either $s \in R^\times$ or $t \in R^\times$. Thus, $r \in \mathcal{A}(R)$. Conversely, suppose that $r \in \mathcal{A}(R)$ and write $r = f_1 f_2$ for some $f_1, f_2 \in R[M]$. In light of part~(1), $f_1 = r_1 x^{b_1}$ and $f_2 = r_2 x^{b_2}$ for some $r_1, r_2 \in R$ and $b_1, b_2 \in M$. Since $b_1 + b_2 = 0$, both $b_1, b_2 \in \uu(M)$ and $r = r_1 r_2$. As $r \in \mathcal{A}(R)$, either $r_1 \in R^\times$ or $r_2 \in R^\times$, which implies that either $f_1 \in R^\times$ or $f_2 \in R^\times$. Hence $r$ is irreducible in $R[M]$.
	\smallskip
	
	(4) Fix $b \in M$. By part~(2), $b \in \uu(M)$ if and only if $x^b \in R[M]^\times$, so we can assume that $b \notin \uu(M)$. It is clear that if $b \notin \mathcal{A}(M)$ and we write $b = c_1 + c_2$ for some $c_1, c_2 \in M \setminus \uu(M)$, then $x^b = x^{c_1} x^{c_2}$, and so $x^b$ is not irreducible in light of part~(2). Conversely, if $x^b$ is not irreducible, then it can be written as $x^b = f_1 f_2$ for two nonunit $f_1, f_2 \in R[M]$, and so it follows from part~(1) and~(2) that $f_1 = r_1x^{d_1}$ and $f_2 = r_2 x^{d_2}$ for some $r_1, r_2 \in R^\times$ and $d_1, d_2 \in M \setminus \uu(M)$, whence the equality $b = d_1 + d_2$ ensures that $b \notin \mathcal{A}(M)$. Thus, $b \notin \mathcal{A}(M)$ if and only if $x^b$ is not irreducible in $R[M]$.
\end{proof}

We proceed to establish necessary conditions on an integral domain $R$ and a monoid $M$ for the monoid algebra $R[M]$ to be atomic and to satisfy the ACCP.

\begin{proposition} \label{prop:MA necessary conditions}
	Let $R$ be an integral domain, and let $M$ be a monoid. Then the following statements hold.
	\begin{enumerate}
		\item If the monoid algebra $R[M]$ is atomic, then both $R$ and $M$ are atomic.
		\smallskip
		
		\item If the monoid algebra $R[M]$ satisfies the ACCP, then both $R$ and $M$ satisfy the ACCP.
	\end{enumerate}
\end{proposition}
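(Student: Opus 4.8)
The plan is to deduce both statements from Lemma~\ref{lem:units and irreducibles in MAs} by passing to the submonoid of nonzero monomials and keeping careful track of units. Throughout I work inside the multiplicative monoid $R[M]^*$, using that $R$ is atomic (resp.\ satisfies the ACCP) exactly when $R^*$ is atomic (resp.\ satisfies the ACCP), and likewise for $M$ and $R[M]$. The organizing object is the set of nonzero monomials $D := \{r x^b : r \in R^* \text{ and } b \in M\}$, which by part~(1) of Lemma~\ref{lem:units and irreducibles in MAs} is a divisor-closed submonoid of $R[M]^*$, and which the assignment $r x^b \mapsto (r,b)$ identifies with the direct product $R^* \times M$. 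The proofs for $R$ and for $M$ will be mirror images of each other, one reading off the $R$-coordinate and the other the $M$-coordinate of this monomial monoid.

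For part~(1), assume $R[M]$ is atomic. To see that $R$ is atomic, fix a nonzero nonunit $r \in R$, which by part~(2) of Lemma~\ref{lem:units and irreducibles in MAs} is also a nonunit of $R[M]$, and write $r = f_1 \cdots f_n$ with each $f_i \in \ii(R[M])$. Since $r$ is a monomial and $D$ is divisor-closed, each $f_i$ is a monomial $r_i x^{b_i}$; comparing $r$ with $\big(\prod_i r_i\big) x^{\sum_i b_i}$ forces $\sum_i b_i = 0$, so each $b_i \in \uu(M)$ and hence each $x^{b_i}$ is a unit by part~(2). Thus $r_i$ is an associate of the irreducible $f_i$, so $r_i \in \ii(R[M])$, and part~(3) gives $r_i \in \ii(R)$; the resulting equality $r = r_1 \cdots r_n$ exhibits $r$ as a product of atoms of $R$. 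For $M$, fix a nonunit $b \in M$ and factor the nonunit $x^b$ into irreducibles in $R[M]$; part~(1) makes each factor a monomial $r_i x^{b_i}$, the coefficient comparison $\prod_i r_i = 1$ forces every $r_i \in R^\times$, each $x^{b_i}$ is therefore irreducible, and part~(4) yields $b_i \in \ii(M)$ with $b = \sum_i b_i$.

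For part~(2), assume $R[M]$ satisfies the ACCP; then so does $R[M]^*$, and since $D$ is divisor-closed, Corollary~\ref{cor:divisor-closed submonoids of an ACCP monoid} shows that $D \cong R^* \times M$ satisfies the ACCP. It then remains only to observe that a direct product of monoids satisfies the ACCP iff each factor does, since an ascending chain of principal ideals in $R^*$ (resp.\ in $M$) lifts to an ascending chain in $R^* \times M$ by pairing each generator with the identity of the other factor, and such a lifted chain stabilizes exactly when the original does. Alternatively one can bypass the product language entirely: a nonstabilizing chain $(r_n R)_{n \ge 1}$ in $R$ would yield the chain $(r_n R[M])_{n \ge 1}$, whose forced stabilization gives $r_n/r_N \in R[M]^\times \cap R = R^\times$ (again by part~(2)), a contradiction; the chain $(x^{b_n} R[M])_{n \ge 1}$ disposes of $M$ in the same way.

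I expect no deep obstacle here, as every step is short once Lemma~\ref{lem:units and irreducibles in MAs} is available; the one point demanding care is the unit bookkeeping. Because neither the constants $R^*$ nor the pure monomials $\{x^b : b \in M\}$ is, by itself, divisor-closed in $R[M]^*$ when $M$ has nontrivial units, one cannot apply Corollary~\ref{cor:divisor-closed submonoids of an ACCP monoid} to them directly; the entire monomial monoid $D$ must be used, and at each stage the spurious unit factors $x^{u}$ with $u \in \uu(M)$ and the scalars $s \in R^\times$ must be absorbed via part~(2) of Lemma~\ref{lem:units and irreducibles in MAs}. This is the only place where the standing hypotheses that $R$ is a domain and $M$ is torsion-free (ensuring the well-behaved degree/order and hence the monomial decompositions underlying the lemma) are implicitly in play.
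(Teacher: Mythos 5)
Your proposal is correct and takes essentially the same approach as the paper: part~(1) is, step for step, the paper's own argument via Lemma~\ref{lem:units and irreducibles in MAs}, and your ``alternative'' chain-lifting argument in part~(2) (lifting $(r_nR)_{n\ge 1}$ and $(b_n+M)_{n\ge 1}$ to chains in $R[M]$ and using the unit-monomial description to descend stabilization) is exactly what the paper does. Your primary route for part~(2), through the divisor-closed monomial monoid $D \cong R^* \times M$, Corollary~\ref{cor:divisor-closed submonoids of an ACCP monoid}, and the observation that ACCP passes from a direct product to its factors, is a valid mild repackaging of the same idea rather than a genuinely different method.
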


\begin{proof}
	(1) Take a nonzero nonunit $r \in R$. Then it follows from part~(2) of Lemma~\ref{lem:units and irreducibles in MAs} that $r$ is a nonzero nonunit of $R[M]$, and so $r$ factors into irreducibles in $R[M]$. By virtue of part~(1) of Lemma~\ref{lem:units and irreducibles in MAs}, this factorization must have the form $r = \prod_{i=1}^\ell r_i x^{b_i}$ for some $r_1, \dots, r_\ell \in R$ and $b_1, \dots, b_\ell \in M$. Since $b_1 + \dots + b_\ell = 0$, we see that for each $i \in \ldb 1, \ell \rdb$ the monomial $x^{b_i}$ is a unit of $R[M]$ by part~(2) of Lemma~\ref{lem:units and irreducibles in MAs}. Thus, the equality $r = r_1 \cdots r_\ell$, in tandem with part~(3) of Lemma~\ref{lem:units and irreducibles in MAs}, guarantees that $r$ is atomic in $R$. Hence $R$ is atomic.
	
	To argue that $M$ is atomic, take a non-invertible element $b \in M$. It follows from part~(3) of Lemma~\ref{lem:units and irreducibles in MAs} that $x^b \notin R[M]^\times$. Now the fact that $R[M]$ is atomic, in tandem with part~(2) of Lemma~\ref{lem:units and irreducibles in MAs}, allows us to write $x^b = x^{a_1} \cdots x^{a_k}$ for some $a_1, \dots, a_k \in M$ such that the monomials $x^{a_1}, \dots, x^{a_k}$ are irreducible in $R[M]$. Then it follows from part~(4) of Lemma~\ref{lem:units and irreducibles in MAs} that $a_1, \dots, a_k \in \mathcal{A}(M)$, and so the equality $b = a_1 \cdots a_k$ implies that $b$ is atomic. Thus, $M$ is atomic.
	\smallskip
	
	(2) To argue that $R$ satisfies the ACCP, let $(r_n R)_{n \ge 1}$ be an ascending chain of principal ideals of~$R$. Then $(r_n R[M])_{n \ge 1}$ is an ascending chain of principal ideals in $R[M]$ and, as $R[M]$ satisfies the ACCP, there exists $N \in \nn$ such that $r_n R[M] = r_N R[M]$ for every $n \ge N$. Then, for each $n \ge N$, the element $r_N/r_n$ belongs to $R[M]^\times$ and so it is a unit monomial by part~(2) of Lemma~\ref{lem:units and irreducibles in MAs}, whence the fact that both $r_N$ and $r_n$ have degree zero in $R[M]$ guarantee that $r_N$ and~$r_n$ are associates in $R$. Hence $r_n R = r_N R$ for every $n \ge N$, and so $(r_n R)_{n \ge 1}$ stabilizes. Thus, $R$ satisfies the ACCP.
	
	To prove that $M$ satisfies the ACCP, let $(b_n + M)_{n \ge 1}$ be an ascending chain of principal ideals of~$M$. This implies that $(x^{b_n}R[M])_{n \ge 1}$ is an ascending chain of principal ideals of $R[M]$. As $R[M]$ satisfies the ACCP, there exists $N \in \nn$ such that $x^{b_n} R[M] = x^{b_N}R[M]$ for every $n \in \nn$ with $n \ge N$. Now for each $n \ge N$, it follows from part~(3) of Lemma~\ref{lem:units and irreducibles in MAs} that the elements $b_N$ and $b_n$ differ by an invertible element in $M$, which means that $b_n + M = b_N + M$. Therefore the chain of ideals $(b_n + M)_{n \ge 1}$ stabilizes in $M$. Hence $M$ satisfies the ACCP.
\end{proof}

The ascent of atomicity to monoid algebras, which is the converse of Proposition~\ref{prop:MA necessary conditions}, was brought to attention by Gilmer in~\cite[page 189]{rG84}, and it can be stated as the following question: for any pair $(M,R)$, where $M$ is a torsion-free cancellative monoid and $R$ is an integral domain, does the fact that both $M$ and $R$ are atomic imply that the monoid algebra $R[M]$ is atomic. As mentioned earlier in this survey, the ascent of atomicity to polynomial extensions, which is a specialization of the same ascent problem, was emphasized by Anderson, Anderson, and Zafrullah in their landmark paper~\cite{AAZ90} and negatively answered by Roitman in~\cite{mR93}. The dual specialized problem of whether, for a prescribed field~$F$, the monoid algebra $F[M]$ is atomic when $M$ is atomic was settled more recently by the authors in~\cite{CG19}, where they gave a negative answers for any field of prime cardinality. Even more recently, Rabinovitz and the second author in~\cite{GR23} provided a more general answer, constructing a rank-one torsion-free atomic monoid $M$ such that the monoid algebra $F[M]$ is not atomic for any field $F$.
\smallskip

Before proving that atomicity does not ascend to monoid algebras, we need an auxiliary lemma about the irreducibility of certain polynomials.

\begin{lemma} \label{lem:irreducible polynomials}
	The following statements hold.
	\begin{enumerate}
		\item For each $n \in \mathbb{N}$, the polynomial $x^{2 \cdot 3^n} + x^{3^n} + 1$ is irreducible in $\ff_2[x]$.
		\smallskip
		
		\item Let $F$ be a field of finite characteristic $p$ and $n \in \nn$ be such that $p \nmid n$. Then the polynomial $x^n+y^n+x^ny^n$ is irreducible in $F[x,y]$.
	\end{enumerate}
	
\end{lemma}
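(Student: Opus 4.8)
The plan is to reduce each part to a classical irreducibility criterion. For part~(1), I would first rewrite the polynomial as
\[
	x^{2 \cdot 3^n} + x^{3^n} + 1 = \frac{x^{3^{n+1}} - 1}{x^{3^n} - 1},
\]
so that any root $\zeta \in \overline{\ff_2}$ satisfies $\zeta^{3^{n+1}} = 1$ but $\zeta^{3^n} \neq 1$. Since $\gcd(2,3) = 1$, the polynomial $x^{3^{n+1}} - 1$ is separable over $\ff_2$, whence so is our polynomial, and its roots are precisely the primitive $3^{n+1}$-th roots of unity. The degree $[\ff_2(\zeta) : \ff_2]$ equals the multiplicative order $d$ of $2$ modulo $3^{n+1}$, because the cyclic group $\ff_{2^d}^\times$ has order $2^d - 1$ and contains an element of order $3^{n+1}$ exactly when $3^{n+1} \mid 2^d - 1$. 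The crux is the computation $d = 2 \cdot 3^n$, which I would establish by showing inductively that the order of $2$ modulo $3^k$ is $2 \cdot 3^{k-1}$: the inductive step follows from the lifting-the-exponent identity $\mathsf{v}_3\big((2^2)^{3^{k-1}} - 1\big) = \mathsf{v}_3(2^2 - 1) + (k-1) = k$ (applied to the base $2^2 \equiv 1 \pmod 3$), which shows $3^{k+1} \nmid 2^{2 \cdot 3^{k-1}} - 1$ and hence forces the order to jump by a factor of $3$ when passing from $3^k$ to $3^{k+1}$. Since the minimal polynomial of $\zeta$ over $\ff_2$ divides our polynomial and both are monic of the common degree $2 \cdot 3^n = [\ff_2(\zeta):\ff_2]$, they must coincide, giving irreducibility.

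For part~(2), I would regard $f(x,y) = x^n + y^n + x^n y^n = (1 + x^n)\,y^n + x^n$ as a polynomial in $y$ over the UFD $F[x]$. First I would check that $f$ is primitive in $y$, which is immediate from $(1 + x^n) - x^n = 1$, so the coefficients $1 + x^n$ and $x^n$ are coprime in $F[x]$. By Gauss's lemma, irreducibility of $f$ in $F[x,y]$ is then equivalent to its irreducibility in $F(x)[y]$; dividing through by the unit $1 + x^n$ of $F(x)$, this reduces to showing that the binomial $y^n - a$ is irreducible over $K := F(x)$, where $a := -\frac{x^n}{1+x^n}$.

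At this stage I would invoke the Vahlen--Capelli criterion: $y^n - a$ is irreducible over $K$ if and only if $a \notin K^\ell$ for every prime $\ell \mid n$ and, in the case $4 \mid n$, also $a \notin -4K^4$. The observation that drives both verifications is that $1 + x^n$ is \emph{separable}: because $p \nmid n$, its derivative $n x^{n-1}$ vanishes only at $x = 0$, which is not a root of $1 + x^n$, so $1 + x^n$ has no repeated root in $\overline{F}$. Now suppose $a \in K^\ell$ for some prime $\ell \mid n$; since $\ell \mid n$ makes $x^n = (x^{n/\ell})^\ell$ an $\ell$-th power, a short argument comparing coprime numerators and denominators forces $1 + x^n$ to be a constant multiple of the $\ell$-th power of a polynomial, contradicting separability (as $\ell \ge 2$ and $n \ge 1$). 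The exceptional condition $a \in -4K^4$ (which can occur only when $p \neq 2$, since $4 \mid n$ forces $n$ even and $p \nmid n$) is ruled out identically, now producing a fourth power of a polynomial. Hence both conditions of the criterion hold and $f$ is irreducible in $F[x,y]$.

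The step I expect to be most delicate is the order computation in part~(1), where the lifting-the-exponent estimate must be applied to the base $2^2$ and tracked carefully so that the order of $2$ does not stabilize prematurely modulo higher powers of $3$; in part~(2), the only real subtlety is remembering to dispose of the $4 \mid n$ clause of the binomial criterion, since overlooking it is the standard pitfall.
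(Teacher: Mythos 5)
Your proposal is correct and follows essentially the same route as the paper: part~(1) identifies $x^{2\cdot 3^n}+x^{3^n}+1$ with the $3^{n+1}$-th cyclotomic polynomial over $\ff_2$ and reduces irreducibility to the fact that $2$ is a primitive root modulo $3^{n+1}$, while part~(2) runs the same Gauss's lemma plus Vahlen--Capelli reduction, ending with the observation that $1+x^n$ cannot be an associate of a proper power in $F[x]$. The only differences are in how the sub-steps are justified: you prove the primitive-root fact directly by a lifting-the-exponent induction where the paper cites it, and you obtain the final contradiction in part~(2) from the separability of $1+x^n$ where the paper uses an equivalent derivative computation --- both hinging on the hypothesis $p \nmid n$.
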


\begin{proof}
	(1) If $p$ is an odd prime and $r$ is a primitive root modulo $p^2$, then $r$ is a primitive root modulo~$p^n$ for every $n \ge 2$  \cite[page~179]{KR98}. This, along with the fact that $2$ is a primitive root modulo $3^2$, guarantees that $2$ is also a primitive root modulo $3^k$ for every $k \ge 2$. Fix $n \in \nn$, and then set $f_n(x) := x^{2 \cdot 3^n} + x^{3^n} + 1$ and let $Q_n(x)$ denote the $n^{\text{th}}$ cyclotomic polynomial over $\ff_2$. From the equality $x^{3^{n+1}} - 1 = \prod_{i=0}^{n+1} Q_{3^i}(x)$, we obtain that
	\[
		Q_{3^{n+1}}(x) = \frac{x^{3^{n+1}} - 1}{\prod_{i=0}^n Q_{3^i}(x)} = \frac{x^{3^{n+1}} - 1}{x^{3^n} - 1} = f_n(x).
	\]
	Therefore $f_n(x)$ is the ${3^{n+1}}$-th cyclotomic polynomial over $\ff_2$ (see \cite[Example~2.46]{LN86}). Since $2$ is a primitive root module $3^k$ for any $k \ge 2$, the least positive integer $d$ satisfying that $2^d  \equiv 1 \pmod{3^{n+1}}$ is $\phi(3^{n+1}) = 2 \cdot 3^n$. Hence~\cite[Theorem~2.47(ii)]{LN86} guarantees that the polynomial $f_n(x)$ is irreducible.
	\smallskip
	
	(2) Set $f(x,y) = y^n(1+x^n)+x^n$. Since $1 + x^n$ and $x^n$ are relatively primes in $F[x]$,  the polynomial $f(x,y)$ is primitive as a polynomial on $y$ over $F[x]$. By Gauss's Lemma, arguing that $f(x,y)$ is irreducible in $F[x][y]$ amounts to proving that it is irreducible in $F(x)[y]$, where $F(x)$ is the field of fractions of $F[x]$. We can write now
	\[
		f(x,y) = (1+x^n)y^n + x^n = (1+x^n) \bigg( y^n + \frac{x^n}{1+x^n} \bigg).
	\]
	Set $a_x = \frac{x^n}{1+x^n}$. Then $f(x,y)$ is irreducible in $F[x,y]$ if and only if $y^n + a_x$ is irreducible in $F(x)[y]$. Using~\cite[Theorem~8.1.6]{gK89}, one can guarantee the irreducibility of $y^n + a_x$ by verifying that $a_x \notin 4 F(x)^4$ when $4$ divides $n$ and that $-a_x \notin F(x)^q$ for any prime $q$ dividing $n$. To prove that these two conditions hold suppose, by way of contradiction, that $a_x \in c F(x)^q$, where $c \in \{-1,4\}$ and $q$ is either $4$ or a prime dividing $n$. Take $h_1(x), h_2(x) \in F[x] \setminus \{0\}$ such that $h_1(x)$ and $h_2(x)$ are relatively prime in $F[x]$ and $a_x = c \big( \frac{h_1(x)}{h_2(x)} \big)^q$. Therefore
	\begin{equation} \label{eq:equality of rational fractions}
		x^n h_2(x)^q = c(1 + x^n)h_1(x)^q
	\end{equation}
	From~\eqref{eq:equality of rational fractions}, one can deduce that $h_2(x)^q$ and $1+x^n$ are associates in $F[x]$, and so there exists $\alpha \in F^\times$ such that $h_2(x)^q = \alpha(1+x^n)$. Taking derivatives in both sides of $h_2(x)^q = \alpha(1+x^n)$ and using that $p \nmid n$, we obtain that $h_2(x) = x^m$ for some $m \in \nn$, yielding that $c(1+x^n)h_1(x)^q = x^{n+mq}$. However, this contradicts that $1+x^n$ does not divide $x^{n+mq}$ in $F[x]$. Hence $f(x,y)$ is irreducible in $F[x,y]$.
\end{proof}

We are in a position to argue that atomicity does not ascend to monoid algebras over fields.

\begin{theorem} \label{thm:main result 1}
	For each $p \in \pp$, there exists a (finite-rank) torsion-free atomic monoid $M$ such that the monoid algebra $\ff_p[M]$ is not atomic.
\end{theorem}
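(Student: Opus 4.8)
The plan is to realize $M$ as the rank-one atomic Puiseux monoid built in Example~\ref{ex:ad-hoc atomic PM without the ACCP}, whose atoms are $\{a_n, b_n : n \in \nn\}$ with $a_n + b_n = \tfrac{1}{2^{n-1}}$, so that every dyadic $\tfrac1{2^n}$ lies in $M$ and the chain $\big(\tfrac1{2^n}+M\big)_{n\ge1}$ does not stabilize. (The monoid may be tuned to $p$ through the sequence $(\ell_n)$, but its atomicity and the failure of the ACCP are already in hand.) Since $M$ is atomic, the whole difficulty is to manufacture a single nonzero nonunit of $\ff_p[M]$ that admits no factorization into irreducibles. First I would record the group-algebra picture: $\gp(M)=\zz[\tfrac16]$, which is both $2$- and $3$-divisible, so the Frobenius endomorphism shows that in $\ff_2[\gp(M)]$ (resp.\ $\ff_3[\gp(M)]$) every nonzero nonunit is a square (resp.\ a cube) and hence that these ambient group algebras are antimatter, exactly as in Example~\ref{ex:ACCP monoid algebra with a non-atomic localization}. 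The moral is that any genuine atom of $\ff_p[M]$ must be an artifact of the \emph{proper} submonoid $M$, not of its Grothendieck group.

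The witness I would use is a binomial supported on dyadic exponents, taking $f:=1+x$ when $p=2$ (note $1=a_1+b_1\in M$) and a companion such as $1-x$ when $p$ is odd. Because $\tfrac1{2^k}\in M$ for every $k$, inside $\ff_p[M]$ itself we already have proper factorizations of unbounded length: in characteristic $2$ the repeated squaring $1+x=\big(1+x^{1/2^k}\big)^{2^k}$, and in odd characteristic the repeated halving $1-x=\prod_{j=1}^k\big(1+x^{1/2^j}\big)\cdot\big(1-x^{1/2^k}\big)$. These exhibit, for every $k$, a factorization of $f$ into $\ge k$ nonunits together with a reducible ``remainder'' (a binomial $1\pm x^{1/2^k}$) that can always be split once more. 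The essential point, and the one that needs the real work, is that this process can \emph{never} be completed: none of the remainder binomials is a product of irreducibles of $\ff_p[M]$, so $f$ is not atomic.

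This is where Lemma~\ref{lem:irreducible polynomials} is indispensable, and where the two cases separate. The only way to peel an actual atom off a binomial $1\pm x^{v}$ is through a trinomial splitting: over $\ff_2$ the identity $1+x^{v}=\big(1+x^{v/3}\big)\big(1+x^{v/3}+x^{2v/3}\big)$ brings in the cyclotomic trinomials $1+t^{3^n}+t^{2\cdot3^n}$, which part~(1) certifies to be irreducible; for $p$ odd the admissible atoms are instead governed, through a suitable monomial substitution, by the polynomials $x^n+y^n+x^ny^n$ with $n$ a power of $2$ (hence coprime to $p$), which part~(2) certifies to be irreducible. In either case the certified atom carries a prescribed $3$-adic denominator $3^{m}$ in at least one exponent of its support; but in $M$ the available $3$-adic denominator is rigidly coupled to the $2$-adic level by the growth hypothesis $3^{\ell_n-\ell_{n-1}}>2^{n+1}$, so a monomial whose exponent simultaneously has $3$-adic denominator $3^{m}$ and the large $2$-adic denominator created by repeated halving simply does not belong to $M$. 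Thus the trinomial factorizations that would produce atoms escape $\ff_p[M]$, the only factorizations internal to $\ff_p[M]$ are the barren halving ones, and $f$ has no irreducible factorization at all; combined with Proposition~\ref{prop:MA necessary conditions} (whose converse we are refuting) and the atomicity of $M$, this yields the theorem.

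The hard part, as the above makes clear, is the final nonexistence claim: ruling out \emph{every} conceivable factorization $f=\pi_1\cdots\pi_m$ into irreducibles, not merely the halving ones (note that unbounded factorization length alone is harmless, since an atomic domain need not be a BFD). Concretely, one must control the supports of the hypothetical factors $\pi_i$ --- showing, via the simultaneous $2$-adic and $3$-adic valuations permitted in $M$ and via Lemma~\ref{lem:units and irreducibles in MAs} for the monomial factors, that no admissible finite family of atoms can multiply to $f$ --- and thereby convert the antimatter behavior of $\ff_p[\gp(M)]$ into the genuine absence of an irreducible factorization in the subalgebra $\ff_p[M]$. Making this support/denominator bookkeeping airtight is the technical heart of the proof.
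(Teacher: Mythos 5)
Your proposal stops short of the theorem's actual content, and the gap is exactly where the work lies. To show a specific element of $\ff_p[M]$ is not atomic you must control \emph{every} factorization of it, and your argument never does this. The inference you lean on --- ``each halving chain leaves a remainder $1\pm x^{1/2^k}$ that is never a product of irreducibles, so $f$ is not atomic'' --- is not valid by itself: an element can perfectly well have a non-atomic divisor in one factorization and still admit a different factorization into atoms (the failure of the set of atomic elements to be saturated is precisely the phenomenon driving Section~9 of the paper). The paper closes this hole by a classification lemma: for $p=2$ it takes the witness $x^2+x+1$ (not $1+x$), and shows that \emph{any} factor of it in $\ff_2[M]$ must equal $\big(x^{2/2^k}+x^{1/2^k}+1\big)^t$. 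This works cleanly because after the substitution $x\mapsto x^{6^k}$ the witness becomes $\big(x^{2\cdot 3^k}+x^{3^k}+1\big)^{2^k}$, a power of a \emph{single} irreducible of $\ff_2[x]$ (Lemma~\ref{lem:irreducible polynomials}(1)), so the UFD property of $\ff_2[x]$ pins down all factors; Frobenius then makes each such factor a square, hence reducible, and non-atomicity follows. With your witness $1+x$, clearing denominators gives $x^N+1=\big(x^{3^b}+1\big)^{2^a}=\prod_{i=0}^{b}Q_{3^i}(x)^{2^a}$, a product of $b+1$ \emph{distinct} irreducibles; candidate factors now form a multi-parameter family, and deciding which of them pull back into $\ff_2[M]$ requires determining exactly which exponents with $3$-adic denominators lie in $M$ --- the ``bookkeeping'' you explicitly defer. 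That bookkeeping is not a technicality; it is the proof.

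The odd-$p$ case of your proposal is moreover structurally off. Your rank-one monoid is $2$-divisible but not $p$-divisible for odd $p$, so the engine that makes the paper's candidate factors reducible --- every admissible factor is a $p$-th power via Frobenius --- is simply unavailable; nothing forces your binomials $1+x^{1/2^j}$ to be reducible, and if they are irreducible your witness may well be atomic. You also invoke Lemma~\ref{lem:irreducible polynomials}(2), but that statement concerns the two-variable polynomial $x^n+y^n+x^ny^n\in F[x,y]$ and has no bearing on a rank-one monoid algebra. The paper's proof for odd $p$ changes \emph{both} ingredients for exactly these reasons: it uses the rank-two monoid $M_p\times M_p$ built from $p$-power denominators (so $1/p^k$ is an exponent and Frobenius applies) and the witness $x+y+xy$, whose factors are classified via part~(2) of the lemma as $\big(x^{1/p^k}+y^{1/p^k}+x^{1/p^k}y^{1/p^k}\big)^t$, each a $p$-th power. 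To repair your write-up you would either have to carry out the full support analysis for $1\pm x$ over your monoid in both characteristics (including proving your unproved claims about which $3$-adic exponents lie in $M$), or adopt the paper's choices of witness and, for odd $p$, of monoid.
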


\begin{proof}
	We fix a prime $p$ and divide the proof into the following two cases.
	\smallskip
	
	\noindent \textsc{Case 1:} $p=2$. Let $(\ell_n)_{n \ge 1}$ be a strictly increasing sequence of positive integers such that the inequality $3^{\ell_n - \ell_{n-1}} > 2^{n+1}$ holds for every $n \in \nn$, and consider the sequences $(a_n)_{n \ge 1}$ and $(b_n)_{n \ge 1}$ with terms
	\[	
		a_n := \frac{2^n3^{\ell_n} - 1}{2^{2n} 3^{\ell_n}} \quad \text{and} \quad b_n := \frac{2^n3^{\ell_n} + 1}{2^{2n} 3^{\ell_n}}.
	\]
	Let $M$ be the Puiseux monoid generated by the set $A = \{a_n, b_n : n \in \nn\}$. We have already seen in Example~\ref{ex:ad-hoc atomic PM without the ACCP} that $M$ is an atomic monoid not satisfying the ACCP.
	
	Before proving that $\ff_2[M]$ is not atomic, we need to argue that each factor of the element $x^2 + x + 1$ in $\ff_2[M]$ has the form $\big( x^{2 \frac{1}{2^k}} + x^{\frac{1}{2^k}} + 1 \big)^t$ for some $k \in \mathbb{N}_0$ and $t \in \mathbb{N}$. First, note that because $\big\langle \frac1{2^k} : k \in \mathbb{N}_0 \big\rangle$ is a submonoid of $M$, the polynomial expression $x^{2 \frac{1}{2^k}} + x^{\frac{1}{2^k}} + 1$ belongs to $\ff_2[M]$ for all $k \in \nn_0$. Now suppose that $f(x)$ is a factor of $x^2 + x + 1$ in $\ff_2[M]$, and take $g(x) \in \ff_2[M]$ such that $x^2 + x + 1 = f(x) g(x)$. Then there exists $k \in \mathbb{N}_0$ such that
	\[
		f\big( x^{6^k}\big) g\big( x^{6^k} \big) = \big( x^{6^k} \big)^2 + x^{6^k} + 1 = \big( x^{2 \cdot 3^k} + x^{3^k} + 1 \big)^{2^k}
	\]
	in the polynomial ring $\ff_2[x]$. It follows from part~(1) of Lemma~\ref{lem:irreducible polynomials} that the polynomial $x^{2 \cdot 3^k} + x^{3^k} + 1$ is irreducible in $\ff_2[x]$. Since $\ff_2[x]$ is a UFD, there exists $t \in \mathbb{N}$ such that
	\begin{align} \label{eq:factor description}
		f \big( x^{6^k}\big) = \big( x^{2 \cdot 3^k} + x^{3^k} + 1 \big)^t = \big( \big( x^{6^k} \big)^{2 \frac{1}{2^k}} + \big( x^{6^k} \big)^{\frac{1}{2^k}} + 1 \big)^t.
	\end{align}
	After changing variables in~(\ref{eq:factor description}), one obtains that $f(x) = \big( x^{2 \frac{1}{2^k}} + x^{\frac{1}{2^k}} + 1 )^t$. Thus, each factor of $x^2 + x + 1$ in $\ff_2[M]$ has the desired form.
	
	Now suppose, by way of contradiction, that the monoid algebra $\ff_2[M]$ is atomic. Then we can write $x^2 + x + 1 = \prod_{i=1}^n f_i(x)$ for some $n \in \mathbb{N}$ and irreducible elements $f_1(x), \dots, f_n(x)$ in $\ff_2[M]$. Since $f_1(x)$ is a factor of $x^2 + x + 1$, there exist $k \in \mathbb{N}_0$ and $t \in \mathbb{N}$ such that $f_1(x) = \big( x^{2 \frac{1}{2^k}} + x^{\frac{1}{2^k}} + 1 )^t$. As $f_1(x)$ is irreducible, $t=1$. Now the equality $f_1(x) = \big( x^{2 \frac{1}{2^{k+1}}} + x^{\frac{1}{2^{k+1}}} + 1 \big)^2$ contradicts the fact that $f_1(x)$ is irreducible in $\ff_2[M]$. Hence $\ff_2[M]$ is not atomic.
	\smallskip
	
	\noindent \textsc{Case 2:} $p$ is odd. Let $(p_n)_{n \ge 1}$ be the strictly increasing sequence with underlying set $\pp$, and then consider the Puiseux monoid
	\[
		M_p := \Big\langle \frac{1}{p^n p_n} : p_n \neq p \Big\rangle.
	\]
	As in Example~\ref{ex:Grams monoid}, we can check that $M_p$ is atomic but does not satisfy the ACCP (note that $M_2$ is the Grams' monoid). Now set $M := M_p \times M_p$ and observe that~$M$ is a torsion-free, rank-$2$, atomic monoid. 
	
	We proceed to argue that the monoid algebra $\ff_p[M]$ is not atomic. To improve notation, for each $(q,r) \in M$, we write the monomial expression $x^{(q,r)}$ of $\ff_p[M]$ as a polynomial expression in two variables, namely, $x^q y^r$. We claim that each non-unit factor of $f := x + y + xy$ in $\ff_p[M]$ has the form
	\[
		\big( x^{\frac{1}{p^k}} + y^{\frac{1}{p^k}} + x^{\frac{1}{p^k}}y^{\frac{1}{p^k}} \big)^t
	\]
	for some $k \in \nn_0$ and $t \in \nn$. To prove our claim, let $g \in \ff_p[M]$ be a non-unit factor of $f$, and take $h \in \ff_p[M]$ such that $f = g \, h$. Then there exist $k \in \nn_0$ and $a \in \nn$ with $p \nmid a$ such that $g(x^{ap^k}, y^{ap^k})$ and $h(x^{ap^k}, y^{ap^k})$ are both in the polynomial ring $\ff_p[x,y]$. After changing variables, we obtain
	\[
		g(x^{ap^k},y^{ap^k})h(x^{ap^k},y^{ap^k}) = x^{ap^k} + y^{ap^k} + x^{ap^k}y^{ap^k} = (x^a + y^a + x^ay^a)^{p^k}.
	\]
	By part~(2) of Lemma~\ref{lem:irreducible polynomials}, the polynomial $x^a + y^a + x^ay^a$ is irreducible in the polynomial ring $\ff_p[x,y]$. Since $\ff_p[x,y]$ is a UFD, there exists $t \in \nn$ such that $g\big( x^{ap^k}, y^{ap^k} \big) = \big(x^a + y^a + x^ay^a \big)^t$. Going back to the original variables, we obtain $g(x,y) = \big( x^{\frac{1}{p^k}}+ y^{\frac{1}{p^k}} + x^{\frac{1}{p^k}} y^{\frac{1}{p^k}} \big)^t$, which establishes our claim. Now observe that $f$ is not irreducible because $f = \big( x^\frac1p + y^\frac1p + x^\frac1p y^\frac1p \big)^p$. In light of the established claim, any factor $g$ of $f$ in a potential decomposition into irreducibles of $\ff_p[M]$ must be of the form $\big( x^{\frac{1}{p^k}} + y^{\frac{1}{p^k}} + x^{\frac{1}{p^k}}y^{\frac{1}{p^k}} \big)^t$ and, therefore,
	\begin{align} \label{eq:main theorem I}
		g = \big( x^{\frac{1}{p^{k+1}}} + y^{\frac{1}{p^{k+1}}} + x^{\frac{1}{p^{k+1}}}y^{\frac{1}{p^{k+1}}} \big)^{pt}.
	\end{align}
	Since $x^{\frac{1}{p^{k+1}}} + y^{\frac{1}{p^{k+1}}} + x^{\frac{1}{p^{k+1}}}y^{\frac{1}{p^{k+1}}} \in \ff_p[M]$, the equality~(\ref{eq:main theorem I}) would contradict that $g$ is an irreducible element of $\ff_p[M]$. Thus, the monoid algebra $\ff_p[M]$ is not atomic.
\end{proof}

\bigskip
\section{Hereditary atomicity}

The main purpose of this section is to discuss hereditary atomicity, which has been studied in the recent papers~\cite{CGH23,fG23,GL24,GV23} motivated by~\cite{rG70}, where Gilmer studied hereditarily Noetherian domains.

\begin{definition}
	An integral domain $R$ (resp., a monoid $M$) is called \emph{hereditarily atomic} if every subring of $R$ (resp., every submonoid of $M$) is atomic.
\end{definition}

It was proved by Li and the second author in~\cite{GL24} that every hereditarily atomic monoid satisfies the ACCP (and that both conditions are equivalent in the class of reduced monoids), and it was conjectured by Hasenauer and the authors in~\cite[Conjecture~6.1]{CGH23} that every hereditarily atomic integral domain satisfies the ACCP. Hoping to motivate work on the same conjecture, which we proceed to highlight, we outline in this last section some recent progress towards a better understanding of hereditary atomicity.

\begin{conjecture} \label{conj:HA}
	Every hereditarily atomic domain satisfies the ACCP.
\end{conjecture}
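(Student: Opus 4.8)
The plan is to argue the contrapositive: assuming $R$ does not satisfy the ACCP, I would produce a non-atomic subring of $R$, contradicting hereditary atomicity. Since $R$ satisfies the ACCP if and only if its multiplicative monoid $R^*$ does, the failure of the ACCP yields a sequence $(r_n)_{n \ge 1}$ in $R^*$ together with nonunits $s_2, s_3, \dots \in R \setminus R^\times$ such that $r_n = r_{n+1} s_{n+1}$ for every $n$; equivalently, $R^*$ is a monoid that does not satisfy the ACCP. By the theorem of Li and the second author recorded above (every hereditarily atomic monoid satisfies the ACCP), the monoid $R^*$ cannot be hereditarily atomic, so there exists a submonoid $N$ of $R^*$ that is not atomic.

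The heart of the argument would then be to convert the non-atomic submonoid $N$ into a non-atomic subring. The natural candidate is the subring $S := \Pi[N]$ generated over the prime subring $\Pi$ of $R$ (which is $\zz$ or $\ff_p$) by $N$. Here I would like to invoke the machinery of the section on monoid algebras: there is a canonical surjection $\varphi \colon \Pi[x;N] \onto S$ sending $x^a \mapsto a$, and if this map were an isomorphism then $S$ would be the monoid algebra $\Pi[N]$, whence part~(1) of Proposition~\ref{prop:MA necessary conditions} (atomicity of $R[M]$ forces atomicity of $M$) would immediately give that $S$ is not atomic, because $N$ is not atomic. Lemma~\ref{lem:units and irreducibles in MAs} would supply the needed control over units and irreducible monomials.

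The main obstacle is precisely that $\varphi$ need not be injective: the elements of $N$ live inside $R$ and may satisfy additive relations that are not monoid relations, so $S$ is in general a proper quotient of $\Pi[x;N]$ rather than a genuine monoid algebra. Passing to such a quotient can create new units, new irreducibles, and new factorizations built from the additive structure, any of which can repair the non-atomicity of $N$ inside $S$. This is exactly the phenomenon that makes the ring-theoretic statement strictly harder than its monoid counterpart, and it is the reason the transfer that works so cleanly for monoids is not directly available here.

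Consequently, I would attempt to choose the witnessing data as rigidly as possible: rather than accepting an arbitrary non-atomic submonoid, I would try to extract from the failing chain $(r_n)$ a set of elements of $R^*$ whose only relations over $\Pi$ are the monoid relations, so that the subring they generate is honestly a monoid algebra and the transfer above applies. When $R$ has large enough transcendence degree over $\Pi$, or when the relevant nonunits can be replaced by algebraically independent associates, this can be arranged and the conjecture follows; the genuine difficulty, and the likely breeding ground for any counterexample, is the case in which every witness to the failure of the ACCP is forced by the arithmetic of $R$ to satisfy unavoidable additive relations, collapsing $S$ away from a monoid algebra. Handling this algebraically rigid regime is the step I expect to be decisive, and is, I believe, why the statement remains a conjecture.
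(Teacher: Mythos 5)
What you were asked to prove is Conjecture~\ref{conj:HA}, which the paper explicitly leaves \emph{open}: it is attributed to \cite{CGH23}, and the surrounding section only records partial progress (the characterization of hereditarily atomic fields in Theorem~\ref{thm:meat}, the polynomial and Laurent-polynomial results). So there is no proof in the paper to compare yours against, and your proposal---which you yourself present as a strategy with an unresolved decisive step---is not a proof either. Within those limits, your opening reduction is sound: failure of the ACCP in $R$ is equivalent to failure of the ACCP in the monoid $R^*$, and the theorem of \cite{GL24} quoted in the paper then yields a non-atomic submonoid $N$ of $R^*$; also, in the contrapositive one may assume $R$ itself is atomic, since otherwise $R$ is already the desired non-atomic subring.

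The gap you isolate is the genuine one, and it is worth saying precisely why it is fatal rather than technical. The subring $\Pi[N]$ generated by $N$ over the prime subring is in general a proper quotient of the monoid algebra $\Pi[x;N]$, and Proposition~\ref{prop:MA necessary conditions} says nothing about such quotients; indeed, the ambient ring $R$ is itself an atomic ring whose multiplicative monoid contains the non-atomic submonoid $N$, so the mere presence of a non-atomic multiplicative submonoid can never, by itself, force a subring to be non-atomic---one needs the subring's additive structure to be governed by $N$, which is exactly what fails. Your fallback of choosing witnesses with ``only monoid relations'' is indeed what the transfer requires, but no mechanism for arranging this can come from the failure of the ACCP alone: that hypothesis supplies the multiplicative relations $r_n = r_{n+1}s_{n+1}$ and gives no control whatsoever over additive or algebraic relations. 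Note also that outright algebraic independence is impossible (the chain relations are themselves algebraic relations over $\Pi$), and that when every element of $R$ is algebraic over its prime subring---which by Lemma~\ref{lem:necessary conditions for HAFs} is forced for all hereditarily atomic fields of characteristic zero---no witness system with only monoid relations exists at all. In effect, your argument succeeds exactly when $R$ contains (a localization of) a genuine non-ACCP monoid algebra, i.e., the Grams-type situation of Theorem~\ref{thm:Grams domain}, and the content of the conjecture is precisely the remaining, algebraically rigid cases, which you concede are untreated.
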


Observe that the converse of the statement in Conjecture~\ref{conj:HA} does not hold: the integral domain $\qq[x]$ satisfies the ACCP but contains the non-atomic subring $\zz + x\qq[x]$.

\medskip
\subsection{Hereditarily Atomic Fields}

Although the notion of hereditary atomicity is not well understood yet in the more general context of integral domains, hereditarily atomic fields have been characterized by Hasenauer and the authors in~\cite[Theorem~4.4]{CGH23}. There are plenty of examples of fields that are not hereditarily atomic, including, of course, the fields of fractions of non-atomic domains. It is worth noting that there are hereditarily atomic domains whose corresponding fields of fractions are not hereditarily atomic.

\begin{example} \label{ex:rings of polynomials over Z are HAD}
	Consider the polynomial ring $R = \zz[x]$. Since $R^\times = \{\pm 1\}$, for each subring $S$ of $R$ we see that $S^\times = \{\pm 1\} = R^\times \cap S$. Therefore it follows now from Proposition~\ref{prop:submonoids satisfying ACCP} that~$S$ satisfies the ACCP and so it is atomic. Thus, $R$ is hereditarily atomic. However, the quotient field $\qq(x)$ of $R$ is not hereditarily atomic as it contains the non-atomic subring $\zz + x \qq[x]$.
\end{example}

The primary purpose of this subsection is to present the characterization of hereditarily atomic fields given in~\cite{CGH23}. To do so, we need the next two lemmas.

\begin{lemma}\label{val}
	For a valuation domain $V$, the following conditions are equivalent.
	\begin{enumerate}
		\item[(a)] $V$ is atomic.
		\smallskip
		
		\item[(b)] $\dim V \le 1$ and $V$ is discrete.
		\smallskip
		
		\item[(c)] $V$ is Noetherian (and so a DVR).
		\smallskip
		
		\item[(d)] $V$ is discrete and Archimedean.
	\end{enumerate}
\end{lemma}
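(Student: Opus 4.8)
The plan is to translate all four conditions into properties of the group of divisibility $G := G(V)$, which for a valuation domain is a totally (linearly) ordered abelian group, and to show that each of (a)--(d) is equivalent to the single statement that $G \cong \zz$ or $G$ is trivial (equivalently, that $V$ is a DVR or a field). Throughout I would use two standard facts from ordered-group theory: the correspondence between the Krull dimension of a valuation domain and the rank of $G$ (the order type of its chain of convex subgroups), and H\"older's theorem, which states that a totally ordered abelian group is Archimedean if and only if it order-embeds into $(\rr,+)$. In these terms ``$\dim V \le 1$'' is exactly ``$G$ is Archimedean'' (rank $0$ being the trivial group/field and rank $1$ the nontrivial Archimedean case), and ``$V$ is Archimedean'' means that $G$ is Archimedean.

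First I would dispose of (a) using Theorem~\ref{atomicity:characterization}, which says $V$ is atomic precisely when every positive element of $G$ is a sum of minimal positive elements. Since $G$ is totally ordered, a minimal positive element is unique when it exists: if $0 < a \le \varepsilon$ with $\varepsilon$ minimal positive, then minimality forces $a = \varepsilon$. Hence, if $V$ is atomic and not a field, the unique minimal positive element $\varepsilon$ exists and every positive element equals $n\varepsilon$ for some $n \in \nn$, so $G = \zz\varepsilon \cong \zz$; conversely, both $G \cong \zz$ (a DVR) and $G = \{0\}$ (a field) are clearly atomic. This yields (a) $\iff$ ``$G \cong \zz$ or $G$ trivial.''

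Next I would record that (c) is equivalent to the same value-group condition: finitely generated ideals of a valuation domain are principal, so a Noetherian valuation domain is a (local) PID, hence a DVR or a field, whose value groups are $\zz$ and $\{0\}$; the converse is immediate. For (b) and (d), I would invoke H\"older's theorem to replace ``$\dim V \le 1$'' by ``$G$ Archimedean,'' so that (b) and (d) become the identical pair of hypotheses --- $G$ discrete and Archimedean --- and are therefore equivalent to one another. Interpreting ``discrete'' as ``$G_{>0}$ has a least element,'' an Archimedean such $G$ embeds into $\rr$ as a subgroup possessing a least positive element, hence as a discrete (cyclic) subgroup, giving $G \cong \zz$ or $G = \{0\}$; conversely $\zz$ is both Archimedean and discrete. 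Assembling the three equivalences shows that (a), (b), (c), (d) all reduce to ``$G \cong \zz$ or $G$ trivial,'' and hence are mutually equivalent.

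The main obstacle --- indeed essentially the only nonroutine content --- is the interface with ordered-group theory. One must be careful that discreteness alone is insufficient: for example $\zz \oplus \zz$ under the lexicographic order has a least positive element but is not Archimedean and gives a non-atomic rank-two valuation domain (the positive element $(1,0)$ is not a sum of copies of the minimal positive element $(0,1)$), so the Archimedean/dimension hypothesis is genuinely needed. Pinning down that a discrete Archimedean group is cyclic requires H\"older's embedding theorem. By contrast, I expect the remaining ingredients --- uniqueness of the minimal positive element and the step that a Noetherian valuation domain is a DVR --- to be short once the value-group dictionary is in place.
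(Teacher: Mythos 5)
Your proof is correct, but it takes a genuinely different route from the paper's. You translate all four conditions into a single normal form for the value group --- $G(V) \cong \zz$ or $G(V)$ trivial --- using the paper's own Theorem~\ref{atomicity:characterization} for (a), the Noetherian-B\'ezout-implies-PID observation for (c), and, for (b) and (d), the dictionary ``$\dim V \le 1 \iff G$ Archimedean $\iff V$ Archimedean'' supplied by the rank--dimension correspondence and H\"older's theorem, plus the fact that a subgroup of $\rr$ with a least positive element is cyclic. The paper instead argues ring-theoretically through a cycle of implications: (a)~$\Rightarrow$~(b) by contradiction, taking nested nonzero primes $P \subsetneq Q$, factoring an element of $P$ to find an irreducible $a_j \in P$, and using the valuation property to force $q \in Q\setminus P$ to be an associate of $a_j$; (d)~$\Rightarrow$~(c) by a similar two-prime argument where the Archimedean condition $\bigcap_n x^n V = 0$ kills the smaller prime; (b)~$\Leftrightarrow$~(c) is cited as well known, (c)~$\Rightarrow$~(a) follows from Noetherian~$\Rightarrow$~atomic, and (c)~$\Rightarrow$~(d) from Krull's Intersection Theorem. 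Your approach buys uniformity and conceptual transparency --- all four statements visibly collapse to one fact about $G(V)$, and it reuses the group-of-divisibility machinery the paper has already set up --- at the cost of importing heavier classical inputs (H\"older's embedding theorem and the rank-equals-dimension correspondence) that the paper's elementary, self-contained ideal-theoretic argument avoids. One point you handled correctly but should keep explicit: Theorem~\ref{atomicity:characterization} is stated only for domains that are not fields, so the trivial-group case must be (and in your write-up is) treated separately; likewise your reading of ``discrete'' as ``$G_{>0}$ has a least element'' agrees with the paper's own usage in its proof of (a)~$\Rightarrow$~(b), and your $\zz\oplus\zz$ (lexicographic) example correctly isolates why the Archimedean hypothesis cannot be dropped.
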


\begin{proof}
	(a) $\Rightarrow$ (b): Suppose, by way of contradiction, that there are nonzero prime ideals $P$ and $Q$ such that $P \subsetneq Q$. Let $p$ be a nonzero element of $P$. Since $V$ is atomic, $p$ decomposes into irreducibles, namely, $p = a_1 \cdots a_m$ for some $a_1, \dots, a_m \in \ii(V)$. Because $P$ is prime, $a_j \in P$ for some $j \in \ldb 1,m \rdb$. Take $q \in Q \setminus P$. Since $a_j \nmid_V q$, the fact that $V$ is a valuation domain guarantees that $q \mid_V a_j$. Thus, $q$ and $a_j$ are associates, contradicting that $q \notin P$. Therefore $\dim V \leq 1$. We now note that if $V$ is not discrete, then $V$ has no irreducible elements: this is because the value group of~$V$ has no minimum positive element. 
	\smallskip
	
	(b) $\Leftrightarrow$ (c): This is well known.
	\smallskip
	
	(c) $\Rightarrow$ (a): Every Noetherian domain is atomic (see~\cite[Proposition~2.2]{AAZ90}).
	\smallskip
	
	(c) $\Rightarrow$ (d): This is also clear as every DVR is a discrete valuation and every Noetherian domain is Archimedean by Krull's Intersection Theorem.
	\smallskip
	
	(d) $\Rightarrow$ (c): It suffices to argue that $\dim V \le 1$. To this end, we suppose that~$P$ and $Q$ are prime ideals such that $P \subsetneq Q$. Now take $x \in Q \setminus P$ and $y \in P$. For each $n \in \mathbb{N}$, it is clear that $x^n\in Q \setminus P$, and so $x^n \mid_V y$. Therefore $y \in \bigcap_{n \in \mathbb{N}} Rx^n$, and the fact that $V$ is Archimedean guarantees that $y=0$. Hence $P$ is the zero ideal, and we can conclude that $\dim V \le 1$.
\end{proof}

Let us take a look at some necessary conditions for a field to be hereditarily atomic.

\begin{lemma} \label{lem:necessary conditions for HAFs}
	Let $F$ be a field. If $F$ is hereditarily atomic, then the following statements hold.
	\begin{enumerate}
		\item If $\emph{\ch}(F) = 0$, then $F$ is algebraic over $\mathbb{Q}$.
		\smallskip
		
		\item If $\emph{\ch}(F) = p \in \pp$, then the transcendence degree of $\ff_p \subseteq F$ is at most $1$.
		\smallskip
		
		\item Every valuation of $F$ is both discrete and Archimedean.
	\end{enumerate}
\end{lemma}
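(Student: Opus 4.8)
The plan is to prove all three parts by contraposition, in each case manufacturing a single non-atomic subring of $F$ to contradict hereditary atomicity. The common engine for parts~(1) and~(2) is Corollary~\ref{cor:A+xB[x] under qf(S) subset of R}: whenever $S$ is an integral domain that is not a field and $R = \qf(S)$, the ring $S + xR[x]$ is not atomic. This is precisely the mechanism already witnessed by the recurring example $\zz + x\qq[x]$, and the whole argument reduces to locating isomorphic copies of such rings inside $F$.

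For part~(1), I would assume $\ch(F) = 0$ yet $F$ is not algebraic over $\qq$, and fix $t \in F$ transcendental over $\qq$. Since $\qq \subseteq F$ and $t$ is transcendental, $\qq[t]$ is a genuine polynomial ring, so $\zz + t\qq[t]$ is a subring of $F$ isomorphic to $\zz + x\qq[x]$ (send $t \mapsto x$). Taking $S = \zz$ and $R = \qq = \qf(\zz)$ in Corollary~\ref{cor:A+xB[x] under qf(S) subset of R} shows that $\zz + x\qq[x]$ is not atomic because $\zz$ is not a field; hence $\zz + t\qq[t]$ is a non-atomic subring of $F$, the desired contradiction.

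Part~(2) runs in parallel. Assuming $\ch(F) = p$ and that the transcendence degree of $\ff_p \subseteq F$ is at least $2$, I would pick algebraically independent $s,t \in F$. Then $\ff_p(s) \subseteq F$, and with $S := \ff_p[s]$ and $R := \ff_p(s) = \qf(S)$, the set $\ff_p[s] + t\,\ff_p(s)[t]$ is a subring of $F$ of the form $S + tR[t]$ covered by Corollary~\ref{cor:A+xB[x] under qf(S) subset of R}. Since $\ff_p[s]$ is not a field, this subring is not atomic, again contradicting hereditary atomicity.

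Part~(3) is immediate from the machinery already in place. Given any valuation $v$ of $F$ with valuation ring $V$, the ring $V$ is a subring of $F$, hence atomic by hypothesis; the equivalence (a)$\,\Leftrightarrow\,$(d) in Lemma~\ref{val} then forces $V$ to be discrete and Archimedean. The arguments are short, and the only genuine content lies in choosing the witness subrings in~(1) and~(2); the main point to verify carefully is that these are honest subrings of $F$ and that the hypothesis $\qf(S) \subseteq R$ of Corollary~\ref{cor:A+xB[x] under qf(S) subset of R} holds --- which it does, with equality, in both constructions.
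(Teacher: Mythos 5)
Your proposal is correct and follows essentially the same route as the paper: the same witness subrings $\zz + t\qq[t]$ and $\ff_p[t_1] + t_2\,\ff_p(t_1)[t_2]$ for parts (1) and (2), and the same appeal to Lemma~\ref{val} for part (3). The only cosmetic difference is that you invoke the paper's internal Corollary~\ref{cor:A+xB[x] under qf(S) subset of R} to certify non-atomicity, whereas the paper cites the external reference \cite[Corollary~1.4]{AeA99} for the same fact.
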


\begin{proof}
	(1) Suppose first that $\ch(F) = 0$. Assume, by way of contradiction, that $F$ is not algebraic over $\qq$, and consider the subring $R = \zz + t \qq[t]$ of $F$, where $t \in F$ is a transcendental element over~$\qq$. It is clear that $R$ is isomorphic to the subring $\zz + x \qq[x]$ of the ring of polynomials $\qq[x]$. Since~$\zz$ is not a field, it follows from \cite[Corollary~1.4]{AeA99} that $R$ is not atomic, which is a contradiction.
	\smallskip
	
	(2) Suppose now that $\ch(F) = p \in \pp$. Similar to part~(1), if $t_1, t_2 \in F$ were algebraically independent over $\ff_p$, then the subring $\ff_p[t_1] + t_2 \ff_p(t_1)[t_2]$ of $F$ would be isomorphic to the non-atomic subring $\ff_p[t_1] + x \ff_p(t_1)[x]$ of the polynomial ring $\ff_p(t_1)[x]$.
	\smallskip
	
	(3) For this, it suffices to note that if $F$ has a valuation that is not discrete or a valuation that is not Archimedean, then its corresponding valuation domain will not be atomic by Lemma~\ref{val}.	
\end{proof}

By virtue of Lemma~\ref{lem:necessary conditions for HAFs}, in order to characterize hereditarily atomic fields, we can just restrict our attention to algebraic extensions of $\qq$ and field extensions of $\ff_p$ of transcendence degree at most $1$. As the following example illustrates, not all such field extensions are hereditarily atomic.

\begin{example}
	Take $p \in \pp$, and consider the additive submonoid $M = \zz[\frac1p]_{\ge 0}$ of $\qq_{\ge 0}$, where $\zz[\frac1p]$ is the localization of $\zz$ at the multiplicative set $\{p^n : n \in \nn_0\}$. We claim that the algebraic extension $F := \ff_p(x^m : m \in M)$ of the field $\ff_p(x)$ is not hereditarily atomic. To argue this, observe that the monoid algebra $\ff_p[M]$ is antimatter and, therefore, non-atomic: indeed, as $\ff_p$ is a perfect field and $M = pM$, every polynomial expression in $\ff_p[M]$ is a $p$-th power in $\ff_p[M]$. Since $\ff_p[M]$ is a non-atomic subring of $F$, we conclude that $F$ is not hereditarily atomic.
\end{example}

In order to characterize the fields that are hereditarily atomic the only tool we still need is \cite[Theorem~3.6]{CGH23}, stated as Theorem~\ref{tool} below, which characterizes certain Pr\"ufer domains whose overrings are atomic. A directed system $\{R_\lambda\}_{\lambda \in \Lambda}$ of integral domains is called a \emph{directed integral system} of integral domains if, for all subindices $\alpha, \beta \in \Lambda$ with $\alpha \le \beta$, the ring extension $R_\alpha \subseteq R_\beta$ is integral and $[\text{qf}(R_\beta) : \text{qf}(R_\alpha)] < \infty$.

\begin{theorem}\label{tool}
	Let $D$ be a Pr\"{u}fer domain that is the union of a directed integral system of Dedekind domains. Then all the overrings of $D$ are atomic if and only if $D$ is a Dedekind domain.
\end{theorem}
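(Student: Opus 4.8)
The plan is to prove the two implications separately, treating the forward direction (all overrings atomic $\Rightarrow$ $D$ Dedekind) as the substantive one.

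For the reverse implication, I would invoke the classical fact that every overring of a Dedekind domain is again a Dedekind domain (or the quotient field), being expressible as an intersection $\bigcap_{P} D_P$ of localizations at maximal ideals. Since Dedekind domains are Noetherian, Corollary~\ref{cor:Noetherian implies BF} gives that every such overring is a BFD and in particular atomic, while fields are vacuously atomic. This settles ``$D$ Dedekind $\Rightarrow$ all overrings atomic.''

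For the forward implication I would first exploit that localizations are overrings. We may assume $D$ is not a field, so maximal ideals are nonzero. As $D$ is Pr\"{u}fer, each localization $D_M$ at a maximal ideal $M$ is a valuation domain, and by hypothesis it is atomic; Lemma~\ref{val} then forces $D_M$ to be a DVR. Hence $D$ is locally a DVR and one-dimensional, that is, $D$ is an \emph{almost Dedekind} domain, and moreover Archimedean since $\bigcap_{n} M^n \subseteq \bigcap_{n} M^n D_M = 0$. It remains to upgrade ``almost Dedekind'' to ``Dedekind,'' i.e.\ to Noetherian, and here I would argue the contrapositive: assuming $D$ is not Noetherian, I would manufacture a non-atomic overring. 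The directed integral system $D = \bigcup_\lambda R_\lambda$ enters decisively. Each atom of $D$ must already be irreducible at some finite stage $R_\lambda$: by Lying Over a nonunit of $R_\lambda$ remains a nonunit in every integral extension $R_\beta$, hence in $D$, so any nontrivial factorization occurring at a finite stage persists in $D$. The finite-degree condition $[\qf(R_\beta):\qf(R_\alpha)] < \infty$ bounds, at each finite stage, the number of primes lying over a given prime, so non-Noetherianness of $D$ manifests as \emph{unbounded splitting} of some prime along the system, exactly as the prime $p_0$ splits forever in Example~\ref{ex:non-atomic domain with all localizations at prime ideals PIDs}. I would use this to track a distinguished element $a$ which, in a suitable overring $T$ (obtained by localizing away from the primes whose splitting stabilizes, so as to retain the infinitely-splitting tower), admits factorizations of arbitrary length each still leaving a proper nonunit non-atom factor; such $a$ has no factorization into finitely many atoms in $T$, so $T$ is a non-atomic overring, contradicting the hypothesis.

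The main obstacle is precisely this last construction: converting the failure of the Noetherian property into a concrete overring together with a witness element that provably cannot be written as a finite product of atoms. The delicate points are (i) choosing the localization so that the resulting ring is genuinely an overring sandwiched between $D$ and $\qf(D)$ in which the infinite splitting survives, and (ii) verifying non-atomicity, which amounts to showing that every candidate irreducible factorization of the witness leaves an ``unresolved'' nonunit, i.e.\ the generalization to the present setting of the observation that the identity $p_0 = p_n q_1 \cdots q_n$ never terminates in Example~\ref{ex:non-atomic domain with all localizations at prime ideals PIDs}. Controlling (i) and (ii) uniformly across the whole system is where the finite-degree integral hypotheses on $\{R_\lambda\}_{\lambda \in \Lambda}$ do the real work.
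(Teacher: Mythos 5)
A preliminary remark: the paper itself does not prove this theorem; it explicitly omits the proof as ``somehow technical'' and defers to~\cite{CGH23}, so there is no in-paper argument to compare yours against, and your proposal must stand on its own. Its outer structure is sound. The reverse implication is correct and standard: every overring of a Dedekind domain is Dedekind (or the quotient field), hence Noetherian, hence atomic by Corollary~\ref{cor:Noetherian implies BF}. The first half of the forward implication is also correct: localizations at maximal ideals are overrings, they are valuation domains because $D$ is Pr\"ufer, and Lemma~\ref{val} forces each to be a DVR, so $D$ is almost Dedekind.

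The genuine gap is exactly the step you defer to the end: showing that a non-Noetherian almost Dedekind domain which is a directed union of Dedekind domains (with the finite-degree integrality hypotheses) must admit a non-atomic overring. Your paragraph on this point describes what a proof would need to accomplish rather than giving one. First, ``non-Noetherianness manifests as unbounded splitting of some prime along the system'' is asserted, not proved; for an almost Dedekind domain, failure to be Dedekind is equivalent to failure of finite character, and extracting from this an infinite chain of splittings through the directed system requires an argument (e.g.\ a K\"onig-type lemma applied to the tree of primes of the $R_\lambda$ lying over a fixed element), which you do not supply. Second, the overring $T$ ``obtained by localizing away from the primes whose splitting stabilizes'' is never defined: you do not specify the multiplicative set, verify it is saturated or even multiplicatively closed, or show that the infinitely-splitting tower survives the localization. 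Third, the witness element $a$ admitting no finite factorization into atoms of $T$ is postulated by analogy with Example~\ref{ex:non-atomic domain with all localizations at prime ideals PIDs}, but in that example non-atomicity is visible only because the construction was rigged in advance so that $p_0 = p_n q_n \cdots q_1$ with every $p_n$ a nonunit non-atom; in the present setting you must manufacture that behavior inside some overring of an \emph{arbitrary} non-Dedekind $D$ of the given form, and nothing in the sketch does so. In effect you have produced a correct reduction plus an honest inventory of the obstacles, which is to say you have restated the theorem's difficulty rather than resolved it; this missing construction is the entire technical content of the result, which is precisely why the paper relegates it to~\cite{CGH23}.
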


We have decided to omit the proof of Theorem~\ref{tool} given in~\cite{CGH23} as it is somehow technical. We proceed to characterize the fields that are hereditarily atomic.

\begin{theorem}\label{thm:meat}
	Let $F$ be a field. 
	\begin{enumerate}
		\item If $\emph{char}(F) = 0$, then $F$ is hereditarily atomic if and only if $F$ is an algebraic extension of $\mathbb{Q}$ such that $\overline{\mathbb{Z}}_{F}$ is a Dedekind domain.
		\smallskip
		
		\item If $\emph{char}(F) = p \in \pp$, then $F$ is hereditarily atomic if and only if the transcendental degree of~$F$ over $\ff_p$ is at most~$1$ and $\overline{\ff_p[x]}_F$ is a Dedekind domain for every $x \in F$.
	\end{enumerate}
\end{theorem}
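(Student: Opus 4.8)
The plan is to prove both parts with a single two-step template, using that $F$ is hereditarily atomic exactly when every subring of $F$ is atomic, and pairing the necessary conditions of Lemma~\ref{lem:necessary conditions for HAFs} with Theorem~\ref{tool} for the forward implication and with Proposition~\ref{prop:submonoids satisfying ACCP} for the reverse. In each case there is a natural \emph{base ring} $B$ whose integral closure in $F$ governs the whole argument: $B = \zz$ when $\ch(F) = 0$, and $B = \ff_p[x]$ for a transcendental $x \in F$ when $\ch(F) = p$. I write $\overline{B}_F$ for the integral closure of $B$ in $F$ and, for a subring $S$, write $\overline{S}_F$ for the integral closure of $S$ in $F$.

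For the forward implication, assume $F$ is hereditarily atomic, and first invoke Lemma~\ref{lem:necessary conditions for HAFs}: when $\ch(F) = 0$ it gives that $F$ is algebraic over $\qq$, and when $\ch(F) = p$ it gives that the transcendence degree of $F$ over $\ff_p$ is at most $1$. In characteristic $0$ take $B = \zz$; in characteristic $p$ fix $x \in F$ and take $B = \ff_p[x]$, noting that the condition is vacuous (a field) when $x$ is algebraic over $\ff_p$ and that for transcendental $x$ the degree bound forces $F$ to be algebraic over $\ff_p(x)$. Thus in the relevant cases $F$ is algebraic over $\qf(B)$, so $\qf(\overline{B}_F) = F$ and every overring of $\overline{B}_F$ is a subring of $F$, hence atomic by hereditary atomicity. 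The key structural point is that $\overline{B}_F$ is a Pr\"ufer domain realized as the directed union of the integral closures of $B$ in the finite subextensions of $\qf(B)$ inside $F$; each such integral closure is a Dedekind domain (by Krull--Akizuki, which covers the inseparable finite extensions arising in characteristic $p$), and together they form a directed integral system in the sense required by Theorem~\ref{tool}. Applying Theorem~\ref{tool} then yields that $\overline{B}_F$ --- that is, $\overline{\zz}_F$ (resp.\ $\overline{\ff_p[x]}_F$) --- is a Dedekind domain.

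For the reverse implication, let $S$ be an arbitrary subring of $F$; the goal is to show $S$ is atomic, and I would in fact establish the stronger conclusion that $S$ satisfies the ACCP. The engine is the elementary observation that if $s \in S$ and $s^{-1}$ is integral over $S$, then clearing denominators in an integral equation for $s^{-1}$ yields $s^{-1} \in S$; this gives $(\overline{S}_F)^{\times} \cap S = S^\times$, the unit-preservation hypothesis of Proposition~\ref{prop:submonoids satisfying ACCP} for the extension of multiplicative monoids $S^* \subseteq (\overline{S}_F)^*$. When $\ch(F) = 0$ we have $\zz \subseteq S$, so $\overline{\zz}_F \subseteq \overline{S}_F \subseteq F$; thus $\overline{S}_F$ is an overring of the Dedekind domain $\overline{\zz}_F$, hence itself Dedekind (or a field), hence satisfies the ACCP, and Proposition~\ref{prop:submonoids satisfying ACCP} transfers the ACCP down to $S$. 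When $\ch(F) = p$ I would split into two cases: if $S$ contains an element $t$ transcendental over $\ff_p$, then $\ff_p[t] \subseteq S$ forces $\overline{\ff_p[t]}_F \subseteq \overline{S}_F \subseteq F$, and the same overring-of-Dedekind argument applies with base $\overline{\ff_p[t]}_F$; if instead every element of $S$ is algebraic over $\ff_p$, then each nonzero element of $S$ lies in a finite field and is therefore a unit, so $S$ is a field and is trivially atomic.

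I expect the main obstacle to be the bookkeeping that legitimizes the invocation of Theorem~\ref{tool} in the forward direction: verifying that $\overline{B}_F$ genuinely is a Pr\"ufer domain presented as a directed integral system of Dedekind domains, with the integrality and finite-degree conditions holding between stages, and disposing of the inseparable finite extensions in characteristic $p$ via Krull--Akizuki. A second essential point is that the reverse direction requires $\overline{S}_F$ to satisfy the ACCP rather than merely be atomic, since Proposition~\ref{prop:submonoids satisfying ACCP} is false for atomicity; this is precisely why I would upgrade ``overrings are atomic'' to the classical fact that overrings of a Dedekind domain are again Dedekind. Finally, the asymmetry between the two characteristics --- the case split being unavoidable when $\ch(F) = p$ because $\ff_p$ is a field of dimension $0$, whereas $\zz$ has dimension $1$ and sits inside every characteristic-$0$ subring --- must be accounted for explicitly rather than absorbed into a uniform statement.
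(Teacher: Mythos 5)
Your proposal is correct and follows essentially the same route as the paper's proof: Lemma~\ref{lem:necessary conditions for HAFs} plus Theorem~\ref{tool} (applied to the directed integral system of Dedekind domains whose union is $\overline{\zz}_F$, resp.\ $\overline{\ff_p[x]}_F$) for the forward direction, and the overring-of-a-Dedekind-domain argument combined with unit preservation under integral extensions and Proposition~\ref{prop:submonoids satisfying ACCP} for the reverse, including the same case split in characteristic $p$. Your explicit appeal to Krull--Akizuki for the inseparable finite extensions and your spelled-out justification of $(\overline{S}_F)^\times \cap S = S^\times$ are details the paper leaves implicit, but they do not change the structure of the argument.
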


\begin{proof}
	(1) For the direct implications, suppose that $F$ is hereditarily atomic. It follows from Lemma~\ref{lem:necessary conditions for HAFs} that $F$ is an algebraic extension of $\qq$. Since $\zz$ is a Pr\"ufer domain with quotient field $\qq$, it follows that $\overline{\zz}_F$ is a (one-dimensional) Pr\"ufer domain \cite[Theorem~101]{iK74}. Note now that the set of all finite sub-extensions of $\overline{\zz}_F$ is a directed system of Dedekind domains. This directed system is clearly a directed integral system with directed union $\overline{\zz}_F$. As $F$ is hereditarily atomic, every overring of $\overline{\zz}_F$ is atomic and, therefore, Theorem~\ref{tool} guarantees that $\overline{\zz}_F$ is a Dedekind domain.
	
	Conversely, suppose that $F$ is an algebraic extension of $\qq$ such that $\overline{\zz}_F$ is a Dedekind domain. Let~$S$ be a subring of $F$. As $\overline{\zz}_F \subseteq \overline{S}_F$ holds, the fact that~$F$ is algebraic over both $\qq$ and $\qf(S)$ guarantees that $\qf(\overline{\zz}_F) = F = \qf(\overline{S}_F)$. Therefore $\overline{S}_F$ is an overring of $\overline{\zz}_F$. Since $\overline{\zz}_F$ is a Dedekind domain, so is $\overline{S}_F$ \cite[Theorem~6.21]{LM71}. Thus, $\overline{S}_F$ is Noetherian and, in particular, it must satisfy ACCP. On the other hand, $\overline{S}_F^\times \cap S = S^\times$ because $S \subseteq \overline{S}_F$ is an integral extension. From this, one infers that~$S$ also satisfies ACCP. As a consequence, $S$ is atomic. Hence $F$ is hereditarily atomic.
	\smallskip
	
	(2) Suppose first that $F$ is hereditarily atomic. It follows from Lemma~\ref{lem:necessary conditions for HAFs} that the transcendence degree of $F$ over $\ff_p$ is at most~$1$. Assume that $F$ is not algebraic over $\ff_p$, and fix a transcendental $x \in F$ over $\ff_p$. Then $F$ is an algebraic extension of $\ff_p(x)$. Since $\ff_p[x]$ is a Pr\"ufer domain and $\overline{\ff_p[x]}_F$ is one-dimensional, we can mimic the argument in the first paragraph of this proof to conclude that $\overline{\ff_p[x]}_F$ is a Dedekind domain.
	
	For the reverse implications, we first observe that if $F$ is an algebraic extension of $\ff_p$ for some $p \in \pp$, then every subring of $F$ is a field, whence $F$ is hereditarily atomic. We suppose, therefore, that the transcendental degree of $F$ over $\ff_p$ is~$1$. Let $S$ be a subring of $F$. If every element of $S$ is algebraic over $\ff_p$, then $S$ is a field, and so atomic. Otherwise, let $x \in S$ be a transcendental element over $\ff_p$. Then $\ff_p[x]$ is a subring of $S$. Since $F$ is an algebraic extension of $\ff_p(x)$ and $\overline{\ff_p[x]}_F$ is a Dedekind domain, we can argue that $S$ is hereditarily atomic by simply following the lines of the second paragraph of this proof. Thus,~$F$ is hereditarily atomic.
\end{proof}

\begin{corollary} \label{cor:F(x) is HA when $F$ is an algebraic extension of F_p}
	For $p \in \pp$, let $F$ be an algebraic extension of $\ff_p$. Then the field $F(x)$ is hereditarily atomic.
\end{corollary}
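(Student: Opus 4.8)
The plan is to deduce this directly from Theorem~\ref{thm:meat}(2) applied to the field $L := F(x)$, which has characteristic $p$ since it contains $F$. Two things must be checked: that the transcendence degree of $L$ over $\ff_p$ is at most $1$, and that $\overline{\ff_p[y]}_L$ is a Dedekind domain for every $y \in L$ transcendental over $\ff_p$. The first is immediate: as $F$ is algebraic over $\ff_p$ it contributes transcendence degree $0$, and adjoining the indeterminate $x$ raises the transcendence degree by exactly $1$, so the transcendence degree of $L$ over $\ff_p$ equals $1$. The whole content of the argument therefore lies in the second condition.

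To verify it, I would fix a transcendental $y \in L$ over $\ff_p$ and first observe that $y$ is in fact transcendental over $F$ as well: were $y$ algebraic over $F$, then—since $F$ is itself algebraic over $\ff_p$—the element $y$ would be algebraic over $\ff_p$, a contradiction. Hence $F[y]$ is a polynomial ring over the field $F$, so it is a PID (in particular a Dedekind domain) with field of fractions $F(y)$. Because $F$ is an algebraic, and thus integral, extension of $\ff_p$, the ring $F[y]$ is integral over $\ff_p[y]$; by transitivity of integral closure this yields $\overline{\ff_p[y]}_L = \overline{F[y]}_L$. It therefore suffices to prove that the integral closure of $F[y]$ in $L$ is a Dedekind domain.

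The key step is the observation that $L$ is a \emph{finite} extension of $F(y)$. Writing $y = g(x)/h(x)$ as a reduced rational function in $L = F(x)$, the classical degree formula gives $[L : F(y)] = \max\{\deg g, \deg h\}$, which is finite because $y$ is non-constant. Now $F[y]$ is a Dedekind domain whose fraction field is $F(y)$, and $L$ is a finite extension of $F(y)$; hence, by the standard fact (Krull--Akizuki) that the integral closure of a Dedekind domain in a finite extension of its fraction field is again Dedekind, the ring $\overline{F[y]}_L = \overline{\ff_p[y]}_L$ is Dedekind. With both hypotheses of Theorem~\ref{thm:meat}(2) now confirmed, we conclude that $F(x)$ is hereditarily atomic.

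I expect the main obstacle to be the verification of the Dedekind condition, and specifically the temptation to apply a finite-extension argument directly to $\ff_p[y]$. This fails when $F$ is infinite (for instance $F = \overline{\ff_p}$), since then $[L : \ff_p(y)]$ may well be infinite, and the integral closure of a Dedekind domain in an infinite algebraic extension need not be Dedekind. The device that resolves this is precisely the passage from $\ff_p[y]$ to $F[y]$ via transitivity of integral closure: over $F[y]$ the field $L$ is a genuinely finite extension of the fraction field, which is exactly what the classical theory requires.
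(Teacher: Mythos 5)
Your proof is correct, and it follows the same overall strategy as the paper --- verify the two hypotheses of Theorem~\ref{thm:meat}(2) for $L = F(x)$ --- but the key verification is carried out differently, and in fact more completely. The paper first reduces to the case $F = \overline{\ff_p}$ (harmless, since subrings of $F(x)$ are subrings of $\overline{\ff_p}(x)$) and then checks the Dedekind condition only at the single element $y = x$: there, transitivity of integrality gives $\overline{\ff_p[x]}_{F(x)} = \overline{F[x]}_{F(x)} = F[x]$, which is a PID, and the theorem is invoked. Read against the literal hypothesis of Theorem~\ref{thm:meat}(2), which requires $\overline{\ff_p[y]}_{F(x)}$ to be Dedekind for \emph{every} $y$, the paper's check covers only one element; your argument covers all transcendental $y$ (the algebraic ones yield the field $F$ itself, since $F$ is algebraically closed in $F(x)$, and this case is harmless), using the same transitivity device $\overline{\ff_p[y]}_L = \overline{F[y]}_L$ but then the degree formula $[F(x):F(y)] = \max\{\deg g, \deg h\} < \infty$ together with Krull--Akizuki to conclude Dedekindness. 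What the paper's route buys is brevity: no Krull--Akizuki is needed, because for $y = x$ the integral closure is visibly the PID $F[x]$. What your route buys is that the quantifier ``for every $y$'' in the theorem's hypothesis is honestly discharged rather than sampled at one point; moreover, your closing remark correctly isolates the real pitfall --- that $[L : \ff_p(y)]$ can be infinite when $F$ is infinite, so no finite-extension argument can be run over $\ff_p[y]$ directly --- which is precisely the obstruction that the passage from $\ff_p[y]$ to $F[y]$ circumvents in both proofs.
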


\begin{proof}
	It suffices to show that $F(x)$ is hereditarily atomic assuming that $F$ is the algebraic closure of~$\ff_p$. It is clear that $\overline{\ff_p[x]}_{F(x)} \subseteq \overline{F[x]}_{F(x)} = F[x]$ because $F[x]$ is integrally closed. Conversely, every element of $F[x]$ is integral over $\ff_p[x]$, and so $F[x] \subseteq \overline{\ff_p[x]}_{F(x)}$. Thus, $\overline{\ff_p[x]}_{F(x)} = F[x]$ is a Dedekind domain, and it follows from part~(2) of Theorem~\ref{thm:meat} that $F(x)$ is hereditarily atomic. 
\end{proof}

\medskip
\subsection{Polynomial and Power Series Extensions}

We can harness the characterization of hereditarily atomic fields given in Theorem~\ref{thm:meat} to determine the polynomial rings over fields that are hereditarily atomic.

\begin{proposition} \label{prop:polynomial rings that are HAD}
	Let $F$ be a field. Then the following conditions are equivalent.
	\begin{enumerate}
		\item[(a)] $F[x_1, \dots, x_n]$ is hereditarily atomic for any indeterminates $x_1, \dots, x_n$.
		\smallskip
		
		\item[(b)] $F[x]$ is hereditarily atomic.
		\smallskip
		
		\item[(c)] $F$ is an algebraic extension of $\ff_p$ for some $p \in \pp$.
	\end{enumerate}
\end{proposition}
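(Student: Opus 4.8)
The plan is to prove the cyclic chain of implications $(a)\Rightarrow(b)\Rightarrow(c)\Rightarrow(a)$. The implication $(a)\Rightarrow(b)$ is immediate by specializing to a single indeterminate. The bulk of the work lies in $(b)\Rightarrow(c)$ and $(c)\Rightarrow(a)$, and I expect the contrapositive of $(b)\Rightarrow(c)$ to be the most delicate step.

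For $(b)\Rightarrow(c)$, I would argue by contrapositive: suppose $F$ is \emph{not} an algebraic extension of $\ff_p$ for any prime $p$, and exhibit a non-atomic subring of $F[x]$. There are two cases. If $\ch(F)=0$, then $\qq\subseteq F$, and I would consider the subring $\zz + x F[x]$ of $F[x]$, or more simply restrict to $\zz + x\qq[x]$ sitting inside $F[x]$; since $\zz$ is not a field and $\qf(\zz)=\qq\subseteq F[x]$, Corollary~\ref{cor:A+xB[x] under qf(S) subset of R} shows this subring is not atomic (this is exactly the mechanism used in Lemma~\ref{lem:necessary conditions for HAFs}(1) and Example~\ref{ex:rings of polynomials over Z are HAD}). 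If $\ch(F)=p\in\pp$ but $F$ is transcendental over $\ff_p$, pick $t\in F$ transcendental over $\ff_p$; then $\ff_p[t]$ is a non-field subring of $F$ with $\qf(\ff_p[t])=\ff_p(t)\subseteq F\subseteq F[x]$, so the subring $\ff_p[t] + x\,\ff_p(t)[x]$ of $\ff_p(t)[x]\subseteq F[x]$ is not atomic by the same corollary. Either way $F[x]$ contains a non-atomic subring, so $F[x]$ is not hereditarily atomic.

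For $(c)\Rightarrow(a)$, assume $F$ is an algebraic extension of $\ff_p$. The key structural observation is that the field of fractions of $F[x_1,\dots,x_n]$ has transcendence degree $n$ over $\ff_p$ for $n\ge 1$, so I cannot directly invoke Theorem~\ref{thm:meat} (which requires transcendence degree at most $1$) to the fraction field; instead I work with subrings directly. The cleanest route is induction, reducing to the case $n=1$: here $F[x]$ is hereditarily atomic by Corollary~\ref{cor:F(x) is HA when $F$ is an algebraic extension of F_p}, since any subring of $F[x]$ is in particular a subring of the hereditarily atomic field $F(x)$. For the inductive step I would like to say that every subring $S$ of $F[x_1,\dots,x_n]$ is atomic; the natural tool is again the fact that a subring of a hereditarily atomic field is atomic, so it suffices to show that $\qf\big(F[x_1,\dots,x_n]\big)=F(x_1,\dots,x_n)$ has all its subrings atomic—but that field has transcendence degree $n$ over $\ff_p$, so by Lemma~\ref{lem:necessary conditions for HAFs}(2) it is \emph{not} hereditarily atomic once $n\ge 2$.

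This tension is the main obstacle, and it signals that the statement as written must implicitly carry the convention that the phrase ``for any indeterminates $x_1,\dots,x_n$'' ranges over $n\ge 1$ while the equivalence is genuinely about the single-variable behavior feeding the multivariable one; the honest resolution is that $(a)$ should be read as quantifying over all $n$, and the real content is that hereditary atomicity of $F[x_1,\dots,x_n]$ forces $F$ into the algebraic-over-$\ff_p$ regime and conversely. I would therefore prove $(c)\Rightarrow(a)$ by showing directly that each subring $S\subseteq F[x_1,\dots,x_n]$ satisfies the ACCP using the leading-coefficient/degree argument: order monomials lexicographically, and for an ascending chain $(f_kS)_{k\ge1}$ note that the total degrees stabilize and then the chain of leading coefficients lives in a subring of the hereditarily atomic field $F$ (all of whose subrings are fields, since $F$ is algebraic over $\ff_p$), forcing stabilization—mirroring the argument of Example~\ref{ex:rings of polynomials over Z are HAD}, where $R^\times$ controls everything via Proposition~\ref{prop:submonoids satisfying ACCP}. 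The hard part will be making the leading-coefficient descent rigorous in several variables, i.e.\ verifying that the relevant invertible-element comparison $\overline{S}^\times\cap S=S^\times$ persists so that Proposition~\ref{prop:submonoids satisfying ACCP} applies, and it is precisely here that the hypothesis that $F$ is algebraic over $\ff_p$ (so that every subring of $F$ is a field) does the essential work.
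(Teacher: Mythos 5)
Your plan is correct, and for (c) $\Rightarrow$ (a) it takes a genuinely different route from the paper. Both arguments pivot on the same key fact --- since $F$ is algebraic over $\ff_p$, every subring of $F$ is a field, so $F \cap S$ is a field for every subring $S$ of $R := F[x_1, \dots, x_n]$ --- but they use it differently. The paper gives a direct atomicity argument: an element of $S \setminus F$ all of whose factorizations in $R$ have a factor in $F$ is irreducible in $S$, and any $f \in S$ admits a factorization $f = a_1 \cdots a_m$ with $a_1, \dots, a_m \in S \setminus F$ and $m$ maximal (bounded by $\deg f$), whose factors are then forced to be irreducible in $S$. Your route instead shows that every subring satisfies the ACCP, which is a stronger conclusion; moreover, it can be streamlined so that the step you flag as ``the hard part'' disappears: since $R^\times = F^\times$ and $F \cap S$ is a field, one gets $R^\times \cap S = S^\times$ (this, with ambient ring $R$ rather than an integral closure, is the comparison you write as $\overline{S}^\times \cap S = S^\times$), and since $R$ is Noetherian it satisfies the ACCP, so Proposition~\ref{prop:submonoids satisfying ACCP} immediately gives that $S$ satisfies the ACCP and hence is atomic --- no lexicographic ordering or leading-coefficient descent is needed. (The hands-on version also works: in an ascending chain $(f_k S)_{k \ge 1}$ the total degrees stabilize, after which each quotient $f_k/f_{k+1}$ lies in $F \cap S$, a field, hence in $S^\times$.) Your (b) $\Rightarrow$ (c) is essentially the paper's, except that you bypass Theorem~\ref{thm:meat} by exhibiting the non-atomic subrings $\zz + x\qq[x]$ (characteristic $0$) and $\ff_p[t] + x\ff_p(t)[x]$ (characteristic $p$, $t$ transcendental) directly via Corollary~\ref{cor:A+xB[x] under qf(S) subset of R}; the paper instead invokes Theorem~\ref{thm:meat} to reduce to excluding the transcendence-degree-one case by the same construction. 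Your version is more self-contained.

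One passage should be deleted: the claim that the statement ``must implicitly carry the convention'' because $F(x_1, \dots, x_n)$ fails to be hereditarily atomic for $n \ge 2$. There is no tension in the statement. Hereditary atomicity of a domain neither implies nor requires hereditary atomicity of its quotient field --- the subrings of $F[x_1, \dots, x_n]$ form a strictly smaller collection than the subrings of $F(x_1, \dots, x_n)$ --- and the paper records precisely this phenomenon in Example~\ref{ex:rings of polynomials over Z are HAD} and Corollary~\ref{cor:polynomial rings of several variables over F_p are HAD}. What your observation correctly shows is only that the quotient-field shortcut that works for $n = 1$ (via Corollary~\ref{cor:F(x) is HA when $F$ is an algebraic extension of F_p}) cannot work for $n \ge 2$, which is exactly why a direct argument is needed; your final argument supplies one, so the proof stands once that meta-commentary is removed.
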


\begin{proof}
	(a) $\Rightarrow$ (b): This is clear.
	\smallskip
	
	(b) $\Rightarrow$ (c): Suppose that $F[x]$ is hereditarily atomic. As we have observed, $\ch(F)$ cannot be zero, as otherwise $F[x]$ would contain a copy of the non-atomic domain $\zz + x\qq[x]$. Let $p$ be the characteristic of $F$. Since every subring of $F$ is atomic, it follows from part~(b) of Theorem~\ref{thm:meat} that $F$ is an algebraic extension of either $\ff_p$ or $\ff_p(y)$. Note, however, that~$F$ cannot be an extension of $\ff_p(y)$ because, in such a case, $\ff_p[y] + x\ff_p(y)[x]$ would be a non-atomic subring of $F[x]$ by \cite[Corollary~1.4]{AeA99}. Hence $F$ must be an algebraic extension of $\ff_p$.
	\smallskip
	
	(c) $\Rightarrow$ (a): Suppose now that $F$ is an algebraic extension of the finite field $\ff_p$ for some $p \in \pp$, and set $R = F[x_1, \dots, x_n]$. Let~$S$ be a subring of $R$. If $S$ is a subring of $F$, then $S$ must be atomic by Theorem~\ref{thm:meat} (indeed, in this case, $S$ is a field). Assume, therefore, that $S$ is not a subring of $F$, and take $a \in S \setminus F$ satisfying that for all $g,h \in R$ with $a = gh$, either $g \in F$ or $h \in F$. Write $a = g h$ for some $g, h \in S$ and suppose, without loss of generality, that $g \in F$. Since $F \cap S$ is a subring of $F$, it must be a field, and so $g \in F \cap S$ implies that $g \in S^\times$. Thus, $a \in \ii(S)$. Finally, if $f \in S$, then we can write $f = a_1 \cdots a_m$ for $a_1, \dots, a_m \in S \setminus F$ taking $m \in \nn$ as large as it can be. In this case, $a_1, \dots, a_m$ must be irreducibles in~$S$, and so $a_1 \cdots a_m$ is a factorization of~$f$ in~$S$. Hence $S$ is atomic, and we can conclude that $R$ is hereditarily atomic.
\end{proof}

The following corollary is an immediate consequence of Theorem~\ref{thm:meat} and Proposition~\ref{prop:polynomial rings that are HAD}.

\begin{corollary}[cf. Example~\ref{ex:rings of polynomials over Z are HAD}] \label{cor:polynomial rings of several variables over F_p are HAD}
	Let $F$ be an algebraic extension of $\ff_p$ for some $p \in \pp$. For $n \ge 2$, the ring of polynomials $F[x_1, \dots, x_n]$ is a hereditarily atomic domain whose quotient field is not hereditarily atomic.
\end{corollary}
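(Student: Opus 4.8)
The plan is to deduce both halves of the statement directly from the two results the corollary cites, so that almost no new work is required. For the first half, I would simply invoke Proposition~\ref{prop:polynomial rings that are HAD}: the hypothesis that $F$ is an algebraic extension of $\ff_p$ is precisely condition~(c) there, and the implication (c)~$\Rightarrow$~(a) yields that $F[x_1, \dots, x_n]$ is hereditarily atomic for any indeterminates $x_1, \dots, x_n$. Since a polynomial ring over a field is an integral domain, this already establishes that $F[x_1, \dots, x_n]$ is a hereditarily atomic domain.

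For the second half, I would first identify the quotient field: $\qf(F[x_1, \dots, x_n]) = F(x_1, \dots, x_n)$, the field of rational functions in $n$ variables over $F$. Because $F$ is algebraic over $\ff_p$, the elements $x_1, \dots, x_n$ form a transcendence basis of $F(x_1, \dots, x_n)$ over $\ff_p$, so this field has transcendence degree exactly $n$ over $\ff_p$. As its characteristic is $p$ and $n \ge 2 > 1$, the contrapositive of part~(2) of Lemma~\ref{lem:necessary conditions for HAFs} immediately gives that $F(x_1, \dots, x_n)$ is not hereditarily atomic. Equivalently, one could invoke part~(2) of Theorem~\ref{thm:meat}, whose first requirement—transcendence degree at most~$1$—fails.

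If a self-contained witness is preferred to citing the transcendence-degree obstruction, I would instead exhibit an explicit non-atomic subring, mimicking the proof of Lemma~\ref{lem:necessary conditions for HAFs}(2): since $x_1$ and $x_2$ are algebraically independent over $\ff_p$, the subring $\ff_p[x_1] + x_2\, \ff_p(x_1)[x_2]$ of $F(x_1, \dots, x_n)$ is isomorphic to the subring $\ff_p[t] + x\,\ff_p(t)[x]$ of the polynomial ring $\ff_p(t)[x]$, which fails to be atomic by \cite[Corollary~1.4]{AeA99} (as $\ff_p[t]$ is not a field). This shows directly that $F(x_1, \dots, x_n)$ contains a non-atomic subring and is therefore not hereditarily atomic.

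I expect no genuine obstacle here, since the content is entirely carried by Proposition~\ref{prop:polynomial rings that are HAD} and Lemma~\ref{lem:necessary conditions for HAFs}. The only point requiring a moment's care is the transcendence-degree bookkeeping: one must note that passing from $\ff_p$ to the algebraic extension $F$ leaves the transcendence degree unchanged, so that $F(x_1, \dots, x_n)$ indeed has transcendence degree $n$, and that the hypothesis $n \ge 2$ is exactly what pushes this past the threshold of~$1$ permitted by hereditary atomicity in positive characteristic.
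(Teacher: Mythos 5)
Your proposal is correct and matches the paper's own (unstated, ``immediate'') proof: the paper derives this corollary exactly from Proposition~\ref{prop:polynomial rings that are HAD} (giving hereditary atomicity of $F[x_1,\dots,x_n]$) together with Theorem~\ref{thm:meat} (whose transcendence-degree restriction, equivalently Lemma~\ref{lem:necessary conditions for HAFs}(2), rules out hereditary atomicity of $F(x_1,\dots,x_n)$ once $n \ge 2$). Your optional explicit witness $\ff_p[x_1] + x_2\,\ff_p(x_1)[x_2]$ is also valid and simply makes concrete the same obstruction the paper's cited results encode.
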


We conclude this subsection emphasizing that, as for the property of being atomic, hereditary atomicity may not ascend from an integral domain $R$ to its ring of polynomials $R[x]$: for instance, the field~$\qq$ is hereditarily atomic by Theorem~\ref{thm:meat}, but the ring $\qq[x]$ contains the subring $\zz + x \qq[x]$, which is not atomic.

\medskip
\subsection{Laurent Polynomial Extensions}

We have just identified in Proposition~\ref{prop:polynomial rings that are HAD} a class of polynomial rings that are hereditarily atomic. In the next proposition we fully characterize the rings of Laurent polynomials that are hereditarily atomic.

\begin{theorem} \label{thm:Laurent polynomial rings}
	Let $R$ be an integral domain. Then $R[x^{\pm 1}]$ is hereditarily atomic if and only if~$R$ is an algebraic extension of $\ff_p$ for some $p \in \pp$.
\end{theorem}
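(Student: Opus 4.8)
The plan is to prove both implications by reducing, as far as possible, to facts already established for polynomial rings. Throughout I would write $R[x^{\pm 1}] = R[\zz]$, a group algebra, and exploit the \emph{width} function: by \eqref{eq:degree and order} the maps $\deg$ and $\ord$ are additive on the domain $R[x^{\pm 1}]$, so $w(f) := \deg f - \ord f$ is an additive, nonnegative function on nonzero elements that vanishes exactly on the monomials. By Lemma~\ref{lem:units and irreducibles in MAs}(2) the units of any subring are monomials, hence $w$ vanishes on units, and additivity forces every factorization of a monomial to consist of monomials. These observations will localize the whole problem to the behaviour of monomials.

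For the reverse implication, suppose $R$ is an algebraic extension $F$ of $\ff_p$; then $F$ is a field in which \emph{every} subring is again a field (each element is algebraic over $\ff_p$, so generates a finite-dimensional, hence field, $\ff_p$-subalgebra). Given a subring $S \subseteq F[x^{\pm 1}]$, I would prove $S$ atomic by bounding the lengths of factorizations into nonunits. Splitting such a factorization into its monomial and its non-monomial factors, additivity of $w$ caps the number of non-monomial factors by $w(f)$, while the monomial factors multiply to a single monomial of $S$; it then remains to bound factorizations inside the monoid $P$ of monomials of $S$. Projecting $P$ onto its exponents yields a submonoid $N$ of $\zz$: if $N \subseteq \nn_0$ (the case $N \subseteq -\nn_0$ being symmetric) the exponent itself bounds the length, and if $N$ is a subgroup $d\zz$ I would use that $F \cap S$ is a field to show that every monomial of $S$ is a unit, so that $P$ has no nonunits at all. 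In every case $P$ is a bounded factorization monoid, factorization lengths in $S$ are bounded, and a maximal-length factorization into nonunits consists of atoms; thus $S$ is atomic and $F[x^{\pm 1}]$ is hereditarily atomic.

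For the forward implication I would first force $R$ to be a field and then invoke the polynomial case. Since $R[x] \subseteq R[x^{\pm 1}]$, hereditary atomicity passes to $R[x]$; once $R$ is known to be a field, Proposition~\ref{prop:polynomial rings that are HAD} yields that $R$ is an algebraic extension of $\ff_p$. The crux is therefore to rule out a nonzero nonunit $t \in R$. Assuming such a $t$ exists, I would exhibit the non-atomic subring
\[
	S := \big\{\, f \in R[x^{\pm 1}] : \text{every coefficient of a negative-degree term of } f \text{ lies in } tR \,\big\},
\]
which is precisely $R[x, tx^{-1}, tx^{-2}, \dots]$, and show that $tx^{-1} \in S$ has no factorization into atoms. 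Indeed $x$ is a nonunit of $S$ (as $x^{-1} \notin S$), and every monomial $c x^{e} \in S$ with $e < 0$ is reducible, because $c x^{e} = (c x^{e-1})\cdot x$ with both factors nonunits of $S$ (the cofactor $cx^{e-1}$ cannot be a unit, since $t \notin R^\times$). Hence no monomial of negative exponent is an atom; but any factorization of the monomial $tx^{-1}$ into atoms would consist of monomials whose exponents are nonnegative yet sum to $-1$, which is impossible. Therefore $S$ is not atomic, $R[x^{\pm 1}]$ is not hereditarily atomic, and $R$ must be a field.

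I expect the main obstacle to be the reverse implication, specifically controlling the monomial monoid $P$ uniformly over all subrings $S$: the delicate case is when the exponent monoid $N$ is a nontrivial subgroup of $\zz$, where boundedness of factorizations fails for a general coefficient ring and is rescued precisely by the hypothesis that $F \cap S$ is a field — the very field-theoretic input isolated in Theorem~\ref{thm:meat}. By contrast, the forward construction is short once the ring $S$ above is written down; its only subtleties are verifying closure under multiplication and confirming that the coefficient constraint at each negative exponent is exactly $tR$ (rather than some strictly smaller set), which is what makes the descent $tx^{-e} = (tx^{-e-1})\cdot x$ available at every level and so produces the failure of atomicity.
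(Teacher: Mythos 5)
Your forward implication is correct, but your reverse implication has a genuine gap, and it sits exactly where you did not expect it. The subgroup case is in fact the easy one: as you say, if the exponent monoid $N$ is a group, then for any monomial $cx^e \in S$ there is a monomial $c'x^{-e} \in S$, their product is a nonzero element of the field $F \cap S$, and so every monomial of $S$ is a unit. The problem is the case $N \subseteq \nn_0$. What your argument establishes there is that a \emph{fixed} monomial $cx^e$ of $S$ has factorization length at most $e$. But in a factorization of a non-monomial $f \in S$, the product of the monomial factors is a monomial divisor of $f$ that \emph{varies with the factorization}; to bound the lengths of factorizations of $f$ you need
\[
	\sup\{e \in \nn_0 : cx^e \text{ divides } f \text{ in } S \text{ for some } c \} < \infty,
\]
that is, you need to know that nonunit monomial factors cannot be extracted from $f$ indefinitely. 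Nothing in the proposal proves this. A degree/order count does not give it, because elements of $S$ may have arbitrarily negative order even when $N \subseteq \nn_0$ (consider $S = \ff_p[x + x^{-1}]$), so the non-monomial cofactors can in principle absorb arbitrarily large monomial exponents. Note also that what you claim — bounded factorization lengths in every subring — is strictly stronger than the atomicity you need, and this finiteness is essentially the real content of the theorem in this case. The phenomena that prevent counterexamples are nontrivial: for instance, since $x$ satisfies the monic relation $X^2 - (x+x^{-1})X + 1 = 0$ over $\ff_p[x+x^{-1}]$, any subring containing $x + x^{-1}$ together with some monomial $x^n$ automatically contains $x^{-n}$ (Dickson-polynomial identities), which is precisely the kind of integrality effect the paper exploits. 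The paper sidesteps the issue entirely by embedding $S$ into $K(x)$, with $K$ the algebraic closure of $\ff_p$, and invoking Corollary~\ref{cor:F(x) is HA when $F$ is an algebraic extension of F_p}, which rests on the Dedekind-domain machinery of Theorem~\ref{thm:meat}; citing that corollary is also the quickest repair of your argument.

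Your forward implication, by contrast, is correct and takes a genuinely different, and arguably cleaner, route than the paper. The paper argues in two separate cases: characteristic $0$, via the subring $\zz[x^n, 2/x^n : n \in \nn]$ of $\zz[x^{\pm 1}]$, and characteristic $p$ with a transcendental $w \in R$, via $\ff_p[x, w/x^n : n \in \nn_0]$ and a UFD argument in $\ff_p[x,w]$. Your single subring $S = R[x] + tR[x^{\pm 1}]$, built from an arbitrary nonzero nonunit $t \in R$, replaces both: the two descriptions of $S$ agree and $S$ is multiplicatively closed; units of $R[x^{\pm 1}]$ are monomials with unit coefficients and divisors of monomials are monomials (Lemma~\ref{lem:units and irreducibles in MAs}); every negative-exponent monomial of $S$ is a nonunit and is not an atom, because $cx^e = (cx^{e-1}) \cdot x$ with both factors nonunits; hence an atomic factorization of $tx^{-1}$ would consist of monomial atoms with nonnegative exponents summing to $-1$, which is absurd. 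This forces $R$ to be a field, and then Proposition~\ref{prop:polynomial rings that are HAD}, applied to $R[x]$ (which inherits hereditary atomicity from $R[x^{\pm 1}]$), yields that $R$ is algebraic over $\ff_p$. What this buys is a unified, characteristic-free treatment of the non-field obstruction, reusing the polynomial-ring proposition instead of re-deriving its cases.
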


\begin{proof}
	For the reverse implication, suppose that $R$ is an algebraic extension of $\ff_p$ for some $p \in \pp$. In particular, we can assume that $R$ is a subfield of the algebraic closure $K$ of $\ff_p$. Then $R[x^{\pm 1}]$ is a subring of $K(x)$. Since $K(x)$ is hereditarily atomic by Corollary~\ref{cor:F(x) is HA when $F$ is an algebraic extension of F_p}, we obtain that $R[x^{\pm 1}]$ is also hereditarily atomic.
	\smallskip
	
	For the direct implication, suppose that $R[x^{\pm 1}]$ is hereditarily atomic. We will argue first that~$R$ cannot have characteristic zero. Suppose, towards a contradiction, that $\text{char}(R) = 0$. Then $R[x^{\pm 1}]$ contains $\zz[x^{\pm 1}]$ as a subring. Now consider the subring
	\[
		T := \zz\bigg[x^n, \frac2{x^n} : n \in \nn \bigg]
	\]
	of $\zz[x^{\pm 1}]$. One can readily verify that $x^{-1} \notin T$. This, along with the inclusion $T^\times \subseteq \{\pm x^n : n \in \zz\}$, implies that $T^\times = \{\pm 1\}$. As a consequence, $\frac2{x^j} = x\big(\frac{2}{x^{j+1}}\big)$ ensures that $\frac2{x^j} \notin \ii(T)$ for any $j \in \nn_0$. Since~$T$ is a subring of $R[x^{\pm 1}]$, it must be atomic. Therefore we can write $2 = a_1(x) \cdots a_n(x)$ for some irreducibles $a_1(x), \dots, a_n(x)$ of $T$, where $n \ge 2$ because $2 \notin T^\times \cup \ii(T)$. Since the monoid
	\[
		M := \{cx^n : c \in \zz \setminus \{0\} \text{ and } n \in \zz\}
	\]
	is a divisor-closed submonoid of the multiplicative monoid $\zz[x^{\pm 1}] \setminus \{0\}$, it follows that $a_j(x) \in M$ for every $j \in \ldb 1,n \rdb$. After relabeling and taking associates, we can assume that $a_1(x) = 2x^{s_1}$ and $a_j(x) = x^{s_j}$ for every $j \in \ldb 2,n \rdb$, where $s_1, \dots, s_n \in \zz$ satisfy $s_1 + \dots + s_n = 0$. The fact that $a_1(x) \in \ii(T)$ guarantees that $s_1 \ge 1$. Thus, there must be a $k \in \ldb 2,n \rdb$ such that $s_k < 0$. However, in this case $a_k(x) = x^{s_k}$ would be a unit of $T$, which is a contradiction.
	\smallskip
	
	Hence $\text{char}(R) = p$ for some $p \in \pp$, and so $R$ contains $\ff_p$ as its prime subfield. We will argue that~$R$ is an algebraic extension of $\ff_p$. Assume, by way of contradiction, that there exists $w \in R$ that is transcendental over $\ff_p$. Let us prove that the subring
	\[
		S := \ff_p\bigg[x, \frac{w}{x^n} : n \in \nn_0 \bigg]
	\]
	of $R[x^{\pm 1}]$ is not atomic, which will yield the desired contradiction.
	
	If $x$ were a unit of $S$, then $x^{-1} = \sum_{i=0}^k g_i(x) w^i$ for some $g_0 \in \ff_p[x]$ and $g_1, \dots, g_k \in \ff_p[x^{\pm 1}]$, and so the fact that $w$ is transcendental over $\ff_p(x)$ (as an element of the extension $\text{qf}(R)(x)$) would imply that $x g_0(x) = 1$. Thus, $x \notin S^\times$. Similarly, if $\frac{w}{x^n}$ were a unit of $S$ for some $n \in \nn_0$, then $\frac{x^n}w = \sum_{i=0}^k g_i(x) w^i$ for some $g_0, \dots, g_k \in \ff_p[x,x^{-1}]$ and, after clearing denominators, we would obtain $F(x,w) = 0$ for some nonzero $F$ in the polynomial ring $\ff_p[X,W]$, contradicting that $\{x,w\}$ is algebraically independent over $\ff_p$. Thus, $\frac{w}{x^n} \notin S^\times$ for any $n \in \nn_0$.
	
	We proceed to argue that $w$ does not factor into irreducibles in $S$. Observe first that for every $m \in \nn_0$ the element $\frac{w}{x^m}$ is not irreducible in $S$ because $\frac{w}{x^m} = x\big(\frac{w}{x^{m+1}}\big)$ and, as we have already checked, $x, \frac{w}{x^{m+1}} \notin S^\times$. In particular, $w$ is not irreducible in $S$. Now write
	\begin{equation} \label{eq:product}
		w = \prod_{i=1}^n f_i\Big(x, w, \frac wx, \dots, \frac w{x^k} \Big) 
	\end{equation}
	in~$S$ for some $k \in \nn_0$, $n \in \nn_{\ge 2}$, and $f_1, \dots, f_n \in \ff_p[X, W_0, \dots, W_k]$ such that $f_i\big(x,w, \frac{w}x, \dots, \frac{w}{x^k}\big)$ is not a unit of $S$ for any $i \in \ldb 1,n \rdb$. Now observe that for every index $i \in \ldb 1,n \rdb$, there exists a unique $\ell_i \in \zz$ such that $g_i := x^{-\ell_i} f_i$ belongs to $\ff_p[x,w]$ and $g_i(0,w) \neq 0$. After setting $\ell := \ell_1 + \dots + \ell_n$, we obtain from~\eqref{eq:product} that $g_1(x,w) \cdots g_n(x,w) = x^{-\ell} w$, which implies that $\ell = 0$. As a consequence, we see that $w = g_1(x,w) \cdots g_n(x,w) \in \ff_p[x,w]$. Since $\ff_p[x,w]$ is a UFD and $w$ is irreducible in $\ff_p[w,x]$, we can assume, after a possible relabeling of subindices, that $g_1 = \alpha_1 w$ for some $\alpha_1 \in \ff_p^\times$ and $g_i = \alpha_i \in \ff_p^\times$ for every $i > 1$. Thus, $f_1 = \alpha_1 x^{\ell_1}w$ and $f_i = \alpha_i x^{\ell_i}$ for every $i > 1$. Now we observe that $f_1$ is not irreducible in $S$ because $f_1 = \alpha_1 x^N \big(\frac{w}{x^{N-\ell_1}}\big)$, where $N := |\ell_1| + 1$, and neither $\alpha_1 x^N$ nor $\frac{w}{x^{N-\ell_1}}$ is a unit of $S$. As a consequence, $w$ does not factor into irreducibles in $S$, and so $S$ is a subring of $R[x^{\pm 1}]$ that is not atomic, contradicting that $R[x^{\pm 1}]$ is hereditarily atomic. Hence $R$ is an algebraic extension of $\ff_p$, which concludes the proof.
\end{proof}

\bigskip
\section{Weak Atomicity}
\label{sec:weak atomicity}

In this final section, we consider two algebraic notions that are weaker than atomicity. In addition, as part of this section we present a ``Kaplansky-type'' theorem mimicking Theorem~\ref{kap}, which we mentioned in the introduction.

As mentioned before, one of the difficulties with atomicity is the fact that, in general, it is not preserved under localization. A central part of this problem comes from the observation that if one considers a multiplicative set generated by atoms (surely a natural set to consider) there is no guarantee that this set is saturated. With this in mind, we discuss quasi-atomicity and almost atomicity, two weaker notions of atomicity introduced in~\cite{BC15} and designed to ``repair'' the defect that not all multiplicative sets generated by atoms are saturated.

\medskip
\subsection{Quasi-Atomicity} 

First, we will take a look at integral domains each nonzero element divides an atomic element; this property encapsulates the notion of quasi-atomicity. Let us give the formal notion of a quasi-atomic element/domain similar to the way we introduced an atomic element/domain in the background section.

\begin{definition}
	Let $R$ be an integral domain.
	\begin{itemize}
		\item $r \in R^*$ is a \emph{quasi-atomic} element if $rs$ is atomic for some $s \in R$.
		\smallskip
		
		\item $R$ is \emph{quasi-atomic} if every nonzero element of $R$ is quasi-atomic.
	\end{itemize}
\end{definition}

It follows directly from the definitions that every atomic domain is quasi-atomic, and we will discuss examples of quasi-atomic domains that are not atomic later in this section. 
\smallskip

Let $R$ be an integral domain, and let $Q(R)$ denote the set consisting of all quasi-atomic elements of~$R$. It turns out that $Q(R)$ is a multiplicative subset of~$R$.

\begin{proposition}
	For any integral domain $R$, the set $Q(R)$ is a multiplicative set; that is $Q(R)$ is a submonoid of $R^*$.
\end{proposition}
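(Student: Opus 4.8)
The plan is to verify directly the two defining properties of a multiplicative submonoid of $R^*$: that $Q(R)$ contains the identity (in fact all units) and that it is closed under multiplication. The only genuinely load-bearing observation is that the product of two atomic elements is again atomic; everything else is bookkeeping with the definition of quasi-atomicity. Note at the outset that $Q(R) \subseteq R^*$ by definition, and since $R$ is a domain the product of nonzero elements is nonzero, so any product of quasi-atomic elements automatically lands in $R^*$.

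First I would record that every unit $u \in R^\times$ is quasi-atomic: taking $s = 1$, the element $us = u$ is a unit and hence atomic by definition, so $u \in Q(R)$; in particular $1 \in Q(R)$ and $Q(R)$ is nonempty. The heart of the argument is the auxiliary fact that if $a, b \in R^*$ are both atomic, then $ab$ is atomic. This is immediate from the definition of an atomic element: if $a$ and $b$ are both units, then $ab$ is a unit; if $a$ is a unit and $b = q_1 \cdots q_n$ is a product of irreducibles, then $ab = (aq_1)q_2 \cdots q_n$ is again a product of irreducibles because $aq_1$ is an associate of the irreducible $q_1$; and if $a = p_1 \cdots p_m$ and $b = q_1 \cdots q_n$ are both products of irreducibles, then concatenation yields the factorization $ab = p_1 \cdots p_m q_1 \cdots q_n$ into irreducibles.

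With this in hand, closure follows cleanly. Given $r_1, r_2 \in Q(R)$, I would choose $s_1, s_2 \in R$ with $r_1 s_1$ and $r_2 s_2$ atomic and set $s := s_1 s_2$; then
\[
	(r_1 r_2)\, s = (r_1 s_1)(r_2 s_2)
\]
is a product of two atomic elements, hence atomic by the auxiliary fact, so $r_1 r_2$ is quasi-atomic. I do not anticipate any serious obstacle: the entire content is the elementary closure of atomicity under products. The one subtlety worth stating explicitly is that the witnesses $s_1, s_2$ are automatically nonzero, since an atomic element lies in $R^*$; this guarantees $s = s_1 s_2 \neq 0$, so the computation takes place in the domain $R$ without degeneracy and the resulting product $r_1 r_2 s$ is a genuine nonzero element admitting a factorization into irreducibles (or being a unit).
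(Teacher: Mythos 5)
Your proof is correct and follows essentially the same route as the paper: take witnesses $s_1, s_2$ for $r_1, r_2$ and observe that $(r_1 r_2)(s_1 s_2) = (r_1 s_1)(r_2 s_2)$ is atomic, hence $r_1 r_2 \in Q(R)$. The extra details you supply — the explicit case analysis showing products of atomic elements are atomic, the nonvanishing of the witnesses, and the check that $1 \in Q(R)$ — are all left implicit in the paper's one-line argument but are exactly the right things to make explicit.
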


\begin{proof}
	Suppose that $r_1, r_2 \in Q(R)$. By definition, there exist nonzero $s_1, s_2 \in R$ such that elements $r_1 s_1$ and $r_2 s_2$ are atomic. Therefore the element $r_1 r_2 s_1 s_2$ is also atomic, which implies that $r_1 r_2$ is quasi-atomic.
\end{proof}

The following is a ``Kaplansky-type'' theorem mimicking the earlier recorded Theorem \ref{kap}. This is a weaker version of the Kaplansky's Theorem, as irreducibility (maximal among the set of principal ideals) is markedly weaker than the notion of prime.

\begin{theorem}\label{thm:qa}
	Let $R$ be an integral domain. Then $R$ is quasi-atomic if and only if every nonzero prime ideal of $R$ contains an irreducible element.
\end{theorem}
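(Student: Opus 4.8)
The plan is to prove the two implications separately; in both directions the engine is the interaction between the multiplicative set $Q(R)$ of quasi-atomic elements and the prime ideals of $R$, together with the trivial but crucial observation that every irreducible element is atomic and hence lies in $Q(R)$.

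For the forward implication I would assume $R$ is quasi-atomic and let $P$ be any nonzero prime ideal. Pick a nonzero $r \in P$ and use quasi-atomicity to produce $s \in R$ with $rs$ atomic; note $s \neq 0$ since atomic elements are nonzero by definition. Because $r \in P$ we also have $rs \in P$, and since $P$ is proper the element $rs$ cannot be a unit, so it factors as $rs = a_1 \cdots a_n$ into irreducibles with $n \geq 1$. Primality of $P$ then forces some $a_i \in P$, which is the desired irreducible. This direction is essentially routine.

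The reverse implication carries the real content, and the key preliminary is that $Q(R)$ is not merely a submonoid of $R^*$ but is in fact \emph{divisor-closed}: if $ab \in Q(R)$, then writing $(ab)t$ atomic, the grouping $a(bt)$ exhibits $a$ as quasi-atomic, and symmetrically $b \in Q(R)$. I would record this first. Now assume every nonzero prime ideal contains an irreducible and argue by contradiction: if $R$ were not quasi-atomic there would be a nonzero nonunit $a \notin Q(R)$ (units belong to $Q(R)$), and divisor-closedness guarantees the principal ideal $aR$ is disjoint from $Q(R)$.

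The main technical step is then the standard Zorn's-lemma construction: among all ideals containing $aR$ and disjoint from the multiplicative set $Q(R)$, choose a maximal element $P$. This $P$ is nonzero (as $0 \neq a \in P$) and proper (as $1 \in Q(R)$ is excluded), and by hypothesis it contains an irreducible $\pi$; but $\pi$ is atomic, hence quasi-atomic, so $\pi \in P \cap Q(R)$, contradicting disjointness. Hence $Q(R) = R^*$, as required. The point demanding care is the verification that the maximal ideal disjoint from $Q(R)$ is prime: if $x,y \notin P$ then $P+(x)$ and $P+(y)$ each meet $Q(R)$, and multiplying the two witnesses—using that $Q(R)$ is multiplicatively closed—produces an element of $P \cap Q(R)$. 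Thus the saturated multiplicative structure of $Q(R)$ established above is exactly what powers this step.
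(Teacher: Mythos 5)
Your proof is correct and follows essentially the same route as the paper's: the forward direction is the same routine prime-avoidance argument, and the reverse direction uses the same key facts, namely that $Q(R)$ is a saturated (divisor-closed) multiplicative set, that $aR$ is therefore disjoint from $Q(R)$ for a non-quasi-atomic $a$, and that an ideal maximal among those disjoint from $Q(R)$ is a nonzero prime, contradicting the hypothesis. If anything, you make explicit two details the paper leaves implicit --- the verification that the maximal ideal is prime, and the observation that $rs$ cannot be a unit in the forward direction --- which is fine.
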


\begin{proof}
	For the direct implication, suppose that $R$ is quasi-atomic, and let $\mathfrak{p}$ be a nonzero prime ideal of $R$. Select a nonzero element $r \in \mathfrak{p}$ and note that since~$R$ is quasi-atomic, there is an element $s \in R$ such that $rs = a_1 \cdots a_n$ for some irreducibles $a_1, \dots, a_n$ in $R$. As $\mathfrak{p}$ is prime and $a_1 \cdots a_n \in \mathfrak{p}$, there is an index $i \in \ldb 1,n \rdb$ such that $a_i \in \mathfrak{p}$. Hence $\mathfrak{p}$ contains an irreducible.
	
	For the reverse implication, suppose that every nonzero prime ideal of~$R$ contains an irreducible element. Assume, by way of contradiction, that $R$ is not quasi-atomic. Take an element $z \in R^*$ that is not quasi-atomic. Observe that the saturated multiplicative set  $Q(R)$ contains $R^\times \cup \mathcal{A}(R)$. We claim that $zR \cap Q(R) = \emptyset$: indeed, the existence of $r \in R$ such that $zr \in Q(R)$ would allow us to pick $s \in R$ such that $z(rs)$ is atomic, which is not possible because $z \notin Q(R)$. Hence $zR\cap Q(R)$ is empty. Now we expand the ideal $zR$ to a prime ideal $\mathfrak{p}_z$ that is maximal with respect to the property that $\mathfrak{p}_z \cap Q(R)$ is empty. Since $\mathfrak{p}_z$ is a nonzero prime ideal, it follows from our initial assumption that~$\mathfrak{p}_z$ must contain an irreducible element. However, this contradicts the fact that $\mathfrak{p}_z \cap Q(R)$ is empty, which completes the proof.
\end{proof}

As an immediate consequence, we obtain the following corollary (see \cite[Lemma~5.2]{BC15} and \cite[Theorem~8]{nLL19}).

\begin{corollary}
	Let $R$ be an integral domain. Then every nonzero prime ideal of $R$ must contain an irreducible element.
\end{corollary}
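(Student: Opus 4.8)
The plan is to read this corollary as the forward (left-to-right) implication of Theorem~\ref{thm:qa}, which is precisely the sense in which it is an immediate consequence of that result: with $R$ quasi-atomic, the assertion that every nonzero prime ideal contains an irreducible is exactly the ``only if'' half of the biconditional recorded there. Accordingly I would not prove anything new, but rather isolate and state that half on its own.

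Concretely, I would fix a nonzero prime ideal $\mathfrak{p}$ of $R$ and select a nonzero $r \in \mathfrak{p}$. Quasi-atomicity of $R$ supplies a nonzero $s \in R$ for which $rs$ is atomic, so we may write $rs = a_1 \cdots a_n$ with $a_1, \dots, a_n \in \mathcal{A}(R)$. Since $rs \in \mathfrak{p}$ and $\mathfrak{p}$ is prime, at least one factor $a_i$ lies in $\mathfrak{p}$, and this $a_i$ is the irreducible element of $\mathfrak{p}$ that we seek. This is the same computation that appears in the forward direction of the proof of Theorem~\ref{thm:qa}, now read off for a single prescribed prime.

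The single point on which the argument turns is quasi-atomicity: it is exactly the property guaranteeing that a suitable multiple $rs$ of $r$ factors into atoms, and it is the only hypothesis the proof consumes. For this reason there is no genuine obstacle here—the corollary is a two-line specialization of the implication already carried out in the proof of Theorem~\ref{thm:qa}—and one could equally invoke \cite[Lemma~5.2]{BC15} or \cite[Theorem~8]{nLL19} for the same conclusion.
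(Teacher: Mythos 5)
Your proposal is correct and takes exactly the paper's approach: the paper supplies no separate proof, presenting the corollary as an immediate consequence of Theorem~\ref{thm:qa}, and your argument is precisely the forward direction of that theorem's proof (pick nonzero $r \in \mathfrak{p}$, use quasi-atomicity to get an atomic multiple $rs = a_1 \cdots a_n$, and conclude some $a_i \in \mathfrak{p}$ by primality). You were also right to read the quasi-atomicity hypothesis into the statement: as literally printed, for an arbitrary integral domain, the corollary is false---the paper's very next example, $\zz + x\qq[x]$, has the nonzero prime ideal $x\qq[x]$ containing no irreducible elements---so the hypothesis you restored is exactly what the intended citation of Theorem~\ref{thm:qa} (and of the results in \cite{BC15} and \cite{nLL19}) requires.
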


We proceed to establish a characterization of quasi-atomicity that is parallel to Theorem~\ref{atomicity:characterization}, the fundamental characterization of atomicity we provided in the background section.

\begin{proposition} \label{quasi-atomicity:characterization}
	Let $R$ be an integral domain that is not a field, and let $G(R)$ be the group of divisibility of $R$. Then $R$ is quasi-atomic if and only if for every positive element $r R^\times \in G(R)$ there are minimal positive elements $a_1 R^\times, \dots, a_k R^\times$ such that $r R^\times
		\le a_1 R^\times \cdots a_k R^\times$.
\end{proposition}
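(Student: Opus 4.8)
The plan is to translate the definition of quasi-atomicity directly through the order-preserving correspondence between $R$ and the positive cone of $G(R)$, exactly as was done for atomicity in the proof of Theorem~\ref{atomicity:characterization}. As recorded there, for a nonzero $a \in R$ the coset $aR^\times$ is a minimal positive element of $G(R)$ precisely when $a$ is irreducible, and $rR^\times$ is positive precisely when $r$ is a nonzero nonunit of $R$. The single additional fact I would isolate is that, for nonzero $r,c \in R$, the inequality $rR^\times \le cR^\times$ holds if and only if $c/r \in R$, that is, if and only if $r \mid_R c$; this is immediate from the definition of the order on $G(R)$.

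For the forward implication, suppose $R$ is quasi-atomic and let $rR^\times$ be a positive element of $G(R)$, so that $r$ is a nonzero nonunit. By quasi-atomicity there is a nonzero $s \in R$ with $rs$ atomic. Since $r$ is a nonunit, $rs$ is also a nonunit (a nonunit cannot divide a unit), hence a nonunit atomic element, so $rs = a_1 \cdots a_k$ for some $a_1, \dots, a_k \in \mathcal{A}(R)$. Then each $a_i R^\times$ is minimal positive, and because $s \in R$ we have $r \mid_R rs$, which yields $rR^\times \le (rs)R^\times = a_1 R^\times \cdots a_k R^\times$, as desired.

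For the reverse implication, I would show directly that every nonzero element of $R$ is quasi-atomic; units are vacuously so, so assume $r$ is a nonzero nonunit, i.e.\ $rR^\times$ is positive. By hypothesis there are $a_1, \dots, a_k \in \mathcal{A}(R)$ with $rR^\times \le a_1 R^\times \cdots a_k R^\times = (a_1 \cdots a_k)R^\times$. By the divisibility reformulation above this means $r \mid_R a_1 \cdots a_k$, so $s := (a_1 \cdots a_k)/r$ is a nonzero element of $R$; then $rs = a_1 \cdots a_k$ is a product of irreducibles and hence atomic, witnessing that $r$ is quasi-atomic. Thus $R$ is quasi-atomic.

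The argument is essentially a bookkeeping translation, so there is no deep obstacle; the only points that require care are the edge cases. I would make sure to record that a nonunit $r$ cannot be multiplied into a unit (so that $rs$ genuinely factors into at least one irreducible in the forward direction), and that the witness $s = (a_1\cdots a_k)/r$ produced in the reverse direction is nonzero and actually lies in $R$ — which is exactly the content of the inequality $rR^\times \le (a_1\cdots a_k)R^\times$. These observations are what keep the passage between the multiplicative language of $G(R)$ and the factorization language of $R$ exact.
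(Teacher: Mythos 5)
Your proposal is correct and follows essentially the same route as the paper's proof: both translate minimal positivity into irreducibility, use quasi-atomicity to produce $rs = a_1\cdots a_k$ and read off the inequality $rR^\times \le a_1R^\times \cdots a_kR^\times$ in one direction, and in the other direction read the inequality as the divisibility $r \mid_R a_1\cdots a_k$ to obtain the witness $s = (a_1\cdots a_k)/r$. The only difference is that you make explicit two small points the paper leaves tacit (that $rs$ cannot be a unit when $r$ is a nonunit, and that the witness $s$ lies in $R$ and is nonzero), which is fine.
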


\begin{proof}
	As pointed out in the proof of Theorem~\ref{atomicity:characterization}, for each $a \in R^*$, the coset $a R^\times \in G(R)$ is minimal positive if and only if~$a$ is irreducible in~$R$.
	
	For the direct implication, assume that $R$ is quasi-atomic. Observe that the statement that $rR^\times$ is positive in $G(R)$ is equivalent to the statement that $r \in R$. Let $rR^\times$ be a positive element of $G(R)$. By quasi-atomicity, we can pick $s \in R$ (so $sR^\times$ is positive in $G(R)$) such that $rs = a_1 \cdots a_k$ for some irreducibles $a_1, \dots, a_k$ of $R$. Thus, in $G(R)$, the inequality $r R^\times \le a_1 R^\times \cdots a_k R^\times$ must hold.
	
	On the other hand, assume that every positive element $r R^\times \in G(R)$ there are minimal positive elements $a_1 R^\times, \dots, a_k R^\times$ such that $r R^\times
	\le a_1 R^\times \cdots a_k R^\times$. Now fix $r \in R$. By assumption, we can pick irreducibles $a_1, \dots, a_k$ of~$R$ such that $rR^\times \le a_1 R^\times \cdots a_kR^\times$. Hence $r$ divides $a_1 \cdots a_k$ in $R$. We conclude then that $R$ is quasi-atomic.
\end{proof}

It is worth emphasizing that not every integral domain is quasi-atomic (this was first shown in \cite{BC15}).

\begin{example}
	Let $R: = \mathbb{Z} + x\mathbb{Q}[x]$. Note that every rational prime is an irreducible (prime) element of $R$ and note that every polynomial of the form $xf(x)\in R$ is divisible by every rational prime and so the prime ideal $x\mathbb{Q}[x]$ contains no irreducible elements. It now follows from Theorem \ref{thm:qa} that $R$ cannot be quasi-atomic.
\end{example}

Integral domains having no atoms cannot be quasi-atomic unless they are fields. These domains, which were introduced and first studied in~\cite{CDM99} by Dobbs, Mullins, and the first author, are the extreme deviation from atomicity. An integral domain $R$ is called \emph{antimatter} provided that it contains no irreducible elements. It is not hard to verify that antimatter domains are precisely the integral domains whose groups of divisibility have no minimal positive elements. Any field is vacuously an antimatter domain. We proceed to discuss some less trivial examples of antimatter domains.

\begin{example}
	Any valuation domain $(V,\mathfrak{m})$ with a non-principal maximal ideal~$\mathfrak{m}$ is an antimatter domain. It follows that one can build an antimatter domain with any positive Krull dimension. See Example~\ref{anti} for a concrete illustration of a couple of cases.
\end{example}

\begin{example}
	Let $R$ be an integral domain that is not a field, and let $F$ be the quotient field of~$R$. Then $\overline{R}_{\overline{F}}$ (the integral closure of $R$ in the algebraic closure, $\bar{F}$, of $\mathbb{F}$) is an antimatter domain that is not a field. This is perhaps the easiest way to embed an integral domain into an antimatter domain that is not a field. To argue that $\overline{R}_{\overline{F}}$ is antimatter, we note that if $\alpha\in\overline{R}_{\overline{F}}$, then $\alpha$ is a root of the monic polynomial $x^n+r_{n-1}x^{n-1}+\cdots+r_1x+r_0 \in R[x]$. We now observe that $\sqrt{\alpha}$ is a root of $x^{2n}+r_{n-1}x^{2n-2}+\cdots+r_1x^2+r_0\in R[x]$, and so in $\alpha\in\overline{R}_{\overline{F}}$, we have the factorization $(\sqrt{\alpha})(\sqrt{\alpha})=\alpha$, and so every element in $\overline{R}_{\overline{F}}$ has a square root, which makes clear the fact that $\overline{R}_{\overline{F}}$ has no irreducible elements. Also as any integral extension $R \subseteq T$ has the property that $R^\times = R \cap T^\times$ and $\overline{R}_{\overline{F}}$ is integral over $R$ and not a field, we see that $\overline{R}_{\overline{F}}$ is not a field.
\end{example}

Here is another example of the behavior of antimatter domains in the context of valuation domains. This example will also help illustrate the concepts in the sequel.

\begin{example}\label{anti}
	Suppose that $V$ is a $2$-dimensional valuation domain with value group $\mathbb{Q}\oplus\mathbb{Z}$ ordered lexicographically. In this case, the maximal ideal of $V$ is a principal ideal, which we can write $pV$. Thus, every nonunit of $V$ is divisible by $p$. However, if we localize $V$ at its height-$1$ prime ideal, then the localization is $1$-dimensional and non-discrete, and hence antimatter. On the other hand, if we reverse the order and consider a $2$-dimensional valuation domain, $W$, with value group $\mathbb{Z} \oplus \mathbb{Q}$, then $W$ is antimatter. However, when localized at its height-$1$ prime ideal, $W$ passes locally to a PID.
\end{example}

As mentioned in Section~\ref{sec:atomic domains}, it was proved by Roitman~\cite{mR93} that the property of being atomic does not ascend in general from integral domains to their corresponding polynomial rings. In addition, it was first proved by the authors in~\cite{CG19} that atomicity does not ascend from torsion-free monoids to their corresponding monoid algebras over fields. In order to motivate further research on the weak notions of atomicity we have discussed in this section, we propose the following fundamental open question, which is parallel to the ascent of atomicity (as posed by Gilmer in~\cite[page 189]{rG84}). As for an integral domain, we say that a monoid~$M$ is \emph{quasi-atomic} if every element of $M$ divides an atomic element.

\begin{question} \label{quest:ascent of quasi-atomicity}
	Let $R$ be an integral domain, and let $M$ be a torsion-free monoid. Does the fact that both $R$ and $M$ are quasi-atomic imply that the monoid algebra $R[M]$ is quasi-atomic?
\end{question}

\medskip
\subsection{Almost Atomicity}

Now we take a look at integral domains whose nonzero elements can be written as the quotient of two atomic elements. This describes, roughly speaking, the notion of almost atomicity, which is a weaker notion of atomicity that is stronger than quasi-atomicity.

\begin{definition}
	Let $R$ be an integral domain.
	\begin{itemize}
		\item $r \in R^*$ is an \emph{almost atomic} element if $rs$ is atomic for some atomic element $s \in R$.
		\smallskip
		
		\item $R$ is \emph{almost atomic} if every nonzero element of $R$ is almost atomic.
	\end{itemize}
\end{definition}

Let $R$ be an integral domain, and let $S$ be a multiplicative set of $R$. Recall that $S$ of $R$ is defined to be saturated if $S$ is a divisor-closed submonoid of $R^*$. Every multiplicative set of $R$ is contained in a saturated multiplicative subset: the \emph{saturation} of $S$ is the set consisting of all the elements of $R$ dividing an element of $S$. Clearly, the saturation of a multiplicative set is a saturated multiplicative set. We let $A(R)$ denote the set consisting of all almost atomic elements of~$R$. As $Q(R)$, it turns out that $A(R)$ is a multiplicative subset of $R$.

\begin{proposition}\label{prop:sat}
	Let $R$ be an integral domain. 
	\begin{enumerate}
		\item $A(R)$ is a submonoid of $Q(R)$, and so a multiplicative set of $R$.
		\smallskip
		
		\item $Q(R)$ is the saturation of $A(R)$ in $R$.
	\end{enumerate}
\end{proposition}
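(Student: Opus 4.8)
The plan is to prove the two statements in sequence, using the definitions of almost atomic and quasi-atomic elements together with the already-established fact that every atomic element is quasi-atomic (so $\ii(R) \cup R^\times \subseteq A(R) \subseteq Q(R)$).

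For part (1), I would first show that $A(R)$ is closed under multiplication. Take $r_1, r_2 \in A(R)$; by definition there are atomic elements $s_1, s_2 \in R$ such that $r_1 s_1$ and $r_2 s_2$ are atomic. Then $r_1 r_2 (s_1 s_2) = (r_1 s_1)(r_2 s_2)$ is a product of two atomic elements, hence atomic; since $s_1 s_2$ is itself atomic (the product of two atomic elements is atomic), the element $r_1 r_2$ is almost atomic. This shows $A(R)$ is closed under multiplication, and since units are trivially almost atomic (a unit times $1$ is atomic), $A(R)$ is a submonoid of $R^*$. The inclusion $A(R) \subseteq Q(R)$ is immediate from the definitions: an almost atomic element is in particular quasi-atomic, because the witnessing atomic element $s$ serves as the required multiplier.

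For part (2), I would show $Q(R)$ equals the saturation of $A(R)$, i.e.\ the set of elements of $R^*$ dividing some element of $A(R)$. First, the saturation of $A(R)$ is contained in $Q(R)$: if $r \mid_R a$ for some $a \in A(R)$, write $a = rt$; since $a$ is almost atomic there is an atomic $s$ with $as$ atomic, and then $r(ts) = as$ is atomic, so $r \in Q(R)$. (Alternatively one invokes that $Q(R)$ is saturated, as noted in the proof of Theorem~\ref{thm:qa}, together with $A(R) \subseteq Q(R)$.) For the reverse inclusion, take $r \in Q(R)$, so there is $s' \in R$ with $r s'$ atomic. The key observation is that $r s'$, being atomic, lies in $A(R)$, and $r$ divides $rs'$; hence $r$ belongs to the saturation of $A(R)$.

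The main obstacle, though minor, is making sure the relevant closure facts are cleanly in place: specifically that a product of atomic elements is atomic (needed both to conclude $s_1 s_2$ is atomic in part (1) and to chain factorizations), and that $A(R)$ is nonempty/contains units so that it genuinely is a submonoid. These are routine consequences of the definition of an atomic element, but they must be stated explicitly to justify that $rs' \in A(R)$ when $rs'$ is atomic — the cleanest route is simply to record that every atomic element is almost atomic (take the multiplier to be $1$), which immediately gives the reverse inclusion in part (2).
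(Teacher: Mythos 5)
Your proposal is correct and follows essentially the same argument as the paper: part (1) via the observation that $(r_1s_1)(r_2s_2)$ and $s_1s_2$ are atomic, and part (2) by showing both inclusions with exactly the same witnesses (an element of $Q(R)$ divides the atomic, hence almost atomic, element $rs'$; and a divisor $r$ of $a \in A(R)$ satisfies $r(ts)$ atomic for a suitable atomic $s$). Your extra care in recording the routine facts that products of atomic elements are atomic and that atomic elements are almost atomic is implicit in the paper's proof and adds no substantive difference.
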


\begin{proof}
	(1) Suppose that $r_1, r_2 \in A(R)$. By definition, there exist atomic elements $s_1, s_2 \in R$ such that elements $r_1 s_1$ and $r_2 s_2$ are atomic. Therefore both elements $s_1 s_2$ and $r_1 r_2 s_1 s_2$ are atomic, which implies that $r_1 r_2$ is almost atomic. Hence $A(R)$ is a submonoid of $Q(R)$ and so a multiplicative set of $R$.
	\smallskip
	
	(2) It is clear that $A(R)\subseteq Q(R)$. By definition of a quasi-atomic element, every $r \in Q(R)$ divides an atomic element and, therefore, an almost atomic element. Therefore $Q(R)$ is contained in the saturation of $A(R)$. For the reverse inclusion, let $r \in R^*$ be a divisor of an element of $A(R)$, and take $s \in R^*$ such that $rs \in A(R)$. Then we can take an atomic element $t \in R^*$ such that $r(st)$ is atomic, which implies that $r \in Q(R)$. Thus, the saturation of $A(R)$ is contained in $Q(R)$, and so we conclude that $Q(R)$ is the saturation of $A(R)$.
\end{proof}

It is prudent at this juncture to point out that the notions of quasi-atomicity and almost atomicity are not vacuous. Therefore we presently produce examples witnessing that the class of atomic domains is strictly contained in that of almost atomic domains, and also that the class of almost atomic domains is strictly contained in that of quasi-atomic domains.

\begin{example}
	Let us prove that the integral domain $R := \mathbb{Z} + \mathbb{Z}x + x^2\mathbb{Q}[x]$ is almost atomic but not atomic. It is clear that $R^\times = \{ \pm 1\}$. In addition, observe that, since $\zz^*$ is a divisor-closed submonoid of $R^*$, any rational prime is irreducible in~$R$. 
	
	To verify that $R$ is almost atomic, fix a nonzero polynomial $f(x) \in R$, and let us check that $f(x)$ is an almost atomic element. First, take rational primes $p_1, \dots, p_n \in \pp$ not necessarily distinct such that $p_1 \cdots p_n f(x) \in \mathbb{Z}[x]$. 
	Then write
	\[
		p_1 \cdots p_n f(x) = x^mg(x)
	\]
	for some $m \in \nn_0$ and $g(x) \in \zz[x]$ such that $\text{ord} \, g(x) \in \{0,1\}$. As $x$ is an irreducible of $R$, the monomial $x^m$ is an atomic element. We proceed to show that $g(x)$ is also an atomic element. To do so, write
	\[
		g(x) = c_1 \cdots c_k g_1(x) \cdots g_\ell(x)
	\]
	for some nonunits $c_1, \ldots, c_k, g_1(x), \dots, g_\ell(x)$ in~$R$ such that $c_1, \dots, c_k$ are constant and $g_1(x), \dots, g_\ell(x)$ are nonconstant. Since $\ord \, g(x) \in \{0,1\}$, it follows that $\ord \, g_i(x) \in \{0,1\}$ for every $i \in \ldb 1,\ell \rdb$. Therefore $c_1 \cdots c_k$ divides in $\zz$ the integer coefficient $c$ of the nonzero minimum-degree term of $g(x)$, and so $k$ is bounded by the length of the factorization of $c$ in $\zz$. On the other hand, the fact that $g_i(x)$ is nonconstant for every $i \in \ldb 1, \ell \rdb$ ensures that~$\ell$ is bounded by the degree of $g(x)$. Thus, after assuming that we have chosen $k+\ell$ as large as it can possibly be, we obtain that $ c_1 \cdots c_k g_1(x) \cdots g_\ell(x)$ is a factorization of $g(x)$ in $R$, and so $g(x)$ is atomic in~$R$. This in turn implies that $f(x)$ is an almost atomic element of $R$, whence $R$ is almost atomic. 
	
	In order to conclude that $R$ is not atomic, it is enough to argue that the monomial $\frac12 x^2$ cannot be factored into irreducibles in~$R$. We first remark that every nonzero polynomial $p(x) \in \qq[x]$ whose order is at least~$2$ is not irreducible in $R$: indeed, it can be written in $R$ as $p(x) = 2 \cdot \frac{p(x)}2$. Now note that any potential factorization of $\frac{1}{2}x^2$ in $R$ would consist of monomials of $\mathbb{Q}[x]$ that are irreducible in $R$, and so the degrees of these monomials must be less than~$2$. Then in order to obtain a factorization of $\frac{1}{2}x^2$ in $R$, we should be able to write $\frac12 x^2 = (mx)(nx)$ for some $m,n \in \mathbb{Z}$, but this is clearly impossible. Hence we conclude that the integral domain~$R$ is almost atomic but not atomic.
\end{example}

Next we produce an integral domain that is quasi-atomic but not almost atomic. This integral domain is a variant of that exhibited in~\cite[Example~7]{nLL19}, but for this one we use power series instead of polynomials.

\begin{example}\label{Q}
	Consider the integral domain $R := \mathbb{Z} + \mathbb{Z}x + x^2\mathbb{R}\ldb x \rdb$, and let us show that it is quasi-atomic but not almost atomic. For a nonzero $f(x) \in \rr\ldb x \rdb$ with order $m$, we call the coefficient of the term $x^m$ the \emph{initial coefficient} of $f(x)$. 
	
	We first claim that every power series in $R$ with order at most~$1$ is atomic. To argue this, it suffices to fix a nonzero $f(x) \in R$ with $\text{ord} \, f(x) \in \{0,1\}$ and show that $f(x)$ satisfies the ACCP. Let $(f_n(x)R)_{n \ge 0}$ be an ascending chain of principal ideals of $R$ with $f_0(x)=f(x)$. For each $n \in \nn_0$, the fact that $f_{n+1}(x)$ is a divisor of $f_n(x)$ in $R$ ensures that $\text{ord} \, f_{n+1}(x) \le \text{ord} \, f_n(x)$, and so we can take $N \in \nn$ such that $\text{ord} f_n(x) = \text{ord} \, f_N(x)$ for every $n \ge N$. Moreover, as $\zz$ satisfies the ACCP, we can further assume that~$N$ is large enough so that when $n \ge N$ the initial coefficient of $f_n(x)$ equals that of $f_N(x)$. This in turn implies that for each $n \ge N$, the power series $g_n(x) := f_N(x)/f_n(x)$ belongs to $R$ and satisfies that $g_n(0) = 1$, from which we can deduce that $g_n(x) \in R^\times$. As a consequence, the chain of principal ideals $(f_n(x)R)_{n \ge 0}$ stabilizes, whence $f(x)$ satisfies the ACCP, as desired.
	
	Now fix a nonzero $f(x) \in R$ with $m := \text{ord} \, f(x) \ge 2$, and then take a power series $g(x) \in \rr\ldb x \rdb$ such that $f(x) = x^m g(x)$. We proceed to show that $f(x)$ is atomic if and only if $g(0) \in \zz$. As we can write $f(x) = x^{m-1}(xg(x))$ and both $x^{m-1}$ and $xg(x)$ belong to $R$, the reverse implication immediately follows from the already-established fact that every power series in $R$ with order at most~$1$ is atomic. For the direct implication, suppose that $f(x)$ is atomic in $R$. Observe that each power series in $R$ with order at least~$2$ is divisible by all rational primes, and so~$R$ does not contain any irreducibles of order greater than~$1$. Thus, we can write $f(x) = a_1(x) \cdots a_\ell(x)$ for some irreducibles $a_1(x), \dots, a_\ell(x)$ in $R$ whose orders belong to $\{0,1\}$. Thus, the initial coefficients of $a_1(x), \dots, a_\ell(x)$ are integers, and so $g(0) \in \zz$ because $g(0)$ is the initial coefficient of $f(x) = a_1(x) \cdots a_\ell(x)$. 
	
	Thus, the atomic elements of $R$ are its nonzero elements whose initial coefficients are integers (they include the units of $R$, which are the elements having their constant coefficients in $\{\pm 1\}$). To complete the argument, take a nonzero $f(x) \in R$ and, as in the previous paragraph, write $f(x) = x^m g(x)$, now assuming that $m \in \nn_0$ and $g(x) \in \rr\ldb x \rdb$ such that $g(0) \neq 0$. Then note that the initial coefficient of $\big(\frac{x^2}{g(0)} \big) f(x)$ is~$1$, and so it is atomic in $R$. Hence $R$ is quasi-atomic. Now suppose that $g(0) \notin \mathbb{Q}$. In this case, there is no atomic element $a(x)$ in $R$ such that the initial coefficient of $a(x)f(x)$ belongs to $\mathbb{Z}$, which implies that $R$ is not almost atomic. Thus, the integral domain $R$ is quasi-atomic but not almost atomic, as desired.
\end{example}

We proceed to provide a characterization for almost atomic domains in terms of their corresponding divisibility group, similar to the characterizations of atomicity and quasi-atomicity we gave in Theorem~\ref{atomicity:characterization} and Proposition~\ref{quasi-atomicity:characterization}, respectively.

\begin{proposition} \label{almost atomicity:characterization}
	Let $R$ be an integral domain that is not a field, and let $G(R)$ be the group of divisibility of $R$. Then $R$ is almost atomic if and only if $G(R)$ is generated by the set of minimal positive elements of $G(R)$.
\end{proposition}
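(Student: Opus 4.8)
The plan is to run the entire argument through the dictionary between multiplicative divisibility and the ordered group $G(R)$, exactly as in the proof of Theorem~\ref{atomicity:characterization}. I would first recall the key translation established there: for $a \in R^*$, the coset $a R^\times$ is a minimal positive element of $G(R)$ if and only if $a$ is irreducible in $R$, and more generally $a R^\times$ is positive precisely when $a$ is a nonzero nonunit of $R$. Consequently an element $c \in R^*$ is atomic exactly when $cR^\times$ is either the identity of $G(R)$ (when $c$ is a unit) or a product of minimal positive elements of $G(R)$ (when $c$ is a nonzero nonunit).

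For the forward implication, I would assume $R$ is almost atomic and take an arbitrary $g \in G(R)$. Since every element of $\qf(R)^\times$ is a ratio $r/t$ with $r,t \in R^*$, I may write $g = (rR^\times)(tR^\times)^{-1}$. Applying almost atomicity to each of $r$ and $t$ yields atomic elements $s_r, s_t \in R$ with $rs_r$ and $ts_t$ atomic; passing to cosets gives $rR^\times = (r s_r)R^\times\,(s_r R^\times)^{-1}$, and similarly for $t$. By the translation above, each of the four cosets $(rs_r)R^\times$, $(s_r)R^\times$, $(ts_t)R^\times$, $(s_t)R^\times$ is a product of minimal positive elements of $G(R)$, so $g$ lies in the subgroup generated by the minimal positive elements. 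Hence that subgroup is all of $G(R)$.

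For the reverse implication, I would assume $G(R)$ is generated by its minimal positive elements and fix a nonzero nonunit $r \in R$ (units are trivially almost atomic via $s = 1$). Then $rR^\times$ lies in the generated subgroup, so, collecting positive and negative exponents, I can write $rR^\times = (a_1 R^\times \cdots a_m R^\times)(b_1 R^\times \cdots b_n R^\times)^{-1}$ for irreducibles $a_i, b_j$ of $R$. Clearing the inverse gives $(r\,b_1 \cdots b_n)R^\times = (a_1 \cdots a_m)R^\times$, so $r\,b_1\cdots b_n$ and $a_1 \cdots a_m$ are associates, say $r\,b_1\cdots b_n = u\,a_1\cdots a_m$ with $u \in R^\times$. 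Setting $s := b_1 \cdots b_n$ (read as $1$ when $n = 0$), both $s$ and $rs = u\,a_1\cdots a_m$ are atomic, so $r$ is almost atomic, and therefore $R$ is almost atomic.

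The argument is short because all of the real content is packaged in the already-proved correspondence between irreducibles and minimal positive elements. The only points requiring care are bookkeeping: tracking positivity so that products of irreducibles map to genuine products of minimal positives (rather than mixed-sign products), handling the degenerate cases where $r$ or $t$ is a unit (empty products), and in the reverse direction cleanly separating positive from negative exponents before clearing denominators. I expect the mildly delicate step to be justifying that ``$G(R)$ generated by minimal positives'' is correctly read as generation \emph{as an abelian group} rather than as a monoid; verifying that the generated subgroup is all of $G(R)$, not merely the positive cone, is precisely what separates almost atomicity from the atomicity criterion of Theorem~\ref{atomicity:characterization}, and keeping this distinction explicit is the heart of the write-up.
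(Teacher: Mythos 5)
Your proof is correct and takes essentially the same route as the paper's: both directions run through the dictionary between irreducibles and minimal positive elements established in Theorem~\ref{atomicity:characterization}, with the forward direction applying almost atomicity to the numerator and denominator of a fraction representing a given coset, and the reverse direction separating positive and negative exponents and clearing denominators to exhibit $rs$ and $s$ as atomic. Your explicit attention to empty products and to group (rather than monoid) generation is just slightly more careful bookkeeping of the same argument.
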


\begin{proof}
	As pointed out in the proof of Theorem~\ref{atomicity:characterization}, for each $a \in R^*$, the coset $a R^\times \in G(R)$ is minimal positive if and only if~$a$ is irreducible in~$R$.
	\smallskip
	
	For the direct implication, assume that $R$ is almost atomic. Take $k \in \qf(R)^\times$ and write $k = \frac{r}{s}$ for some $r,s \in R$. Using the almost atomicity of $R$, we can find irreducibles $a_1,\dots, a_m$ and $b_1, \dots, b_n$ of $R$ such that $r a_1 \cdots a_m$ and $sb_1 \cdots b_n$ are both atomic elements in $R$. This means that $k = \frac{r}{s}$ is a quotient of products of atoms, and so $G(R)$ is generated by its minimal positive elements (i.e., cosets represented by irreducibles).
	
	Conversely, assume that $G(R)$ is generated by the set of minimal positive elements of $G(R)$. Suppose that $d \in R$ is a nonzero nonunit. If $d$ is a product of atoms, then we are done. Otherwise, the coset $d R^\times$ can be written in the form
	\[
		d R^\times = \prod_{i=1}^m a_i R^\times \prod_{j=1}^n b^{-1}_j R^\times
	\]
	for atoms $a_1, \dots, a_m$ and $b_1, \dots, b_n$ of~$R$. This means that there exists $u \in R^\times$ such that the equality $d b_1 \cdots b_n = u a_1 \cdots a_m$ holds, and so $d$ is an atomic element. Hence $R$ is almost atomic.
\end{proof}

As for an integral domain, we say that an (additive) monoid $M$ is \emph{almost atomic} if every element of $M$ can be written as the difference of two atomic elements. We conclude this section highlighting the ascent of almost atomicity as an open question for the interested reader.

\begin{question}
	Let $R$ be an integral domain, and let $M$ be a torsion-free monoid. Does the fact that both $R$ and $M$ are almost atomic imply that the monoid algebra $R[M]$ is almost atomic?
\end{question}

\medskip
\subsection{A Homological Method}

In order to generalize the study of factorizations, one might wish to remove the restriction of atomicity. In a certain sense, the classical study of factorizations in the setting of integral domains was predicated on factoring nonzero nonunit elements into irreducibles. There are two assumptions here: the first is that one can find irreducible divisors of a selected nonzero nonunit, and the second is that the selected element can be written as a finite product of some of these irreducibles. It stands to reason that a ``typical'' integral domain will be neither antimatter nor atomic, and one fundamental question is how to study factorizations or, more generally, the structure of the group of divisibility in such a theater. In this final subsection, we offer a potential approach to this question. Much of the material in this subsection is a expository condensation of the last half of the paper~\cite{CG16} and some of the material can be found in~\cite{BC15}.

To deal with factorizations in a non-atomic domain, it would seem that localization could be an effective tool. In this direction, it is natural to want to use localization to excise the non-atomic elements, but in general this approach can prove to be hazardous given the fact that the set of irreducibles is not saturated (and hence the non-atomic elements may not form a multiplicative set). We illustrate this with the following example.

\begin{example}
	Let $n\geq 2$ be a natural number, and let $V$ be an $n$-dimensional discrete valuation domain, that is, a valuation domain with value group isomorphic to $\mathbb{Z}^n$ under the lexicographical order. Since a valuation domain that is not a field is atomic if and only if its value group is $\mathbb{Z}$, we see that $V$ is not atomic. If the maximal ideal of $V$ is generated by $p$, then $p$ divides every nonunit of $V$. Hence any saturated multiplicative subset of $V$ that contains a nonunit, must contain $p$. The upshot of this is that if one were to try to form a multiplicative subset of $V$ consisting of non-atomic elements of $V$, then any such set must also contain all of the irreducibles. It turns out in this case (and in our general approach) that a more effective strategy is to form the multiplicative subset generated by the irreducibles of $V$, to peel back the first atomic layer and see if this localization yields deeper factorization data. In this particular case, if we adopt the approach of recursively localizing at the set of irreducibles, then we obtain the sequence of integral domains
	\[
		V_n:=V \subset V_{n-1} \subset \cdots \subset V_1 \subset V_0
	\]
	with each $V_i$ obtained from $V_{i+1}$ by localizing at the multiplicative subset of $V_{i+1}$ generated by its irreducibles; equivalently, each $V_i$ is the localization of $V_{i+1}$ at its prime ideal of coheight $1$. Note that~$V_1$ is the first atomic domain in the sequence and $V_0$ is the quotient field of $V$.
\end{example}

We now introduce some terminology. Let $R$ be an integral domain and $S$ a multiplicative subset of~$R$. Observe that the multiplicative group generated by the irreducibles of $R$ is identical to the multiplicative group generated by the elements that are almost atomic. It is interesting to note that the Grothendieck groups of the monoid generated by the irreducibles and the monoid generated by the almost atomic elements coincide, but in general this group does not pick up the saturation of the set of irreducibles in the integral domain as a whole (Example \ref{Q} is helpful in illustrating this point).

Measuring this defect is an interesting task and speaks to the theme of the ``layers'' of factorizations mentioned earlier in this section and lends itself to some interesting techniques.

Let $R_0$ be an integral domain, and let $S_0$ be the multiplicative set generated by the (almost) irreducibles of $R$. We inductively define, for each $n \in \nn$, the multiplicative set $S_n$ generated by the irreducibles of $R_n$, where $R_n = (R_{n-1})_{S_{n-1}}$. For each $n\in\mathbb{N}$, we introduce the following definitions.
\begin{enumerate}
	\item $G_n$ denotes the group of divisibility of $R_n$.
	\smallskip
	
	\item $A_n$ denotes the subgroup of $G_n$ generated by the (almost) irreducibles.
	\smallskip
	
	\item $Q_n$ denotes the subgroup of $G_n$ generated by the quasi-irreducibles.
\end{enumerate}

\begin{proposition}\label{QAQS}
	With notation as before, the sequence of ring injections
	\[
		R_0 \longrightarrow R_1 \longrightarrow R_2 \longrightarrow \cdots
	\]
	gives rise to the sequence of group epimorphisms
	\[
		G_0 \longrightarrow G_1 \longrightarrow G_2 \longrightarrow \cdots
	\]
	for which the projection $G_n \longrightarrow G_{n+1}$ given by $kR_n^\times \mapsto kR_{n+1}^\times$ is induced by the inclusion $R_n \longrightarrow R_{n+1}$.
\end{proposition}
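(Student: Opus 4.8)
The plan is to exploit the single structural fact driving the whole construction: each $R_{n+1}$ is a localization of $R_n$, so the entire tower shares one quotient field while the unit groups can only grow. Everything then reduces to the tautological projection between two quotients of a fixed multiplicative group.

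First I would record that, since $R_{n+1} = (R_{n})_{S_{n}}$ is a localization of the domain $R_n$ at a multiplicative set that does not contain $0$ (no irreducible is $0$), the map $R_n \hookrightarrow R_{n+1}$ is an injection of integral domains and, crucially, $\qf(R_{n+1}) = \qf(R_n)$. Iterating, every $R_n$ has the same quotient field; writing $K := \qf(R_0)$, we obtain $G_n = K^\times/R_n^\times$ for all $n$. Next I would observe that the inclusion $R_n \subseteq R_{n+1}$ forces $R_n^\times \subseteq R_{n+1}^\times$: if $u \in R_n^\times$, then both $u$ and $u^{-1}$ lie in $R_n \subseteq R_{n+1}$, so $u \in R_{n+1}^\times$. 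Given any containment $H_1 \subseteq H_2$ of subgroups of an abelian group $A$, there is a canonical surjection $A/H_1 \onto A/H_2$; applying this with $A = K^\times$, $H_1 = R_n^\times$, and $H_2 = R_{n+1}^\times$ yields a group epimorphism $\pi_n \colon G_n \onto G_{n+1}$ whose rule is exactly $kR_n^\times \mapsto kR_{n+1}^\times$. Here well-definedness is precisely the containment $R_n^\times \subseteq R_{n+1}^\times$, and surjectivity is immediate since every coset $kR_{n+1}^\times$ is the image of $kR_n^\times$. Because $\pi_n$ acts as the identity on representatives $k \in K^\times$, it is by construction the map induced on quotient-field units by the ring inclusion $R_n \hookrightarrow R_{n+1}$, which is the asserted identification.

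I do not expect any substantial obstacle: the entire content is the pair of observations that localization preserves the quotient field and enlarges the group of units, after which the morphism is forced. The only point warranting a sentence of care is well-definedness, which is exactly the unit-group containment, and the mild bookkeeping that all the quotient fields genuinely coincide. As a cost-free bonus one may note that each $\pi_n$ is order-preserving for the divisibility orders: a nonnegative coset $kR_n^\times$ (that is, $k \in R_n$) maps to $kR_{n+1}^\times$ with $k \in R_n \subseteq R_{n+1}$, hence nonnegative in $G_{n+1}$, so the $\pi_n$ are in fact morphisms of partially ordered abelian groups, though the statement asks only for group epimorphisms.
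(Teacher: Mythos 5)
Your proof is correct. The paper states Proposition~\ref{QAQS} without proof, treating it as routine, and your argument supplies exactly the verification that is implicit there: since each $R_{n+1}$ is a localization of $R_n$ at a multiplicative set of nonzero elements, all the rings share one quotient field $K$ and $R_n^\times \subseteq R_{n+1}^\times$, so the canonical projection $K^\times/R_n^\times \onto K^\times/R_{n+1}^\times$ is a well-defined group epimorphism given by $kR_n^\times \mapsto kR_{n+1}^\times$, i.e., the map induced by the inclusion $R_n \subseteq R_{n+1}$; your added remark that these maps are order-preserving is also correct and consistent with how the paper later uses them as morphisms of po-groups.
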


Valuation domains illustrate this nicely, as the following example shows.

\begin{example}
	Let $V_n$ be an $n$-dimensional discrete valuation domain with value group $\mathbb{Z}^n$ ordered lexicographically. The sequence of localizations in Proposition \ref{QAQS} corresponds to the sequence of successive localizations 
	\[
		V_n \longrightarrow V_{n-1} \longrightarrow \cdots \longrightarrow V_1 \longrightarrow V_0
	\]
	and the corresponding sequence of groups of divisibility corresponds to the successive quotients of the value groups. We obtain the following corresponding sequence of totally ordered groups:
	\[
		\mathbb{Z}^n \longrightarrow \mathbb{Z}^{n-1}\longrightarrow \cdots \longrightarrow\mathbb{Z} \longrightarrow 0.
	\]
\end{example}

We make the following definitions to catalog some of the possibilities for these sequences of homomorphisms.

\begin{definition}\label{definition-atomic} With $G_n$ defined as above, we define the following.
	\begin{enumerate}
		\item If there exists $n \in \bbn_0$ such that $G_n = 0$, then we say that the group $G_0$ is $n$-\textit{atomic}.
		\smallskip
		
		\item If $G_n$ is not $m$-atomic for any $m \in \bbn_0$ and there exists $n \in \mathbb{N}_0$ such that the projection $\pi_n: G_n \to G_{n+1}$ is the identity, then we say that the group $G_0$ is $n$-\textit{antimatter}.
		\smallskip
		
		\item If $G_0$ is not $m$-atomic for any $m \in \bbn_0$ and not $n$-antimatter for any $n \in \bbn_0$, then we say that the group $G_0$ is \textit{mixed}.
	\end{enumerate}
\end{definition}

Consider the sequence $G_0 \overset{\pi_0}\longrightarrow G_1 \overset{\pi_1}\longrightarrow G_2 \overset{\pi_2}\longrightarrow \cdots$ from above. We now construct a more detailed picture by considering the atomic and quasi-atomic subgroups, and their pullbacks. Recall that $A_n$ is the group generated by (almost) irreducibles and $Q_n$ the group generated by the quasi-irreducibles. Since each successive homomorphism annihilates the irreducibles, it is natural to consider the preimages of the groups generated by (almost) irreducibles and quasi-irreducibles, respectively. For the rest of this section, we set
\[
	\widehat{A_{n}} := \pi_{n}^{-1}(A_{n+1}) \quad \text{and} \quad \widehat{Q_{n}} = \pi_{n}^{-1}(Q_{n+1}).
\]
Then we obtain the commutative diagram of Figure~\ref{detailedCommutativeDiagram}, where upward arrows denote inclusion and rightward arrows denote epimorphisms $\pi_n$ with appropriate (co)domain restrictions.  Commutativity of this diagram is a standard diagram chase we omit. We note $Q_n = \ker \pi_n$ and the $n^{\text{th}}$ rightward arrow is induced by $\pi_n$, explaining why the bottom rightward arrows all go to the identity.
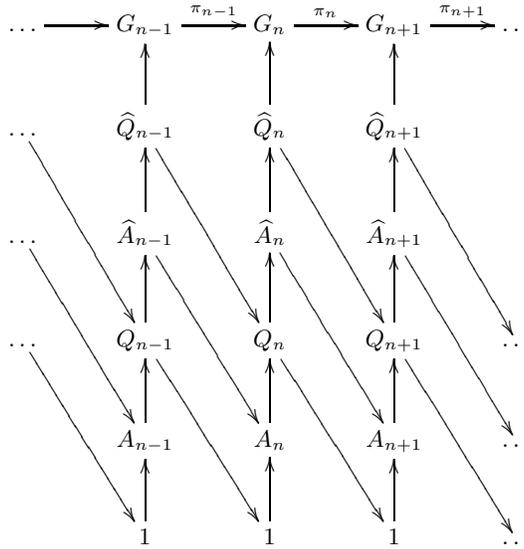
\begin{figure}[h]
	\begin{equation*}
		\xymatrix{
			\dots \ar[r] & G_{n-1} \ar[r]^-{\pi_{n-1}} & G_n \ar[r]^-{\pi_n} & G_{n+1} \ar[r]^-{\pi_{n+1}}  & \dots \\
			\dots \ar[ddr] & \widehat{Q}_{n-1}  \ar[u] \ar[ddr] & \widehat{Q}_{n} \ar[u] \ar[ddr] & \widehat{Q}_{n+1} \ar[ddr] \ar[u] & \\
			\dots \ar[ddr] & \widehat{A}_{n-1} \ar[u] \ar[ddr] & \widehat{A}_{n} \ar[u] \ar[ddr] & \widehat{A}_{n+1} \ar[u] \ar[ddr] \\
			\dots \ar[ddr] & Q_{n-1} \ar[u] \ar[ddr] & Q_{n} \ar[u] \ar[ddr] & Q_{n+1} \ar[u]  \ar[ddr] & \dots  \\
			& A_{n-1} \ar[u] &A_{n} \ar[u] & A_{n+1} \ar[u] & \dots\\
			& 1 \ar[u]& 1 \ar[u]& 1 \ar[u] & \dots
		}
	\end{equation*}
	\caption{A commutative diagram with the subgroups and morphisms used for constructing cochain complexes.}
	\label{detailedCommutativeDiagram}
\end{figure}

Due to the containment $A_{n+1} \subseteq Q_{n+1}$, we obtain that
\[
	A_{n} \subseteq Q_{n} \subseteq \widehat{A}_{n} \subseteq \widehat{Q}_{n}.
\]
As the following lemma indicates, each of the subgroups $A_n$, $Q_n$, $\widehat{A}_n$, and $\widehat{Q}_n$ yields a cochain complex with maps induced by $\{\pi_{n} : n\geq 0\}$ restricted appropriately. We only provide here a sketch of proof as most of the work needed in a full proof consists of routine diagram chase.

\begin{lemma}
	The following form cochain complexes:
	\begin{align}
		A_{\bullet}&= \dots \longrightarrow 0  \longrightarrow A_{0} \longrightarrow  A_{1} \longrightarrow  A_{2} \longrightarrow  \dots \\
		Q_{\bullet}&= \dots \longrightarrow 0  \longrightarrow Q_{0} \longrightarrow Q_{1} \longrightarrow  Q_{2} \longrightarrow \dots \\
		\widehat{A}_{\bullet}&= \dots  \longrightarrow 0 \longrightarrow \widehat{A}_{0} \longrightarrow \widehat{A}_{1} \longrightarrow \widehat{A}_{2} \longrightarrow \dots \\
		\widehat{Q}_{\bullet}&= \dots \longrightarrow  0 \longrightarrow \widehat{Q}_{0} \longrightarrow \widehat{Q}_{1} \longrightarrow \widehat{Q}_{2} \longrightarrow \dots \\
		\widehat{A}_{\bullet}/Q_{\bullet}&= \dots \longrightarrow  0 \longrightarrow \widehat{A}_{0}/Q_{0} \longrightarrow \widehat{A}_{1}/Q_{1} \longrightarrow \widehat{A}_{2}/Q_{2} \longrightarrow \dots \\
		\widehat{Q}_{\bullet}/Q_{\bullet}&= \dots \longrightarrow  0 \longrightarrow \widehat{Q}_{0}/Q_{0} \longrightarrow \widehat{Q}_{1}/Q_{1} \longrightarrow \widehat{Q}_{2}/Q_{2} \longrightarrow \dots
	\end{align}
	where the maps are naturally induced by the epimorphisms $\pi_n$.  Furthermore, the following statements hold.
	\begin{enumerate}
		\item The sequences $A_{\bullet}$, $Q_{\bullet}$, $\widehat{A}_{\bullet}/Q_{\bullet}$, $\widehat{Q}_{\bullet}/Q_{\bullet}$ are each trivial in the sense that each differential is the trivial homomorphism.
		\smallskip
		
		\item The sequence $\widehat{Q}_{\bullet}$ is exact.
	\end{enumerate}
	\label{lemma-cochain-complexes}
\end{lemma}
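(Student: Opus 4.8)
The plan is to verify the cochain-complex conditions directly from the definitions and from the containments $A_n \subseteq Q_n \subseteq \widehat{A}_n \subseteq \widehat{Q}_n$ already recorded, treating the two labelled assertions as the substantive content. First I would make explicit what ``differential'' means in each sequence: the map at stage $n$ is the homomorphism induced by $\pi_n \colon G_n \to G_{n+1}$, restricted and corestricted to the relevant subgroups (or quotients thereof). For this to be well defined one needs, in each case, that $\pi_n$ carries the source subgroup into the target subgroup; for instance, for $A_\bullet$ one needs $\pi_n(A_n) \subseteq A_{n+1}$, and for $\widehat{Q}_\bullet$ one needs $\pi_n(\widehat{Q}_n) \subseteq \widehat{Q}_{n+1}$. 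These inclusions are exactly the commutativity relations encoded in Figure~\ref{detailedCommutativeDiagram}, so I would cite that diagram rather than re-deriving each containment by hand.

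The key observation driving assertion~(1) is that $Q_n = \ker \pi_n$. This is the point emphasized in the text just below the diagram: since $S_n$ is generated by the irreducibles of $R_n$ and $R_{n+1} = (R_n)_{S_n}$, passing to $R_{n+1}$ inverts precisely the irreducibles, so every quasi-irreducible coset dies in $G_{n+1}$; hence $Q_n \subseteq \ker \pi_n$, and the reverse inclusion follows because an element killed by the localization must divide a product of the inverted irreducibles. Granting $Q_n = \ker \pi_n$, the differential on $Q_\bullet$ is the zero map because $Q_n$ lies in the kernel of $\pi_n$; the differential on $A_\bullet$ is zero a fortiori from $A_n \subseteq Q_n$; and the differentials on $\widehat{A}_\bullet / Q_\bullet$ and $\widehat{Q}_\bullet / Q_\bullet$ are zero because $\pi_n$ sends all of $\widehat{A}_n$ and $\widehat{Q}_n$ into $A_{n+1} \subseteq Q_{n+1}$ by definition of the hats ($\widehat{A}_n = \pi_n^{-1}(A_{n+1})$, $\widehat{Q}_n = \pi_n^{-1}(Q_{n+1})$), and $Q_{n+1}$ is exactly what we quotient by in the target. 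So each of these four induced maps is literally trivial, which simultaneously shows the sequences are complexes and establishes~(1).

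For assertion~(2), exactness of $\widehat{Q}_\bullet$, I would argue at a fixed stage that $\ker\big(\widehat{Q}_n \to \widehat{Q}_{n+1}\big) = \operatorname{im}\big(\widehat{Q}_{n-1} \to \widehat{Q}_n\big)$. The kernel of the restricted map is $\widehat{Q}_n \cap \ker \pi_n = \widehat{Q}_n \cap Q_n = Q_n$, using $Q_n \subseteq \widehat{Q}_n$. For the image, one uses surjectivity of $\pi_{n-1}$ from Proposition~\ref{QAQS}: given $g \in Q_n$, lift it along $\pi_{n-1}$ to some $\tilde g \in G_{n-1}$, and since $\pi_{n-1}(\tilde g) = g \in Q_n$ one has $\tilde g \in \pi_{n-1}^{-1}(Q_n) = \widehat{Q}_{n-1}$, so $g$ lies in the image of the differential. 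Thus kernel and image both equal $Q_n$ and the complex is exact.

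I expect the main obstacle to be not any single hard computation but rather pinning down the definitions so that everything aligns: specifically, confirming the identification $Q_n = \ker\pi_n$ rigorously (the forward inclusion is routine, but the reverse — that nothing outside the quasi-irreducible subgroup is killed by inverting irreducibles — is the one place genuine content from the localization-by-irreducibles construction enters), and being careful that the hat subgroups are indexed correctly so that $\pi_n(\widehat{Q}_n) \subseteq Q_{n+1}$ holds with the right shift. Once these are nailed down, the rest is the diagram chase the text explicitly invites us to omit, so I would present only the kernel/image identifications above and defer the purely mechanical verifications.
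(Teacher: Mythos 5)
Your proposal is correct and takes essentially the same route as the paper's (explicitly labeled) sketch: both arguments turn on the identities $Q_n = \ker \pi_n$, $\widehat{A}_n = \pi_n^{-1}(A_{n+1})$, $\widehat{Q}_n = \pi_n^{-1}(Q_{n+1})$, together with the surjectivity of each $\pi_n$. You are in fact more complete than the paper, whose sketch verifies $\pi_{n+1} \circ \pi_n = 0$ only for $\widehat{A}_{\bullet}$ and defers the remaining complexes and assertions (1) and (2) to ``similar'' omitted diagram chases, whereas you also supply the kernel/image identification $\ker\big(\widehat{Q}_n \to \widehat{Q}_{n+1}\big) = Q_n = \operatorname{im}\big(\widehat{Q}_{n-1} \to \widehat{Q}_n\big)$ and a justification of $Q_n = \ker \pi_n$, which the paper merely records as a remark before the lemma.
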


\begin{proof}[Sketch]
	We only show that $\widehat{A}_\bullet$ is a cochain complex, for all the others are proven similarly. All we must show is that $\pi_{n+1} \circ \pi_n = 0$. Consider the map $\pi_n: \widehat{A}_n \to \widehat{A}_{n+1}$, and recall that the equality $\widehat{A}_n = \pi_n^{-1}(A_{n+1})$ holds. Certainly $\pi_n \circ \pi_n^{-1}(A_{n+1}) = A_{n+1}$. Of course, $\pi_{n+1}$ is the natural map $G_{n+1} \onto \frac{G_{n+1}}{Q_{n+1}}$, and $A_{n+1} \subseteq Q_{n+1} = \ker \pi_{n+1}$. In particular, the image of $\pi_n$ is a subset of the kernel of $\pi_{n+1}$.
\end{proof}

We now form short exact sequences of cochain complexes of po-group homomorphisms. One can use some chase diagram arguments to argue that both
\begin{equation} \label{eq:two SES}
	0 \to Q_{\bullet} \to \widehat{A}_{\bullet} \to \frac{\widehat{A}_{\bullet}}{Q_{\bullet}} \to 0 \quad \text{and} \quad 0 \to Q_{\bullet} \to  \widehat{Q}_{\bullet} \to \frac{\widehat{Q}_{\bullet}}{Q_{\bullet}} \to 0
\end{equation}
are short exact sequences where the cochain maps between the complexes are induced by inclusion or projection where appropriate. Let $X_{\bullet}$ denote any of the complexes from Lemma \ref{lemma-cochain-complexes}, and let $\delta_i \colon X_i \to X_{i+1}$ denote the $i^{\text{th}}$ differential map. Now, for $n \in \bbz$, define the $n^{\text{th}}$ cohomology group of $X_{\bullet}$ as follows: $H^n(G,X_{\bullet})=\ker(\delta_{n})/\text{Im}(\delta_{n-1})$. Then
\begin{align*}
	H^0(G,A_\bullet) =& A_0, & H^n(G,A_\bullet) =& A_n, \\
	H^0(G,Q_\bullet) =& Q_0, & H^n(G,Q_\bullet) =& Q_n, \\
	H^0(G,\widehat{A}_\bullet) =& Q_0, & H^n(G,\widehat{A}_\bullet) =& \frac{Q_n}{A_n},\\
	H^0(G,\widehat{Q}_\bullet) =& Q_0, & H^n(G,\widehat{Q}_\bullet) =& 0, \\
	H^0\Big(G,\frac{\widehat{A}_\bullet}{Q_\bullet}\Big) =& \frac{\widehat{A}_0}{Q_0} = A_1, & H^n\Big(G,\frac{\widehat{A}_\bullet}{Q_\bullet}\Big) =& \frac{\widehat{A}_n}{Q_n} = A_{n+1}, \\
	H^0\Big(G,\frac{\widehat{Q}_\bullet}{Q_\bullet}\Big) =& \frac{\widehat{Q}_0}{Q_0} = Q_{1}, & H^n\Big(G,\frac{\widehat{Q}_\bullet}{Q_\bullet}\Big) =& \frac{\widehat{Q}_n}{Q_n} = Q_{n+1}.
\end{align*}
These cohomology groups confirm that $\widehat{A}_\bullet$ is the sequence with some factorization content: this is because $H^n(G,\widehat{A}_\bullet) = Q_n/A_n$ contains data sensitive to the gap between almost atomicity and quasi-atomicity in the $n^{\text{th}}$ stage of localization.

\begin{theorem}\label{corollary-hom}
	The long exact sequences in cohomology induced by the exact sequences of complexes in~\eqref{eq:two SES} are precisely the sequences given below.
	
	\begin{align*}
		& 0 \to  Q_0 \to Q_0 \to A_1 \to  Q_1 \to \frac{Q_1}{A_1} \to A_2 \to  Q_2 \to \frac{Q_2}{A_2} \to A_3 \to \cdots \\
		& 0 \to 	 Q_0 \to Q_0 \to Q_1 \to Q_1 \to 0 \to Q_2 \to Q_2 \to 0 \to \cdots \label{eq-les3}
	\end{align*} 
\end{theorem}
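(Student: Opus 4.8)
The plan is to read the statement as a routine instance of the long exact sequence in cohomology attached to a short exact sequence of cochain complexes, with the only real input being the table of cohomology groups displayed just before the statement. Everything takes place in the abelian category of abelian groups: each $G_n$ is abelian, and every one of $A_n$, $Q_n$, $\widehat{A}_n$, $\widehat{Q}_n$ together with the relevant quotients is a subquotient of $G_n$, so the standard homological machinery applies unchanged. In particular, although the maps in \eqref{eq:two SES} are \emph{po}-group homomorphisms, the partial orders play no role in forming kernels, images, and cokernels, so the cohomology computation is purely an abelian-group affair.

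First I would invoke the fundamental theorem of homological algebra: any short exact sequence of cochain complexes $0 \to X_\bullet \to Y_\bullet \to Z_\bullet \to 0$ induces a long exact sequence
\[
	\cdots \to H^n(X_\bullet) \to H^n(Y_\bullet) \to H^n(Z_\bullet) \xrightarrow{\ \delta\ } H^{n+1}(X_\bullet) \to \cdots,
\]
in which the first two arrows are induced by the chain maps and $\delta$ is the connecting homomorphism. Since the two rows of \eqref{eq:two SES} were already established to be short exact sequences of cochain complexes, this theorem applies to each of them verbatim, and it simultaneously guarantees the exactness of the two output sequences, so no separate exactness check is needed.

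Next I would specialize and substitute. For $0 \to Q_\bullet \to \widehat{A}_\bullet \to \widehat{A}_\bullet/Q_\bullet \to 0$, plugging in $H^0(Q_\bullet)=Q_0$, $H^0(\widehat{A}_\bullet)=Q_0$, $H^0(\widehat{A}_\bullet/Q_\bullet)=A_1$, and, for $n \ge 1$, the values $H^n(Q_\bullet)=Q_n$, $H^n(\widehat{A}_\bullet)=Q_n/A_n$, $H^n(\widehat{A}_\bullet/Q_\bullet)=A_{n+1}$, and reading off the terms in the order dictated by the long exact sequence, yields exactly $0 \to Q_0 \to Q_0 \to A_1 \to Q_1 \to Q_1/A_1 \to A_2 \to \cdots$, the first displayed sequence. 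For $0 \to Q_\bullet \to \widehat{Q}_\bullet \to \widehat{Q}_\bullet/Q_\bullet \to 0$, substituting $H^n(\widehat{Q}_\bullet)=0$ for $n \ge 1$ (and $Q_0$ in degree $0$) together with $H^n(\widehat{Q}_\bullet/Q_\bullet)=Q_{n+1}$ collapses the long exact sequence to $0 \to Q_0 \to Q_0 \to Q_1 \to Q_1 \to 0 \to Q_2 \to Q_2 \to 0 \to \cdots$, the second displayed sequence.

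The only genuinely delicate point is the index bookkeeping. Because the two quotient complexes satisfy $H^n(\widehat{A}_\bullet/Q_\bullet)=A_{n+1}$ and $H^n(\widehat{Q}_\bullet/Q_\bullet)=Q_{n+1}$ — a degree shift — one must align the terms of the long exact sequence carefully, so that each connecting map $\delta$ lands in the correct $H^{n+1}(Q_\bullet)$; I would confirm the alignment by writing the sequence out for a single generic index $n$ and checking the adjacent triples against the table. For the second short exact sequence, the characteristic pattern of interleaved zeros and repeated groups $Q_n$ is a direct transcription of the vanishing $H^n(\widehat{Q}_\bullet)=0$ for $n \ge 1$ recorded in Lemma~\ref{lemma-cochain-complexes}. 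Thus the entire proof reduces to naming the terms correctly, and the hardest part is merely ensuring the degree shift in the quotient complexes is not mishandled.
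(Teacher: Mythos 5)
Your proposal is correct and follows essentially the same route as the paper: both invoke the standard long exact sequence in cohomology attached to each short exact sequence of cochain complexes in~\eqref{eq:two SES}, then substitute the cohomology groups from the table (with $H^0(G,\widehat{A}_\bullet)=Q_0$, $H^n(G,\widehat{A}_\bullet)=Q_n/A_n$, $H^n(G,\widehat{Q}_\bullet)=0$, and the degree-shifted $H^n$ of the quotient complexes) to read off the two displayed sequences. The only cosmetic difference is that the paper additionally names the maps explicitly (identity, zero, inclusion $\epsilon$, projection $\pi$) and double-checks exactness by splitting the sequence into the pieces $0 \to Q_0 \to Q_0 \to 0$ and $0 \to A_n \to Q_n \to Q_n/A_n \to 0$, whereas you correctly note that exactness already follows from the general theorem.
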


\begin{proof}[Sketch]
	We proceed to sketch the proof for the short exact sequence of cochain complexes $0 \to Q_\bullet \to \widehat{A}_\bullet \to \widehat{A}_\bullet/Q_\bullet \to 0$ (the other sequence comes from $0 \to Q_\bullet \to \widehat{Q}_\bullet \to \widehat{Q}_\bullet/Q_\bullet \to 0$ and is shown similarly).  Indeed, the long exact sequence is as follows and has maps induced naturally as described above.
	\begin{align*}
		H^0(G, Q_\bullet) \longrightarrow H^0(G, \widehat{A}_\bullet) \longrightarrow H^0(G, \widehat{A}_\bullet/Q_\bullet) \longrightarrow H^1(G, Q_\bullet) \longrightarrow \cdots
	\end{align*}
	We simply substitute our cohomology groups here.
	\[Q_0 \overset{=}{\to} Q_0 \overset{0}{\to} A_1 \overset{\epsilon}{\to} Q_1 \overset{\pi}{\to} Q_1/A_1 \overset{0}{\to} A_2 \overset{\epsilon}{\to} Q_2 \overset{\pi}{\to} Q_2/A_2 \overset{0}{\to} A_3 \to \cdots\] Each $\epsilon$ denotes the natural inclusion of groups, each $\pi$ denotes the natural epimorphism of groups, and each $0$ denotes mapping all elements to the group identity. This breaks up into $0 \to Q_0 \overset{=}{\to} Q_0 \to 0$ and $0 \to A_n \to Q_n \to Q_n/A_n \to 0$, which are exact.
\end{proof}

These cohomology groups arise naturally, and we notice that we recover the factorization information describing the gap between the atomic subgroup of divisibility and the quasi-atomic subgroup of divisibility at the $n^{\text{th}}$ stage of localization, just as expected. We conclude presenting a concrete example of an integral domain with a $4$-atomic group of divisibility, and we interpret this through the lens of our long exact sequences.

\begin{example}\label{ex-homo}
	Let $K$ be a field, and consider the 4-dimensional valuation domain
	\[
		R = K\bigg[x_1, x_2, x_3, x_4, \frac{x_2}{x_1^j}, \frac{x_3}{x_2^j}, \frac{x_4}{x_3^j} : j \in \nn \bigg]_{\m}
	\]
	where $\m$ is the maximal ideal generated by all indeterminate elements over $K$. The integral domain~$R$ has group of divisibility order-isomorphic to $\bbz^4$ ordered lexicographically. In particular, $R$ has a single irreducible, namely, $x_1$. After the first localization, we obtain only one irreducible, namely, $\frac{x_2}{1}$. Localizing twice yields a single irreducible $\frac{x_3}{1}$. Localizing a third time yields a single irreducible, namely, $\frac{x_4}{1}$. Localizing a fourth time yields the quotient field. Observe, however, that our quasi-atomic subgroup coincides with our atomic subgroup, which is isomorphic to $\bbz$, so our sequence of groups becomes
	\[
		\bbz^4 \to \bbz^3 \to \bbz^2 \to \bbz \to 0
	\] 
	(with each product ordered lexicographically). Therefore $R$ is $4$-atomic. We resolve this sequence in detail in Figure \ref{fig:detailedCommutativeDiagram}. 
	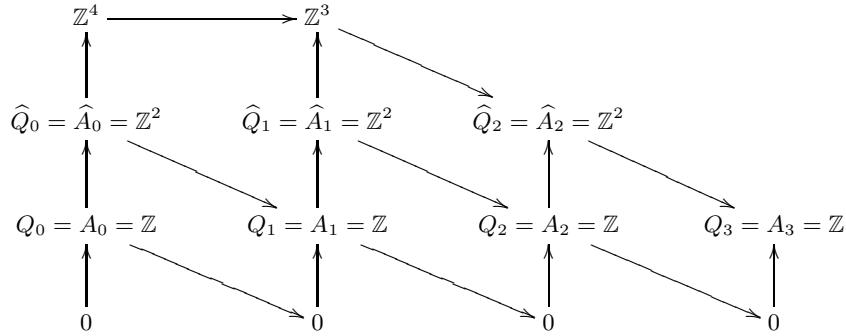
\begin{figure}[h]
		\xymatrix{
			\bbz^4 \ar[r] & \bbz^3 \ar[dr] & & \\
			\widehat{Q}_0 = \widehat{A}_{0} = \bbz^2 \ar[u] \ar[dr] & \widehat{Q}_1 =\widehat{A}_{1} = \bbz^2 \ar[u] \ar[dr] & \widehat{Q}_2 = \widehat{A}_{2} = \bbz^2 \ar[dr] & \\
			Q_0 = A_0 = \bbz \ar[u] \ar[dr] & Q_1 = A_1 = \bbz \ar[u] \ar[dr] & Q_2 = A_2 = \bbz \ar[u] \ar[dr] & Q_3 = A_3 = \bbz \\
			0 \ar[u] & 0 \ar[u] & 0 \ar[u] &0 \ar[u]
		}
		\caption{A detailed expansion of the resolution of the sequence presented in Example \ref{ex-homo}.} \label{fig:detailedCommutativeDiagram}.
	\end{figure}
	As a consequence, Theorem \ref{corollary-hom} only yields one distinct long exact sequence, which is illustrated in the following diagram:
	\[\xymatrix{
		0 \to  Q_0=\bbz \ar[r] & Q_0=\bbz \ar[r] &  A_1 = \frac{\bbz^2}{\bbz} \ar[r] &  Q_1 = \bbz \ar[r] &  \frac{Q_1}{A_1} = 0 \ar[dllll] \\
		A_2 = \frac{\bbz^2}{\bbz} \ar[r] &  Q_2=\bbz \ar[r] &  \frac{Q_2}{A_2}=0 \ar[r] & A_3 = \frac{\bbz^2}{\bbz} \ar[r] & Q_3 = \bbz \ar[r] & 0.
	}\]
	This long exact sequence breaks into four non-trivial isomorphisms. The first of these is $0 \to \bbz \overset{=}{\to} \bbz \to 0$ and the rest are all $0 \to \frac{\bbz^2}{\bbz} \overset{\simeq}{\longrightarrow} \bbz \to 0$.  
\end{example}

\bigskip
\section*{Acknowledgments}

The authors thank the anonymous referees not only for carefully reading an earlier version of this paper, but also for making many comments and suggestions that helped improve the quality of the same. During the preparation of this paper, the second author was supported by the NSF award DMS-2213323.

\bigskip

\end{document}